\newtheorem{definition}{Definition}[section]
\newtheorem{lemma}[definition]{Lemma}
\newtheorem{proposition}[definition]{Proposition}
\newtheorem{example}[definition]{Example}
\newtheorem{corollary}[definition]{Corollary}
\newtheorem{theorem}[definition]{Theorem}
\newtheorem{remark}[definition]{Remark}
\newtheorem{maintheorem}{Theorem}
\newtheorem{maincorollary}[maintheorem]{Corollary}
\newtheorem*{THEOREMA}{Theorem~\ref{thm:our-ostrowski-in-introduction}}
\newtheorem*{THEOREMB}{Theorem~\ref{thm:1-norm-statistics}}
\newtheorem*{THEOREMC}{Theorem~\ref{thm:nice-formula-for-order-ideals}}
\newtheorem*{THEOREMD}{Theorem~\ref{thm:nice-formula-for-qmatchings}}
\newtheorem*{COROLLARYE}{Corollary~\ref{cor:three-combinatorial-interpretation-of-q-rational}}
\newcommand{\N}{\mathbb{N}}
\newcommand{\Z}{\mathbb{Z}}
\newcommand{\Q}{\mathbb{Q}}
\newcommand{\R}{\mathbb{R}}
\newcommand{\GL}{\mathrm{GL}}
\newcommand{\xx}{{x}}
\newcommand{\M}{{\mathcal M}}
\newcommand{\Acal}{{\mathcal{A}}}
\newcommand{\Bcal}{{\mathcal{B}}}
\newcommand{\Fcal}{{\mathcal{F}}}
\newcommand{\Gcal}{{\mathcal{G}}}
\newcommand{\Jcal}{{\mathcal{J}}}
\newcommand{\Mcal}{{\mathcal{M}}}
\newcommand{\Pcal}{{\mathcal{P}}}
\newcommand{\Zcal}{{\mathcal{Z}}}
\newcommand{\bfrak}{{\mathfrak{b}}}
\newcommand{\0}{\mathtt{0}}
\newcommand{\1}{\mathtt{1}}
\newcommand{\carreplein}{{\perp}}
\newcommand{\carrevide}{{\parallel}}
\newcommand{\Acaleven}{{\Acal_{\text{even}}}}
\DeclareMathOperator{\Pcaleven}{{\Pcal_{\text{even}}}}
\DeclareMathOperator{\Pcalodd}{{\Pcal_{\text{odd}}}}
\DeclareMathOperator{\CFeven}{{CF_{\text{even}}}}
\DeclareMathOperator{\CFodd}{{CF_{\text{odd}}}}
\DeclareMathOperator{\region}{region}
\DeclareMathOperator{\area}{area}
\DeclareMathOperator{\rep}{rep}
\DeclareMathOperator{\val}{val}
\DeclareMathOperator{\shift}{\textsc{shift}}
\DeclareMathOperator{\pop}{\textsc{pop}}
\DeclareMathOperator{\conv}{conv}
\newcommand{\size}[1]{
    {|#1|}                 %
}
\def\enclosedAreaColor{green!10}
\def\basicMatchingColor{red}
\def\someMatchingColor{blue}
\tikzset{
circled node/.style={circle,fill, inner sep=0, minimum size=.3em},
empty circled node/.style={draw, shape=circle, inner sep=2pt, fill=white},
plein circled node/.style={draw, shape=circle, inner sep=2pt, fill=black!50},
thebasicmatching/.style={ultra thick,dotted,bend left=0,\basicMatchingColor},
somematching/.style={thick,\someMatchingColor},
}
\begin{document}

\title[$q$-analogs of rational numbers]
{$q$-analogs of rational numbers:\\ from Ostrowski numeration systems\\to perfect matchings}

\author[J.-C.~Aval]{Jean-Christophe Aval}
\address[J.-C.~Aval]{Univ. Bordeaux, CNRS, Bordeaux INP, LaBRI, UMR 5800, F-33400 Talence, France}
\email{aval@labri.fr}
\urladdr{https://www.labri.fr/perso/aval/}

\author[S.~Labb\'e]{S\'ebastien Labb\'e}
\address[S.~Labb\'e]{CNRS -- Université de Montréal CRM-CNRS, Montréal, Canada}
\email{sebastien.labbe@cnrs.fr}
\urladdr{http://www.slabbe.org/}

\keywords{$q$-analogs,
numeration system,
Ostrowski's theorem,
snake graph,
perfect matching,
continued fraction,
fence poset,
order ideal,
Markoff number \and
polytope}
\subjclass[2010]{Primary 05A15, 05A30 \and 11A63;
Secondary 05A19, 05C10, 06A07, 11A55, 11J06, 52A20 \and 68R15}

\date{\today}

\begin{abstract}
We consider the $q$-deformation of rational numbers introduced recently by Morier-Genoud and Ovsienko. We propose three enumerative interpretations of these $q$-rationals: in terms of a new version of Ostrowski's numeration system for integers, in terms of order ideals of fence posets and in terms of perfect matchings of snake graphs. %
    Contrary to previous results which are restricted to rational numbers greater than one, our interpretations work for all positive rational numbers and are based on a single combinatorial object for defining both the numerator and denominator. The proofs rest on order-preserving bijections between posets over these objects. We recover a formula for a $q$-analog of Markoff numbers. We also deduce a fourth interpretation given in terms of the integer points inside a polytope in $\mathbb{R}^k$ on both sides of a hyperplane where $k$ is the length of the continued fraction expansion.
\end{abstract}

\maketitle

\setcounter{tocdepth}{1}
\tableofcontents

\section{Introduction}

Fix a positive irrational number $\xx$ with continued fraction expansion
$[a_0; a_1, a_2, \dots]$ with $a_0\geq0$ and $a_n\geq1$ for every $n\geq1$.
Let $(p_n)$ and $(q_n)$ be the sequence of numerators and denominators of the
convergents $p_n/q_n$ to $\xx$ \cite[Theorem 149]{MR2445243}:
\begin{equation}\label{eq:CF-denominators}
\begin{aligned}
    p_i &= a_ip_{i-1} + p_{i-2},  &&\text{ with } p_{-1}=1 \text{ and } p_0=a_0,\\
    q_i &= a_iq_{i-1} + q_{i-2},  &&\text{ with } q_{-1}=0 \text{ and } q_0=1.
\end{aligned}
\end{equation}
It was proved by Ostrowski a century ago \cite{zbMATH02600866}
that every positive integer $n$ can be written uniquely as
\[
    n = \sum_{i=1}^{k}b_{i}q_{i}
\]
where the integer coefficients satisfy $0 \leq b_i \leq a_i$ and if $b_i = a_i$
then $b_{i-1} = 0$.
See also \cite{berthe_autour_2001,MR1997038, MR1970391, MR1970385} or the more recent
\cite{epifanio_sturmian_2012}.

To have uniqueness, different conditions can be imposed on the sequence $(b_i)_i$,
leading to different versions of the theorem. For example, two choices are
called \emph{first} and \emph{second} multiplicative sequences in the chapter
\cite{MR1970391} describing the substitutive structure of Sturmian sequences.
In this article, we propose a \emph{third} choice
leading to an alternating-sign analog of Ostrowski's theorem
using the sum of the numerator and denominator of the convergents.

Our choice of sequences can be stated as follows.
Let $a=[a_0; a_1, \dots, a_{k-1}]$
be the even or odd-length continued fraction expansion
of a positive rational number
where $a_0\geq0$ and $a_i\geq1$ for every index $i$ with $1\leq i<k$
\cite[Theorem 162]{MR2445243}.
    We say that a finite sequence of integers $(b_i)_{0\leq i< k}$ is
    \emph{admissible for $a$} if it satisfies
    \begin{itemize}
        \item $0 \leq b_i \leq a_i$ for every integer $i$ with $0\leq i<k$,
        \item if $i>0$ is odd and $b_i = a_i$, then $b_{i-1} = a_{i-1}$,
        \item if $i>0$ is even  and $b_i = 0$, then $b_{i-1} = 0$.
    \end{itemize}
    We reuse the vocabulary of ``admissible'' sequence from \cite{MR1020484,MR4836876}
    where it was used to describe another numeration system based on a single substitution.

Also, let $(r_i)_{0\leq i\leq k}$ be the sequence defined by
\[
    r_i=p_{i-1}+q_{i-1}
\]
for every integer $i$ with $0\leq i\leq k$
where $(p_i)_{0\leq i\leq k-1}$ and $(q_i)_{0\leq i\leq k-1}$
are the numerators and denominators of the convergents
defined as above from the finite continued fraction expansion $a$.
Note that we have $r_0=p_{-1}+q_{-1}=1$ and $r_1=p_0+q_0=a_0+1$.

\newcommand\MainTheoremA{
    Let $k\geq1$ be an integer and
    $a=[a_0; a_1, \dots, a_{k-1}]$ be the even or odd-length continued
    fraction expansion of a positive rational number.
    Let $\Zcal(a)=\Z\,\cap\,[0,r_{k})$ if $k$ is odd
    or $\Zcal(a)=\Z\cap[r_{k-1}-r_{k},r_{k-1})$ if $k$ is even.
    Every integer $n\in \Zcal(a)$ can be written uniquely as
    \[
        n = \sum_{i=0}^{k-1}(-1)^ib_{i}r_{i}
    \]
    where  $(b_i)_{0\leq i\leq k-1}$ is an admissible sequence for $a$.
}

    Using this choice of admissible sequences, we propose the following
    alternating-sign version of Ostrowski's theorem.
    Note that it is distinct from the alternate Ostrowski numeration
    system proposed in \cite{bourla_ostrowski_2016}.

\begin{maintheorem}\label{thm:our-ostrowski-in-introduction}
    \MainTheoremA
\end{maintheorem}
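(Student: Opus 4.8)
The plan is to argue by induction on the length $k$ of the continued fraction, reformulating the existence-and-uniqueness assertion as: the map $\Phi_a$ that sends an admissible sequence $(b_i)_{0\leq i<k}$ to $\sum_{i=0}^{k-1}(-1)^ib_ir_i$ is a bijection from the set of admissible sequences for $a=[a_0;a_1,\dots,a_{k-1}]$ onto $\Zcal(a)$. Two preliminary facts set this up. First, adding the recurrences \eqref{eq:CF-denominators} for $p_i$ and $q_i$ shows that $(r_i)$ obeys the same recurrence $r_{i+1}=a_ir_i+r_{i-1}$ for $i\geq1$, with $r_0=1$ and $r_1=a_0+1$; this is the only identity about the $r_i$ that the argument needs. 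Second, the admissibility conditions of $a$ at the indices $1,\dots,k-2$ coincide with the admissibility conditions of the truncation $a'=[a_0;a_1,\dots,a_{k-2}]$, and $r_0,\dots,r_{k-1}$ are unchanged by this truncation; hence $(b_0,\dots,b_{k-1})$ is admissible for $a$ exactly when $(b_0,\dots,b_{k-2})$ is admissible for $a'$, $0\leq b_{k-1}\leq a_{k-1}$, and the single condition of $a$ at index $k-1$ holds. The cases $k=1$ (where $\Phi_a$ is the identity on $\{0,1,\dots,a_0\}$) and $k=2$ are the base cases and are verified directly.

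For the inductive step I would take $k\geq3$ and partition the admissible sequences for $a$ according to the value of $b_{k-1}$. The key observation is that the condition of $a$ at index $k-1$ constrains the prefix $(b_0,\dots,b_{k-2})$ only for one extreme value of $b_{k-1}$, namely $b_{k-1}=0$ if $k-1$ is even and $b_{k-1}=a_{k-1}$ if $k-1$ is odd, whereas for every other value of $b_{k-1}$ the prefix is an arbitrary admissible sequence for $a'$. For those ``free'' values the inductive hypothesis applied to $a'$ gives that the image under $\Phi_a$ is the translate of $\Zcal(a')$ by $(-1)^{k-1}b_{k-1}r_{k-1}$; letting $b_{k-1}$ run through the free values and collapsing $a_{k-1}r_{k-1}+r_{k-2}$ to $r_k$ where it occurs, one checks that these translates are pairwise disjoint and fill precisely the sub-interval $\Z\cap[r_{k-2},r_k)$ of $\Zcal(a)$ when $k$ is odd, and $\Z\cap[(1-a_{k-1})r_{k-1},r_{k-1})$ when $k$ is even. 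For the one remaining value of $b_{k-1}$ the condition at index $k-1$ forces $b_{k-2}$ to an extreme value too, but that value is not the one that would trigger the condition at index $k-2$ (it equals $a_{k-2}$ where index $k-2$ tests whether $b_{k-2}=0$, or $0$ where it tests whether $b_{k-2}=a_{k-2}$), so after removing $b_{k-1}$ and $b_{k-2}$ simultaneously the truncated prefix $(b_0,\dots,b_{k-3})$ is an arbitrary admissible sequence for $a''=[a_0;\dots,a_{k-3}]$. The inductive hypothesis applied to $a''$, once more together with the recurrence for $(r_i)$, identifies the image of this last family with the complementary sub-interval: $\Z\cap[0,r_{k-2})$ when $k$ is odd, and $\Z\cap[r_{k-1}-r_k,(1-a_{k-1})r_{k-1})$ when $k$ is even. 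Gluing the two families of images recovers $\Zcal(a)$ with every point reached exactly once, completing the induction; in particular the number of admissible sequences for $a$ equals $\size{\Zcal(a)}=r_k$.

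The main obstacle is bookkeeping rather than insight: one has to carry the parities of $k$, $k-1$, $k-2$ side by side, keep straight when a single coordinate is stripped off and when $b_{k-1}$ and $b_{k-2}$ are stripped off together, and repeatedly rewrite expressions such as $r_{k-3}+a_{k-2}r_{k-2}$ and $a_{k-1}r_{k-1}+r_{k-2}$ via the $r$-recurrence to see that consecutive sub-intervals abut with no gap and no overlap. The degenerate inputs are harmless: $a_0=0$ merely forces $b_0=0$, and the small lengths $k\in\{1,2\}$, where the truncation $a''$ would be empty, are precisely the base cases. One could instead first establish by a short transfer-matrix recursion that the admissible sequences for $a$ are counted by $r_k=\size{\Zcal(a)}$, reducing the induction to checking only that $\Phi_a$ takes values in $\Zcal(a)$ and is injective; but the tiling argument above delivers surjectivity at no extra cost, so I would keep everything in a single induction.
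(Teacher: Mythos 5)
Your proof is correct, but it takes a genuinely different route from the paper's. The paper splits the work into three pieces: a cardinality recursion showing $\#\Bcal_k(a)=r_k$ (using the same decomposition of admissible sequences that you use, into the family with $b_{k-1}$ unequal to the constrained extreme value and the family where $b_{k-1}$ and $b_{k-2}$ are both pinned), then a separate existence proof that every $n\in\Zcal(a)$ has an admissible representation, carried out by an induction that alternates parities and extracts $b_{k-1}$ as a quotient in a division with remainder (with an auxiliary telescoping-sum bound on $\sum(-1)^ib_ir_i$ needed to verify the admissibility condition linking $b_{k-1}$ to $b_{k-2}$); injectivity then comes for free from the matching cardinalities. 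You instead run a single induction in which the images of the two families under $\val_a$ are computed explicitly as translates of $\Zcal(a')$ and of $\Zcal(a'')$ and shown to tile $\Zcal(a)$ without overlap; I checked the interval arithmetic in both parity cases (e.g.\ for $k$ even the pinned family lands on $\Z\cap[r_{k-1}-r_k,(1-a_{k-1})r_{k-1})$ via $a_{k-2}r_{k-2}=r_{k-1}-r_{k-3}$, abutting the union $\Z\cap[(1-a_{k-1})r_{k-1},r_{k-1})$ of the free translates) and it is sound, as is the observation that pinning $b_{k-1}$ and $b_{k-2}$ to the same extreme never triggers the admissibility condition at index $k-2$. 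Your approach gets injectivity and surjectivity simultaneously and avoids the greedy/division argument and the upper-bound lemma entirely, at the cost of heavier interval bookkeeping; the paper's approach localizes the delicate parity analysis in the existence lemma and keeps the counting argument trivial. Both are complete proofs.
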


Given a positive rational number $\xx$ with continued fraction
expansion $a=[a_0; a_1, \dots, a_{k-1}]$, it is natural to define
the set of admissible sequences for $a$:
\[
        \Bcal(a)
        =
        \{(b_i)_{0\leq i\leq k-1} \mid
            (b_i)_{0\leq i\leq k-1}
            \text{ is admissible for } a
        \}.
\]
Theorem~\ref{thm:our-ostrowski-in-introduction}
implies that $n\mapsto(b_i)_{0\leq i\leq k-1}$
defines a bijection $\rep_a:\Zcal(a)\to\Bcal(a)$
representing every integer in the interval $\Zcal(a)$ as an admissible sequence for
$a$.

When $a=[1;1,\dots,1]$ is a convergent to the golden ratio,
we recover similar values %
as in the negaFibonacci number system proposed by Donald Knuth
\cite{zbMATH05597000} in Section
7.1.3 \textit{Bitwise tricks \& techniques}.
But, a difference is that admissible sequences allow using two consecutive
Fibonacci numbers, whereas negaFibonacci numeration system forbids doing this;
see Example~\ref{ex:negaFibonacci}.

We split the set $\Bcal(a)$ into two as the disjoint union
$\Bcal(a)= \Bcal^\bullet(a) \cup \Bcal^\circ(a)$
where
    \begin{align*}
        \Bcal^\bullet(a)
        &=
        \{(b_i)_{0\leq i\leq k-1}
        \in\Bcal(a)
            \mid
            b_0=a_0=0<b_1=a_1
            \text{ or }
            0<b_0
        \},\\
        \Bcal^\circ(a)
        &=
        \{(b_i)_{0\leq i\leq k-1}
        \in\Bcal(a)
            \mid
            b_0=a_0=0\leq b_1<a_1
            \text{ or }
        0=b_0<a_0
    \}.
\end{align*}
When $k$ is even, we can show that the cardinality of the two sets are equal
respectively to the numerator and denominator of the rational number
represented by $a$, that is,
\[
        \#\Bcal^\bullet(a) = p_{k-1}
        \quad
        \text{ and }
        \quad
        \#\Bcal^\circ(a) = q_{k-1}.
\]
This is a consequence of the following stronger result which states that
the 1-norm statistics over the set of admissible sequences for $a$
can be computed from a product of the following two matrices:
\[
    L_q=
\left(\begin{array}{rr}
q & 0 \\
q & 1
\end{array}\right)
\qquad
\text{ and }
\qquad
    R_q=
\left(\begin{array}{rr}
q & 1 \\
0 & 1
\end{array}\right)
\]
where $q$ is some indeterminate.

\newcommand\MainTheoremB{
Let $a=[a_0;\dots,a_{2\ell-1}]$ be the even-length continued fraction expansion
of a positive rational number.
The set of admissible sequences for $a$ satisfies
    \begin{equation*}
    \left(\begin{array}{r}
        \sum_{b\in \Bcal^\bullet(a)} q^{\Vert b\Vert_1}\\
          \sum_{b\in \Bcal^\circ  (a)} q^{\Vert b\Vert_1}
    \end{array}\right)
        =
        \left(\begin{smallmatrix} 1 & 0 \\ 0 & q \end{smallmatrix}\right)^{-1}
        R_q^{a_0}L_q^{a_1}\cdots R_q^{a_{2\ell-2}}L_q^{a_{2\ell-1}}
        \left(\begin{smallmatrix} 1 \\ 0 \end{smallmatrix}\right).
    \end{equation*}
where
$ \Vert b\Vert_1 = b_0+b_1+\dots+b_{2\ell-1} $
    for every admissible sequence $(b_i)_{0\leq i< 2\ell}\in\Bcal(a)$.
}

\begin{maintheorem}\label{thm:1-norm-statistics}
\MainTheoremB
\end{maintheorem}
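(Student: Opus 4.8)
The plan is to prove the asserted identity by induction on $\ell$, peeling off the first two digits $b_0,b_1$ (equivalently the partial quotients $a_0,a_1$) at each step. Write $V(a)\in\Z[q]^2$ for the column vector on the left-hand side and set $D=\left(\begin{smallmatrix}1&0\\0&q\end{smallmatrix}\right)$, so that the goal is $V(a)=D^{-1}R_q^{a_0}L_q^{a_1}\cdots R_q^{a_{2\ell-2}}L_q^{a_{2\ell-1}}\left(\begin{smallmatrix}1\\0\end{smallmatrix}\right)$. I will use throughout the closed forms $R_q^a=\left(\begin{smallmatrix}q^a&[a]_q\\0&1\end{smallmatrix}\right)$ and $L_q^a=\left(\begin{smallmatrix}q^a&0\\q[a]_q&1\end{smallmatrix}\right)$, where $[a]_q=1+q+\cdots+q^{a-1}$ (so that $[0]_q=0$), which give
\[
  M(a_0,a_1)\ :=\ D^{-1}R_q^{a_0}L_q^{a_1}D
  \ =\ \begin{pmatrix} q^{a_0+a_1}+q[a_0]_q[a_1]_q & q[a_0]_q\\ [a_1]_q & 1\end{pmatrix}.
\]

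For the base case $\ell=1$, the admissible sequences for $a=[a_0;a_1]$ are exactly the pairs $(b_0,b_1)$ with $0\le b_0\le a_0$, $0\le b_1\le a_1$ and $b_1=a_1\Rightarrow b_0=a_0$, and the sets $\Bcal^\bullet(a),\Bcal^\circ(a)$ are cut out by $b_0>0$ resp.\ $b_0=0$ when $a_0>0$, and (since then $b_0=0$ is forced) by $b_1=a_1$ resp.\ $b_1<a_1$ when $a_0=0$. Enumerating $\sum q^{b_0+b_1}$ over each part and comparing with $D^{-1}R_q^{a_0}L_q^{a_1}\left(\begin{smallmatrix}1\\0\end{smallmatrix}\right)=M(a_0,a_1)\left(\begin{smallmatrix}1\\0\end{smallmatrix}\right)$ settles this case, the subcases $a_0=0$ and $a_0>0$ being treated separately.

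For the inductive step, fix $\ell\ge2$ and set $a''=[a_2;a_3,\dots,a_{2\ell-1}]$, an even-length expansion with $a_2\ge1$, to which the induction hypothesis applies. The first ingredient is that concatenation is a bijection from the set of pairs consisting of a head $(b_0,b_1)\in\{0,\dots,a_0\}\times\{0,\dots,a_1\}$ and a tail $(b_2,\dots,b_{2\ell-1})\in\Bcal(a'')$ satisfying the two gluing conditions $b_1=a_1\Rightarrow b_0=a_0$ and $b_2=0\Rightarrow b_1=0$, onto $\Bcal(a)$; indeed, beyond the ranges of $b_0$ and $b_1$, the defining conditions of $\Bcal(a)$ are the one at position $1$ (the first gluing condition), the one at position $2$ (the second gluing condition), and the ones at positions $\ge3$, which involve only $b_2,\dots,b_{2\ell-1}$ and are precisely the coupling conditions defining $\Bcal(a'')$. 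The second ingredient transports $q^{\Vert b\Vert_1}$ and the $\bullet/\circ$ split through this bijection. Since $a_2\ge1$, a tail lies in $\Bcal^\circ(a'')$ exactly when $b_2=0$; then the second gluing condition forces $b_1=0$, only $b_0$ remains free, and the concatenation lies in $\Bcal^\bullet(a)$ iff $b_0>0$. A tail lies in $\Bcal^\bullet(a'')$ exactly when $b_2>0$; then the second gluing condition is vacuous and $(b_0,b_1)$ runs over the admissible pairs for $[a_0;a_1]$. Summing $q^{b_0+b_1}$ over each kind of head, split according to whether the concatenation lands in $\Bcal^\bullet(a)$ or $\Bcal^\circ(a)$ --- a condition depending only on $(b_0,b_1)$, $a_0$ and $a_1$ --- reproduces precisely the four entries of $M(a_0,a_1)$, so that $V(a)=M(a_0,a_1)\,V(a'')=D^{-1}R_q^{a_0}L_q^{a_1}D\,V(a'')$. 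By the induction hypothesis $V(a'')=D^{-1}R_q^{a_2}L_q^{a_3}\cdots L_q^{a_{2\ell-1}}\left(\begin{smallmatrix}1\\0\end{smallmatrix}\right)$; the factor $DD^{-1}$ in the middle cancels, and the claim follows.

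I expect the only delicate point to be this last piece of bookkeeping: one must cleanly separate the roles of the two gluing conditions --- the one at position $2$ couples $b_1$ to the first entry $b_2$ of the tail, while the one at position $1$ couples $b_1$ to $b_0$ --- and, in the edge case $a_0=0$, keep in mind that the $\bullet/\circ$ classification of the concatenation is then governed by $b_1$ rather than by $b_0$. Everything else reduces to elementary identities between the polynomials $q^a$ and $[a]_q$.
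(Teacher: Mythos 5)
Your proposal is correct, and it takes a genuinely different route from the paper. The paper does not prove Theorem~\ref{thm:1-norm-statistics} directly on admissible sequences: it first establishes the statistic-preserving, order-preserving bijection $\Psi:\Jcal(\xx)\to\Bcal(a)$ of Theorem~\ref{thm:bijection-order-ideals-to-integersequences}, proves Theorem~\ref{thm:nice-formula-for-order-ideals} for order ideals of fence posets by a one-letter-at-a-time recursion on the binary word $W(a)$ (via Proposition~\ref{prop:equivalent-conditions-Fq} and the conjugation identity of Lemma~\ref{lem:identity-w-hat}), and then transports the result to $\Bcal(a)$ through $\Psi$. You instead induct directly on $\ell$, peeling off two partial quotients at a time and showing that the generating vector transforms by $M(a_0,a_1)=\left(\begin{smallmatrix}1&0\\0&q\end{smallmatrix}\right)^{-1}R_q^{a_0}L_q^{a_1}\left(\begin{smallmatrix}1&0\\0&q\end{smallmatrix}\right)$; I checked your closed forms for $R_q^a$, $L_q^a$ and the four entries of $M(a_0,a_1)$ against the head-tail decomposition (including the two gluing conditions and the $a_0=0$ edge case for the $\bullet/\circ$ split, where the classification is governed by whether $b_1=a_1$), and everything matches. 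What your approach buys is a self-contained proof that never leaves the world of admissible sequences — no fence posets, no binary encoding, no involution $\widehat{w}$. What the paper's approach buys is reuse: the same letter-by-letter engine (Proposition~\ref{prop:equivalent-conditions-Fq}) simultaneously yields Theorems~\ref{thm:nice-formula-for-order-ideals} and~\ref{thm:nice-formula-for-qmatchings}, and the bijection $\Psi$ gives the stronger poset isomorphism of Corollary~\ref{cor:poset-isomorphism-order-ideals-to-admissible} as a byproduct. The only thing I would ask you to write out in full is the base-case and head-sum enumeration you currently compress into ``settles this case'' and ``reproduces precisely the four entries''; the claims are true, but they are the whole content of the inductive step.
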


We prove in Section~\ref{sec:convexity} that the set $\Bcal(a)$ of admissible
sequences is a convex subset in $\Z^{2\ell}$
whose convex hull is a polytope
and that $\Bcal^\bullet(a)$ and
$\Bcal^\circ(a)$ are the two sets of integer points in the polytope
on both sides of a hyperplane.

A $q$-analog of all rational numbers was proposed in \cite{MR4073883}.
It is given as a ratio of two polynomials in the indeterminate $q$.
When the rational number is larger than one, they also gave a combinatorial
interpretation of the coefficients of the numerator and denominator
polynomials in terms of the subsets of closures of an oriented path graph.
Closures can equivalently be defined as the lower order ideals of a fence poset
\cite{MR4266256,MR4499341}. This is the terminology that we use in this work.

We extend the definition of fence posets proposed in
\cite{MR4266256,MR4499341} in order to describe all of them, not only those
that start with an up step.
This allows to interpret simultaneously the numerator and the denominator
of a $q$-rational number on a single fence poset.
Also, this provides an enumerative interpretation of the $q$-analog of
every positive rational number without the condition $>1$ assumed in \cite{MR4073883}.

To every positive rational number $x$,
whose even-length continued fraction expansion is
$a=[a_0;a_1,\dots,a_{2\ell-1}]$,
we associate a fence poset $\Fcal(\xx)$ containing
$a_0+a_1+\dots+a_{2\ell-1}$ elements
and $a_0+a_1+\dots+a_{2\ell-1}-1$ covering relations;
please refer to the precise definition in
Section~\ref{sec:bijection-order-ideals-admissible}.
To each such fence poset $\Fcal(\xx)$, we denote
its set of lower order ideals by $\Jcal(\xx)$.

The set $\Jcal(\xx)$ of lower order ideals
is naturally partitioned into a disjoint union
$\Jcal(\xx)= \Jcal^\bullet(\xx) \cup \Jcal^\circ(\xx)$
according to whether the left-most vertex of the fence poset is in the order
ideal ($\bullet$) or not ($\circ$).
We prove the following theorem
about the rank polynomial of the lower order ideals.

\newcommand\MainTheoremC{
    For every positive rational number $x\in\Q_{>0}$,
whose even-length continued fraction expansion is $[a_0;a_1,\dots,a_{2\ell-1}]$,
the cardinality statistics over the set $\Jcal(\xx)$ of order ideals
of the fence poset $\Fcal(\xx)$ satisfies
\begin{equation*}
\left(\begin{array}{r}
        \sum_{I\in \Jcal^\bullet(\xx)} q^{\size{I}}\\
        \sum_{I\in \Jcal^\circ  (\xx)} q^{\size{I}}
\end{array}\right)
    =
    \left(\begin{smallmatrix} 1 & 0 \\ 0 & q \end{smallmatrix}\right)^{-1}
    R_q^{a_0}L_q^{a_1}\cdots R_q^{a_{2\ell-2}}L_q^{a_{2\ell-1}}
    \left(\begin{smallmatrix} 1 \\ 0 \end{smallmatrix}\right).
\end{equation*}
}

\begin{maintheorem}\label{thm:nice-formula-for-order-ideals}
\MainTheoremC
\end{maintheorem}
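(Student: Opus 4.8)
\emph{Strategy.}
The right-hand side of Theorem~\ref{thm:nice-formula-for-order-ideals} is literally the same matrix product as in Theorem~\ref{thm:1-norm-statistics}, so the plan is to reduce one to the other. I would construct a bijection $\Phi_\xx\colon\Jcal(\xx)\to\Bcal(a)$, where $a=[a_0;a_1,\dots,a_{2\ell-1}]$, such that $\Phi_\xx$ restricts to bijections $\Jcal^\bullet(\xx)\to\Bcal^\bullet(a)$ and $\Jcal^\circ(\xx)\to\Bcal^\circ(a)$ and satisfies $\size{I}=\Vert\Phi_\xx(I)\Vert_1$ for every order ideal $I$. Granting this, the two coordinates of the vector on the left of Theorem~\ref{thm:nice-formula-for-order-ideals} equal, term by term, the two coordinates on the left of Theorem~\ref{thm:1-norm-statistics}, and the formula follows. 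To match the ``order-preserving bijection between posets'' viewpoint used for the companion results, I would also record that $\Phi_\xx$ is an isomorphism from $(\Jcal(\xx),\subseteq)$ onto $\Bcal(a)$ equipped with the alternating componentwise order $b\preceq b'$ defined by $b_i\le b_i'$ for even $i$ and $b_i\ge b_i'$ for odd $i$.

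\emph{Construction of $\Phi_\xx$.}
Read $\Fcal(\xx)$ as a zigzag whose maximal monotone runs, from the left, alternate up, down, up, \dots; the even-indexed runs go up and the odd-indexed ones go down, their lengths are controlled by $a_0,\dots,a_{2\ell-1}$ (with run $0$ absent when $a_0=0$ and the last run shorter by one), and two consecutive runs share a peak or a valley vertex. Attribute each shared vertex to the run of larger index. For a lower order ideal $I$, let $b_i$ be the number of vertices attributed to run $i$ that lie in $I$. Since a lower order ideal meets each run in an initial segment measured from that run's minimum, one gets $0\le b_i\le a_i$, and $(b_0,\dots,b_{2\ell-1})$ determines $I$. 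Conversely, any $(b_i)$ with $0\le b_i\le a_i$ reconstructs the unique subset of $\Fcal(\xx)$ which is an initial segment of each run, and that subset is globally downward closed exactly when the compatibility condition at every peak and every valley holds. A peak lies between an up-run of (even) index $i-1$ and a down-run of odd index $i$: its maximal vertex is in $I$ precisely when $b_i=a_i$, and this forces all of run $i-1$ into $I$, i.e.\ $b_{i-1}=a_{i-1}$ --- exactly the odd-index admissibility condition. A valley lies between a down-run of odd index $i-1$ and an up-run of even index $i$: its minimal vertex fails to be in $I$ precisely when $b_i=0$, and this forces all of run $i-1$ out of $I$, i.e.\ $b_{i-1}=0$ --- exactly the even-index admissibility condition. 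Hence $\Phi_\xx(I)=(b_i)_{0\le i<2\ell}$ is a well-defined bijection $\Jcal(\xx)\to\Bcal(a)$.

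\emph{The three verifications.}
Since each vertex of $\Fcal(\xx)$ is attributed to exactly one run, $\size{I}=\sum_i b_i=\Vert\Phi_\xx(I)\Vert_1$. The left-most vertex of $\Fcal(\xx)$ belongs to run $0$ when $a_0>0$, where it is that run's minimum, so it lies in $I$ iff $b_0>0$; it belongs to run $1$ when $a_0=0$, where it is that run's maximum, so it lies in $I$ iff $b_1=a_1$. In both cases ``$I$ contains the left-most vertex'' is exactly the inequality defining $\Bcal^\bullet(a)$, so $\Phi_\xx$ carries $\Jcal^\bullet(\xx)$ to $\Bcal^\bullet(a)$ and, by complementation, $\Jcal^\circ(\xx)$ to $\Bcal^\circ(a)$. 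Finally, enlarging an order ideal lengthens the retained initial segment of every up-run and shortens that of every down-run, so $\Phi_\xx$ and $\Phi_\xx^{-1}$ are monotone for the two orders above. Feeding this bijection into Theorem~\ref{thm:1-norm-statistics} proves Theorem~\ref{thm:nice-formula-for-order-ideals}.

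\emph{Main obstacle.}
The only genuinely delicate point is setting up $\Phi_\xx$ at the peak and valley vertices shared by consecutive runs: the attribution rule must be chosen so that, at once, the statistic identity holds on the nose, the two admissibility conditions appear with the correct parities, and the boundary cases --- the last run shortened by one, and run $0$ vanishing when $\xx<1$ --- are absorbed without ad hoc adjustments. One should also record the elementary fact that, even for the extended fences allowed to begin with a down-step, a lower order ideal restricts to an initial segment on each maximal run. Once $\Fcal(\xx)$ is defined with all this in mind (as in Section~\ref{sec:bijection-order-ideals-admissible}), the rest is bookkeeping and Theorem~\ref{thm:nice-formula-for-order-ideals} becomes a formal consequence of Theorem~\ref{thm:1-norm-statistics}; alternatively one could bypass $\Bcal(a)$ and check directly that appending a run multiplies the order-ideal generating vector by the corresponding $R_q^{a_j}$ or $L_q^{a_j}$, but the bijective route also yields the promised poset isomorphism.
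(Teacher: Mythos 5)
Your bijection $\Phi_\xx$ is, in substance, the paper's map $\Psi$ from Theorem~\ref{thm:bijection-order-ideals-to-integersequences}: count the elements of $I$ in each of the consecutive blocks $C_i(a)$ of sizes $a_0,\dots,a_{2\ell-1}$, with each shared peak or valley assigned to the block of larger index. Your verifications of well-definedness, of the $\bullet$/$\circ$ dichotomy and of $\size{I}=\Vert\Phi_\xx(I)\Vert_1$ all match the paper's. The problem is how you then use it: you deduce Theorem~\ref{thm:nice-formula-for-order-ideals} from Theorem~\ref{thm:1-norm-statistics}, but in the paper the dependency runs the other way --- Theorem~\ref{thm:1-norm-statistics} is itself deduced from Theorem~\ref{thm:nice-formula-for-order-ideals} by means of the very same bijection, and neither you nor the paper gives an independent proof of Theorem~\ref{thm:1-norm-statistics}. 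As written, your argument only establishes that the two statements are equivalent; it proves neither. The route you relegate to a single closing sentence (``check directly that appending a run multiplies the order-ideal generating vector by the corresponding matrix'') is in fact the paper's actual proof, and it is the part that carries all the content: one sets $X(w)=\sum_{I\in J^\bullet(F(w))}q^{\size{I}}$ and $Y(w)=\sum_{I\in J^\circ(F(w))}q^{\size{I}}$, verifies the one-letter recurrences $X(\1 w)=q(X(w)+Y(w))$, $Y(\1 w)=Y(w)$, $X(\0 w)=qX(w)$, $Y(\0 w)=X(w)+Y(w)$ with $X(\varepsilon)=q$, $Y(\varepsilon)=1$ (Proposition~\ref{prop:statistics-size-satisfies-XYhypothesis}), converts them into the product $\left(\begin{smallmatrix} 1 & 0 \\ 0 & q \end{smallmatrix}\right)^{-1}\nu_q(w)\left(\begin{smallmatrix} q \\ q \end{smallmatrix}\right)$ via the conjugation $\left(\begin{smallmatrix} 1 & 0 \\ 0 & q \end{smallmatrix}\right)^{-1}R_q\left(\begin{smallmatrix} 1 & 0 \\ 0 & q \end{smallmatrix}\right)=L_q^{T}$ (Lemma~\ref{lem:identity-w-hat}, Proposition~\ref{prop:equivalent-conditions-Fq}), and finally handles the last exponent $a_{2\ell-1}-1$ versus $a_{2\ell-1}$ through $\nu_q(W(a))\left(\begin{smallmatrix} q \\ q \end{smallmatrix}\right)=R_q^{a_0}L_q^{a_1}\cdots L_q^{a_{2\ell-1}}\left(\begin{smallmatrix} 1 \\ 0 \end{smallmatrix}\right)$ (Lemma~\ref{lem:sufficient-conditions-qcontinued-fraction}). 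None of this is carried out in your note, so the gap is real.

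A smaller but genuine error: your claim that $\Phi_\xx$ is an isomorphism onto $\Bcal(a)$ equipped with the \emph{alternating} componentwise order is wrong. If $I\subseteq I'$ then $I\cap C_i(a)\subseteq I'\cap C_i(a)$ for every $i$, so every coordinate $b_i=\#\bigl(I\cap C_i(a)\bigr)$ weakly increases, down-runs included (an ideal meets a down-run in an initial segment measured from that run's minimum, and enlarging the ideal can only lengthen it). The correct order on $\Bcal(a)$ is the plain componentwise one, as in the paper. This does not affect the enumerative identity, which uses only the bijection and the statistic, but it should be corrected if you keep the poset-isomorphism remark.
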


Snake graphs have been introduced to provide combinatorial formulas
for elements in cluster algebras of surface type
\cite{MR2661414,MR2807089,MR3034481,zbMATH06144657},
see also \cite{zbMATH07181526}.
Their relation with continued fraction expansion of rational numbers
greater than one is known \cite{zbMATH06559878,MR4058266}.

To every positive rational number $\xx$,
whose even-length continued fraction expansion is $[a_0;a_1,\dots,a_{2\ell-1}]$,
we associate a snake graph $\Gcal(\xx)$ embedded in the plane $\R^2$ containing
$a_0+a_1+\dots+a_{2\ell-1}$ unit squares.
This is one more unit square than the usual definition \cite{MR3778183}.
This choice allows us to associate bijectively a snake graph to all positive rational
numbers, not only those that are larger than one.

The bijection from positive rational numbers to the set of all snake graphs makes sense:
if $\xx=\frac{r}{s}$ for some coprime integers
$r,s\geq1$, we show that the snake graph $\Gcal(\xx)$ contains exactly $r+s$
perfect matchings.
Moreover, the set of perfect matchings $\Mcal(\xx)$
of the snake graph $\Gcal(\xx)$
is naturally partitioned into a disjoint union
$\Mcal(\xx)= \Mcal^\carreplein(\xx) \cup \Mcal^\carrevide(\xx)$ according to
the parity of the length of the snake graph and whether
the first edge of the matching is vertical or horizontal;
see Definition~\ref{def:matching-dichothomoy}. We prove that
the cardinalities of the two sets
\[
        \#\Mcal^\carreplein(\xx) = r
        \quad
        \text{ and }
        \quad
        \#\Mcal^\carrevide(\xx) = s
\]
are the numerator and denominator of the rational number $\xx$.

Again this follows from a stronger result involving
Morier-Genoud--Ovsienko's $q$-analogs of rational numbers.
We use a statistics on perfect matchings counting
the area of the region enclosed by the symmetric difference $m\Delta\bfrak$
of each perfect matching $m$ with some fixed canonical \emph{basic} perfect
matching $\bfrak$.
Precise definitions are given in Section~\ref{sec:snake-graphs-matchings}.
The region enclosed by the symmetric difference of two perfect matchings of
a snake graph was used previously to define the height monomials of a cluster
algebra.
This combinatorial interpretation was used to prove
the positivity conjecture (nonnegativity of the coefficients of the cluster
expansions of cluster variables with respect to any seed) for cluster algebras
coming from a surface \cite{MR2807089,zbMATH06144657}.

We prove that the area statistics of the perfect matchings of a snake graph
is computed from the continued fraction expansion of the rational number
defining the snake graph.

\newcommand\MainTheoremD{
For every positive rational number $x\in\Q_{>0}$,
whose even-length continued fraction expansion is $[a_0;a_1,\dots,a_{2\ell-1}]$,
the area statistics over the set of perfect matchings
of the snake graph $\Gcal(\xx)$ satisfies
\begin{equation*}
\left(\begin{array}{r}
        \sum_{m\in \Mcal^\carreplein(\xx)} q^{\area(m\Delta\bfrak)}\\
        \sum_{m\in \Mcal^\carrevide (\xx)} q^{\area(m\Delta\bfrak)}
\end{array}\right)
    =
    \left(\begin{smallmatrix} 1 & 0 \\ 0 & q \end{smallmatrix}\right)^{-1}
    R_q^{a_0}L_q^{a_1}\cdots R_q^{a_{2\ell-2}}L_q^{a_{2\ell-1}}
    \left(\begin{smallmatrix} 1 \\ 0 \end{smallmatrix}\right).
\end{equation*}
where $\bfrak$ is the basic perfect matching of $\Gcal(\xx)$.
}

\begin{maintheorem}\label{thm:nice-formula-for-qmatchings}
\MainTheoremD
\end{maintheorem}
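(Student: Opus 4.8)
The plan is to deduce Theorem~\ref{thm:nice-formula-for-qmatchings} from Theorem~\ref{thm:nice-formula-for-order-ideals} by producing a statistics-preserving bijection between the lower order ideals of the fence poset $\Fcal(\xx)$ and the perfect matchings of the snake graph $\Gcal(\xx)$, compatible with their respective dichotomies. Both objects are governed by the same underlying data: the $a_0+a_1+\dots+a_{2\ell-1}$ tiles of $\Gcal(\xx)$ correspond one-for-one with the $a_0+a_1+\dots+a_{2\ell-1}$ elements of $\Fcal(\xx)$, two consecutive tiles glued along an edge of $\Gcal(\xx)$ producing a covering relation of $\Fcal(\xx)$, while the up/down shape of the fence records whether successive gluings go to the north/east or to the south/west. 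The first step is to make this correspondence explicit from the definitions of $\Fcal(\xx)$ and $\Gcal(\xx)$ given in Sections~\ref{sec:bijection-order-ideals-admissible} and~\ref{sec:snake-graphs-matchings}, taking care of the extra tile that distinguishes our snake graph from the usual one.

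The second step is the bijection itself. I would use the classical description of the perfect matchings of a snake graph as a distributive lattice whose minimum we may take to be the basic matching $\bfrak$: every perfect matching $m$ is obtained from $\bfrak$ by simultaneously flipping --- replacing in each tile one pair of opposite edges by the other pair --- the tiles of a uniquely determined set $I_m$, and $I_m$ is exactly a lower order ideal of $\Fcal(\xx)$; conversely every lower order ideal arises in this way. This produces a bijection $\psi\colon\Jcal(\xx)\to\Mcal(\xx)$ with $\psi(\varnothing)=\bfrak$. The region enclosed by the symmetric difference $\psi(I)\Delta\bfrak$ is then precisely the union of the tiles belonging to $I$, so it has area $\size{I}$; hence $\area(\psi(I)\Delta\bfrak)=\size{I}$ for every $I\in\Jcal(\xx)$.

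The third step is to verify that $\psi$ matches the two partitions, that is $\psi(\Jcal^\bullet(\xx))=\Mcal^\carreplein(\xx)$ and $\psi(\Jcal^\circ(\xx))=\Mcal^\carrevide(\xx)$. By definition, $I\in\Jcal^\bullet(\xx)$ if and only if the left-most element of $\Fcal(\xx)$ lies in $I$, which under $\psi$ means that the first tile of $\Gcal(\xx)$ is flipped relative to $\bfrak$; tracking how this flip affects the first edge of the matching, together with the parity of the length of the snake graph, reproduces exactly the dichotomy of Definition~\ref{def:matching-dichothomoy}. Assembling the three steps gives
\[
    \sum_{m\in\Mcal^\carreplein(\xx)}q^{\area(m\Delta\bfrak)}
    =\sum_{I\in\Jcal^\bullet(\xx)}q^{\size{I}}
    \qquad\text{and}\qquad
    \sum_{m\in\Mcal^\carrevide(\xx)}q^{\area(m\Delta\bfrak)}
    =\sum_{I\in\Jcal^\circ(\xx)}q^{\size{I}},
\]
and the announced formula follows from Theorem~\ref{thm:nice-formula-for-order-ideals}.

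The main obstacle will be the bookkeeping in the first and third steps: checking that the nonstandard snake graph with its extra tile and its basic matching $\bfrak$ line up cell-for-cell with the extended fence poset so that $\bfrak$ corresponds to the empty order ideal, and that the north/east versus south/west gluing pattern of $\Gcal(\xx)$ reproduces the up/down pattern of $\Fcal(\xx)$ --- with attention to the borderline cases where a partial quotient equals $1$. Once this dictionary is in place, the second step is the standard height-function correspondence for snake-graph matchings and the area identity is immediate; alternatively, one could compose with the bijection between order ideals and admissible sequences underlying Theorem~\ref{thm:1-norm-statistics} to reach the same conclusion directly from that theorem.
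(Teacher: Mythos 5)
Your proposal is correct and follows essentially the same route as the paper: the paper also deduces Theorem~\ref{thm:nice-formula-for-qmatchings} from the order-ideal statistics (Proposition~\ref{prop:statistics-over-order-ideals}, which underlies Theorem~\ref{thm:nice-formula-for-order-ideals}) by means of an order-preserving, area-to-cardinality, dichotomy-respecting bijection $\Phi\colon\Mcal(\xx)\to\Jcal(\xx)$, established as Theorem~\ref{theo:bij} and packaged as Proposition~\ref{prop:statistics-over-snake-graph-Gw}. The ``bookkeeping'' you defer to the end --- aligning the tiles of $\Gcal(\xx)=G(\theta(W(\xx)))$ with the elements of $\Fcal(\xx)=F(W(\xx))$ so that the flippable tile-sets relative to $\bfrak$ are exactly the lower order ideals --- is precisely where the involution $\theta$ enters, and it is the actual content of the paper's Theorem~\ref{theo:bij} (proved there by induction via a $\pop$ operation rather than by citing the height-function correspondence).
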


Note that a perfect matching $m$ belongs to
$\Mcal^\carreplein(\xx)$ if and only if
the first unit square of the snake graph is in the region
enclosed by the cycles of $m\Delta\bfrak$;
see Remark~\ref{rem:carreplein-interpretation}.
In particular, $\area(m\Delta\bfrak)\geq1$ when $m\in\Mcal^\carreplein(\xx)$.

The proof of Theorem~\ref{thm:nice-formula-for-order-ideals} is
based on Proposition~\ref{prop:equivalent-conditions-Fq}
where equivalent conditions are given for a pair of polynomial functions
to be described by product of $L_q$ and $R_q$ matrices.
Then, Theorem~\ref{thm:1-norm-statistics}
and Theorem~\ref{thm:nice-formula-for-qmatchings} are deduced
from bijections.

\begin{figure}[h]
\begin{center}
\begin{tikzcd}[ampersand replacement=\&]
    \& [0mm,between origins]
    \Q_{>0}
    \arrow{r}{\CFeven}
    \& \Acaleven
    \arrow{r}{W}
	\& \{\0,\1\}^*
    \arrow{r}{\theta}
    \& \{\0,\1\}^*\\[-5mm]
    \& x
    \arrow[draw=none]{u}[sloped,auto=false]{\in}
    \arrow[mapsto]{r}{}
    \arrow[mapsto]{dd}{}
    \& a
    \arrow[draw=none]{u}[sloped,auto=false]{\in}
    \arrow[mapsto]{r}{}
    \arrow[mapsto]{dd}{\Bcal}
    \& W(\xx)
    \arrow[draw=none]{u}[sloped,auto=false]{\in}
    \arrow[mapsto]{r}{}
    \arrow[mapsto]{d}{F}
    \& \theta(W(\xx))
    \arrow[draw=none]{u}[sloped,auto=false]{\in}
    \arrow[mapsto]{d}{G}
    \\
    \& \& \& \Fcal(\xx)
    \arrow[mapsto]{d}{J}
    \& \Gcal(\xx)
    \arrow[mapsto]{d}{M}\\
    \Z
    \arrow[draw=none]{r}[sloped,auto=false]{\supset}
    \& \Zcal(a)
    \& \Bcal(a)
    \arrow[swap]{l}{\val_a}
    \arrow[mapsto]{dr}{}
    \& \Jcal(\xx)
    \arrow[swap]{l}{\Psi}
    \arrow[mapsto]{d}{}
    \& \Mcal(\xx)
    \arrow[swap]{l}{\Phi}
    \arrow[mapsto]{dl}{}
    \\[-3mm]
    \& \& \&
    {[x]_q}
\end{tikzcd}
\end{center}
    \caption{The big picture of bijections presented in this article.}
    \label{fig:the-big-picture}
\end{figure}
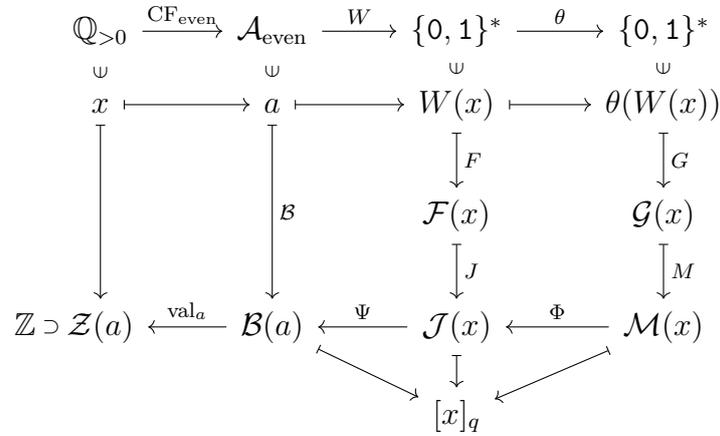

There are two levels of bijections.
On the first level, we represent positive
rational numbers as binary words
using a bijection $W\circ\CFeven:\Q_{>0}\to\{\0,\1\}^*$
where $\CFeven$ returns the even-length continued fraction expansion
of a positive rational number.
We also have an involution $\theta:\{\0,\1\}^*\to\{\0,\1\}^*$:
\begin{center}
\begin{tikzcd}[ampersand replacement=\&]
    \Q_{>0}
    \arrow{r}{\CFeven}
	\& \Acaleven
    \arrow{r}{W}
	\& \{\0,\1\}^*
    \arrow{r}{\theta}
	\& \{\0,\1\}^*.
\end{tikzcd}
\end{center}

The second level of bijections depends on a
chosen rational number $\xx>0$
whose even-length continued fraction expansion is $a=[a_0;a_1,\dots,a_{2\ell-1}]$.
We prove the existence of three bijections
$\Phi:\Mcal(\xx)\to\Jcal(\xx)$,
$\Psi:\Jcal(\xx)\to\Bcal(a)$ and
$\val_a:\Bcal(a)\to \Zcal(a)$
involving
the set $\Mcal(\xx)$ of perfect matchings of the snake graph $\Gcal(\xx)$,
the set $\Jcal(\xx)$ of order ideals of the fence poset $\Fcal(\xx)$,
the set $\Bcal(a)$ of admissible sequences for $a$
and
an interval of integers $\Zcal(a)\subset\Z$ that depend on $a$:
\begin{center}
\begin{tikzcd}[ampersand replacement=\&]
       \Mcal(\xx) \arrow{r}{\Phi}
    \& \Jcal(\xx) \arrow{r}{\Psi}
    \& \Bcal(a) \arrow{r}{\val_a}
    \& \Zcal(a)\subset\Z.
\end{tikzcd}
\end{center}
The second level of bijections depends on the first since
the snake graph $\Gcal(\xx)$ is defined from the word $\theta\circ W\circ\CFeven(\xx)$
and the fence poset $\Fcal(\xx)$ is defined from the word $W\circ\CFeven(\xx)$.
The three bijections $\Phi$, $\Psi$ and $\val_a$
are proved in
Theorem~\ref{thm:bijection-admissible-to-interval-integers},
Theorem~\ref{thm:bijection-order-ideals-to-integersequences} and
Theorem~\ref{theo:bij}.
The bijections preserve statistics that are used to define polynomials
in the indeterminate $q$.
In particular, $\Phi$ and $\Psi$ are order-preserving
for some natural partial orderings defined on $\Mcal(\xx)$, $\Jcal(\xx)$ and $\Bcal(a)$.
The big picture of all bijections of the two levels is shown in
Figure~\ref{fig:the-big-picture}
and are illustrated in
Figure~\ref{fig:big-picture-for-4-over-5} for the rational number $4/5$ whose
even-length continued fraction expansion is $[0;1,3,1]$.

\begin{figure}
    \begin{center}
        \begin{tikzcd}[ampersand replacement=\&]
                      4/5           \arrow[mapsto]{r}{\CFeven}
            \&[-15mm] {[0;1,3,1]}   \arrow[mapsto]{r}{W}   \arrow[mapsto]{dd}{\Bcal}
            \&[-15mm] \0\1\1\1      \arrow[mapsto]{r}{\theta} \arrow[mapsto]{d}{F}
            \&[-15mm] \0\0\1\0                                \arrow[mapsto]{d}{G}
            \\[-1mm]
            \&
            \& \includegraphics[height=12mm]{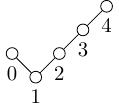} %
			   \arrow[mapsto]{d}{J}
            \& \includegraphics[height=10mm]{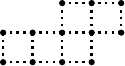} %
               \arrow[mapsto]{d}{M}
            \\
              \includegraphics[height=11cm]{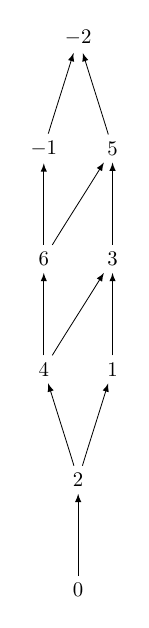}
            \&\includegraphics[height=11cm]{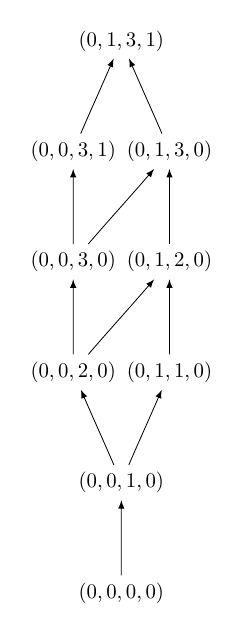}
            \&\includegraphics[height=11cm]{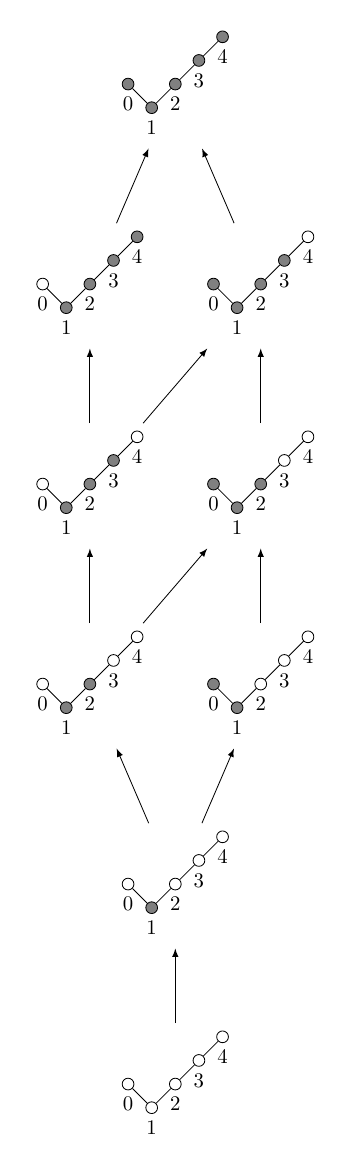}
            \&\includegraphics[height=11cm]{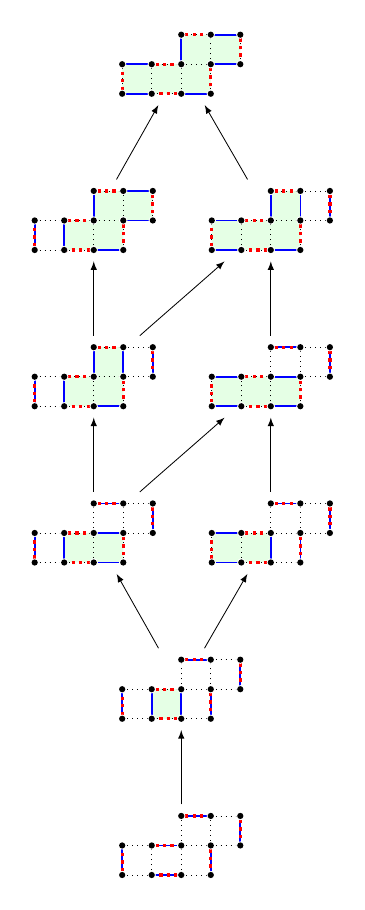}
            \\[-9mm]
               \Zcal([0;1;3;1])
						\arrow[phantom,xshift=17mm,yshift=-4mm]{d}
                            {\begin{array}{c}(r_0,-r_1,r_2,-r_3)\\= (1,-1,2,-7)\end{array}}
            \& \Bcal([0;1;3;1]) \arrow[swap]{l}{\val}%
               \arrow[mapsto]{dr}{}
            \& \Jcal(\frac{4}{5}) \arrow[swap]{l}{\Psi}
               \arrow[mapsto]{d}{}
            \& \Mcal(\frac{4}{5}) \arrow[swap]{l}{\Phi}
               \arrow[mapsto]{dl}{}
            \\[-3mm]
            \phantom{x} \& \&
            {
            \left[\frac{4}{5}\right]_q
                    = q^{-1}\frac{q^5+q^4+q^3+q^2}
                                {q^4+q^3+q^2+q+1}
            }
        \end{tikzcd}
    \end{center}
    \caption{The admissible sequences
            $\Bcal(\frac{4}{5})$ for $\frac{4}{5}=[0;1,3,1]$,
			the order ideals $\Jcal(\frac{4}{5})$ of the
            fence poset $\Fcal(\frac{4}{5})$,
            and the set of perfect matchings $\Mcal(\frac{4}{5})$ of the snake
            graph $\Gcal(\frac{4}{5})$ are isomorphic posets.
            The poset is partitioned into two disjoint poset intervals
            whose rank polynomials are the numerator and denominator
            of $[4/5]_q$ up to a scalar $q^{-1}$.
            }
    \label{fig:big-picture-for-4-over-5}
\end{figure}

As a corollary of these three bijections,
we obtain three different enumerative interpretations of the
Morier-Genoud and Ovsienko's
$q$-analog of positive rational numbers \cite{MR4073883}.
The definition of this $q$-analog is recalled Section~\ref{sec:Morier-Genoud-Ovsienko-q-analog}.

\newcommand\MainCorollaryE{
    Let $\xx>0$ be a positive rational number
    whose even-length continued fraction expansion
    is $a=[a_0;\dots,a_{2\ell-1}]$.
    Then,
    the set $\Bcal(a)$ of admissible sequences for $a$,
    the set $\Mcal(\xx)$ of perfect matchings of the snake graph $\Gcal(\xx)$ and
    the set $\Jcal(\xx)$ of order ideals of the fence poset $\Fcal(\xx)$
    give three enumerative interpretations
    of the Morier-Genoud--Ovsienko $q$-analog of the rational number $\xx$:
\[
\left[x\right]_q
=
\frac{q^{-1}\sum_{b\in \Bcal^\bullet(a)} q^{\Vert b\Vert_1}}
            {\sum_{b\in \Bcal^\circ  (a)} q^{\Vert b\Vert_1}}
=
\frac{q^{-1}\sum_{I\in \Jcal^\bullet(\xx)} q^{\size{I}}}
            {\sum_{I\in \Jcal^\circ  (\xx)} q^{\size{I}}}
=
\frac{q^{-1}\sum_{m\in \Mcal^\carreplein(\xx)} q^{\area(m\Delta\bfrak)}}
            {\sum_{m\in \Mcal^\carrevide (\xx)} q^{\area(m\Delta\bfrak)}}
\]
where $\bfrak$ is the basic perfect matching of the snake graph $\Gcal(\xx)$
and the fractions on the right-hand sides are reduced.
}

\begin{maincorollary}\label{cor:three-combinatorial-interpretation-of-q-rational}
\MainCorollaryE
\end{maincorollary}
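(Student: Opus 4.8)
The plan is to obtain the three fractions as immediate consequences of the three "nice formula" theorems together with the definition of the Morier-Genoud--Ovsienko $q$-analog. First I would recall from Section~\ref{sec:Morier-Genoud-Ovsienko-q-analog} the matrix-product description of $[x]_q$: if $a=[a_0;a_1,\dots,a_{2\ell-1}]$ is the even-length continued fraction expansion of $x$, then $[x]_q$ is expressed (up to the usual normalization by $\left(\begin{smallmatrix} 1 & 0 \\ 0 & q \end{smallmatrix}\right)^{-1}$ and the scalar $q^{-1}$) by the product $R_q^{a_0}L_q^{a_1}\cdots R_q^{a_{2\ell-2}}L_q^{a_{2\ell-1}}$ applied to $\left(\begin{smallmatrix} 1 \\ 0 \end{smallmatrix}\right)$. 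Concretely, the vector
\[
\left(\begin{smallmatrix} 1 & 0 \\ 0 & q \end{smallmatrix}\right)^{-1}
R_q^{a_0}L_q^{a_1}\cdots R_q^{a_{2\ell-2}}L_q^{a_{2\ell-1}}
\left(\begin{smallmatrix} 1 \\ 0 \end{smallmatrix}\right)
= \binom{\Ncal(q)}{\Dcal(q)}
\]
has the property that $[x]_q = q^{-1}\Ncal(q)/\Dcal(q)$, and that this fraction is already reduced; this is exactly the content of the definition recalled earlier (or a short lemma extracted from it).

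Next I would simply invoke Theorem~\ref{thm:1-norm-statistics}, Theorem~\ref{thm:nice-formula-for-order-ideals} and Theorem~\ref{thm:nice-formula-for-qmatchings}: each of these asserts that the respective pair of generating polynomials
\[
\binom{\sum_{b\in \Bcal^\bullet(a)} q^{\Vert b\Vert_1}}{\sum_{b\in \Bcal^\circ(a)} q^{\Vert b\Vert_1}},\qquad
\binom{\sum_{I\in \Jcal^\bullet(\xx)} q^{\size{I}}}{\sum_{I\in \Jcal^\circ(\xx)} q^{\size{I}}},\qquad
\binom{\sum_{m\in \Mcal^\carreplein(\xx)} q^{\area(m\Delta\bfrak)}}{\sum_{m\in \Mcal^\carrevide(\xx)} q^{\area(m\Delta\bfrak)}}
\]
equals the very same vector $\binom{\Ncal(q)}{\Dcal(q)}$. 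Therefore each of the three fractions $q^{-1}\sum_{\bullet}\big/\sum_{\circ}$ coincides with $q^{-1}\Ncal(q)/\Dcal(q) = [x]_q$, and since the matrix-product form is already in lowest terms, the fractions on the right-hand sides are reduced. This proves the displayed chain of equalities.

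There is essentially no obstacle here: the corollary is a formal consequence of the three main theorems and the recalled definition of $[x]_q$, the only point requiring a word of care being the bookkeeping of the normalization factor $\left(\begin{smallmatrix} 1 & 0 \\ 0 & q \end{smallmatrix}\right)^{-1}$ and the scalar $q^{-1}$ — that is, checking that the normalization used in Section~\ref{sec:Morier-Genoud-Ovsienko-q-analog} to define $[x]_q$ matches, component by component, the normalization appearing on the right-hand side of Theorems~\ref{thm:1-norm-statistics}, \ref{thm:nice-formula-for-order-ideals} and~\ref{thm:nice-formula-for-qmatchings}. Once this routine verification is in place, the statement follows, and one may additionally remark that the bijections $\Phi$ and $\Psi$ of Figure~\ref{fig:the-big-picture} give an alternative, bijective route: they identify the three sets $\Mcal(\xx)$, $\Jcal(\xx)$, $\Bcal(a)$ while matching $\carreplein$ with $\bullet$, $\carrevide$ with $\circ$, and sending the statistics $\area(\cdot\,\Delta\bfrak)$, $\size{\cdot}$, $\Vert\cdot\Vert_1$ to one another, so that the three generating-function pairs are literally equal term by term, not merely as elements of $\N[q]$.
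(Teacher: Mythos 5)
Your proposal is correct and follows essentially the same route as the paper: invoke Theorems~\ref{thm:1-norm-statistics}, \ref{thm:nice-formula-for-order-ideals} and~\ref{thm:nice-formula-for-qmatchings} to identify all three statistic vectors with $\left(\begin{smallmatrix} 1 & 0 \\ 0 & q \end{smallmatrix}\right)^{-1}R_q^{a_0}L_q^{a_1}\cdots L_q^{a_{2\ell-1}}\left(\begin{smallmatrix} 1 \\ 0 \end{smallmatrix}\right)$, then match this against the normalization in \eqref{eq:CF-expansion-LqRq} to see that the vector equals $\left(\begin{smallmatrix} q\,R(q) \\ S(q) \end{smallmatrix}\right)$, whence each ratio $q^{-1}\sum_{\bullet}/\sum_{\circ}$ equals $R(q)/S(q)=[x]_q$. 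The normalization bookkeeping you flag as the only point of care is exactly the short computation the paper carries out.
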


Our interpretation in terms of order ideals of fence posets is more general
than the one proposed in \cite{MR4073883} as it is not restricted to rational
numbers larger than one.
Similarly, our interpretation in terms of perfect matchings of snake graphs
works for all positive rational numbers and, contrary to \cite{MR3778183,MR4058266},
it is not restricted to rational numbers larger than one.
Also, our interpretation of the $q$-analog of rational numbers in terms of
admissible sequences is new.

An interesting aspect of
Corollary~\ref{cor:three-combinatorial-interpretation-of-q-rational},
compared to previous combinatorial interpretations provided in
\cite{MR3778183,MR4058266,MR4073883,musiker_higher_2023},
is that the numerator polynomial and denominator polynomial are obtained
from a single fence poset and a single snake graph.
This seems to be an original result even in the case of $q=1$ providing three
combinatorial interpretations of the numerator and denominator of every
positive rational number.
The usual interpretation involving different combinatorial objects
for the numerator and denominator can be deduced as a consequence
of our main results; see Corollary~\ref{cor:the-usual-combinatorial-interpretation}.

\subsection*{Structure of the article}
We recall the continued fraction expansion of rational numbers
and their encodings into binary words in Section~\ref{sec:continued-fraction}.
Then, we recall Morier-Genoud--Ovsienko's $q$-analog of rational numbers 
in Section~\ref{sec:Morier-Genoud-Ovsienko-q-analog}.
In Section~\ref{sec:bijection-admissible-to-integers},
we prove the bijection between admissible sequences
and integers in an interval (Theorem~\ref{thm:our-ostrowski-in-introduction}).
In Section~\ref{sec:bijection-order-ideals-admissible},
we prove the order-preserving bijection from fence posets order ideals
to admissible sequences.
In Section~\ref{sec:proof-of-Theorems-B-and-D},
we prove Theorem~\ref{thm:nice-formula-for-order-ideals}
and deduce Theorem~\ref{thm:1-norm-statistics}.
In Section~\ref{sec:snake-graphs-matchings},
we recall the definition of snake graphs, their perfect matchings
and the area statistics.
In Section~\ref{sec:bijection-perfect-matchings-to-order-ideals},
we prove the order-preserving bijection from the set of perfect matchings of a snake graph to
the set of order ideals of a fence poset.
In Section~\ref{sec:proof-of-thm:nice-formula-for-qmatchings},
we prove Theorem~\ref{thm:nice-formula-for-qmatchings}
and Corollary~\ref{cor:three-combinatorial-interpretation-of-q-rational}.
In Section~\ref{sec:Markoff},
we express how to recover Markoff numbers and a $q$-analog of them
from these statistics.
Finally, we show in Section~\ref{sec:convexity} that
the set of admissible sequences is the set 
of integer points inside of a polytope.

\section{Continued fractions}
\label{sec:continued-fraction}

In this section, we recall well-known notions about the continued fraction
expansion of real numbers; see \cite{MR2445243} for more details.
We also present an encoding of rational numbers as binary words
which will be useful thereafter.

\subsection{Continued fraction expansions of rational numbers}

Theorem 162 from \cite{MR2445243} says that
a rational number can be expressed as a finite
continued fraction expansion in just two ways:
one of odd-length and the other of even-length.
In one form, the last partial quotient is 1,
in the other, it is greater than 1.
For example,
\[
    \frac{22}{7} = 3 + \frac{1}{7}
    = 3 + \frac{1}{
      6 + \frac{1}{1
      }}
      \qquad
      \text{ and }
      \qquad
    \frac{7}{22} = 0 + \frac{1}{3 + \frac{1}{7}}
    = 0 + \frac{1}{
      3 + \frac{1}{
      6 + \frac{1}{1
      }}}.
\]
The above is often more compactly written with square brackets as
\[
    \frac{22}{7}=[3;7] = [3;6,1]
      \qquad
      \text{ and }
      \qquad
    \frac{7}{22}=[0;3,7] = [0;3,6,1].
\]
In this work, we have a preference for using the
even-length expansions:
\[
\Acaleven=\{
    [a_0;a_1,\dots,a_{2\ell-1}]\in\N^{2\ell}
\colon
\ell\geq1,
a_0\geq0\text{ and }
a_i>0 \text{ when } 0<i<2\ell
\}
\]
where the $a_i$ are called \emph{partial quotients}.
Thus, it leads to the following bijection
\[
\begin{array}{rccl}
\CFeven:&\Q_{>0}&\to&\Acaleven\\
&\xx
&\mapsto
&[a_0;a_1,\dots,a_{2\ell-1}]
\end{array}
\]
where $[a_0;a_1,\dots,a_{2\ell-1}]$,
for some integer $\ell\geq1$,
is the unique even-length continued fraction expansion of the positive rational
number $\xx$.

\subsection{Continued fraction convergents in matrix form}

Let $x\in\Q_{>0}$
and \[\CFeven(x)=[a_0;a_1,\dots,a_{2\ell-1}]\] be
its even-length continued fraction expansion
for some integer $\ell\geq1$.
If $x=r/s$ where $r,s\geq1$ are coprime integers,
then it can be deduced \cite[Theorem 149]{MR2445243}
that
\begin{equation}\label{eq:CF-expansion-LR}
    \begin{aligned}
        \left(\begin{array}{c} r \\ s \end{array}\right)
        &=
        R^{a_0}L^{a_1}\cdots R^{a_{2\ell-2}}L^{a_{2\ell-1}}
         \left(\begin{smallmatrix} 1 \\ 0 \end{smallmatrix}\right)\\
        &=
        R^{a_0}L^{a_1}\cdots R^{a_{2\ell-2}}L^{a_{2\ell-1}-1}
         \left(\begin{smallmatrix} 1 \\ 1 \end{smallmatrix}\right)
    \end{aligned}
\end{equation}
where $L$ and $R$ are the elementary matrices
\[
L=
\left(\begin{array}{rr}
1 & 0 \\
1 & 1
\end{array}\right)
\qquad
\text{ and }
\qquad
R=
\left(\begin{array}{rr}
1 & 1 \\
0 & 1
\end{array}\right).
\]

\subsection{Binary encodings of rational numbers}
\label{sec:binary-encoding-of-QQ}

The free monoid $\{\0,\1\}^*$ is the set of all finite words
over the alphabet $\{\0,\1\}$ together with a binary operation
of concatenation. The neutral element for the concatenation is the
word of length zero, and denoted $\varepsilon$. If $w=ps\in\{\0,\1\}^*$
is the concatenation of two other words $p,s\in\{\0,\1\}^*$,
then we say that $p$ is a \emph{prefix} of $w$
and $s$ is a \emph{suffix} of $w$.

A rational number can be represented as a binary word over the alphabet $\{\0,\1\}$
using its even-length continued fraction expansion as follows:
\[
\begin{array}{rccl}
W:&\Acaleven &\to&\{\0,\1\}^*\\
&[a_0;a_1,\dots,a_{2\ell-1}]
&\mapsto
&\1^{a_0}\0^{a_1}\dots \1^{a_{2\ell-2}}\0^{a_{2\ell-1}-1}.
\end{array}
\]
Here is the image of $W$ for some rational numbers:
\[
    \def\arraycolsep{4mm}
\begin{array}{c|l|ll}
    \xx      &  \CFeven(\xx) & W(\CFeven(\xx))\\[2pt]
    \hline
    &&&\\[-3mm]
    1               & [0;1]     & \1^0\0^{1-1}          &=\varepsilon   \\
    2               & [1;1]     & \1^1\0^{1-1}          &=\1            \\
    1/2             & [0;2]     & \1^0\0^{2-1}          &=\0\\
    1/3             & [0;3]     & \1^0\0^{3-1}          &=\0\0\\
    2/3             & [0;1,1,1] & \1^0\0^{1}\1^1\0^{1-1}&=\0\1\\
    3/2             & [1;2]     & \1^1\0^{2-1}          &=\1\0\\
    3               & [2;1]     & \1^2\0^{1-1}          &=\1\1
\end{array}
\]
Notice that $W([a_0;\dots,a_{2\ell-1}])$
is of length $a_0+\dots+a_{2\ell-1}-1$.

In fact, every binary word can be expressed as the image of an even-length
continued fraction expansion under the map $W$.
\begin{lemma}\label{lem:W-Acaleven-to-binarywords-bijection}
$W:\Acaleven\to\{\0,\1\}^*$ is a bijection.
\end{lemma}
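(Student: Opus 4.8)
The plan is to exhibit an explicit two-sided inverse of $W$, by reversing the construction $\CFeven(x)\mapsto \1^{a_0}\0^{a_1}\cdots\1^{a_{2\ell-2}}\0^{a_{2\ell-1}-1}$. Given a word $w\in\{\0,\1\}^*$, write it as a concatenation of maximal blocks of equal letters. The key normalization observation is that $W([a_0;\dots,a_{2\ell-1}])$ always \emph{ends} with a (possibly empty) block of $\0$'s coming from the factor $\0^{a_{2\ell-1}-1}$, and always \emph{starts} with a (possibly empty) block of $\1$'s coming from $\1^{a_0}$; in between, the blocks strictly alternate $\1$-block, $\0$-block, $\1$-block, $\dots$ because each interior partial quotient $a_i$ with $0<i<2\ell$ is $\geq 1$. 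So the recipe for $W^{-1}(w)$ is: prepend a marker so that $w$ is parsed as $\1^{c_0}\0^{c_1}\1^{c_2}\cdots$ for a unique even-length tuple $(c_0,c_1,\dots,c_{2\ell-1})$ of nonnegative integers with $c_i\geq 1$ for $0<i<2\ell$, where we \emph{force} the parse to have even length by allowing $c_0=0$ (if $w$ starts with $\0$) and allowing the final block $c_{2\ell-1}=0$ to be empty; then set $a_0=c_0$, $a_i=c_i$ for $0<i<2\ell-1$, and $a_{2\ell-1}=c_{2\ell-1}+1$. One checks $(a_0;\dots,a_{2\ell-1})\in\Acaleven$ and applying $W$ recovers $w$.

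Concretely I would carry this out in the following steps. First, make precise the \emph{canonical block decomposition}: every $w\in\{\0,\1\}^*$ can be written uniquely as $w=\1^{c_0}\0^{c_1}\1^{c_2}\0^{c_3}\cdots$ where the exponents alternate in reading off $\1$'s then $\0$'s, with $c_0\geq 0$, all later $c_i\geq 1$, except we are free to pad with one trailing exponent equal to $0$ to make the number of exponents even. Show that imposing "even number of exponents, only the first and last allowed to be $0$, all strictly interior ones $\geq 1$" pins down the tuple $(c_0,\dots,c_{2\ell-1})$ uniquely. Second, define $V:\{\0,\1\}^*\to\Acaleven$ by $V(w)=[c_0;c_1,\dots,c_{2\ell-2},c_{2\ell-1}+1]$, and check membership in $\Acaleven$: $c_0\geq 0$, each $c_i\geq 1$ for $0<i<2\ell-1$, and $c_{2\ell-1}+1\geq 1$, indeed $\geq 1$ automatically and $>0$, which is exactly the condition ``$a_i>0$ when $0<i<2\ell$'' together with $a_{2\ell-1}\geq 1$. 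Third, verify $W\circ V=\mathrm{id}$: $W(V(w))=\1^{c_0}\0^{c_1}\cdots\1^{c_{2\ell-2}}\0^{(c_{2\ell-1}+1)-1}=\1^{c_0}\0^{c_1}\cdots\0^{c_{2\ell-1}}=w$. Fourth, verify $V\circ W=\mathrm{id}$ on $\Acaleven$: given $[a_0;\dots,a_{2\ell-1}]$, the word $\1^{a_0}\0^{a_1}\cdots\0^{a_{2\ell-1}-1}$ has canonical decomposition with $c_i=a_i$ for $i<2\ell-1$ and $c_{2\ell-1}=a_{2\ell-1}-1\geq 0$ — and one must confirm this is the \emph{canonical} (even-length, interior-positive) parse, which holds precisely because $a_i\geq 1$ for $0<i<2\ell$ forces nonempty interior blocks, so no spurious merging of blocks can occur — hence $V$ returns $[a_0;\dots,a_{2\ell-2},(a_{2\ell-1}-1)+1]=[a_0;\dots,a_{2\ell-1}]$.

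The one subtle point — the only place the argument needs care — is the \emph{uniqueness} of the even-length canonical decomposition, i.e.\ that padding with a single trailing zero block is unambiguous. The potential ambiguity is when $w$ ends in $\1$: then the naturally-parsed last block is a $\1$-block $\1^{c_{2\ell-2}}$, and to reach even length we append an empty $\0$-block $c_{2\ell-1}=0$; this is forced, since the decomposition must end on an even index (a $\0$-exponent slot). When $w$ ends in $\0$, the last block is already a $\0$-block and no padding is needed. When $w=\varepsilon$ we take $\ell=1$, $c_0=c_1=0$, recovering $[0;1]=\CFeven(1)$, consistent with the table. Dually, the leading exponent $c_0$ is allowed to be $0$ exactly when $w$ starts with $\0$. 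Once this bookkeeping is nailed down, both composites are immediate as above, and the lemma follows. (This is also the content one expects from the table in the excerpt, where e.g.\ $\0\1\mapsto[0;1,1,1]$ via $c_0=0,c_1=1,c_2=1,c_3=0$.)
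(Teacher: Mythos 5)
Your proposal is correct and follows essentially the same route as the paper: the paper's proof likewise defines the explicit inverse map $w\mapsto[a_0;\dots,a_{2\ell-1}]$ via the unique even-length parse $w=\1^{a_0}\0^{a_1}\cdots\1^{a_{2\ell-2}}\0^{a_{2\ell-1}-1}$ and checks that both composites are the identity. You simply spell out the uniqueness of that block decomposition (the padding with an empty trailing $\0$-block and the leading $c_0=0$ case), which the paper leaves implicit.
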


\begin{proof}
    Consider the following map
\[
\begin{array}{rccl}
Y:&\{\0,\1\}^*&\to&\Acaleven\\
&w
&\mapsto
&[a_0;\dots,a_{2\ell-1}]
\end{array}
\]
    where $(a_i)_{0\leq i<2\ell}$ is the unique sequence of integers
    of even length
    such that
    \[
w= \1^{a_0}\0^{a_1}\dots \1^{a_{2\ell-2}}\0^{a_{2\ell-1}-1}
\]
with $a_0\geq 0$ and $a_i\geq1$ for every $i\neq0$.
We observe that $Y\circ W$ and $W\circ Y$ are identity mappings.
Therefore, $Y$ is the inverse map of $W$ and we conclude that
    $W:\Acaleven\to\{\0,\1\}^*$ is invertible,
and thus is a bijection.
\end{proof}

Since $\CFeven:\Q_{>0}\to\Acaleven$ is a well-known bijection
often kept implicit, we also denote the composition
$W\circ\CFeven:\Q_{>0}\to\{\0,\1\}^*$ as $W$ in this article.
We hope the reader accepts this slight abuse of notation.
Thus, we have

\begin{lemma}
    $W:\Q_{>0}\to\{\0,\1\}^*$ is a bijection.
\end{lemma}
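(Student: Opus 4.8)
The plan is to deduce the statement as an immediate composition of two bijections already established in the excerpt. Recall that $\CFeven:\Q_{>0}\to\Acaleven$ is asserted to be a well-known bijection (this is essentially Theorem~162 of \cite{MR2445243}, giving the unique even-length continued fraction expansion of each positive rational), and $W:\Acaleven\to\{\0,\1\}^*$ was just shown to be a bijection in Lemma~\ref{lem:W-Acaleven-to-binarywords-bijection}. Since a composition of bijections is a bijection, the map $W\circ\CFeven:\Q_{>0}\to\{\0,\1\}^*$ is a bijection; by the stated abuse of notation this composite is exactly the map called $W$ in the statement.

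Concretely, first I would invoke the bijectivity of $\CFeven$, pointing to the paragraph preceding the lemma and to \cite[Theorem~162]{MR2445243} for the existence-and-uniqueness of the even-length expansion. Second, I would invoke Lemma~\ref{lem:W-Acaleven-to-binarywords-bijection} for the bijectivity of $W$ on $\Acaleven$. Third, I would note the elementary fact that the composition of two bijections is a bijection, and observe that the displayed composite diagram
\[
\Q_{>0}\xrightarrow{\ \CFeven\ }\Acaleven\xrightarrow{\ W\ }\{\0,\1\}^*
\]
is precisely the map denoted $W$ in the lemma. That closes the argument.

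There is essentially no obstacle here: the only thing to be careful about is the notational overloading — making clear that ``$W$'' in the statement refers to $W\circ\CFeven$ and not to the map $W$ of Lemma~\ref{lem:W-Acaleven-to-binarywords-bijection}, which has domain $\Acaleven$. So the proof is two sentences: $\CFeven$ is a bijection, $W$ (on $\Acaleven$) is a bijection by Lemma~\ref{lem:W-Acaleven-to-binarywords-bijection}, hence their composite is a bijection.
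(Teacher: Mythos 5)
Your proof is correct and matches the paper's own argument exactly: the paper likewise deduces the lemma by composing the bijection $\CFeven:\Q_{>0}\to\Acaleven$ with the bijection of Lemma~\ref{lem:W-Acaleven-to-binarywords-bijection}, under the stated abuse of notation. Nothing further is needed.
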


\begin{proof}
    Follows from Lemma~\ref{lem:W-Acaleven-to-binarywords-bijection}
    since $\CFeven:\Q_{>0}\to\Acaleven$ is also a bijection.
\end{proof}

\subsection{Two involutions on rational numbers}
The inverse of rational numbers
corresponds, under the map $W$, to a natural involution
on the free monoid $\{\0,\1\}^*$ which swaps the letters.
On $\{\0,\1\}^*$, let $w \mapsto \overline{w}$ be defined as
$\overline{w} = \overline{w_1} \cdots \overline{w_k}$ if $w = w_1 \cdots w_k$
where
$\overline{\0}=\1$
and
$\overline{\1}=\0$.

\begin{lemma}\label{lem:map-W-commutes-inverse-bar}
    For every $x\in\Q_{>0}$, $\overline{W(x)}=W(x^{-1})$.
    In other words, the following diagram is commutative:
\begin{center}
\begin{tikzcd}[ampersand replacement=\&]
    \Q_{>0}
    \arrow{r}{W}
    \arrow[swap]{d}{x\mapsto x^{-1}}
	\& \{\0,\1\}^* \arrow{d}{w\mapsto \overline{w}}
    \\
    \Q_{>0} \arrow{r}{W}
    \& \{\0,\1\}^*
\end{tikzcd}
\end{center}
\end{lemma}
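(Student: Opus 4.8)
The plan is to push everything through the matrix factorization \eqref{eq:CF-expansion-LR}, using that swapping the letters $\0\leftrightarrow\1$ corresponds at the level of the elementary matrices to conjugation by $J=\left(\begin{smallmatrix}0&1\\1&0\end{smallmatrix}\right)$, which exchanges $L$ and $R$ and fixes $\binom{1}{1}$.

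First I would attach to each binary word $u=u_1\cdots u_n\in\{\0,\1\}^*$ the matrix product $M_u$ obtained by reading each $\1$ as $R$ and each $\0$ as $L$ (with $M_\varepsilon=I$). When $\CFeven(x)=[a_0;a_1,\dots,a_{2\ell-1}]$ and $x=r/s$ with $r,s$ coprime, we have $W(x)=\1^{a_0}\0^{a_1}\cdots\1^{a_{2\ell-2}}\0^{a_{2\ell-1}-1}$, hence $M_{W(x)}=R^{a_0}L^{a_1}\cdots R^{a_{2\ell-2}}L^{a_{2\ell-1}-1}$, and the \emph{second} line of \eqref{eq:CF-expansion-LR} says precisely that $M_{W(x)}\binom{1}{1}=\binom{r}{s}$. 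Since $L$ and $R$ have nonnegative entries, $M_u\binom{1}{1}$ is always a vector of positive integers; let $g(u)\in\Q_{>0}$ be the rational number $p/q$ where $M_u\binom{1}{1}=\binom{p}{q}$. The identity above reads $g\circ W=\mathrm{id}_{\Q_{>0}}$, and since $W\colon\Q_{>0}\to\{\0,\1\}^*$ is a bijection this forces $g=W^{-1}$; this is the device that lets me recover a word from a matrix without proving directly that the monoid generated by $L$ and $R$ is free.

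Next I would record the two one-line computations $JLJ=R$ and $JRJ=L$. Replacing each letter of $u$ by its opposite replaces each factor $L$ of $M_u$ by $R$ and each $R$ by $L$, so conjugating factorwise by $J$ and telescoping (using $J^2=I$) gives $M_{\overline u}=JM_uJ$. Since $J\binom{1}{1}=\binom{1}{1}$, we get $M_{\overline u}\binom{1}{1}=JM_u\binom{1}{1}$ for every word $u$.

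To conclude, fix $x=r/s$ with $r,s$ coprime and put $u=W(x)$, so $M_u\binom{1}{1}=\binom{r}{s}$ by the identity above. Then $M_{\overline u}\binom{1}{1}=J\binom{r}{s}=\binom{s}{r}$, so $g(\overline u)=s/r=x^{-1}$, and since $g=W^{-1}$ this gives $\overline{W(x)}=\overline u=W\bigl(g(\overline u)\bigr)=W(x^{-1})$, which is the claimed identity; commutativity of the diagram is a restatement of this. I expect the only point requiring care to be the bookkeeping relating the word $W(x)$ to the matrix $M_{W(x)}$ — in particular matching the final block $\0^{a_{2\ell-1}-1}$ with the $\binom{1}{1}$-version rather than the $\binom{1}{0}$-version of \eqref{eq:CF-expansion-LR}, and noting that this formula stays literally correct in the degenerate cases $a_0=0$ or $a_{2\ell-1}=1$, where a boundary factor of $M_{W(x)}$ is the identity. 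Everything else reduces to the two matrix identities above.
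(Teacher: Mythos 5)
Your proof is correct, but it takes a genuinely different route from the paper's. The paper argues directly on the words: it flips the letters of $\1^{a_0}\0^{a_1}\cdots\1^{a_{2\ell-2}}\0^{a_{2\ell-1}-1}$ and recognizes the result as $W[0;a_0,\dots,a_{2\ell-1}-1,1]=W(x^{-1})$, using the continued fraction identity $[a_0;\dots,a_{2\ell-1}]^{-1}=[0;a_0,\dots,a_{2\ell-1}]$; this forces a case split on $a_0=0$ and $a_{2\ell-1}=1$ (which the paper leaves to the reader), since the regrouping of blocks into a valid even-length expansion changes in those cases. You instead push everything through the monoid morphism $u\mapsto M_u$ sending $\1\mapsto R$, $\0\mapsto L$, observe that the second line of \eqref{eq:CF-expansion-LR} says $M_{W(x)}\left(\begin{smallmatrix}1\\1\end{smallmatrix}\right)=\left(\begin{smallmatrix}r\\s\end{smallmatrix}\right)$, and use the conjugation $JLJ=R$, $JRJ=L$ with $J=\left(\begin{smallmatrix}0&1\\1&0\end{smallmatrix}\right)$ fixing $\left(\begin{smallmatrix}1\\1\end{smallmatrix}\right)$ to get $M_{\overline{u}}\left(\begin{smallmatrix}1\\1\end{smallmatrix}\right)=J M_u\left(\begin{smallmatrix}1\\1\end{smallmatrix}\right)$. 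The device of defining $g(u)=p/q$ from $M_u\left(\begin{smallmatrix}1\\1\end{smallmatrix}\right)$ and deducing $g=W^{-1}$ from $g\circ W=\mathrm{id}$ together with the already-established bijectivity of $W$ is sound and neatly avoids having to prove freeness of the monoid generated by $L$ and $R$. What your approach buys is uniformity: the degenerate cases $a_0=0$ and $a_{2\ell-1}=1$ only amount to an identity factor in the matrix product, so no case analysis is needed. What it costs is the extra scaffolding ($M_u$, $g$, and reliance on \eqref{eq:CF-expansion-LR}), whereas the paper's computation is self-contained at the level of words; your $M_u$ is in fact the $q=1$ specialization of the paper's later morphism $\nu_q$, so the two arguments are compatible in spirit.
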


\begin{proof}
    Let $[a_0;\dots,a_{2\ell-1}]\in\Q_{>0}$.
    If $a_0>0$ and $a_{2\ell-1}>1$, we have
    \begin{align*}
    \overline{W[a_0;\dots,a_{2\ell-1}]}
        &=\overline{\1^{a_0}\0^{a_1}\dots \1^{a_{2\ell-2}}\0^{a_{2\ell-1}-1}}\\
        &=\0^{a_0}\1^{a_1}\dots \0^{a_{2\ell-2}}\1^{a_{2\ell-1}-1}\\
        &=\1^0\0^{a_0}\1^{a_1}\dots \0^{a_{2\ell-2}}\1^{a_{2\ell-1}-1}\0^{1-1}\\
        &=W[0;a_0,\dots,a_{2\ell-1}-1,1]\\
        &=W[0;a_0,\dots,a_{2\ell-1}]\\
        &=W[a_0;\dots,a_{2\ell-1}]^{-1}.
    \end{align*}
    Other cases ($a_0=0$ or $a_{2\ell-1}=1$) are dealt similarly and are left to the reader.
\end{proof}

Also, we define the reversal $w \mapsto \widetilde{w}$ on $\{\0,\1\}^*$ as
$\widetilde{w} = w_k \cdots w_1$ if $w = w_1 \cdots w_k$.
The reversal is an involution whose fixed points are called palindromes.
Finally, another involution on $\{\0,\1\}^*$ is $w \mapsto \widehat{w}$ defined as
$\widehat{w} = \overline{w_k} \cdots \overline{w_1}$ if $w = w_1 \cdots w_k$.
It satisfies $\widehat{w}
              =\widetilde{\overline{w}}
              =\overline{\widetilde{w}}$.
It turns out that the map $w\;\mapsto \widehat{w}$ is conjugate
to the involution $\tau:\Acaleven\to\Acaleven$ defined as
\[
    \tau([a_0;\dots,a_{2\ell-1}])=[a_{2\ell-1}-1,a_{2\ell-2},\dots,a_1,a_{0}+1].
\]

\begin{lemma}\label{lem:map-W-commutes-the-widehat}
    For every $a\in\Acaleven$, $\widehat{W(a)}=W(\tau(a))$.
    In other words, the following diagram is commutative:
\begin{center}
\begin{tikzcd}[ampersand replacement=\&]
    \Acaleven
    \arrow{r}{W}
    \arrow[swap]{d}{\tau}
	\& \{\0,\1\}^* \arrow{d}{w\;\mapsto \widehat{w}}
    \\
    \Acaleven \arrow{r}{W}
    \& \{\0,\1\}^*
\end{tikzcd}
\end{center}
\end{lemma}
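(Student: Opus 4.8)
The plan is to verify the commutativity directly by computing both paths around the square on an arbitrary element $a=[a_0;a_1,\dots,a_{2\ell-1}]\in\Acaleven$, using the definitions of $W$, of $\tau$, and of the hat-involution. Starting from the definition
\[
    W(a)=\1^{a_0}\0^{a_1}\1^{a_2}\cdots\1^{a_{2\ell-2}}\0^{a_{2\ell-1}-1},
\]
I would apply $w\mapsto\widehat{w}=\overline{\widetilde{w}}$: reversing gives $\0^{a_{2\ell-1}-1}\1^{a_{2\ell-2}}\cdots\1^{a_2}\0^{a_1}\1^{a_0}$, and then swapping letters gives
\[
    \widehat{W(a)}=\1^{a_{2\ell-1}-1}\0^{a_{2\ell-2}}\1^{a_{2\ell-3}}\cdots\0^{a_1}\1^{a_0}.
\]
Going the other way, $\tau(a)=[a_{2\ell-1}-1;\,a_{2\ell-2},a_{2\ell-3},\dots,a_1,\,a_0+1]$, which is again an even-length tuple (its length is still $2\ell$), so $W(\tau(a))$ is defined; applying the formula for $W$ to it yields
\[
    W(\tau(a))=\1^{a_{2\ell-1}-1}\0^{a_{2\ell-2}}\1^{a_{2\ell-3}}\cdots\0^{a_1}\1^{(a_0+1)-1}
             =\1^{a_{2\ell-1}-1}\0^{a_{2\ell-2}}\1^{a_{2\ell-3}}\cdots\0^{a_1}\1^{a_0}.
\]
The two expressions coincide, which is exactly the claimed identity.

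The one genuine subtlety — the main obstacle, such as it is — is bookkeeping at the two ends of the word and making sure the parity conventions line up. On the left end, $W$ produces a block $\1^{a_0}$ (which is empty when $a_0=0$), and under reversal-then-complement this becomes a trailing $\1^{a_0}$, matching the trailing block of $W(\tau(a))$, where the last partial quotient $a_0+1$ contributes $\1^{(a_0+1)-1}=\1^{a_0}$; here the "$-1$" in the definition of $W$ and the "$+1$" in the definition of $\tau$ cancel precisely, which is the whole point of the construction. On the right end, $W$ produces a trailing $\0^{a_{2\ell-1}-1}$; under reversal this becomes a leading $\0^{a_{2\ell-1}-1}$, and after complementing it becomes $\1^{a_{2\ell-1}-1}$, matching the leading block of $W(\tau(a))$ coming from the new $a_0$-entry $a_{2\ell-1}-1$. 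One should also note that $\tau$ is well-defined as a map $\Acaleven\to\Acaleven$: the new entries $a_{2\ell-2},\dots,a_1$ are all $\geq1$, the new last entry $a_0+1\geq1$, and the new first entry $a_{2\ell-1}-1\geq0$, so the image lies in $\Acaleven$; and $\tau$ is an involution since applying the reversal-shift-by-one operation twice returns the original tuple, which can be checked on the formula or simply deduced from the fact that $w\mapsto\widehat{w}$ is an involution together with the injectivity of $W$ (Lemma~\ref{lem:W-Acaleven-to-binarywords-bijection}).

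Finally, since the identity $\widehat{W(a)}=W(\tau(a))$ has been established for every $a\in\Acaleven$, commutativity of the diagram follows, with the vertical arrows $\tau$ and $w\mapsto\widehat{w}$ being involutions (hence bijections) and $W$ a bijection by Lemma~\ref{lem:W-Acaleven-to-binarywords-bijection}; in particular $\tau$ is conjugate to $w\mapsto\widehat{w}$ via $W$, as asserted in the text preceding the statement. I expect the write-up to be short: a single display chasing each side of the square, plus a remark on the edge cases $a_0=0$ and $a_{2\ell-1}=1$, which require no change in the argument because the empty-block conventions ($\1^0=\0^0=\varepsilon$) absorb them automatically.
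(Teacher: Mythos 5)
Your proof takes exactly the same route as the paper's: expand $W(a)$, reverse, complement, and recognize the result as $W(\tau(a))$; the structure is sound and the conclusion is correct. However, the endpoint bookkeeping that you single out as the one subtle point is precisely where both of your displays slip: since $W$ ends in a $\0$-block (namely $\0^{a_{2\ell-1}-1}$) and the hat involution complements letters, the tail of $\widehat{W(a)}$ is $\cdots\1^{a_1}\0^{a_0}$, not $\cdots\0^{a_1}\1^{a_0}$, and likewise $W(\tau(a))$ ends in $\1^{a_1}\0^{(a_0+1)-1}=\1^{a_1}\0^{a_0}$ because the entry $a_1$ sits at the even position $2\ell-2$ of $\tau(a)$ (contributing a $\1$-block) and $a_0+1$ at the odd last position (contributing a $\0$-block). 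The same letter-swap appears in your prose ("$\1^{a_0}$ ... becomes a trailing $\1^{a_0}$" should be a trailing $\0^{a_0}$). Because the slip is made identically on both sides, the two expressions you compare still coincide and the lemma survives, but the displayed intermediate words should be corrected to match the paper's $\1^{a_{2\ell-1}-1}\0^{a_{2\ell-2}}\cdots\1^{a_1}\0^{a_0}$.
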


\begin{proof}
    Let $[a_0;a_1,\dots,a_{2\ell-1}]\in\Acaleven$.
    We have
    \begin{align*}
    \widehat{W[a_0;a_1,\dots,a_{2\ell-1}]}
        &=\widehat{\1^{a_0}\0^{a_1}\dots \1^{a_{2\ell-2}}\0^{a_{2\ell-1}-1}}\\
        &=\overline{\0^{a_{2\ell-1}-1} \1^{a_{2\ell-2}}\dots\0^{a_1}\1^{a_0}}\\
        &=\1^{a_{2\ell-1}-1} \0^{a_{2\ell-2}}\dots\1^{a_1}\0^{a_0}\\
        &=W[a_{2\ell-1}-1;a_{2\ell-2}\dots,a_1,a_0+1].
    \end{align*}
\end{proof}

\section{Morier-Genoud--Ovsienko's \texorpdfstring{$q$}{q}-analog of rational numbers}
\label{sec:Morier-Genoud-Ovsienko-q-analog}

Recently,
Morier-Genoud--Ovsienko defined
a $q$-analog for the rational numbers using the matrices
\[
    L_q=
\left(\begin{array}{rr}
q & 0 \\
q & 1
\end{array}\right)
\qquad
\text{ and }
\qquad
    R_q=
\left(\begin{array}{rr}
q & 1 \\
0 & 1
\end{array}\right)
\]
where $q$ is some indeterminate \cite{MR4073883}.
The Morier-Genoud--Ovsienko $q$-analog of the rational number $\frac{r}{s}$
is the rational function
\[
    \left[\frac{r}{s}\right]_q
    :=
    \frac{R(q)}{S(q)}
\]
where the numerator and denominator polynomials are obtained
by replacing matrices $L$ and $R$ in \eqref{eq:CF-expansion-LR}
by their $q$-analogs $L_q$ and $R_q$
and computing the following vector
(see Proposition 4.3 and Proposition 4.5 in \cite{MR4073883}):
\begin{equation}\label{eq:CF-expansion-LqRq}
    \begin{aligned}
    \left(\begin{array}{c} R(q) \\ S(q) \end{array}\right)
        &= q^{-1} R_q^{a_0}L_q^{a_1}\cdots R_q^{a_{2\ell-2}}L_q^{a_{2\ell-1}}
                \left(\begin{smallmatrix} 1 \\ 0 \end{smallmatrix}\right)\\
        &= R_q^{a_0}L_q^{a_1}\cdots R_q^{a_{2\ell-2}}L_q^{a_{2\ell-1}-1}
                \left(\begin{smallmatrix} 1 \\ 1 \end{smallmatrix}\right).
    \end{aligned}
\end{equation}
In \eqref{eq:CF-expansion-LqRq},
it can be shown that
$R(q)$ and $S(q)$ are polynomials
and the constant coefficient of the denominator is $S(0)=1$.

\begin{example}
For example,
\[
    \frac{7}{2}
    = 3 + \frac{\displaystyle 1}{\displaystyle 2 }
    = [3;2],
    \qquad
        R_q^{3}L_q^{2-1}
         \left(\begin{array}{c} 1 \\ 1 \end{array}\right)
        =
    \left(\begin{array}{c} {q^{4} + q^{3} + 2q^{2} + 2q + 1} \\ {q + 1} \end{array}\right)
\]
and
\[
    \left[\frac{7}{2}\right]_q
    =
    \frac{q^{4} + q^{3} + 2q^{2} + 2q + 1}{q + 1}.
\]
\end{example}

Theorem 4 in \cite{MR4073883} gives a combinatorial
interpretation of the coefficients of $R(q)$ and $S(q)$
in terms of order ideals within two distinct fence posets
constructed from the partial quotients of the continued fraction expansion of
$\frac{r}{s}$.
In their theorem, it is implicitly assumed
that $a_0\neq 0$, that is, $\frac{r}{s}>1$.
However, Equation~\eqref{eq:CF-expansion-LqRq} works when $a_0=0$.
Thus, it provides a definition of $q$-analog for all
positive rational numbers including those in the interval $0<\frac{r}{s}<1$.
\begin{example}\label{ex:2-sur-7}
For example,
\[
    \frac{2}{7}
    = 0
    + \frac{1}{3
    + \frac{1}{1
    + \frac{1}{1
    }}}
    = [0;3,1,1],
    \qquad
        R_q^{0}L_q^{3}R_q^{1}L_q^{1-1}
         \left(\begin{array}{c} 1 \\ 1 \end{array}\right)
        =
    \left(\begin{array}{c} {q^{4} + q^{3}} \\ {q^4+2q^3+2q^2+q+1} \end{array}\right)
\]
and
\[
    \left[\frac{2}{7}\right]_q
    =
    \frac{q^4+q^3}{q^4+2q^3+2q^2+q+1}.
\]
\end{example}

As observed by \cite{MorierGenoud2019} and \cite{MR4073883},
the identity
\begin{equation}\label{eq:identity-q-rationals}
    \left[\frac{r}{s}+1\right]_q
    = q\left[\frac{r}{s}\right]_q + 1.
\end{equation}
allows to extend the definition to all rational numbers including negative ones.

\begin{remark}
In Example~\ref{ex:2-sur-7}, the constant term of the numerator $R(q)=q^4+q^3$ is 0.
This is not a contradiction with Corollary 1.7 (iii) of \cite{MR4073883}
that states that the constant terms of both $R(q)$ and $S(q)$ are equal to 1,
because \cite{MR4073883} implicitly assumes that $\frac{r}{s}>1$.
\end{remark}

\subsection*{An active subject}
Since the discovery of $q$-analog of rational numbers by
Morier-Genoud and Ovsienko, the subject has sparkled in many research directions.
During the preparation of this article, contributions were made that are close to this article.
The use of double dimer cover of graphs was considered in \cite{musiker_super_2025}.
Also, perfect matchings with weighted edges
and $q$-deformed Markov numbers were considered in
\cite{evans_q-deformed_2025}.
Then, an interpretation of
$q$-deformed rationals in terms of
hyperbinary partitions was proposed in
\cite{mcconville_hyperbinary_2025}.
Equivariant modular functions are considered in \cite{topkara_equivariant_2025}.
Finally, as observed by James Propp\footnote{
    \url{https://mathenchant.wordpress.com/2025/07/17/when-999-isnt-1/}},
there are three rules that allow to compute $[\xx]_q$ for any rational number $\xx\in\Q$:
    $[0]_q = 0$,
    $[\xx+1]_q = q[\xx]_q + 1$
    and
    $[\xx]_q [-\xx^{-1}]_q = -q^{-1}$.

\section{A bijection from admissible sequences to integers in an interval}
\label{sec:bijection-admissible-to-integers}

Let $\xx$ be a positive irrational number
with continued fraction expansion
$a=[a_0; a_1, a_2, \dots]$.
Let $(p_n)$ and $(q_n)$ be the sequence of numerators and denominators of the
convergents $p_n/q_n$ to $\xx$ satisfying \eqref{eq:CF-denominators}.
Let $(r_i)_{i\geq0}$ be the sequence defined as the sum of the numerator and
denominator of the convergents:
\[
    r_i=p_{i-1}+q_{i-1}.
\]
It satisfies the following recurrence:
\begin{equation}\label{eq:the-r-sequence}
    r_i = a_{i-1}r_{i-1} + r_{i-2} \quad\text{ for every } i>0,
\qquad
\text{ with }
r_{-1}=r_0=1.
\end{equation}
Let
    \begin{align*}
        \Bcal_{k}(a)
        &=
        \{(b_i)_{0\leq i< k} \mid
            (b_i)_{0\leq i< k}
            \text{ is admissible for } [a_0;\dots,a_{k-1}]
        \}
    \end{align*}
be the set of admissible sequences of length $k$.

\begin{lemma}\label{lem:cardinality-admissible-sequences}
    The set of admissible sequences of length $k$ has cardinality
    \[
        \#\Bcal_{k}(a)  = r_{k}.
    \]
\end{lemma}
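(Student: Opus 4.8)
The claim is that $\#\Bcal_k(a) = r_k$ where $r_k$ satisfies the recurrence $r_k = a_{k-1}r_{k-1} + r_{k-2}$ with $r_{-1}=r_0=1$. The natural approach is induction on $k$, setting up a finer recurrence by keeping track of the value of the last coordinate $b_{k-1}$. For fixed $a$, let $\Bcal_k(a)$ be as defined. Because the admissibility conditions only relate consecutive coordinates $b_{i-1},b_i$ (and depend on the parity of $i$), extending an admissible sequence of length $k-1$ to one of length $k$ requires only knowing whether $b_{k-2}$ is ``saturated'' in the relevant sense. The cleanest bookkeeping is to split $\Bcal_k(a)$ according to the value of $b_{k-1}$, or, more economically, into those sequences where the last entry is ``extremal'' versus not. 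I would define, for each $k\geq 1$, the quantities
\[
  u_k = \#\{b\in\Bcal_k(a) : b_{k-1} \text{ is not extremal}\},
  \qquad
  v_k = \#\{b\in\Bcal_k(a) : b_{k-1} \text{ is extremal}\},
\]
where ``extremal'' means $b_{k-1}=a_{k-1}$ when $k-1$ is odd (or $k-1=0$), and $b_{k-1}=0$ when $k-1>0$ is even; the point is that the extremal value is exactly the one that constrains the next coordinate. Then $\#\Bcal_k(a) = u_k + v_k$.

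**The transfer step.** The core computation is to express $(u_k, v_k)$ in terms of $(u_{k-1}, v_{k-1})$. Fix an admissible sequence of length $k-1$ and count its admissible extensions by a value $b_{k-1}\in\{0,1,\dots,a_{k-1}\}$. The admissibility condition linking index $k-1$ to $k-2$ fires only when $b_{k-1}$ takes its extremal value, and then it forces $b_{k-2}$ to be extremal as well; when $b_{k-1}$ is non-extremal there is no constraint on $b_{k-2}$. Counting carefully (and being slightly careful about the boundary conventions at $i=0$, where $b_0=a_0$ and $b_0=0$ can coincide when $a_0=0$) yields a linear recursion of the shape
\[
  \begin{pmatrix} u_k \\ v_k \end{pmatrix}
  = \begin{pmatrix} a_{k-1} & a_{k-1}-1 \\ 1 & 1 \end{pmatrix}
    \begin{pmatrix} u_{k-1} \\ v_{k-1} \end{pmatrix}
\]
(with the precise entries to be pinned down by the boundary case analysis — this is exactly where the asymmetry between the odd-index and even-index conditions must be reconciled with a uniform matrix). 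Summing the two rows gives $u_k+v_k = a_{k-1}(u_{k-1}+v_{k-1}) + (u_{k-2}+v_{k-2})$ after one more substitution, which is precisely the recurrence \eqref{eq:the-r-sequence} for $r_k$. One then checks the base cases $k=0$ and $k=1$ directly: $\Bcal_0(a)$ is the empty sequence (cardinality $1=r_0$), and $\Bcal_1(a)$ consists of the single admissible $b_0$ with $0\leq b_0\leq a_0$, i.e. $a_0+1 = r_1$ sequences.

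**Main obstacle.** The genuinely fiddly part is not the induction skeleton but handling the degenerate boundary behaviour at index $0$: the admissibility clauses are stated only for $i>0$, and when $a_0=0$ the ``extremal'' values $b_0=a_0$ and $b_0=0$ collapse, so the dichotomy $\Bcal_k = \{b_{k-1}\text{ extremal}\}\sqcup\{b_{k-1}\text{ non-extremal}\}$ needs a careful re-reading at the start of the sequence. I would either treat the case $a_0=0$ separately at the first step (so the induction proper runs from $k=2$ onward with clean hypotheses), or absorb it by reindexing. Once the two-dimensional linear recurrence with the correct matrix is established for $k\geq 2$, collapsing it to the scalar recurrence \eqref{eq:the-r-sequence} and matching initial conditions is routine. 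An alternative, possibly slicker, route — worth mentioning — is to set up an explicit bijection between $\Bcal_k(a)$ and a disjoint union $\Bcal_{k-1}(a)^{\,a_{k-1}} \sqcup \Bcal_{k-2}(a)$ realizing the recurrence combinatorially; but the transfer-matrix argument above is the most direct and is all that is needed.
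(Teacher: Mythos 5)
Your overall skeleton (induction on $k$, base cases $\#\Bcal_0(a)=1$ and $\#\Bcal_1(a)=a_0+1$, and the recurrence $\#\Bcal_k(a)=a_{k-1}\#\Bcal_{k-1}(a)+\#\Bcal_{k-2}(a)$) is exactly the paper's, and the ``alternative, possibly slicker route'' you mention in your final sentence \emph{is} the paper's proof. Indeed, with your own definition of ``extremal'' (the value $M$ of $b_{k-1}$ that triggers the backward constraint on $b_{k-2}$, namely $M=a_{k-1}$ if $k-1$ is odd and $M=0$ if $k-1>0$ is even), the fibre $\{b\in\Bcal_k(a): b_{k-1}=M\}$ is in bijection with $\Bcal_{k-2}(a)$ (the constraint forces $b_{k-2}$, and the forced value triggers no further constraint because $a_{k-2}\geq 1$), while $\{b\in\Bcal_k(a): b_{k-1}\neq M\}$ is in bijection with $a_{k-1}$ disjoint copies of $\Bcal_{k-1}(a)$. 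Adding the two cardinalities gives the recurrence at once; no transfer matrix is needed.

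The transfer-matrix route you actually develop has a genuine gap, and it is not the boundary issue at $i=0$ that you flag. With your definition, the state of a length-$(k-1)$ sequence records whether $b_{k-2}$ equals the value that triggers the constraint on $b_{k-3}$; but to count admissible extensions by $b_{k-1}$ you must know whether $b_{k-2}$ equals the value \emph{required of it when $b_{k-1}$ takes its constrained value}, and these two reference values sit at opposite parities (one equals $a_{k-2}$ exactly when the other equals $0$). Hence $(u_{k-1},v_{k-1})$ does not determine $(u_k,v_k)$, and your matrix is inconsistent with your own conclusion: its second row gives $v_k=u_{k-1}+v_{k-1}$, so the row sum is $a_{k-1}(u_{k-1}+v_{k-1})+u_{k-1}$, whereas the substitution you invoke needs $v_{k-1}$, not $u_{k-1}$, in the leftover slot. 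The idea can be repaired by switching the parity convention: call a length-$k$ sequence \emph{open} if $b_{k-1}=a_{k-1}$ when $k$ is odd and $b_{k-1}=0$ when $k$ is even; then an open sequence has $a_k+1$ admissible extensions, a closed one has $a_k$, exactly one extension of each is open, and one gets the uniform recursion $v_{k+1}=u_k+v_k$ and $u_{k+1}=(a_k-1)u_k+a_kv_k$, whose row sum is $a_k(u_k+v_k)+v_k$ with $v_k=u_{k-1}+v_{k-1}$, as required. As written, however, your main argument does not go through, while the one-line alternative you set aside is the correct (and simplest) proof.
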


\begin{proof}
    We proceed by induction.
    We have $\#\Bcal_{0}(a) = 1 = r_{0}$
    and $\#\Bcal_{1}(a) = a_1 + 1 = r_{1}$.
    Let $M=0$ if $k$ is odd or $M=a_{k-1}$ if $k$ is even.
    The following union is disjoint:
    \begin{align*}
        \Bcal_{k}(a)
        &=
        \{(b_i)_{0\leq i< k}\in\Bcal_{k}(a)
          \mid b_{k-2}=b_{k-1}=M\}
        \cup
        \{(b_i)_{0\leq i< k}\in\Bcal_{k}(a)
          \mid b_{k-1}\neq M\}.
    \end{align*}
    Therefore,
    \begin{align*}
        \#\Bcal_{k}(a)
        &=
        \#\{(b_i)_{0\leq i< k}\in\Bcal_{k}(a)
          \mid b_{k-2}=b_{k-1}=M\}\\
        &\phantom{=}+
        \#\{(b_i)_{0\leq i< k}\in\Bcal_{k}(a)
          \mid b_{k-1}\neq M\}\\
        &=
        \#\Bcal_{k-2}(a)
        + a_{k-1}\#\Bcal_{k-1}(a)\\
        &= r_{k-2} + a_{k-1}r_{k-1}
         = r_{k}.
    \end{align*}
    This concludes the proof.
\end{proof}

\begin{lemma}\label{lem:upper-bounds-on-the-sum}
    Let $a=[a_0; a_1, \dots, a_{k-1}]$ be the continued fraction expansion
    of a positive rational number.
    Let $n= \sum_{i=0}^{k-1} (-1)^{i}b_{i}r_{i}$
    where $(b_i)_{0\leq i< k}$ is an admissible
    sequence for $a$. Then,
    \[
        n <
        \begin{cases}
            r_{k-1} & \text{ if $k$ is even},\\
            r_k     & \text{ if $k$ is odd}.
        \end{cases}
    \]
\end{lemma}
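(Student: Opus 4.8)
The plan is to prove the bound by induction on $k$, tracking the two parities together. The key quantity to control is the maximum value of $n = \sum_{i=0}^{k-1}(-1)^i b_i r_i$ over all admissible sequences $(b_i)$. For $k$ odd the last term $(-1)^{k-1}b_{k-1}r_{k-1}$ has a plus sign, so to maximize $n$ one wants $b_{k-1}$ large; for $k$ even it has a minus sign, so one wants $b_{k-1}$ small, but then the admissibility clause ``if $i$ even and $b_i=0$ then $b_{i-1}=0$'' forces a cascade. This is exactly the kind of interaction that makes a naive termwise bound fail, so the induction must be set up to exploit it.

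First I would handle the base cases $k=1$ (where $n=b_0 r_0 = b_0 \le a_0 < a_0+1 = r_1$, using $r_0=1$) and, to be safe, also $k=2$ by hand. For the inductive step, write $n = n' + (-1)^{k-1}b_{k-1}r_{k-1}$ where $n' = \sum_{i=0}^{k-2}(-1)^i b_i r_i$ and $(b_i)_{0\le i<k-1}$ is admissible for $[a_0;\dots,a_{k-2}]$ (it is, since truncating an admissible sequence preserves admissibility). Now split on the parity of $k$:

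\emph{Case $k$ odd (so $k-1$ even).} The last sign is $+$. We want $n = n' + b_{k-1}r_{k-1} < r_k = a_{k-1}r_{k-1} + r_{k-2}$. Since $k-1$ is even, by the induction hypothesis applied at length $k-1$ we have $n' < r_{k-2}$. If $b_{k-1}\le a_{k-1}-1$ then $n < r_{k-2} + (a_{k-1}-1)r_{k-1} < a_{k-1}r_{k-1}+r_{k-2} = r_k$ provided $r_{k-2}\le r_{k-1}$, which holds since $r$ is nondecreasing by \eqref{eq:the-r-sequence}. If $b_{k-1}=a_{k-1}$, the admissibility clause for the even index $k-1$ with $b_{k-1}=a_{k-1}\ne 0$ does not directly constrain things, so instead I would use a sharper bound on $n'$: when $b_{k-1}=a_{k-1}$ one still has $n' < r_{k-2}$, giving $n < a_{k-1}r_{k-1} + r_{k-2} = r_k$ directly — so actually this case is immediate and the only delicate point is ensuring the strict inequality $n' < r_{k-2}$ is the right IH to carry.

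\emph{Case $k$ even (so $k-1$ odd).} The last sign is $-$, so $n = n' - b_{k-1}r_{k-1} \le n'$, and by the induction hypothesis at length $k-1$ (odd) we get $n' < r_{k-1}$, hence $n < r_{k-1}$, which is exactly the claimed bound. This direction is essentially free.

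The honest difficulty is making sure the induction hypothesis is stated strongly enough that the odd case with $b_{k-1}=a_{k-1}$ closes: one needs the strict bound $n' < r_{k-2}$ rather than $n'\le r_{k-2}$ for the sub-sequence of length $k-1$, and one must check it genuinely holds even when the truncated admissible sequence is ``maximal'' (all $b_i$ at their ceilings where allowed). I expect this to reduce to the observation that the recurrence \eqref{eq:the-r-sequence} together with the admissibility constraint ``$b_i=a_i$ (odd $i$) $\Rightarrow b_{i-1}=a_{i-1}$'' prevents all odd-indexed coefficients from being simultaneously maximal unless forced, which is precisely what keeps the alternating sum below $r_{k-1}$; the clean way to encode this is to prove by induction the slightly stronger pair of statements ``for $k$ even, $0\le n < r_{k-1}$; for $k$ odd, $0 \le n < r_k$'' together, so that whichever parity is needed in the step is available. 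The nonnegativity half ($n\ge 0$) is proved by the symmetric argument with $L_q,R_q$ roles swapped, or can be deferred since only the upper bound is asserted here.
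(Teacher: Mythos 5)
Your induction is correct and does close: the two statements ``$n<r_{k-1}$ for $k$ even'' and ``$n<r_k$ for $k$ odd'' prove each other in alternation, the even step being free (the last term is $\le 0$) and the odd step following from $n'<r_{k-2}$ plus $b_{k-1}r_{k-1}\le a_{k-1}r_{k-1}$ and the recurrence $r_k=a_{k-1}r_{k-1}+r_{k-2}$. However, the paper's proof is shorter and your framing of the difficulty is off. The paper simply drops all the negative (odd-index) terms, bounds each positive term by $a_{2j}r_{2j}=r_{2j+1}-r_{2j-1}$, and telescopes:
\[
n\;\le\;\sum_{0\le 2j<k}a_{2j}r_{2j}\;=\;\sum_{0\le 2j<k}\bigl(r_{2j+1}-r_{2j-1}\bigr)\;=\;
\begin{cases} r_{k-1}-r_{-1} & k \text{ even},\\ r_k-r_{-1} & k \text{ odd},\end{cases}
\]
with strictness coming from $r_{-1}=1$. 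So the ``naive termwise bound'' that you declare must fail in fact succeeds, and none of the cascade clauses of admissibility (``$b_i=a_i\Rightarrow b_{i-1}=a_{i-1}$'', etc.) are needed for this lemma --- only $0\le b_i\le a_i$. Your closing paragraph, which predicts that the proof must exploit the impossibility of all odd-indexed coefficients being simultaneously maximal, is a red herring; your own case analysis already shows the delicate case is ``immediate''. Your induction is essentially the telescoping sum unrolled one step at a time, so both arguments use the same input; the paper's version just packages it in one line.
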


\begin{proof}
    The upper bounds can be proved with the following telescoping sum.
    We have
    \begin{align*}
        n%
        &\leq \sum_{0\leq 2j <k} a_{2j}r_{2j}
        = \sum_{0\leq 2j <k} (r_{2j+1}-r_{2j-1})
        =
        \begin{cases}
            r_{k-1} - r_{-1} & \text{ if $k$ is even},\\
            r_k - r_{-1} & \text{ if $k$ is odd}.
        \end{cases}
    \end{align*}
    The strict inequality follows from the fact that $r_{-1}=1$.
\end{proof}

\begin{lemma}\label{lem:existence-rep-in-interval}
    Let $[a_0; a_1, \dots, a_{k-1}]$ be the continued fraction expansion
    of a positive rational number.
    \begin{enumerate}
        \item If $k$ is odd, then every integer $n$ in the interval
                $0\leq n< r_{k}$
                can be written as
                $
                    n = \sum_{i=0}^{k-1} (-1)^{i}b_{i}r_{i}
                $
                where $(b_i)_{0\leq i\leq k-1}$ is an admissible sequence.
        \item If $k$ is even, then every integer $n$ in the interval
                $r_{k-1}-r_{k}\leq n< r_{k-1}$
                can be written as
                $
                    n = \sum_{i=0}^{k-1} (-1)^{i}b_{i}r_{i}
                $
                where $(b_i)_{0\leq i\leq k-1}$ is an admissible sequence.
    \end{enumerate}
\end{lemma}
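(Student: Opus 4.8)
The plan is to prove both cases simultaneously by induction on $k$, producing the admissible sequence digit by digit from the top index $k-1$ downward. Combined with Lemma~\ref{lem:cardinality-admissible-sequences} (which says $\#\Bcal_k(a)=r_k$) and Lemma~\ref{lem:upper-bounds-on-the-sum} (which pins down the upper end of the range of $\val_a$), a surjectivity statement like this is enough to get the bijection in Theorem~\ref{thm:our-ostrowski-in-introduction}: the target interval $\Zcal(a)$ has exactly $r_k$ integers in the odd case and $r_{k-1}-(r_{k-1}-r_k)=r_k$ in the even case, matching $\#\Bcal_k(a)$, so a surjection between finite sets of equal size is automatically a bijection. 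So the real content here is just the existence (surjectivity) half.

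First I would set up the induction. The base cases $k=0$ and $k=1$ are immediate ($\Zcal$ is a single point or the interval $[0,a_0]$, matching $\Bcal_0$ and $\Bcal_1$). For the inductive step, suppose $k\geq 2$ and fix $n$ in the prescribed interval. The idea is to choose the leading digit $b_{k-1}\in\{0,1,\dots,a_{k-1}\}$ so that the remainder $n-(-1)^{k-1}b_{k-1}r_{k-1}$ lands in the interval handled by the inductive hypothesis for the truncated expansion $[a_0;\dots,a_{k-2}]$ of length $k-1$, and so that the admissibility constraint linking $b_{k-1}$ to $b_{k-2}$ can still be met. The two parities of $k$ are mirror images: when $k$ is even the sign on $r_{k-1}$ is negative and the dangerous admissibility rule is ``$b_{k-1}=a_{k-1}\Rightarrow b_{k-2}=a_{k-2}$''; when $k$ is odd the sign is positive and the rule is ``$b_{k-1}=0\Rightarrow b_{k-2}=0$''. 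In each case one uses the recurrence $r_k=a_{k-1}r_{k-1}+r_{k-2}$ to see that the $a_{k-1}+1$ candidate shifted intervals (indexed by $b_{k-1}$) tile the interval for $n$, with the two extreme values of $b_{k-1}$ producing intervals that are shorter by exactly $r_{k-2}$ at the appropriate end — precisely the slack that the parity-dependent admissibility constraint removes by forcing $b_{k-2}$ to its extreme value, which by induction is compatible since the truncated interval for that boundary sub-case has length $r_{k-2}$ rather than $r_{k-1}$.

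Concretely, in the even case I would argue: choose $b_{k-1}$ maximal with $n\geq r_{k-1}-r_k+b_{k-1}r_{k-1}$ (equivalently locate $n$ among the shifted copies), set $n'=n+b_{k-1}r_{k-1}$, and check that $n'$ lies in $[r_{k-2}-r_{k-1}',r_{k-2})$ where $r_{k-1}'$ denotes the relevant $r$-value for the length-$(k-1)$ expansion — except when $b_{k-1}=a_{k-1}$, in which case $n'$ is forced into the shorter sub-interval $[0,r_{k-2})$ or rather $[r_{k-2}'-r_{k-2}, r_{k-2})$, which the inductive hypothesis fills using sequences with $b_{k-2}=a_{k-2}$, exactly as admissibility demands. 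The odd case is symmetric with $0$ and $a_{k-1}$ swapped. The main obstacle, and where care is needed, is bookkeeping the boundary sub-case: one must verify that when the leading digit is at its ``problematic'' extreme, the residual interval is exactly the one that the inductive hypothesis realizes by sequences whose next digit $b_{k-2}$ is correspondingly forced — this is where the telescoping identity behind Lemma~\ref{lem:upper-bounds-on-the-sum} and the recurrence for $(r_i)$ interlock, and getting the half-open endpoints to match up on the correct side is the only delicate point. Everything else is routine interval arithmetic.
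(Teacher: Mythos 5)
Your plan follows essentially the same route as the paper's proof: induction on the length, extracting the leading digit by a division/interval-location argument, applying the inductive hypothesis to the remainder for the truncated expansion, and verifying the admissibility constraint in the extreme sub-case via the upper bound of Lemma~\ref{lem:upper-bounds-on-the-sum} together with the recurrence for $(r_i)$. The only quibbles are slips in your ``concretely'' paragraph --- for $k$ even the truncated expansion has odd length, so the remainder should land in $[0,r_{k-1})$ rather than an interval of the even-length form, and only one (not both) of the two extreme values of the leading digit yields the shorter sub-interval of length $r_{k-2}$ --- neither of which affects the soundness of the plan.
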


\begin{proof}
    The proof of existence of the admissible sequence is done by induction.
    We prove that if (1) holds for some odd $k$, then (2) hold for $k+1$,
    and that if (2) holds for some even $k$, then (1) hold for $k+1$.

    First, we prove that (1) holds for $k=1$.
    Recall that we have $r_{-1}=r_0=1$.
    Let $k=1$ and $n$ be an integer in the interval $0\leq n<r_{k}=r_1=a_0r_0+r_{-1}=a_0+1$.
    Let $b_0=n$. We have $0\leq b_0\leq a_0$. Thus, the sequence $(b_0)$ is admissible
    and satisfies
    \[
        n = b_0 = (-1)^{0}b_0r_0.
    \]

    Suppose that (1) holds for some odd integer $k\geq1$.
    We want to show that (2) holds for $k+1$.
    Let $n$ be an integer in the interval
    $r_{k}-r_{k+1}\leq n< r_{k}$.
    Let $Q$ be the quotient and $R$ be the remainder
    of the division of $n$ by $r_{k}$.
    We have
    \begin{align*}
        0
        &=    \left\lfloor
        \frac{r_{k}-1}
             {r_{k}}
            \right\rfloor
        \geq Q
        = \left\lfloor
        \frac{n}
             {r_{k}}
            \right\rfloor
        \geq
            \left\lfloor
        \frac{r_{k}-r_{k+1}}
             {r_{k}}
            \right\rfloor\\
        &=
            \left\lfloor
        \frac{r_{k}-(a_{k}r_{k}+r_{k-1})}
             {r_{k}}
            \right\rfloor
        = - a_{k}
            +
            \left\lfloor
        \frac{r_{k}-r_{k-1}}
             {r_{k}}
            \right\rfloor
        = - a_{k}.
    \end{align*}
    Let $b_{k}=-Q$ so that
    $0\leq b_{k}\leq a_{k}$.
    By definition of the division, we have
    $n=Qr_{k} + R$
    where the remainder satisfies $0\leq R<r_{k}$.
    From (1), the integer $R$
    can be written uniquely as
    $
        \sum_{i=0}^{k-1} (-1)^{i}b_{i}r_{i}
    $
    where $(b_i)_{0\leq i\leq k-1}$ is an admissible sequence.
    Thus,
    \begin{align*}
        n
        &=Qr_{k} + R
         =-b_{k}r_{k} + \sum_{i=0}^{k-1} (-1)^{i}b_{i}r_{i}
         =\sum_{i=0}^{k} (-1)^{i}b_{i}r_{i}.
    \end{align*}
    It remains to show that $(b_i)_{0\leq i\leq k}$ is an admissible sequence,
    that is, that $b_{k}=a_{k}$ implies that $b_{k-1}=a_{k-1}$.
    Suppose that $b_{k}=a_{k}$. Thus, $Q=-a_{k}$.
    We have
    \[
        r_{k}
        > R
        =
        n - Q r_{k}
        =
        n+a_{k}r_{k}
        \geq
        r_{k}-r_{k+1} +a_{k}r_{k}
        =
        r_{k}-r_{k-1}.
    \]
    Let $R'=\sum_{i=0}^{k-2} (-1)^{i}b_{i}r_{i}$.
    From Lemma~\ref{lem:upper-bounds-on-the-sum}, we have $R'<r_{k-2}$ since $k-1$ is even.
    Therefore,
    \[
        b_{k-1}r_{k-1}
        = R - R'
        > r_{k}-r_{k-1} - r_{k-2}
        = a_{k-1}r_{k-1}+r_{k-2}-r_{k-1} - r_{k-2}
        = (a_{k-1}-1)r_{k-1}.
    \]
    This implies that $b_{k-1}=a_{k-1}$ in the sum $R=\sum_{i=0}^{k-1} (-1)^{i}b_{i}r_{i}$.
    We conclude that (2) holds for $k+1$.

    Suppose that (2) holds for some even integer $k\geq1$.
    We want to show that (1) holds for $k+1$.
    Let $n$ be an integer in the interval
    $0\leq n< r_{k+1}$.
    Let $b_{k}$ be the quotient and $R$ be the remainder
    of the division of $n-(r_{k-1}-r_{k})$ by $r_{k}$.
    We have
    \begin{align*}
        0\leq b_{k} &= \left\lfloor
        \frac{n-(r_{k-1}-r_{k})}
             {r_{k}}
            \right\rfloor
        \leq
            \left\lfloor
        \frac{r_{k+1}{-}1-(r_{k-1}-r_{k})}
             {r_{k}}
            \right\rfloor\\
        &=
            \left\lfloor
        \frac{a_{k}r_{k}+r_{k-1}-(r_{k-1}-r_{k}){-}1}
             {r_{k}}
            \right\rfloor
        =
            \left\lfloor
        \frac{a_{k}r_{k}+r_{k}-1}
             {r_{k}}
            \right\rfloor
        = a_{k}.
    \end{align*}
    By definition of the division, we have
    $n-(r_{k-1}-r_{k})=b_{k}r_{k} + R$
    where the remainder satisfies $0\leq R<r_{k}$.
    Thus,
    \[
        r_{k-1}-r_{k} \leq R+ r_{k-1}-r_{k}  < r_{k-1}.
    \]
    From (2), the integer $R+ r_{k-1}-r_{k}$
    can be written uniquely as
    $
        \sum_{i=0}^{k-1} (-1)^{i}b_{i}r_{i}
    $
    where $(b_i)_{0\leq i\leq k-1}$ is an admissible sequence.
    Therefore,
    \[
        n
        = b_{k}r_{k} + R+(r_{k-1}-r_{k})
        = b_{k}r_{k} + \sum_{i=0}^{k-1} (-1)^{i}b_{i}r_{i}
        = \sum_{i=0}^{k} (-1)^{i}b_{i}r_{i}.
    \]
    It remains to show that $(b_i)_{0\leq i\leq k}$ is an admissible sequence,
    that is, that $b_{k}=0$ implies that $b_{k-1}=0$.
    Suppose that $b_{k}=0$.
    We have
    $n = -b_{k-1}r_{k-1} + R'$
    where $R'=\sum_{i=0}^{k-2} (-1)^{i}b_{i}r_{i}$.
    From Lemma~\ref{lem:upper-bounds-on-the-sum}, we have $R'<r_{k-1}$ since $k-1$ is odd.
    Therefore, using $n\geq0$,
    \[
        b_{k-1}r_{k-1} = R' - n < r_{k-1} + 0 = r_{k-1}.
    \]
    Thus, we conclude that $b_{k-1}=0$
    and $(b_i)_{0\leq i\leq k}$ is an admissible sequence.
    We conclude that (1) holds for $k+1$.
\end{proof}

\begin{table}
\[
\footnotesize
\begin{array}{c|cccc}
       &  a_0  & a_1   & a_2 \\
       &  2  &  2    & 2   \\
\hline
\hline
       &  p_{-1}/q_{-1}  & p_0/q_0  & p_1/q_1  \\
       &  1/0  &  2/1    & 5/2  \\
\hline
\hline
       &  r_0  & -r_1   & r_2 \\
       &  1  &  -3   & 7   \\
\hline
\hline
  n    &  b_0  & b_1   & b_2 \\
\hline
  0   & 0  & 0  &  0\\
  1   & 1  & 0  &  0\\
  2   & 2  & 0  &  0\\
  3   & 2  & 2  &  1\\
  4   & 0  & 1  &  1\\
  5   & 1  & 1  &  1\\
  6   & 2  & 1  &  1\\
  7   & 0  & 0  &  1\\
  8   & 1  & 0  &  1\\
  9   & 2  & 0  &  1\\
  10  & 2  & 2  &  2\\
  11  & 0  & 1  &  2\\
  12  & 1  & 1  &  2\\
  13  & 2  & 1  &  2\\
  14  & 0  & 0  &  2\\
  15  & 1  & 0  &  2\\
  16  & 2  & 0  &  2\\
\end{array}
\qquad
\begin{array}{c|cccc}
       &  a_0  & a_1   & a_2 & a_3 \\
       &  2  &  2    & 2   & 2 \\
\hline
\hline
       &  p_{-1}/q_{-1}  & p_0/q_0  & p_1/q_1   & p_2/q_2 \\
       &  1/0  &  2/1    & 5/2   & 12/5 \\
\hline
\hline
       &  r_0  & -r_1   & r_2 & -r_3 \\
       &  1  &  -3   & 7   & -17 \\
\hline
\hline
  n    &  b_0  & b_1   & b_2 & b_3 \\
\hline
  -24  &  2  &  2    & 2   & 2 \\
  -23  &  0  &  1    & 2   & 2 \\
  -22  &  1  &  1    & 2   & 2 \\
  -21  &  2  &  1    & 2   & 2 \\
  -20  &  0  &  0    & 2   & 2 \\
  -19  &  1  &  0    & 2   & 2 \\
  -18  &  2  &  0    & 2   & 2 \\
  -17  &  0  &  0    & 0   & 1 \\
  -16  &  1  &  0    & 0   & 1 \\
  -15  &  2  &  0    & 0   & 1 \\
  -14  &  2  &  2    & 1   & 1 \\
  -13  &  0  &  1    & 1   & 1 \\
  -12  &  1  &  1    & 1   & 1 \\
  -11  &  2  &  1    & 1   & 1 \\
  -10  &  0  &  0    & 1   & 1 \\
  -9   &  1  &  0    & 1   & 1 \\
  -8   &  2  &  0    & 1   & 1 \\
  -7   &  2  &  2    & 2   & 1 \\
  -6   &  0  &  1    & 2   & 1 \\
  -5   &  1  &  1    & 2   & 1 \\
  -4   &  2  &  1    & 2   & 1 \\
  -3   &  0  &  0    & 2   & 1 \\
  -2   &  1  &  0    & 2   & 1 \\
  -1   &  2  &  0    & 2   & 1 \\
  0    &  0  &  0    & 0   & 0 \\
  1    &  1  &  0    & 0   & 0 \\
  2    &  2  &  0    & 0   & 0 \\
  3    &  2  &  2    & 1   & 0 \\
  4    &  0  &  1    & 1   & 0 \\
  5    &  1  &  1    & 1   & 0 \\
  6    &  2  &  1    & 1   & 0 \\
  7    &  0  &  0    & 1   & 0 \\
  8    &  1  &  0    & 1   & 0 \\
  9    &  2  &  0    & 1   & 0 \\
  10   &  2  &  2    & 2   & 0 \\
  11   &  0  &  1    & 2   & 0 \\
  12   &  1  &  1    & 2   & 0 \\
  13   &  2  &  1    & 2   & 0 \\
  14   &  0  &  0    & 2   & 0 \\
  15   &  1  &  0    & 2   & 0 \\
  16   &  2  &  0    & 2   & 0 \\
\end{array}
\]
\caption{
    Let $a=[2;2,2]$ be the continued fraction expansion of $12/5$.  The table
    on the left shows the admissible representation $(b_0,b_1,b_2)$ of every
    integer $n$ in the interval $0 \leq n < r_3=17$.
    They satisfy $n=b_0r_0-b_1r_1+b_2r_2$.
    Let $a=[2;2,2,2]$ be the continued fraction expansion of $29/12$.
    The table on the right shows the admissible representation $(b_0,b_1,b_2,b_3)$ of every integer
    $n$ in the interval $-24=r_3-r_4 \leq n < r_3=17$.
    They satisfy $n=b_0r_0-b_1r_1+b_2r_2-b_3r_3$.
    }
\label{table:2222}
\end{table}

\begin{theorem}\label{thm:bijection-admissible-to-interval-integers}
    Let $a=[a_0; a_1, \dots, a_{k-1}]$ be a (even or odd-length) continued
    fraction expansion of a positive rational number.
    The map
    \[
    \begin{array}{rccl}
        \val_a:&\Bcal(a)&\to& \Zcal(a) \\
        &(b_i)_{0\leq i\leq k-1}&\mapsto &\sum_{i=0}^{k-1} (-1)^{i}b_{i}r_{i}
    \end{array}
    \]
    is a bijection whose range is the following interval of integers
    \[
        \Zcal(a) =
        \begin{cases}
            \Z\cap [0,r_{k})               & \text{ if $k$ is odd,}\\
            \Z\cap [r_{k-1}-r_{k},r_{k-1}) & \text{ if $k$ is even.}
        \end{cases}
    \]
\end{theorem}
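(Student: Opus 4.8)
The plan is to deduce the theorem from Lemma~\ref{lem:cardinality-admissible-sequences} and Lemma~\ref{lem:existence-rep-in-interval} by a counting argument, since those two lemmas already carry all the real content. First I would record two cardinalities. On one hand, the interval $\Zcal(a)$ contains exactly $r_k$ integers in both parities: $\#(\Z\cap[0,r_{k}))=r_{k}$ when $k$ is odd, and $\#(\Z\cap[r_{k-1}-r_{k},r_{k-1}))=r_{k-1}-(r_{k-1}-r_{k})=r_{k}$ when $k$ is even. On the other hand, whether a length-$k$ sequence is admissible depends only on $a_0,\dots,a_{k-1}$, so $\Bcal(a)=\Bcal_{k}(a)$, and therefore $\#\Bcal(a)=r_{k}$ by Lemma~\ref{lem:cardinality-admissible-sequences}.

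Next, I would regard $\val_a$ a priori as a map into $\Z$. Lemma~\ref{lem:existence-rep-in-interval} states precisely that every integer of $\Zcal(a)$ lies in its image, i.e.\ $\Zcal(a)\subseteq\val_a(\Bcal(a))$. Since the image of a map from a finite set is no larger than the set itself,
\[
    r_{k}=\#\Zcal(a)\;\le\;\#\val_a(\Bcal(a))\;\le\;\#\Bcal(a)=r_{k},
\]
so every inequality is an equality. Equality on the left together with the inclusion $\Zcal(a)\subseteq\val_a(\Bcal(a))$ forces $\val_a(\Bcal(a))=\Zcal(a)$; in particular $\val_a$ does take values in $\Zcal(a)$ and is surjective onto it. Equality $\#\val_a(\Bcal(a))=\#\Bcal(a)$ together with finiteness of the domain forces $\val_a$ to be injective. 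Hence $\val_a\colon\Bcal(a)\to\Zcal(a)$ is a bijection, which is the claim.

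I do not expect a genuine obstacle here: the substance is in Lemma~\ref{lem:existence-rep-in-interval} (surjectivity) and Lemma~\ref{lem:cardinality-admissible-sequences} (the count), the latter resting on the recurrence~\eqref{eq:the-r-sequence} and the former on the telescoping bound of Lemma~\ref{lem:upper-bounds-on-the-sum}. Should one prefer to avoid the pigeonhole step, one could instead prove the inclusion $\val_a(\Bcal(a))\subseteq\Zcal(a)$ directly by upgrading Lemma~\ref{lem:upper-bounds-on-the-sum} to two-sided bounds on the partial sums $n_m=\sum_{i=0}^{m}(-1)^{i}b_ir_i$ --- namely $0\le n_m<r_{m+1}$ for $m$ even and $r_m-r_{m+1}\le n_m<r_m$ for $m$ odd, proved by induction on $m$ with a case split according to which of the two admissibility clauses is active at the top index --- and then obtain injectivity by peeling coefficients off the top, the width $<r_{m+1}$ of the lower tail pinning down $b_{m+1}$. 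But the counting argument above is shorter and reuses the lemmas as they stand.
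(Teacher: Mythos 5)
Your proof is correct and follows essentially the same route as the paper's: surjectivity onto $\Zcal(a)$ via Lemma~\ref{lem:existence-rep-in-interval}, the cardinality count $\#\Bcal(a)=r_k=\#\Zcal(a)$ via Lemma~\ref{lem:cardinality-admissible-sequences}, and a pigeonhole conclusion. Your counting chain even makes the containment $\val_a(\Bcal(a))\subseteq\Zcal(a)$ slightly more explicit than the paper does, but the substance is identical.
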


\begin{proof}
    Let $\Zcal(a)=\Z\cap[0,r_{k})$ if $k$ is odd or
    $\Zcal(a)=\Z\cap[r_{k-1}-r_{k},r_{k-1})$ if $k$ is even.
    From the existence of the admissible sequence proved above
    in Lemma~\ref{lem:existence-rep-in-interval},
    the map $\val_a:\Bcal(a)\to \Zcal(a)$ is onto.
    The cardinality of the range of $\val_a$ is $\#\Zcal(a)=r_{k}$.
    From Lemma~\ref{lem:cardinality-admissible-sequences},
    we also have that the set of admissible sequences for $a$ of length $k$ is
    $\#\Bcal(a)  = r_{k}$.
    Thus, the map $\val_a:\Bcal(a)\to \Zcal(a)$ is also injective
    and it is a bijection.
\end{proof}

We denote the representation of integers in the interval $\Zcal(a)$
by the function $\rep_a:\Zcal(a)\to\Bcal(a)$
defined as the inverse of the map $\val_a$, that is,
$\rep_a = \val_a^{-1}$.

\begin{THEOREMA}
    \MainTheoremA
\end{THEOREMA}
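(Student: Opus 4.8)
The statement is word-for-word Theorem~\ref{thm:bijection-admissible-to-interval-integers}, so the plan is to observe this and to collect the ingredients already established above. Spelling it out: asking to write $n=\sum_{i=0}^{k-1}(-1)^i b_i r_i$ with $(b_i)_{0\le i\le k-1}$ admissible for $a$ is exactly asking for a preimage of $n$ under the map $\val_a\colon\Bcal(a)\to\Zcal(a)$. Hence \emph{existence} of the representation is surjectivity of $\val_a$, and \emph{uniqueness} is injectivity.

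For surjectivity I would cite Lemma~\ref{lem:existence-rep-in-interval}, which, separately for odd-length and even-length $a$, produces an admissible representation of every integer in the interval $[0,r_k)$ (when $k$ is odd) or $[r_{k-1}-r_k,r_{k-1})$ (when $k$ is even) --- precisely the set $\Zcal(a)$. For injectivity I would not argue directly; I would count instead. By Lemma~\ref{lem:cardinality-admissible-sequences} the set of admissible sequences of length $k$ has cardinality $r_k$, and the interval $\Zcal(a)$ contains exactly $r_k$ integers in either parity case. A surjection between finite sets of equal cardinality is a bijection, so $\val_a$ is injective and the admissible representation is unique. This is the assembly already carried out in the proof of Theorem~\ref{thm:bijection-admissible-to-interval-integers}.

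All the genuine content therefore sits in Lemma~\ref{lem:existence-rep-in-interval}, whose induction alternates ``(1) holds for odd $k$ $\Rightarrow$ (2) holds for $k+1$'' with ``(2) holds for even $k$ $\Rightarrow$ (1) holds for $k+1$'', starting from the trivial base case $k=1$. At each step one reads off the top coefficient as (minus) the quotient of a Euclidean division by $r_k$ and reduces the rest to the inductive interval. The part I expect to require the most care --- and which I would isolate as the main obstacle --- is verifying, at each induction step, the boundary clause of admissibility: that $b_k=a_k$ forces $b_{k-1}=a_{k-1}$ when $k$ is odd, and $b_k=0$ forces $b_{k-1}=0$ when $k$ is even. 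This uses the telescoping upper bound of Lemma~\ref{lem:upper-bounds-on-the-sum} applied to the lower-order tail $R'=\sum_{i=0}^{k-2}(-1)^i b_i r_i$, together with a careful choice of the endpoints of $\Zcal(a)$ so that the quotient lands in the allowed range $\{-a_k,\dots,0\}$ (respectively $b_k$ lands in $\{0,\dots,a_k\}$). Once those inequalities are in place, everything else is routine bookkeeping with the recurrence $r_i=a_{i-1}r_{i-1}+r_{i-2}$ and $r_{-1}=r_0=1$.
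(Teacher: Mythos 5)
Your proposal is correct and follows exactly the paper's own route: existence via Lemma~\ref{lem:existence-rep-in-interval} (surjectivity of $\val_a$), uniqueness by the counting argument comparing $\#\Bcal(a)=r_k$ from Lemma~\ref{lem:cardinality-admissible-sequences} with $\#\Zcal(a)=r_k$, assembled in Theorem~\ref{thm:bijection-admissible-to-interval-integers}. Your description of the inductive structure of Lemma~\ref{lem:existence-rep-in-interval}, including the role of Lemma~\ref{lem:upper-bounds-on-the-sum} in verifying the boundary clauses of admissibility, also matches the paper.
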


\begin{proof}
    The statement follows from the fact that
    the map $\val_a:\Bcal(a)\to \Zcal(a)$ proposed in
    Theorem~\ref{thm:bijection-admissible-to-interval-integers}
    is a bijection.
\end{proof}

Theorem~\ref{thm:our-ostrowski-in-introduction}
is illustrated in Table~\ref{table:2222}
using the odd-length continued fraction expansion $a=[2;2,2]$ of $12/5$
and
using the even-length continued fraction expansion $a=[2;2,2,2]$ of $29/12$.
The following example illustrates the case of a convergent to the golden ratio.

\begin{example}\label{ex:negaFibonacci}
With $a=[1;1,1,1,1,1]$ of even length 6, we have
$(r_i)_i=(1, -2, 3, -5, 8, -13)$. Writing
least-significant first, we obtain the following representations
for integers in the interval $[-8, 13)$:
    \[
\begin{array}{c|c}
    n & \rep_a(n)\\
    \hline \\[-4mm]
  -8 & 111111 \\[-.5mm]
  -7 & 001111 \\[-.5mm]
  -6 & 101111 \\[-.5mm]
  -5 & 000011 \\[-.5mm]
  -4 & 100011 \\[-.5mm]
  -3 & 111011 \\[-.5mm]
  -2 & 001011
\end{array}
\qquad
\qquad
\begin{array}{c|c}
  n & \rep_a(n)\\
    \hline \\[-4mm]
  -1 & 101011 \\[-.5mm]
  0  & 000000 \\[-.5mm]
  1  & 100000 \\[-.5mm]
  2  & 111000 \\[-.5mm]
  3  & 001000 \\[-.5mm]
  4  & 101000 \\[-.5mm]
  5  & 111110
\end{array}
\qquad
\qquad
\begin{array}{c|c}
  n & \rep_a(n)\\
    \hline \\[-4mm]
  6  & 001110\\[-.5mm]
  7  & 101110\\[-.5mm]
  8  & 000010\\[-.5mm]
  9  & 100010\\[-.5mm]
  10 & 111010\\[-.5mm]
  11 & 001010\\[-.5mm]
  12 & 101010
\end{array}
    \]
\end{example}

The set of admissible sequences is naturally endowed with a partial order.
Its definition is made before Theorem~\ref{thm:bijection-order-ideals-to-integersequences}.
Additional results can be obtained on these admissible sequences
related to discrete geometry and numeration systems.

In Section~\ref{sec:convexity}, we prove that the set $\Bcal(a)$ of admissible
sequences for $a$ is a convex set of integer points (Theorem~\ref{thm:convex-hull}).
Thus $\Bcal(a)$ is the set of integer points inside of a polytope.

Using the infinite continued fraction expansion of a positive irrational number,
Theorem~\ref{thm:our-ostrowski-in-introduction} can be extended
to all positive integers using odd-length admissible sequences
and
to all integers in $\Z$ using even-length admissible sequences.
These two alternating-sign versions of Ostrowski's theorem seem to be new and
will be developed in a separate work from this article.

\section{A bijection from fence poset order ideals to admissible sequences}
\label{sec:bijection-order-ideals-admissible}

In \cite{MR4073883}, Morier-Genoud and Ovsienko
proposed a $q$-analog of rational numbers and
gave an enumerative interpretation of rational numbers larger than 1
through objects named
closures of oriented path graphs.
While making progress \cite{MR4266256} and solving
\cite{MR4499341} a conjecture of Morier-Genoud and
Ovsienko about the unimodality of the involved polynomials, the authors
used the equivalent, but maybe more common, terminology of fence posets and their
order ideals. In this section, we follow this choice.

The aim of this section is to relate the combinatorics of admissible sequences
to order ideals of fence posets.
As we shall see, we extend the definition of fence posets proposed in
\cite{MR4266256,MR4499341} in order to describe all of them, not only those
that start with an up step.
This allows to interpret simultaneously the numerator and the denominator
of a $q$-rational number on a single fence poset.
Also, this gives an enumerative interpretation of the $q$-analog of
every positive rational number without the condition $>1$ assumed in \cite{MR4073883}.

\subsection{From rational numbers to fence posets}\label{sec:rational-to-fence-poset}

A \emph{partially ordered set} $P$ (or \emph{poset}, for short)
is a set together with a binary relation denoted $\vartriangleleft$,
satisfying three axioms: reflexivity, antisymmetry and transitivity;
see \cite{MR2868112}.
If $s,t\in P$ with $s\neq t$, then we say that $t$ covers $s$ or $s$ is covered by $t$,
if $s\vartriangleleft t$ and
no element $u\in P\setminus\{s,t\}$ satisfies $s \vartriangleleft u \vartriangleleft t$.
The \emph{Hasse diagram}
of a finite poset $P$ is the graph whose vertices are the elements of $P$, whose edges are the
cover relations, and such that if $s \vartriangleleft t$ then $t$ is drawn above $s$
(i.e., with a higher vertical coordinate).

A \emph{fence poset} is a poset whose Hasse diagram is a path graph,
that is, a connected acyclic directed graph whose degree of every vertex is at
most~2.
In \cite{MR4266256}, a fence poset was associated
to every composition $(a_0,a_1,\dots,a_{k-1})$ made of positive integers,
thus forcing the first step of the fence poset to always be an up step.
The same definition was used in \cite{MR4499341},
but it was convenient for them to
``\textit{allow the first part of the composition to be zero}''
in their proof of Morier-Genoud--Ovsienko's rank-unimodality conjecture
in order to represent all fence posets.
This suggests that it may be more convenient to represent fence posets
using the continued fraction expansion of
positive rational numbers.
This is what we do in this section.

\begin{figure}[h]
    \begin{tikzpicture}[scale=.6,every node/.style={scale=0.9}]
        \foreach \x/\h in {0/0,1/1,2/2,3/3,4/2,5/1,6/2}  {
            \node[empty circled node] (v\x) at (\x,\h) {};
            \node[above=2mm]  at (\x,\h) {$y_\x$};
        }
        \draw (v0) -- (v1) ;
        \draw (v1) -- (v2) ;
        \draw (v2) -- (v3) ;
        \draw (v3) -- (v4) ;
        \draw (v4) -- (v5) ;
        \draw (v5) -- (v6) ;
        \node at (3,-1) {$y_0\vartriangleleft
                          y_1\vartriangleleft
                          y_2\vartriangleleft
                          y_3\vartriangleright
                          y_4\vartriangleright
                          y_5\vartriangleleft y_6$};
        \node at (3,-2) {$\Fcal(\frac{17}{5})=\Fcal([3;2,2])=F(\1\1\1\0\0\1)$};
    \end{tikzpicture}
    \qquad
    \begin{tikzpicture}[scale=.6,every node/.style={scale=0.9}]
        \foreach \x/\h in {0/3,1/2,2/1,3/0,4/1,5/2,6/1,7/2,8/3,9/4,10/3,11/2}  {
            \node[empty circled node] (v\x) at (\x,\h) {};
            \node[above=2mm]  at (\x,\h) {$y_{\x}$};
        }
        \draw (v0) -- (v1) ;
        \draw (v1) -- (v2) ;
        \draw (v2) -- (v3) ;
        \draw (v3) -- (v4) ;
        \draw (v4) -- (v5) ;
        \draw (v5) -- (v6) ;
        \draw (v6) -- (v7) ;
        \draw (v7) -- (v8) ;
        \draw (v8) -- (v9) ;
        \draw (v9) -- (v10) ;
        \draw (v10) -- (v11) ;
        \node at (5.5,-1) {$\Fcal(\frac{36}{121})=\Fcal([0;3,2,1,3,3])=F(\mathtt{00011011100})$};
    \end{tikzpicture}
    \caption{The fence posets %
                              $F(\1\1\1\0\0\1)$
             and %
                 $F(\mathtt{00011011100})$.}
    \label{fig:fence-posets}
\end{figure}

For every word $w=w_1\cdots w_n\in\{\0,\1\}^*$, let
$F(w)$ be the fence poset with elements $\{y_0,y_1,\dots,y_n\}$
and covering relations
\[
\begin{cases}
    y_{i-1} \vartriangleleft y_i  & \text{ if } w_i=\0,\\
    y_{i-1} \vartriangleright y_i & \text{ if } w_i=\1,
\end{cases}
\]
for every integer index $i$ with $1\leq i\leq n$,
where $\vartriangleleft$ is the order relation in $F(w)$.
Every fence poset can be expressed
as $F(w)$ for some binary word $w\in\{\0,\1\}^*$.
Thus, $F$ is a bijection from $\{\0,\1\}^*$
to the set of fence posets.
For example, the fence posets $F(\0\0\0\1\1\0)$
and $F(\mathtt{00011011100})$
are shown in Figure~\ref{fig:fence-posets}.

We let $\Fcal=F\circ W\circ\CFeven$ allowing to associate bijectively a fence poset
to every positive rational number:
\[
    \Fcal:\Q_{>0}\to\{\text{fence posets}\}.
\]
Figure~\ref{fig:fence-posets} shows a fence poset $\Fcal(\xx)$ associated with
a rational number $\xx>1$ and another associated with a rational number
$\xx<1$.

The map $\Fcal$ maps the Stern-Brocot tree
of positive rational numbers \cite[Section 4.5]{MR1397498}
to the binary tree of fence posets where the parent relation
is defined by the prefix relation; see
Figure~\ref{fig:Stern-Brocot-to-fences-tree}.
\begin{figure}
\begin{center}
\includegraphics[width=\linewidth]{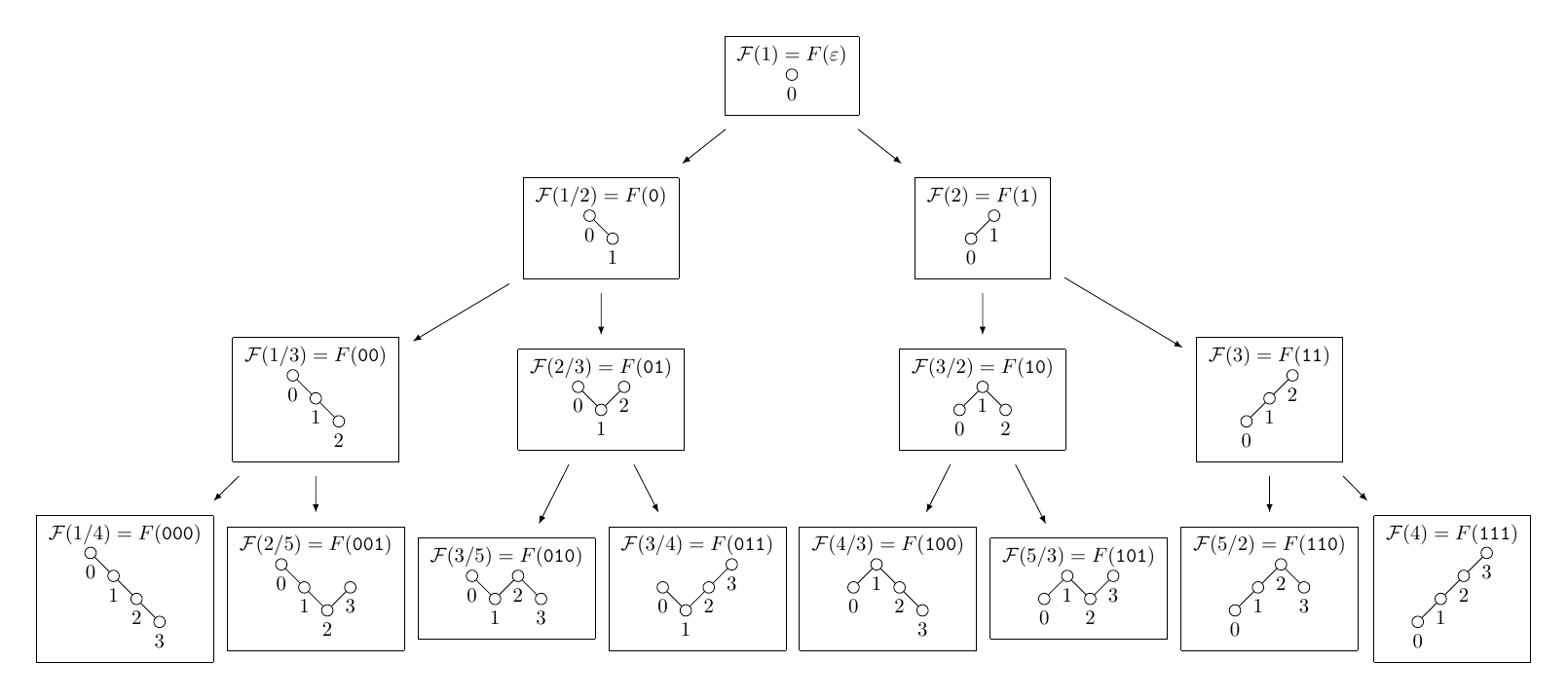}
\end{center}
    \caption{The map $\Fcal$ maps the Stern-Brocot tree of positive
    rational numbers on the binary tree of fence posets ordered by the prefix relation.}
    \label{fig:Stern-Brocot-to-fences-tree}
\end{figure}

The reader may refer to Figure~\ref{fig:the-big-picture}
to locate this bijection in the big picture.
Notice that the Hasse diagram of the fence poset $\Fcal([a_0;\dots,a_{2\ell-1}])$
has $a_0+\dots+a_{2\ell-1}-1$ edges
and $a_0+\dots+a_{2\ell-1}$ vertices.
Also, for every $x\in\Q_{>0}$, the poset $\Fcal(\xx)$ starts with a down step
if and only if $\xx<1$.

Following Lemma~\ref{lem:map-W-commutes-inverse-bar},
the fence posets $\Fcal(\xx)$ and $\Fcal(\xx^{-1})$ are mirror image
of one another under the horizontal axis.
Mirror image under the vertical axis can be understood
as another involution on the positive rational numbers;
see Lemma~\ref{lem:map-W-commutes-the-widehat}.

\begin{remark}
The construction of a fence poset from the even-length continued fraction
expansion of a rational number was already proposed in \cite{MR4588256},
but it is restricted to rational numbers larger than one.
This choice is consistent with the combinatorial interpretation of $q$-rational numbers
from $k$-vertex closures of two oriented path graphs made in \cite{MR4073883}.
On the contrary, the fence poset $\Fcal(x)$ defined here works for all
rational numbers $\xx>0$.
\end{remark}

\subsection{Fence poset order ideals}

A (lower) \emph{order ideal}
of a poset $P$ with binary relation $\vartriangleleft$
is a subset $I$ of $P$ such that if $t\in I$
and $s \vartriangleleft t$, then $s\in I$ \cite{MR2868112}.
The set of all order ideals of $P$, ordered by inclusion, forms a poset denoted
$J(P)$ which is also a \emph{distributive lattice}.
The lattice $J(P)$ is graded by
the cardinality, called \emph{rank}, of the order ideals.
The number of order ideals of each rank is encoded into a \emph{rank-generating polynomial}
\[
    \sum_{I\in J(P)} q^{|I|}
\]
where $|I|$ is the cardinality (rank) of the order ideal $I$.

In the context of fence posets,
we let $\Jcal:=J\circ\Fcal = J\circ F\circ W\circ\CFeven$.
Thus, if $\xx>0$ is a positive rational number, then
$\Jcal(\xx)$ is the set of lower order ideals of the fence poset $\Fcal(\xx)$
ordered by inclusion;
see these maps in the big picture in Figure~\ref{fig:the-big-picture}.
It was conjectured in \cite{MR4073883} and
proved in \cite{MR4499341} that the rank polynomials of the distributive
lattices of lower ideals of fence posets are unimodal.

For every $\xx\in\Q_{>0}$,
the set $\Jcal(\xx)$ of lower order ideals of the fence poset $\Fcal(\xx)$
is naturally partitioned as the disjoint union
$\Jcal(\xx)=\Jcal^\bullet(\xx)\cup\Jcal^\circ  (\xx)$
according to whether the order ideal contains the first element or not,
that is,
\begin{align*}
    \Jcal^\bullet(\xx) &= \{I\in\Jcal(\xx) \mid y_0\in    I\},\\
    \Jcal^\circ  (\xx) &= \{I\in\Jcal(\xx) \mid y_0\notin I\}.
\end{align*}
For example, an order ideal in the set $\Jcal^\bullet(399/121)$ is
shown in Figure~\ref{fig:ideal-containing-zero}.

\begin{figure}[h]
\begin{center}
    \begin{tikzpicture}[scale=.5,every node/.style={scale=0.9}]
        \foreach \x/\h in {2/2,3/3,4/2,5/1,10/2,11/3,12/4}  {
            \node[empty circled node] (v\x) at (\x,\h) {};
            \node[above=1mm]  at (\x,\h) {${\x}$};
        }
        \foreach \x/\h in {0/0,1/1,6/0,7/1,8/2,9/1,13/3,14/2}  {
            \node[plein circled node] (v\x) at (\x,\h) {};
            \node[above=1mm]  at (\x,\h) {${\x}$};
        }
        \draw (v0)  -- (v1)  ;
        \draw (v1)  -- (v2)  ;
        \draw (v2)  -- (v3)  ;
        \draw (v3)  -- (v4)  ;
        \draw (v4)  -- (v5)  ;
        \draw (v5)  -- (v6)  ;
        \draw (v6)  -- (v7)  ;
        \draw (v7)  -- (v8)  ;
        \draw (v8)  -- (v9)  ;
        \draw (v9)  -- (v10) ;
        \draw (v10) -- (v11) ;
        \draw (v11) -- (v12) ;
        \draw (v12) -- (v13) ;
        \draw (v13) -- (v14) ;
    \end{tikzpicture}
\end{center}
    \caption{The order ideal $\{0,1,6,7,8,9,13,14\}$
    of cardinality $8$
    in the distributive lattice $\Jcal^\bullet(399/121)=\Jcal^\bullet([3,3,2,1,3,3])$
    is shown with dark vertices.
    The elements $y_i$ of the poset are identified with their indices~$i$.
    }
\label{fig:ideal-containing-zero}
\end{figure}
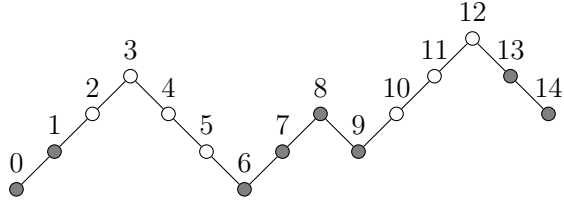

\subsection{A bijection from fence poset order ideals to admissible sequences}

The relation between order ideals of fence posets
and binary strings in $\{\0,\1\}^*$ with no two consecutive $\1$'s was noticed before
for the particular fence poset $\Fcal(\1\0\1\0\1\dots)$
whose number of order ideals is a Fibonacci number \cite{MR1948779}.
In this section, we extend this observation to all fence posets
using a partition of the fence poset into chains
as in \cite[Section 3.5]{MR2868112}.

We prove the existence of a bijection from
the set of lower order ideals of a fence poset
to the set of admissible sequences
such that the cardinality of an order ideal
correspond to the 1-norm of an admissible sequence.

Let $a=[a_0;a_1,\dots,a_{2\ell-1}]$
be the even-length continued fraction expansion of a positive
rational number $x$.
For every $i$, with $0\leq i< 2\ell$,
we define the interval of integers
\[
    C_i(a) = \left\{n\in\N
        \;\middle\vert\;
        \sum_{k=0}^{i-1}a_k
        \leq n <
        \sum_{k=0}^{i}a_k\right\}
\]
which is a chain in the fence poset $\Fcal(\xx)$.
The family $\{C_i(a)\}_{0\leq i< 2\ell}$ is made of disjoint consecutive
intervals of integers and its union is the set
\[
    \bigcup_{0\leq i< 2\ell} C_i(a)
    = \{0,1,\dots,a_0+a_1+\dots+a_{2\ell-1}-1\}.
\]
To simplify the notation,
we identify the elements $\{y_0,\dots,y_{a_0+a_1+\dots+a_{2\ell-1}-1}\}$ 
of the fence poset $\Fcal(\xx)$ with their integer indices
$\{0,1,\dots,a_0+a_1+\dots+a_{2\ell-1}-1\}$.
Counting the number of elements of an order ideal of $\Fcal(\xx)$
in each of the intervals of the family
$\{C_i(a)\}_{0\leq i< 2\ell}$
leads to a function $\Psi:\Jcal(\xx)\to\Bcal(a)$ defined as
\[
\begin{array}{rccl}
    \Psi:&\Jcal(\xx) & \to & \Bcal(a)\\
    &I & \mapsto &
    \left(\# I\cap C_i(a)\right)_{0\leq i< 2\ell}.
\end{array}
\]
An example of the bijection is shown in Figure~\ref{fig:order-ideal-to-admissible-sequence}.

\begin{figure}[h]
\begin{center}
\newcommand{\drawboxaibiUP}[6]{
        \draw[rounded corners,dashed,very thick,fill=black!4] [yshift=2mm](#2-\dx,#3)
                          -- ++ (\dx,-\dy)
                          -- ++ (#4-1+\dx,#4-1+\dy)
                          -- ++ (-\dx,\dy) -- cycle;
        \node[above] at (#2+.5*#4-.5,-2.)   {$a_{#1}=#4$};
        \node[below] at (#2+.5*#4-.5,-.5) {$b_{#1}=#5$};
}
\newcommand{\drawboxaibiDOWN}[6]{
        \draw[rounded corners,dashed,very thick,fill=black!4] [yshift=2mm](#2,#3+\dy)
                          -- ++ (-\dx,-\dy)
                          -- ++ (#4-1+\dx,-#4+1-\dy)
                          -- ++ (\dx,\dy) -- cycle;
        \node[above] at (#2+.5*#4-.5,-2)   {$a_{#1}=#4$};
        \node[below] at (#2+.5*#4-.5,-.5) {$b_{#1}=#5$};
}
        \def\dx{.8}
        \def\dy{.8}
    \begin{tikzpicture}[scale=.8,every node/.style={scale=0.9}]
        \node at (7,6) {$
        \begin{array}{c|cccccc}
            i      & 0        & 1        & 2        & 3        & 4        & 5\\
            \hline
            C_i(a) & \{0,1,2\}& \{3,4,5\}& \{6,7\}& \{8\}& \{9,10,11\}& \{12,13,14\}\\
        \end{array}$};
        \drawboxaibiUP  {0}{0} {0}{3}{2}{\{0,1,2\}}
        \drawboxaibiDOWN{1}{3} {3}{3}{0}{\{3,4,5\}}
        \drawboxaibiUP  {2}{6} {0}{2}{2}{\{6,7\}}
        \drawboxaibiDOWN{3}{8} {2}{1}{1}{\{8\}}
        \drawboxaibiUP  {4}{9} {1}{3}{1}{\{9,10,11\}}
        \drawboxaibiDOWN{5}{12}{4}{3}{2}{\{12,13,14\}}
        \foreach \x/\h in {2/2,3/3,4/2,5/1,10/2,11/3,12/4}  {
            \node[empty circled node] (v\x) at (\x,\h) {};
            \node[above=1mm]  at (\x,\h) {${\x}$};
        }
        \foreach \x/\h in {0/0,1/1,6/0,7/1,8/2,9/1,13/3,14/2}  {
            \node[plein circled node] (v\x) at (\x,\h) {};
            \node[above=1mm]  at (\x,\h) {${\x}$};
        }
        \draw (v0) -- (v1)   ;
        \draw (v1) -- (v2)   ;
        \draw (v2) -- (v3)   ;
        \draw (v3) -- (v4)   ;
        \draw (v4) -- (v5)   ;
        \draw (v5) -- (v6)   ;
        \draw (v6) -- (v7)   ;
        \draw (v7) -- (v8)   ;
        \draw (v8) -- (v9)   ;
        \draw (v9) -- (v10)  ;
        \draw (v10) -- (v11) ;
        \draw (v11) -- (v12) ;
        \draw (v12) -- (v13) ;
        \draw (v13) -- (v14) ;
    \end{tikzpicture}
\end{center}
    \caption{The order ideal $I=\{0,1,6,7,8,9,13,14\}$
         in $\Jcal^\bullet(a)$
         corresponding to an admissible sequence
         $\Psi(I)=b=(2,0,2,1,1,2)$
         for $a=[3;3,2,1,3,3]$.}
\label{fig:order-ideal-to-admissible-sequence}
\end{figure}

Let $a=[a_0; a_1, \dots, a_{k-1}]$ be the (even or odd-length) continued
fraction expansion of a positive rational number.
The set $\Bcal(a)$ of admissible sequences for $a$
is naturally equipped with a partial order
$(\Bcal(a), \leq)$ defined as follows:
\[
     (b_i)_{0\leq i\leq k-1}
     \leq
     (b_i')_{0\leq i\leq k-1}
    \quad
    \text{ if and only if }
    \quad
    b_i\leq b_i'
    \text{ for every $i$ with }
    0\leq i\leq k-1.
\]
The rank function of the graded poset $(\Bcal(a), \leq)$ is the 1-norm of the
admissible sequences.

\begin{theorem}\label{thm:bijection-order-ideals-to-integersequences}
    Let $x\in\Q_{>0}$ whose
even-length continued fraction expansion is $a=[a_0;a_1,\dots,a_{2\ell-1}]$.
    The map $\Psi:(\Jcal(\xx),\subseteq)\to(\Bcal(a),\leq)$ is an order-preserving bijection from
the set of order ideals of the fence poset $\Fcal(\xx)$
to the set of admissible sequences for $a$
such that:
\begin{itemize}
\item $I\in \Jcal^\bullet(\xx)$ if and only if $\Psi(I)\in \Bcal^\bullet(a)$;
\item $I\in \Jcal^\circ(\xx)$  if and only if $\Psi(I)\in \Bcal^\circ(a)$;
\item for every $I\in \Jcal(\xx)$, we have $\size{I} = \Vert\Psi(I)\Vert_1$.
\end{itemize}
\end{theorem}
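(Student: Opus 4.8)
The plan is to read the structure of $\Fcal(\xx)$ off the word $w=W(\xx)=\1^{a_0}\0^{a_1}\cdots\1^{a_{2\ell-2}}\0^{a_{2\ell-1}-1}$ through its chain decomposition $\{C_i(a)\}_{0\le i<2\ell}$ and to match the cover relations of $\Fcal(\xx)$ against the three conditions defining an admissible sequence. First I would record the local shape. Writing $s_i=a_0+\cdots+a_{i-1}$, the block structure of $w$ shows that each $C_i(a)=\{y_{s_i},\dots,y_{s_{i+1}-1}\}$ is a chain of $a_i$ vertices (empty precisely when $i=0$ and $a_0=0$), totally ordered with $y_{s_i}$ at the bottom when $i$ is even and at the top when $i$ is odd; and that the cover relations of $\Fcal(\xx)$ are exactly the chain covers inside each $C_i(a)$ together with one connecting cover between consecutive chains, whose orientation alternates: for $i$ odd the maximum of $C_i(a)$ covers the maximum of $C_{i-1}(a)$, hence lies above every element of $C_{i-1}(a)$; while for even $i\ge 2$ the minimum of $C_{i-1}(a)$ covers the minimum of $C_i(a)$, hence the minimum of $C_i(a)$ lies below every element of $C_{i-1}(a)$. (When $a_0=0$ the chain $C_0(a)$ is empty, there is no $C_0$--$C_1$ connecting cover, and the $i=1$ instance $b_1=a_1\Rightarrow b_0=a_0=0$ of the odd condition holds trivially.) Consequently an order ideal of $\Fcal(\xx)$ is exactly a subset whose trace on each $C_i(a)$ is a down-set of that chain and which is closed under the connecting covers; so for $I\in\Jcal(\xx)$ we get $b_i:=\#(I\cap C_i(a))\in\{0,\dots,a_i\}$, with $I\cap C_i(a)$ consisting of the $b_i$ smallest elements of $C_i(a)$, and the two remaining admissibility conditions are the connecting covers in disguise. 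If $i$ is odd and $b_i=a_i$, the maximum of $C_i(a)$ lies in $I$; since it covers the maximum of $C_{i-1}(a)$, the latter lies in $I$ too, forcing $I\cap C_{i-1}(a)=C_{i-1}(a)$, i.e. $b_{i-1}=a_{i-1}$. If $i$ is even and $b_i=0$ but $b_{i-1}\ge 1$, the minimum of $C_{i-1}(a)$ lies in $I$; since it covers the minimum of $C_i(a)$, the latter lies in $I$ too, so $b_i\ge 1$, a contradiction; hence $b_{i-1}=0$. Thus $\Psi(I)\in\Bcal(a)$.

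For bijectivity I would produce $\Psi^{-1}$ explicitly. Injectivity is immediate: a finite chain has a unique down-set of each cardinality, so $I\cap C_i(a)$ is recovered from $b_i$, and the $C_i(a)$ partition the ground set of $\Fcal(\xx)$. For surjectivity, given $b\in\Bcal(a)$ put $I=\bigcup_i D_i$ with $D_i$ the set of the $b_i$ smallest elements of $C_i(a)$; since the only cover relations linking distinct chains are the connecting covers, and a subset of a finite poset is an order ideal iff it is closed under covers, it suffices to check closure under the connecting covers, which is the previous argument run in reverse. For $i$ odd, if the maximum of $C_i(a)$ lies in $I$ then $b_i=a_i$, so $b_{i-1}=a_{i-1}$ by admissibility and $D_{i-1}=C_{i-1}(a)$ already contains the maximum of $C_{i-1}(a)$. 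For even $i\ge 2$, if the minimum of $C_{i-1}(a)$ lies in $I$ then $b_{i-1}\ge 1$, so $b_i\ge 1$ by admissibility (the contrapositive of the even condition), and $D_i$ contains the minimum of $C_i(a)$. Hence $I$ is an order ideal and $\Psi(I)=b$. The same picture yields the last three bullets at once: $I\subseteq I'$ iff $I\cap C_i(a)\subseteq I'\cap C_i(a)$ for every $i$ iff $b_i\le b_i'$ for every $i$ iff $\Psi(I)\le\Psi(I')$, so $\Psi$ is in fact an order isomorphism; and $\size{I}=\sum_i b_i=\Vert\Psi(I)\Vert_1$ because the $C_i(a)$ partition $\Fcal(\xx)$.

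Finally, to split off the two halves: if $a_0\ge 1$ then $y_0$ is the minimum of the nonempty chain $C_0(a)$, so $y_0\in I$ iff $b_0\ge 1$, which is precisely the defining condition of $\Bcal^\bullet(a)$ in this case (the alternative $b_0=a_0=0<b_1=a_1$ being vacuous); if $a_0=0$ then $C_0(a)=\emptyset$ and $y_0$ is the maximum of $C_1(a)$, so $y_0\in I$ iff $b_1=a_1$, which, since $b_0=a_0=0$ is forced, is again precisely the defining condition of $\Bcal^\bullet(a)$. In either case $I\in\Jcal^\bullet(\xx)\iff\Psi(I)\in\Bcal^\bullet(a)$; and since $\Jcal(\xx)=\Jcal^\bullet(\xx)\sqcup\Jcal^\circ(\xx)$, $\Bcal(a)=\Bcal^\bullet(a)\sqcup\Bcal^\circ(a)$, and $\Psi$ is a bijection, the $\circ$-statement follows as well.

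The one genuinely delicate step is the first paragraph: extracting from the alternating blocks of $w$ which endpoint of each chain $C_i(a)$ is its top and which its bottom, and in which direction each connecting cover points, so that the two inequality-type admissibility conditions line up with the two parities; together with the observation that distinct chains interact only through these connecting covers, which is exactly what lets ``order ideal'' be checked by finitely many local conditions. Everything after that, including the degenerate case $C_0(a)=\emptyset$ (equivalently $\xx<1$), is routine bookkeeping.
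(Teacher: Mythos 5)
Your proof is correct and follows essentially the same route as the paper's: the same chain decomposition $\{C_i(a)\}_{0\le i<2\ell}$, the same verification that the two parity conditions on admissible sequences are exactly the two kinds of connecting covers, injectivity and surjectivity via the fact that a down-set of a finite chain is determined by its cardinality, and the same case split on $a_0=0$ versus $a_0>0$ for the $\bullet/\circ$ dichotomy. One remark on the step you flag as delicate: your orientation reading ($\1$ = up step, so $y_{s_i}$ is the bottom of $C_i(a)$ for even $i$ and the top for odd $i$) agrees with the paper's figures, with the observation that $\Fcal(\xx)$ starts with a down step iff $\xx<1$, and with what the paper's own proof uses, even though the displayed definition of $F(w)$ appears to have its two cases swapped; your convention is the one that makes the theorem true.
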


\begin{proof}
    First we prove that $\Psi$ is \textbf{well-defined}.
    Let $I\in\Jcal(\xx)$ be an order ideal and $b=\Psi(I)$.
    For every integer $i$, with $0\leq i< 2\ell$, we have
    \[
        0
        \leq
    b_i=\# I\cap C_i(a)
    \leq \# C_i(a)
      = a_i.
    \]
    If $i$ is odd and $b_i = a_i$, then $C_i(a)\subset I$.
    In particular, $\sum_{k=0}^{i-1}a_k\in I$.
    This forces the previous interval to be a subset of $I$ as well, that is,
    $C_{i-1}(a)\subset I$.
    Thus, $b_{i-1} =\# I\cap C_{i-1}(a) =\# C_{i-1}(a) =a_{i-1}$.
    If $i$ is even and $b_i = 0$, $C_i(a)\cap I=\varnothing$.
    In particular, $\sum_{k=0}^{i-1}a_k\notin I$.
    This forces all of the elements of the previous interval to
    not be in $I$ as well, that is,
    $C_{i-1}(a)\cap I=\varnothing$. Thus, $b_{i-1}=\# I\cap C_{i-1}(a) = 0$.
    We conclude that $b=\Psi(I)$ is admissible for $a$
    and that the map $\Psi$ is well-defined.

    \textbf{$\Psi$ is injective.}
    Let $I,I'\in \Jcal^\bullet(\xx)$ with $I\neq I'$.
    From the definition of order ideal,
    for every integer $i$,
    the four sets
    $I\cap C_i(a)$,
    $I'\cap C_i(a)$,
    $C_i(a)\setminus I$ and
    $C_i(a)\setminus I'$ are intervals of consecutive integers.
    Since $I$ and $I'$ are distinct order ideals,
    there exists
    an integer $i$, with $0\leq i< 2\ell$, such that
    $I\cap C_i(a) \neq I'\cap C_i(a)$.
    This is only possible if the cardinalities of the four sets do not match.
    In particular, $\# I\cap C_i(a) \neq \# I'\cap C_i(a)$.
    We conclude that $\Psi(I)\neq\Psi(I')$ and $\Psi$ is injective.

    \textbf{$\Psi$ is surjective.}
    Let $(b_i)_{0\leq i< 2\ell}$ be
    an admissible sequence for $a$.
    Let
    \begin{align*}
        I = &\left(\bigcup_{0\leq i< \ell} C_{2i+1}(a)\cap (C_{2i+1}(a)+(a_{2i+1}-b_{2i+1}))\right)
            \cup\\
          &\left(\bigcup_{0\leq i< \ell} C_{2i}(a)\cap (C_{2i}(a)-(a_{2i}-b_{2i}))\right)
    \end{align*}
    be the subset made of the union of the
    $b_i$ left-most (right-most, resp.) elements of $C_i(a)$ when $i$ is even (odd, resp.).
    Since $b$ is admissible, we have that $I$ is a well-defined order ideal.
    By construction, we have $\Psi(I)=b$. Thus $\Psi$ is surjective.

    \textbf{$\Psi$ is order-preserving.}
    Let $I, I'\in \Jcal(\xx)$ be two order ideals.
    Let $(b_i)_{0\leq i< 2\ell}=\Psi(I)$
    and $(b_i')_{0\leq i< 2\ell}=\Psi(I')$.
    We have
    $I\subseteq I'$
    if and only if
    $I\cap C_i(a)\subseteq I'\cap C_i(a)$ for every index $i$ such that $0\leq i< 2\ell$
    if and only if
    $\# I\cap C_i(a)\leq \# I'\cap C_i(a)$ for every index $i$ such that $0\leq i< 2\ell$
    if and only if
    $b_i\leq b_i'$ for every index $i$ such that $0\leq i< 2\ell$
    if and only if
    $(b_i)_{0\leq i< 2\ell}\leq (b_i')_{0\leq i< 2\ell}$
    if and only if
    $\Psi(I)\subset\Psi(I')$.

    \textbf{Items 1 and 2.}
    Let $I\in \Jcal^\bullet(\xx)$, that is $0\in I$.
    We need to distinguish two cases whether $a_0=0$ or not.
    If $a_0>0$, $0\in I$ implies that $b_0>0$, so that
    $\Psi(I)\in \Bcal^\bullet(a)$.
    If $a_0=0$, $0\in I$ implies that $b_0=0$, $C_1(a)\subset I$ and $0<b_1=a_1$, so that
    $\Psi(I)\in \Bcal^\bullet(a)$.

    Let $I\in \Jcal^\circ(\xx)$, that is $0\notin I$.
    We need to distinguish two cases whether $a_0=0$ or not.
    If $a_0>0$, $0\notin I$ implies that $b_0=0$, so that
    $\Psi(I)\in \Bcal^\circ(a)$.
    If $a_0=0$, $0\notin I$ implies that $b_0=0$, $C_1(a)\not\subset I$ and $0\leq b_1<a_1$, so that
    $\Psi(I)\in \Bcal^\circ(a)$.

    This is sufficient to prove
    both Items 1 and 2 since
    $\Bcal(a)=\Bcal^\bullet(a)\cup \Bcal^\circ(a)$ is a disjoint union.

    \textbf{Item 3.}
    Let $I\in \Jcal(\xx)$ and $(b_i)_{0\leq i< 2\ell}=\Psi(I)$. We have
    \[
        \Vert\Psi(I)\Vert_1
        = \sum_{i=0}^{2\ell-1} b_i
        = \sum_{i=0}^{2\ell-1} \# I\cap C_i(a)
        = \# I\cap \left(\cup_{i=0}^{2\ell-1} C_i(a)\right)
        = \# I = \size{I},
    \]
    since the intervals $(C_i(a))_{0\leq i< 2\ell}$ are disjoint
    and $I\subset \cup_{i=0}^{2\ell-1} C_i(a)$.
\end{proof}

\begin{corollary}\label{cor:poset-isomorphism-order-ideals-to-admissible}
Let $x\in\Q_{>0}$ whose even-length continued fraction expansion is $a=[a_0;a_1,\dots,a_{2\ell-1}]$.
The posets
    $(\Jcal(\xx), \subseteq)$
    and
    $(\Bcal(a), \leq)$ are isomorphic.
\end{corollary}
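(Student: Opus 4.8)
The plan is to obtain this immediately from Theorem~\ref{thm:bijection-order-ideals-to-integersequences}. That theorem already supplies a bijection $\Psi\colon\Jcal(\xx)\to\Bcal(a)$, so the only thing left to check in order to have a poset isomorphism between $(\Jcal(\xx),\subseteq)$ and $(\Bcal(a),\leq)$ is the \emph{biconditional}
\[
    I\subseteq I'
    \iff
    \Psi(I)\leq\Psi(I')
    \qquad\text{for all }I,I'\in\Jcal(\xx),
\]
i.e.\ that both $\Psi$ and $\Psi^{-1}$ are order-preserving.

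This biconditional is exactly what the ``$\Psi$ is order-preserving'' step in the proof of Theorem~\ref{thm:bijection-order-ideals-to-integersequences} establishes: writing $b=\Psi(I)$ and $b'=\Psi(I')$, one has $I\subseteq I'$ if and only if $I\cap C_i(a)\subseteq I'\cap C_i(a)$ for every index $i$, if and only if $\#\bigl(I\cap C_i(a)\bigr)\leq\#\bigl(I'\cap C_i(a)\bigr)$ for every $i$ --- here one uses that the relevant subsets of each chain $C_i(a)$ are intervals of consecutive integers, so that containment is detected by cardinality --- if and only if $b_i\leq b_i'$ for every $i$, which is by definition $\Psi(I)\leq\Psi(I')$ in $\Bcal(a)$. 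A bijection between posets satisfying this equivalence is a poset isomorphism, which is the assertion of the corollary.

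There is no genuine obstacle here: the only substantive point is the observation that the ``order-preserving bijection'' produced by Theorem~\ref{thm:bijection-order-ideals-to-integersequences} is in fact an \emph{order isomorphism}, because its proof establishes the defining equivalence in both directions rather than just a one-sided implication. I would simply record that remark and conclude.
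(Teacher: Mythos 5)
Your proposal is correct and follows the same route as the paper: the corollary is deduced directly from Theorem~\ref{thm:bijection-order-ideals-to-integersequences}, whose ``order-preserving'' step is indeed proved as a chain of biconditionals and hence already gives an order isomorphism. Your explicit remark that the two-sided equivalence (rather than a one-sided implication) is what is needed is a fair point of care, but it adds nothing beyond what the paper's proof of the theorem establishes.
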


\begin{proof}
    It follows from the order-preserving bijection proved in
    Theorem~\ref{thm:bijection-order-ideals-to-integersequences}.
\end{proof}

\section{Proof of
Theorem~\ref{thm:1-norm-statistics} and
Theorem~\ref{thm:nice-formula-for-order-ideals}}
\label{sec:proof-of-Theorems-B-and-D}

In this section, we introduce an equivalent condition for a pair of polynomial
functions to be counted by $q$-analog of continued fraction expansions.
We use this condition to count the number of order ideal of a fence poset
keeping track of the cardinality statistics,
thus proving Theorem~\ref{thm:nice-formula-for-order-ideals}.
Using the bijection defined in Theorem~\ref{thm:bijection-order-ideals-to-integersequences},
we deduce Theorem~\ref{thm:1-norm-statistics}.

First, we define the homomorphism
\[
    \begin{array}{rccl}
        \nu_q: &\{\0,\1\}^* &\to& \GL_2(\Z[q^{\pm1}])\\
        &\0 &\mapsto& L_q,\\
        &\1 &\mapsto& R_q.
    \end{array}
\]
Reusing the involution $w\mapsto \widehat{w}$
defined for binary words in Section~\ref{sec:binary-encoding-of-QQ},
we have the following two results.

\begin{lemma}\label{lem:identity-w-hat}
For every word $w\in\{\0,\1\}^*$,
the homomorphism $\nu_q$ satisfies
\begin{equation}\label{eq:nu_q_hat_property}
    \nu_q(w)
    \left(\begin{smallmatrix} 1 & 0 \\ 0 & q \end{smallmatrix}\right)
        =
    \left(\begin{smallmatrix} 1 & 0 \\ 0 & q \end{smallmatrix}\right)
        \nu_q(\widehat{w})^T
\end{equation}
where $\widehat{w}=\widetilde{\overline{w}}$.
\end{lemma}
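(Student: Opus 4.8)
The plan is to reduce the identity to the one‑letter case using the homomorphism property of $\nu_q$, dispatch that case by a direct $2\times2$ computation, and then induct on the length of $w$.

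First I would isolate the one‑letter identity. Write $D=\left(\begin{smallmatrix}1&0\\0&q\end{smallmatrix}\right)$. Since $\widehat{a}=\overline a$ for a single letter, the claim for a letter $a\in\{\0,\1\}$ reads $\nu_q(a)\,D=D\,\nu_q(\overline a)^T$, i.e.\ the two equalities $L_qD=DR_q^T$ and $R_qD=DL_q^T$. Each is a one‑line multiplication of explicit $2\times2$ matrices: $L_qD=DR_q^T=\left(\begin{smallmatrix}q&0\\q&q\end{smallmatrix}\right)$ and $R_qD=DL_q^T=\left(\begin{smallmatrix}q&q\\0&q\end{smallmatrix}\right)$. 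Conceptually this says that transposing $L_q$ and $R_q$ is realized by conjugation by $D$ together with the letter swap $\0\leftrightarrow\1$, which is exactly the content of $\nu_q\circ(\widehat{\,\cdot\,})$.

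Next I would push this through concatenation. The bookkeeping fact I need is that $\widehat{\,\cdot\,}$ turns appending a letter on the right into prepending the complemented letter on the left: if $w=w_1\cdots w_n$ then $\widehat{wa}=\overline a\,\widehat w$, so that $\nu_q(\widehat{wa})^T=\nu_q(\widehat w)^T\,\nu_q(\overline a)^T$ because $\nu_q$ is a homomorphism and transposition reverses products. Now I induct on $|w|$. The base case $w=\varepsilon$ is $D=D$. For the inductive step, assuming $\nu_q(w)\,D=D\,\nu_q(\widehat w)^T$, I compute
\begin{align*}
\nu_q(wa)\,D
&=\nu_q(w)\,\nu_q(a)\,D
=\nu_q(w)\,D\,\nu_q(\overline a)^T\\
&=D\,\nu_q(\widehat w)^T\,\nu_q(\overline a)^T
=D\,\nu_q(\widehat{wa})^T,
\end{align*}
using the one‑letter identity for the second equality and the inductive hypothesis for the third. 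This closes the induction and proves the lemma.

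There is no genuine obstacle here: the statement is essentially a conjugation identity whose whole content lies in the two $2\times2$ products of the one‑letter case. The only thing to handle with care is the interaction of transposition, word reversal and letter complementation in the concatenation step, which is why it is cleanest to append letters on the right rather than on the left.
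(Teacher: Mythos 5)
Your proof is correct and follows essentially the same route as the paper: the paper verifies the two one-letter conjugation identities $D^{-1}R_qD=L_q^T$ and $D^{-1}L_qD=R_q^T$ and then says ``the conclusion follows,'' while you make explicit the induction on $|w|$ via $\widehat{wa}=\overline{a}\,\widehat{w}$ and the reversal of products under transposition. Nothing is missing.
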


\begin{proof}
    The matrix
    $\left(\begin{smallmatrix} 1 & 0 \\ 0 & q \end{smallmatrix}\right)$
    conjugates the matrix $R_q$ into the transpose of $L_q$
    and conjugates the matrix $L_q$ into the transpose of $R_q$:
\begin{align*}
    \left(\begin{smallmatrix} 1 & 0 \\ 0 & q \end{smallmatrix}\right)^{-1}
        R_q
    \left(\begin{smallmatrix} 1 & 0 \\ 0 & q \end{smallmatrix}\right)
        &= L_q^T,\\
    \left(\begin{smallmatrix} 1 & 0 \\ 0 & q \end{smallmatrix}\right)^{-1}
        L_q
    \left(\begin{smallmatrix} 1 & 0 \\ 0 & q \end{smallmatrix}\right)
        &= R_q^T.
\end{align*}
The conclusion follows.
\end{proof}

    \begin{lemma}\label{lem:sufficient-conditions-qcontinued-fraction}
        Let $a=[a_0;a_1,\dots,a_{2\ell-1}]$ be the even-length continued fraction expansion
        of a positive rational number.
        Then
        \begin{equation*}
                \nu_q(W(a))
            \left(\begin{smallmatrix} q \\ q \end{smallmatrix}\right)
            =
            R_q^{a_0}L_q^{a_1}\cdots R_q^{a_{2\ell-2}}L_q^{a_{2\ell-1}}
            \left(\begin{smallmatrix} 1 \\ 0 \end{smallmatrix}\right).
        \end{equation*}
    \end{lemma}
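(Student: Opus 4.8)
The plan is to unwind every definition in the statement and reduce the claim to one elementary matrix identity, with no real content beyond bookkeeping. First I would recall that, by the definition of $W$ in Section~\ref{sec:binary-encoding-of-QQ}, the binary word attached to $a$ is
\[
    W(a)=\1^{a_0}\0^{a_1}\cdots\1^{a_{2\ell-2}}\0^{a_{2\ell-1}-1},
\]
which is well-formed precisely because $a\in\Acaleven$ forces $a_{2\ell-1}\geq1$, so the final block $\0^{a_{2\ell-1}-1}$ has nonnegative length. Since $\nu_q$ is a monoid homomorphism sending $\0\mapsto L_q$ and $\1\mapsto R_q$, this gives
\[
    \nu_q(W(a)) = R_q^{a_0}L_q^{a_1}\cdots R_q^{a_{2\ell-2}}L_q^{a_{2\ell-1}-1},
\]
again a well-defined nonnegative power of $L_q$ at the right end.

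Next I would record the elementary identity $L_q\left(\begin{smallmatrix}1\\0\end{smallmatrix}\right)=\left(\begin{smallmatrix}q\\q\end{smallmatrix}\right)$, which is immediate from $L_q=\left(\begin{smallmatrix}q&0\\q&1\end{smallmatrix}\right)$. Substituting this into the right-hand side of the previous display and using associativity of matrix multiplication yields
\[
    \nu_q(W(a))\left(\begin{smallmatrix}q\\q\end{smallmatrix}\right)
    = R_q^{a_0}L_q^{a_1}\cdots R_q^{a_{2\ell-2}}L_q^{a_{2\ell-1}-1}L_q\left(\begin{smallmatrix}1\\0\end{smallmatrix}\right)
    = R_q^{a_0}L_q^{a_1}\cdots R_q^{a_{2\ell-2}}L_q^{a_{2\ell-1}}\left(\begin{smallmatrix}1\\0\end{smallmatrix}\right),
\]
which is exactly the asserted equality. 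Alternatively, one may observe $\left(\begin{smallmatrix}q\\q\end{smallmatrix}\right)=q\left(\begin{smallmatrix}1\\1\end{smallmatrix}\right)$ and then invoke the equality of the two matrix products for $\left(\begin{smallmatrix}R(q)\\S(q)\end{smallmatrix}\right)$ already recorded in \eqref{eq:CF-expansion-LqRq}; either route works.

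I do not expect any genuine obstacle. The only point meriting a word of care is that the exponent $a_{2\ell-1}-1$ occurring in $W(a)$ (and hence in $\nu_q(W(a))$) is nonnegative, which is exactly the condition $a_{2\ell-1}\geq1$ built into the definition of $\Acaleven$; once this is flagged, the computation above is the entire proof.
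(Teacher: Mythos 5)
Your proof is correct and follows essentially the same route as the paper's: unwind $W(a)$, use that $\nu_q$ is a homomorphism, and absorb the vector $\left(\begin{smallmatrix}q\\q\end{smallmatrix}\right)=L_q\left(\begin{smallmatrix}1\\0\end{smallmatrix}\right)$ into the final $L_q^{a_{2\ell-1}-1}$ factor. The remark about $a_{2\ell-1}\geq1$ guaranteeing a nonnegative exponent is a harmless extra precaution the paper leaves implicit.
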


\begin{proof}
    We have $W(a)=\1^{a_0}\0^{a_{1}}\cdots \1^{a_{2\ell-2}}\0^{a_{2\ell-1}-1}$.
    Thus,
    \begin{align*}
        \nu_q(W(a))
        \left(\begin{smallmatrix} q \\ q \end{smallmatrix}\right)
        &=
        \nu_q(\1^{a_0}\0^{a_{1}}\cdots \1^{a_{2\ell-2}}\0^{a_{2\ell-1}-1})
           L_q
        \left(\begin{smallmatrix} 1 \\ 0 \end{smallmatrix}\right)\\
        &=
        R_q^{a_0}L_q^{a_1}\cdots R_q^{a_{2\ell-2}}L_q^{a_{2\ell-1}-1}L_q
        \left(\begin{smallmatrix} 1 \\ 0 \end{smallmatrix}\right)\\
        &=
        R_q^{a_0}L_q^{a_1}\cdots R_q^{a_{2\ell-2}}L_q^{a_{2\ell-1}}
        \left(\begin{smallmatrix} 1 \\ 0 \end{smallmatrix}\right).
    \end{align*}
\end{proof}

Then, we have the following two more important results.

    \begin{lemma}\label{lem:sufficient-conditions-to-be-described-by-Fq}
        Let
        $X:\{\0,\1\}^*\to\Z[q]$ and
        $Y:\{\0,\1\}^*\to\Z[q]$ be polynomial functions such that
        \begin{equation*}%
        \begin{aligned}
            &X(\1 w) =qX(w)+qY(w),
            &&X(\0 w) =qX(w),\\
            &Y(\1 w)  =Y(w),
            &&Y(\0 w)  =X(w)+Y(w)
        \end{aligned}
        \end{equation*}
        for every $w\in\{\0,\1\}^*$.
        Then
        \begin{equation}\label{eq:XY-in-terms-of-Fq}
            \left(\begin{array}{r}
            X(w)\\
            Y(w)
            \end{array}\right)
            =
            \nu_q(\widehat{w})^T
            \left(\begin{array}{r}
                X(\varepsilon)\\
                Y(\varepsilon)
            \end{array}\right).
        \end{equation}
    \end{lemma}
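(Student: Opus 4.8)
The plan is to rewrite the four scalar recurrences as a single \emph{conjugated} matrix recurrence and then iterate it, invoking Lemma~\ref{lem:identity-w-hat} at the very end to unwind the conjugation. Put $D := \left(\begin{smallmatrix} 1 & 0 \\ 0 & q \end{smallmatrix}\right)$ and $v(w) := \left(\begin{smallmatrix} X(w) \\ Y(w) \end{smallmatrix}\right)$. First I would observe that the hypotheses say exactly that $v(\1 w) = L_q^{T}\, v(w)$ and $v(\0 w) = R_q^{T}\, v(w)$ for every $w\in\{\0,\1\}^{*}$, since $L_q^{T} = \left(\begin{smallmatrix} q & q \\ 0 & 1\end{smallmatrix}\right)$ and $R_q^{T} = \left(\begin{smallmatrix} q & 0 \\ 1 & 1\end{smallmatrix}\right)$. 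Combining this with the two conjugation identities already checked in the proof of Lemma~\ref{lem:identity-w-hat}, namely $D^{-1}R_q D = L_q^{T}$ and $D^{-1}L_q D = R_q^{T}$, one gets the uniform statement $v(c w) = D^{-1}\nu_q(c)\, D\, v(w)$ for every letter $c\in\{\0,\1\}$.

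Next I would prove by induction on the length of $w$ that $v(w) = D^{-1}\nu_q(w)\, D\, v(\varepsilon)$. The base case $w=\varepsilon$ is trivial since $\nu_q(\varepsilon)$ is the identity matrix. For the inductive step, writing $w = c w'$ with $c\in\{\0,\1\}$ and using that $\nu_q$ is a monoid homomorphism, one computes $v(w) = D^{-1}\nu_q(c) D\, v(w') = D^{-1}\nu_q(c) D\, D^{-1}\nu_q(w') D\, v(\varepsilon) = D^{-1}\nu_q(c)\nu_q(w') D\, v(\varepsilon) = D^{-1}\nu_q(w) D\, v(\varepsilon)$. Finally, Lemma~\ref{lem:identity-w-hat} reads $\nu_q(w) D = D\, \nu_q(\widehat{w})^{T}$, hence $D^{-1}\nu_q(w) D = \nu_q(\widehat{w})^{T}$, and therefore $v(w) = \nu_q(\widehat{w})^{T}\, v(\varepsilon)$, which is precisely \eqref{eq:XY-in-terms-of-Fq}.

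I do not anticipate any serious obstacle: the argument reduces to one matrix identity verified on the two generators together with a routine induction. The only points that require a little care are that letters are \emph{prepended} in the recurrences, so the induction should strip the leftmost letter and the matrix factors must accumulate on the left in the matching order; and that it is Lemma~\ref{lem:identity-w-hat} — whose content is exactly that conjugation by $D$ turns $\nu_q(w)$ into $\nu_q(\widehat{w})^{T}$, the reversal inside $\widehat{w} = \widetilde{\overline{w}}$ being produced by the transpose — that performs the final unwinding. A self-contained variant avoiding Lemma~\ref{lem:identity-w-hat} is equally feasible: prepending a letter $c$ multiplies $v$ on the left by $\nu_q(\overline{c})^{T}$, and transposing a product of matrices reverses the order of its factors, which directly accounts for both the complement and the reversal in $\widehat{w}$.
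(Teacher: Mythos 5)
Your proof is correct. It is the same induction on word length that the paper uses, resting on the same observation that the hypotheses say $\left(\begin{smallmatrix}X(\1 w)\\ Y(\1 w)\end{smallmatrix}\right)=L_q^T\left(\begin{smallmatrix}X(w)\\ Y(w)\end{smallmatrix}\right)$ and $\left(\begin{smallmatrix}X(\0 w)\\ Y(\0 w)\end{smallmatrix}\right)=R_q^T\left(\begin{smallmatrix}X(w)\\ Y(w)\end{smallmatrix}\right)$; the only organizational difference is that you carry the conjugated form $D^{-1}\nu_q(w)D$ through the induction and invoke Lemma~\ref{lem:identity-w-hat} once at the end, whereas the paper's proof is self-contained: it carries $\nu_q(\widehat{w})^T$ directly, absorbing the complement and reversal step by step via $L_q^T\,\nu_q(\widehat{w})^T=\nu_q(\widehat{w}\0)^T=\nu_q(\widehat{\1 w})^T$ (and similarly for $\0$), which is exactly the ``self-contained variant'' you sketch in your last sentence. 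Both versions are sound; yours makes the reliance on the conjugation identity explicit, the paper's avoids the extra appeal to Lemma~\ref{lem:identity-w-hat}.
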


    \begin{proof}
        The proof is done by induction.
        It holds for the empty word $w=\varepsilon$.
        Suppose that \eqref{eq:XY-in-terms-of-Fq} holds
        for every $w\in\{\0,\1\}^k$ for some integer $k\geq0$.
        We have
    \begin{align*}
        \left(\begin{array}{r}
        X(\1 w)\\
        Y(\1 w)
        \end{array}\right)
        &=
        \left(\begin{array}{r}
        qX(w)+qY(w)\\
        Y(w)
        \end{array}\right)
        = \left(\begin{array}{rr} q & q\\0 & 1 \end{array}\right)
        \left(\begin{array}{r}
        X(w)\\
        Y(w)
        \end{array}\right)\\
        &= L_q^T
          \nu_q(\widehat{w})^T
          \left(\begin{array}{r} X(\varepsilon)\\ Y(\varepsilon) \end{array}\right)
        = \nu_q(\widehat{w}\0)^T
          \left(\begin{array}{r} X(\varepsilon)\\ Y(\varepsilon) \end{array}\right)
        = \nu_q(\widehat{\1 w})^T
          \left(\begin{array}{r} X(\varepsilon)\\ Y(\varepsilon) \end{array}\right).
    \end{align*}
    Also,
    \begin{align*}
        \left(\begin{array}{r}
        X(\0 w)\\
        Y(\0 w)
        \end{array}\right)
        &=
        \left(\begin{array}{r}
        qX(w)\\
        X(w)+Y(w)
        \end{array}\right)
        = \left(\begin{array}{rr} q & 0\\1 & 1 \end{array}\right)
        \left(\begin{array}{r}
        X(w)\\
        Y(w)
        \end{array}\right)\\
        &= R_q^T
          \nu_q(\widehat{w})^T
          \left(\begin{array}{r} X(\varepsilon)\\ Y(\varepsilon) \end{array}\right)
        = \nu_q(\widehat{w}\1)^T
          \left(\begin{array}{r} X(\varepsilon)\\ Y(\varepsilon) \end{array}\right)
        = \nu_q(\widehat{\0 w})^T
          \left(\begin{array}{r} X(\varepsilon)\\ Y(\varepsilon) \end{array}\right).
    \end{align*}
    \end{proof}

    \begin{proposition}\label{prop:equivalent-conditions-Fq}
        Let
        $X:\{\0,\1\}^*\to\Z[q]$ and
        $Y:\{\0,\1\}^*\to\Z[q]$ be polynomial functions.
        We have
        \begin{equation*}
            \left(\begin{array}{r}
                X(w)\\
                Y(w)
            \end{array}\right)
            =
            \left(\begin{smallmatrix} 1 & 0 \\ 0 & q \end{smallmatrix}\right)^{-1}
            \nu_q(w)
            \left(\begin{array}{cc} q \\ q \end{array}\right)
        \end{equation*}
        for every $w\in\{\0,\1\}^*$
        if and only if
        $X(\varepsilon)=q$, $Y(\varepsilon)=1$ and
        \begin{equation}\label{eq:hypothesis-on-X-Y}
        \begin{aligned}
            &X(\1 w) =qX(w)+qY(w),
            &&X(\0 w) =qX(w),\\
            &Y(\1 w)  =Y(w),
            &&Y(\0 w)  =X(w)+Y(w)\\
        \end{aligned}
        \end{equation}
        for every $w\in\{\0,\1\}^*$.
    \end{proposition}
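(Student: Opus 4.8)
The plan is to prove the two implications separately; in both directions everything reduces to short $2\times2$ matrix computations once the preliminary lemmas are invoked.

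For the ``if'' direction I would observe that the hypotheses $X(\varepsilon)=q$, $Y(\varepsilon)=1$ together with the recurrences~\eqref{eq:hypothesis-on-X-Y} are exactly the hypotheses of Lemma~\ref{lem:sufficient-conditions-to-be-described-by-Fq}. That lemma then gives
\[
    \left(\begin{array}{r} X(w)\\ Y(w)\end{array}\right)
    = \nu_q(\widehat{w})^T \left(\begin{array}{r} q\\ 1\end{array}\right)
\]
for every $w\in\{\0,\1\}^*$. Next I would rewrite Lemma~\ref{lem:identity-w-hat} in the form $\nu_q(\widehat{w})^T = \left(\begin{smallmatrix}1&0\\0&q\end{smallmatrix}\right)^{-1}\nu_q(w)\left(\begin{smallmatrix}1&0\\0&q\end{smallmatrix}\right)$ and note the elementary identity $\left(\begin{smallmatrix}1&0\\0&q\end{smallmatrix}\right)\left(\begin{smallmatrix}q\\1\end{smallmatrix}\right)=\left(\begin{smallmatrix}q\\q\end{smallmatrix}\right)$; substituting these two facts into the displayed equation produces exactly the claimed formula $\left(\begin{smallmatrix}X(w)\\Y(w)\end{smallmatrix}\right)=\left(\begin{smallmatrix}1&0\\0&q\end{smallmatrix}\right)^{-1}\nu_q(w)\left(\begin{smallmatrix}q\\q\end{smallmatrix}\right)$.

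For the ``only if'' direction I would assume the matrix formula for every word, and first specialise to $w=\varepsilon$: since $\nu_q(\varepsilon)$ is the identity matrix, this gives $\left(\begin{smallmatrix}X(\varepsilon)\\Y(\varepsilon)\end{smallmatrix}\right)=\left(\begin{smallmatrix}1&0\\0&q\end{smallmatrix}\right)^{-1}\left(\begin{smallmatrix}q\\q\end{smallmatrix}\right)=\left(\begin{smallmatrix}q\\1\end{smallmatrix}\right)$, which is the pair of initial conditions. For the recurrences I would use that $\nu_q$ is a monoid homomorphism, so $\nu_q(\0 w)=L_q\,\nu_q(w)$ and $\nu_q(\1 w)=R_q\,\nu_q(w)$, together with the rearrangement $\nu_q(w)\left(\begin{smallmatrix}q\\q\end{smallmatrix}\right)=\left(\begin{smallmatrix}1&0\\0&q\end{smallmatrix}\right)\left(\begin{smallmatrix}X(w)\\Y(w)\end{smallmatrix}\right)$ of the formula applied to $w$. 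This yields
\[
    \left(\begin{array}{r} X(\0 w)\\ Y(\0 w)\end{array}\right)
    = \left(\begin{smallmatrix}1&0\\0&q\end{smallmatrix}\right)^{-1}L_q\left(\begin{smallmatrix}1&0\\0&q\end{smallmatrix}\right)\left(\begin{array}{r} X(w)\\ Y(w)\end{array}\right),
    \qquad
    \left(\begin{array}{r} X(\1 w)\\ Y(\1 w)\end{array}\right)
    = \left(\begin{smallmatrix}1&0\\0&q\end{smallmatrix}\right)^{-1}R_q\left(\begin{smallmatrix}1&0\\0&q\end{smallmatrix}\right)\left(\begin{array}{r} X(w)\\ Y(w)\end{array}\right),
\]
and a one-line computation gives $\left(\begin{smallmatrix}1&0\\0&q\end{smallmatrix}\right)^{-1}L_q\left(\begin{smallmatrix}1&0\\0&q\end{smallmatrix}\right)=\left(\begin{smallmatrix}q&0\\1&1\end{smallmatrix}\right)$ and $\left(\begin{smallmatrix}1&0\\0&q\end{smallmatrix}\right)^{-1}R_q\left(\begin{smallmatrix}1&0\\0&q\end{smallmatrix}\right)=\left(\begin{smallmatrix}q&q\\0&1\end{smallmatrix}\right)$. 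Reading off the two coordinates of each of these two identities then recovers precisely~\eqref{eq:hypothesis-on-X-Y}.

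The whole argument is essentially bookkeeping, and I do not expect any genuine obstacle; the one point requiring care is tracking the direction of the conjugation by $\left(\begin{smallmatrix}1&0\\0&q\end{smallmatrix}\right)$ and the transpose occurring in Lemmas~\ref{lem:identity-w-hat} and~\ref{lem:sufficient-conditions-to-be-described-by-Fq}. If one preferred not to cite Lemma~\ref{lem:sufficient-conditions-to-be-described-by-Fq} in the ``if'' direction, the same two conjugation identities also power a direct induction on the length of $w$ starting from $w=\varepsilon$, which would be my fallback.
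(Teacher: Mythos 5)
Your proposal is correct and follows essentially the same route as the paper's own proof: the ``if'' direction invokes Lemma~\ref{lem:sufficient-conditions-to-be-described-by-Fq} and then converts $\nu_q(\widehat{w})^T$ via Lemma~\ref{lem:identity-w-hat}, while the ``only if'' direction specialises to $\varepsilon$ and uses the homomorphism property together with the conjugation identities $\left(\begin{smallmatrix}1&0\\0&q\end{smallmatrix}\right)^{-1}L_q\left(\begin{smallmatrix}1&0\\0&q\end{smallmatrix}\right)=R_q^T$ and $\left(\begin{smallmatrix}1&0\\0&q\end{smallmatrix}\right)^{-1}R_q\left(\begin{smallmatrix}1&0\\0&q\end{smallmatrix}\right)=L_q^T$, exactly as in the paper. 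All the matrix computations you state check out.
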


\begin{proof}
    Let $w\in\{\0,\1\}^*$.
    First, we assume that $X(\varepsilon)=q$, $Y(\varepsilon)=1$ and
    \eqref{eq:hypothesis-on-X-Y} holds.
    Using Lemma~\ref{lem:sufficient-conditions-to-be-described-by-Fq},
    we have
    \begin{align*}
            \left(\begin{array}{r}
                X(w)\\
                Y(w)
            \end{array}\right)
        &\overset{\eqref{eq:XY-in-terms-of-Fq}}{=}
        \nu_q(\widehat{w})^T
        \left(\begin{array}{r} X(\varepsilon)\\ Y(\varepsilon) \end{array}\right)\\
            &=
        \left(\begin{smallmatrix} 1 & 0 \\ 0 & q \end{smallmatrix}\right)^{-1}
        \left(\begin{smallmatrix} 1 & 0 \\ 0 & q \end{smallmatrix}\right)
        \nu_q(\widehat{w})^T
        \left(\begin{smallmatrix} q \\ 1 \end{smallmatrix}\right)\\
        &\overset{\eqref{eq:nu_q_hat_property}}{=}
        \left(\begin{smallmatrix} 1 & 0 \\ 0 & q \end{smallmatrix}\right)^{-1}
        \nu_q(w)
        \left(\begin{smallmatrix} 1 & 0 \\ 0 & q \end{smallmatrix}\right)
        \left(\begin{smallmatrix} q \\ 1 \end{smallmatrix}\right)\\
        &=
        \left(\begin{smallmatrix} 1 & 0 \\ 0 & q \end{smallmatrix}\right)^{-1}
        \nu_q(w)
        \left(\begin{smallmatrix} q \\ q \end{smallmatrix}\right).
    \end{align*}

    Conversely,
    $
        \left(\begin{smallmatrix}
            X(\varepsilon)\\
            Y(\varepsilon)
        \end{smallmatrix}\right)
        =
        \left(\begin{smallmatrix} 1 & 0 \\ 0 & q \end{smallmatrix}\right)^{-1}
        \nu_q(\varepsilon)
        \left(\begin{smallmatrix} q \\ q \end{smallmatrix}\right)
        =
        \left(\begin{smallmatrix} q \\ 1 \end{smallmatrix}\right)
    $.
    Also,
    \begin{align*}
        \left(\begin{array}{r}
            X(\1 w)\\
            Y(\1 w)
        \end{array}\right)
        &=
        \left(\begin{smallmatrix} 1 & 0 \\ 0 & q \end{smallmatrix}\right)^{-1}
        \nu_q(\1w)
        \left(\begin{smallmatrix} q \\ q \end{smallmatrix}\right)
        =
        \left(\begin{smallmatrix} 1 & 0 \\ 0 & q \end{smallmatrix}\right)^{-1}
        R_q
        \nu_q(w)
        \left(\begin{smallmatrix} q \\ q \end{smallmatrix}\right)\\
        &=
        L_q^T
        \left(\begin{smallmatrix} 1 & 0 \\ 0 & q \end{smallmatrix}\right)^{-1}
        \nu_q(w)
        \left(\begin{smallmatrix} q \\ q \end{smallmatrix}\right)
        =
        L_q^T
        \left(\begin{array}{c}
            X(w)\\
            Y(w)
        \end{array}\right)
        =
        \left(\begin{array}{c}
            qX(w)+qY(w)\\
            Y(w)
        \end{array}\right)
    \end{align*}
    and
    \begin{align*}
        \left(\begin{array}{r}
            X(\0 w)\\
            Y(\0 w)
        \end{array}\right)
        &=
        \left(\begin{smallmatrix} 1 & 0 \\ 0 & q \end{smallmatrix}\right)^{-1}
        \nu_q(\0w)
        \left(\begin{smallmatrix} q \\ q \end{smallmatrix}\right)
        =
        \left(\begin{smallmatrix} 1 & 0 \\ 0 & q \end{smallmatrix}\right)^{-1}
        L_q
        \nu_q(w)
        \left(\begin{smallmatrix} q \\ q \end{smallmatrix}\right)\\
        &=
        R_q^T
        \left(\begin{smallmatrix} 1 & 0 \\ 0 & q \end{smallmatrix}\right)^{-1}
        \nu_q(w)
        \left(\begin{smallmatrix} q \\ q \end{smallmatrix}\right)
        =
        R_q^T
        \left(\begin{array}{c}
            X(w)\\
            Y(w)
        \end{array}\right)
        =
        \left(\begin{array}{c}
            qX(w)\\
            X(w)+Y(w)
        \end{array}\right).
    \end{align*}
\end{proof}

\subsection{A first application: counting order ideals with cardinality statistics}

We can now provide a formula computing the cardinality statistics over the set of
order ideals of the fence poset $F(w)$ for every word $w\in\{\0,\1\}^*$.

\begin{proposition}\label{prop:statistics-size-satisfies-XYhypothesis}
    For every $w\in\{\0,\1\}^*$,
    let
    \begin{align*}
        X(w) &= \sum_{I\in J^\bullet(F(w))} q^{\size{I}},\\
        Y(w) &= \sum_{I\in J^\circ(F(w))} q^{\size{I}}.
    \end{align*}
    Then, $X(\varepsilon)=q$, $Y(\varepsilon)=1$ and
    $X(w)$ and $Y(w)$ both satisfy \eqref{eq:hypothesis-on-X-Y}.
\end{proposition}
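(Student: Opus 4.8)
The claimed statement has no recursive depth: it asserts two boundary values together with four identities relating the polynomials attached to $F(aw)$ to those attached to $F(w)$ for a prepended letter $a\in\{\0,\1\}$. The plan is therefore to evaluate $X$ and $Y$ at the empty word directly, and then, for each $a$ and each $w\in\{\0,\1\}^*$, to read off the identity for $X(aw)$ and $Y(aw)$ from a description of the order ideals of $F(aw)$ in terms of those of $F(w)$.

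For the boundary values, $F(\varepsilon)$ is the one-element poset $\{y_0\}$, whose only order ideals are $\varnothing\in J^\circ(F(\varepsilon))$ and $\{y_0\}\in J^\bullet(F(\varepsilon))$; hence $X(\varepsilon)=q^{1}=q$ and $Y(\varepsilon)=q^{0}=1$.

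For the identities, fix $w=w_1\cdots w_n$ and a letter $a$, and realize $F(aw)$ on the vertices $\{y_0,y_1,\dots,y_{n+1}\}$. Deleting the leaf $y_0$ of the Hasse path leaves a poset isomorphic to $F(w)$ via $y_i\mapsto y_{i-1}$, and through this isomorphism I identify the order ideals of $F(aw)$ that avoid $y_0$ with the order ideals of $F(w)$, the first vertex of $F(w)$ corresponding to $y_1$. The vertex $y_0$ is comparable only to $y_1$, and, as one reads off from the construction of $F$, it is a minimal element of $F(aw)$ when $a=\1$ (so $y_0\vartriangleleft y_1$) and a maximal element when $a=\0$ (so $y_0\vartriangleright y_1$). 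Now partition the order ideals $I$ of $F(aw)$ according to whether $y_0\in I$. If $a=\1$ and $y_0\in I$, then $I\setminus\{y_0\}$ runs over \emph{all} order ideals of $F(w)$, and conversely adjoining the minimal vertex $y_0$ to any order ideal of $F(w)$ yields an order ideal of $F(\1w)$ containing $y_0$; this bijection $J(F(w))\to J^\bullet(F(\1w))$ raises the cardinality by $1$, so $X(\1w)=q\bigl(X(w)+Y(w)\bigr)$. If $a=\1$ and $y_0\notin I$, then $y_1\notin I$ as well (otherwise $y_0\vartriangleleft y_1$ forces $y_0\in I$), so $I$ corresponds to an order ideal of $F(w)$ not containing its first vertex, i.e.\ to an element of $J^\circ(F(w))$, and conversely; this bijection preserves cardinality, so $Y(\1w)=Y(w)$. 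If $a=\0$ and $y_0\in I$, then $y_1\in I$ too (since $y_1\vartriangleleft y_0$), so $I\setminus\{y_0\}$ corresponds to an order ideal of $F(w)$ containing its first vertex, i.e.\ to an element of $J^\bullet(F(w))$, and conversely adjoining $y_0$ to any such ideal is legitimate, whence $X(\0w)=q\,X(w)$. If $a=\0$ and $y_0\notin I$, then $I$ corresponds to an arbitrary order ideal of $F(w)$, since the maximal vertex $y_0$ imposes no down-closure constraint once absent, whence $Y(\0w)=X(w)+Y(w)$.

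The only point requiring care --- and it is entirely elementary --- is to keep straight, in each of the four subcases, which containments among $y_0$ and $y_1$ are \emph{forced}, and to check in the ``conversely'' directions that the subsets written down are genuinely down-closed in $F(aw)$. Both follow immediately from $y_0$ being an extremal vertex adjacent only to $y_1$, so there is no deeper obstacle.
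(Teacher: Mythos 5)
Your proof is correct and follows essentially the same route as the paper's: the paper introduces a map $\shift$ that deletes the vertex $y_0$ and shifts indices, asserts exactly the four bijections you establish ($J^\bullet(F(\1w))\leftrightarrow J(F(w))$, $J^\circ(F(\1w))\leftrightarrow J^\circ(F(w))$, $J^\bullet(F(\0w))\leftrightarrow J^\bullet(F(w))$, $J^\circ(F(\0w))\leftrightarrow J(F(w))$), and then reads off the recurrences; you merely supply the forced-containment arguments that the paper leaves implicit. One caveat worth noting: your orientation of the covering relations ($y_0\vartriangleleft y_1$ when the first letter is $\1$, $y_0\vartriangleright y_1$ when it is $\0$) agrees with the paper's figures and is the one under which the recurrences (and the proposition) actually hold, but it is the reverse of the displayed definition of $F(w)$ in Section~\ref{sec:rational-to-fence-poset}, whose two cases appear to be interchanged --- so your aside ``as one reads off from the construction of $F$'' is accurate only for the intended convention, not the literal one.
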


\begin{proof}
    Let
    \[
        \begin{array}{rccl}
            \shift:&2^\N&\to&2^\N \\
            &I&\mapsto &\{i-1\mid i\in I\text{ and } i\neq 0\}
        \end{array}
    \]
    be a map defined on subsets of $\N$ which subtract $1$ to every element,
    and ignoring the element 0.
    For every $w\in\{\0,\1\}^*$, we have that
    \begin{itemize}
        \item $\shift:J^\bullet(F(\1w))\to J(F(w))$ is a bijection,
        \item $\shift:J^\circ(F(\1w))\to J^\circ(F(w))$ is a bijection,
        \item $\shift:J^\bullet(F(\0w))\to J^\bullet(F(w))$ is a bijection,
        \item $\shift:J^\circ(F(\0w))\to J(F(w))$ is a bijection.
    \end{itemize}
    Therefore, for every $w\in\{\0,\1\}^*$, we have
    \begin{align*}
        X(\1w)
        &= \sum_{I\in J^\bullet(F(\1w))} q^{\size{I}}
        = \sum_{I\in J(F(w))} q^{\size{I}+1}
        = q (X(w)+Y(w)),\\
        Y(\1w)
        &= \sum_{I\in J^\circ(F(\1w))} q^{\size{I}}
        = \sum_{I\in J^\circ(F(w))} q^{\size{I}}
        = Y(w),\\
        X(\0w)
        &= \sum_{I\in J^\bullet(F(\0w))} q^{\size{I}}
        = \sum_{I\in J^\bullet(F(w))} q^{\size{I}+1}
        = q X(w),\\
        Y(\0w)
        &= \sum_{I\in J^\circ(F(\0w))} q^{\size{I}}
        = \sum_{I\in J(F(w))} q^{\size{I}}
        = X(w)+Y(w).
    \end{align*}
    Thus, $X(w)$ and $Y(w)$ both satisfy \eqref{eq:hypothesis-on-X-Y}.
    Also
    \[
        X(\varepsilon)
        = \sum_{I\in J^\bullet(F(\varepsilon))} q^{\size{I}}
        = q^1 = q.
    \]
    and
    \[
        Y(\varepsilon)
        = \sum_{I\in J^\circ(F(\varepsilon))} q^{\size{I}}
        = q^0 = 1. 
    \]
\end{proof}

\begin{proposition}\label{prop:statistics-over-order-ideals}
    Let $w\in\{\0,\1\}^*$.
The cardinality statistics over the set of order ideals
of the fence poset $F(w)$ satisfies
\begin{equation*}
\left(\begin{array}{r}
    \sum_{I\in J^\bullet(F(w))} q^{\size{I}}\\
        \sum_{I\in J^\circ (F(w))}  q^{\size{I}}
\end{array}\right)
    =
    \left(\begin{smallmatrix} 1 & 0 \\ 0 & q \end{smallmatrix}\right)^{-1}
            \nu_q(w)
        \left(\begin{array}{c} q \\ q \end{array}\right).
\end{equation*}
\end{proposition}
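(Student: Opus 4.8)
The statement is an immediate consequence of the two preceding propositions, and the plan is simply to combine them. First I would set $X(w)=\sum_{I\in J^\bullet(F(w))}q^{\size{I}}$ and $Y(w)=\sum_{I\in J^\circ(F(w))}q^{\size{I}}$ for every $w\in\{\0,\1\}^*$. These are genuine polynomial functions $\{\0,\1\}^*\to\Z[q]$ since each is a finite sum of monomials $q^{\size{I}}$ with nonnegative integer exponents, so the hypotheses of Proposition~\ref{prop:equivalent-conditions-Fq} are met.

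Next, by Proposition~\ref{prop:statistics-size-satisfies-XYhypothesis}, these functions satisfy $X(\varepsilon)=q$, $Y(\varepsilon)=1$, and the four recurrences in \eqref{eq:hypothesis-on-X-Y}. Applying the backward (``if'') direction of Proposition~\ref{prop:equivalent-conditions-Fq} with this choice of $X$ and $Y$ yields, for every $w\in\{\0,\1\}^*$,
\[
    \left(\begin{array}{r} X(w)\\ Y(w)\end{array}\right)
    =
    \left(\begin{smallmatrix} 1 & 0 \\ 0 & q \end{smallmatrix}\right)^{-1}
    \nu_q(w)
    \left(\begin{array}{c} q \\ q \end{array}\right).
\]
Substituting back the definitions of $X(w)$ and $Y(w)$ gives exactly the asserted identity. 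There is no real obstacle: all the combinatorial content (the bijections induced by $\shift$ and the base cases) is already carried out in Proposition~\ref{prop:statistics-size-satisfies-XYhypothesis}, and the matrix bookkeeping is handled by Proposition~\ref{prop:equivalent-conditions-Fq}; this proof merely stitches the two together.
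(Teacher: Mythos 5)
Your proposal is correct and follows essentially the same route as the paper's own proof: define $X$ and $Y$ as the two generating polynomials, cite Proposition~\ref{prop:statistics-size-satisfies-XYhypothesis} for the base cases and recurrences, and apply the ``if'' direction of Proposition~\ref{prop:equivalent-conditions-Fq}. Nothing is missing.
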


\begin{proof}
    For every $w\in\{\0,\1\}^*$,
    let
        $X(w) = \sum_{I\in J^\bullet(F(w))} q^{\size{I}}$ and
        $Y(w) = \sum_{I\in J^\circ(F(w))} q^{\size{I}}$.
From Proposition~\ref{prop:statistics-size-satisfies-XYhypothesis}, we
    have that $X(\varepsilon)=q$,
    and $Y(\varepsilon)=1$ and
    $X(w)$ and $Y(w)$ both satisfy \eqref{eq:hypothesis-on-X-Y}.
    Using Proposition~\ref{prop:equivalent-conditions-Fq},
    we obtain
\begin{align*}
    \left(\begin{array}{r}
            X(w)\\
            Y(w)
            \end{array}\right)
    &= \left(\begin{smallmatrix} 1 & 0 \\ 0 & q \end{smallmatrix}\right)^{-1}
            \nu_q(w)
        \left(\begin{smallmatrix} q \\ q \end{smallmatrix}\right).
\end{align*}
\end{proof}

\begin{THEOREMC}
    \MainTheoremC
\end{THEOREMC}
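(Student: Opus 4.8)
The plan is to deduce Theorem~\ref{thm:nice-formula-for-order-ideals} directly from Proposition~\ref{prop:statistics-over-order-ideals} together with Lemma~\ref{lem:sufficient-conditions-qcontinued-fraction}. Almost all of the substance has already been established, so this is a short formal argument with only some notational bookkeeping to check.

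Let $a=\CFeven(\xx)=[a_0;a_1,\dots,a_{2\ell-1}]$ and set $w=W(a)\in\{\0,\1\}^*$. By definition $\Fcal(\xx)=F(w)$, and the partition $\Jcal(\xx)=\Jcal^\bullet(\xx)\cup\Jcal^\circ(\xx)$ according to whether the left-most vertex $y_0$ lies in the order ideal coincides with the partition $J(F(w))=J^\bullet(F(w))\cup J^\circ(F(w))$. Hence the column vector on the left-hand side of the claim is exactly the left-hand side of Proposition~\ref{prop:statistics-over-order-ideals} evaluated at this $w$, so
\[
\left(\begin{array}{r}
    \sum_{I\in \Jcal^\bullet(\xx)} q^{\size{I}}\\
    \sum_{I\in \Jcal^\circ (\xx)}  q^{\size{I}}
\end{array}\right)
    =
    \left(\begin{smallmatrix} 1 & 0 \\ 0 & q \end{smallmatrix}\right)^{-1}
            \nu_q(w)
        \left(\begin{array}{c} q \\ q \end{array}\right).
\]
Next I would rewrite the right-hand side. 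Since $a$ has even length, Lemma~\ref{lem:sufficient-conditions-qcontinued-fraction} gives
\[
\nu_q(W(a))\left(\begin{smallmatrix} q \\ q \end{smallmatrix}\right)
= R_q^{a_0}L_q^{a_1}\cdots R_q^{a_{2\ell-2}}L_q^{a_{2\ell-1}}\left(\begin{smallmatrix} 1 \\ 0 \end{smallmatrix}\right),
\]
and substituting this into the previous display yields precisely the asserted formula.

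There is no real obstacle here: the actual content lives in Proposition~\ref{prop:equivalent-conditions-Fq} (the equivalence between being described by a product of $L_q,R_q$ matrices and satisfying the recurrences \eqref{eq:hypothesis-on-X-Y}), in Proposition~\ref{prop:statistics-size-satisfies-XYhypothesis} (the $\shift$-induced bijections that verify those recurrences and the initial values $X(\varepsilon)=q$, $Y(\varepsilon)=1$ for the cardinality statistics), and in Lemma~\ref{lem:sufficient-conditions-qcontinued-fraction} (the identity converting $\nu_q(W(a))\binom{q}{q}$ into the $R_q/L_q$ product). The only points to watch are notational: that $\Fcal$ is built from the \emph{even-length} expansion via $\CFeven$, that $W(a)=\1^{a_0}\0^{a_1}\cdots\1^{a_{2\ell-2}}\0^{a_{2\ell-1}-1}$ is exactly the word fed to $\nu_q$, and that $\nu_q$ sends $\0\mapsto L_q$, $\1\mapsto R_q$ so the exponents line up with the product. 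All of this is already in place, so the proof reduces to the two displayed substitutions.
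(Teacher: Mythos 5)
Your proposal is correct and follows exactly the paper's own proof: both deduce the statement by applying Proposition~\ref{prop:statistics-over-order-ideals} with $w=W(\CFeven(\xx))$ and then rewriting the right-hand side via Lemma~\ref{lem:sufficient-conditions-qcontinued-fraction}. The notational checks you flag (even-length expansion, the form of $W(a)$, and $\nu_q$ sending $\0\mapsto L_q$, $\1\mapsto R_q$) are the same ones the paper relies on implicitly.
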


\begin{proof}
    Let $w=W\circ\CFeven(\xx)=W(a)
           =\1^{a_0}\0^{a_{1}}\cdots \1^{a_{2\ell-2}}\0^{a_{2\ell-1}-1}$.
    Using Proposition~\ref{prop:statistics-over-order-ideals}
    and Lemma~\ref{lem:sufficient-conditions-qcontinued-fraction},
    we obtain
    \begin{align*}
    \left(\begin{array}{r}
            \sum_{I\in \Jcal^\bullet(\xx)} q^{\size{I}}\\
            \sum_{I\in \Jcal^\circ  (\xx)} q^{\size{I}}
    \end{array}\right)
        &=
    \left(\begin{array}{r}
            \sum_{I\in J^\bullet(F(W(a)))} q^{\size{I}}\\
            \sum_{I\in J^\circ  (F(W(a)))} q^{\size{I}}
    \end{array}\right)\\
        &= \left(\begin{smallmatrix} 1 & 0 \\ 0 & q \end{smallmatrix}\right)^{-1}
                \nu_q(W(a))
            \left(\begin{smallmatrix} q \\ q \end{smallmatrix}\right)\\
        &=
        \left(\begin{smallmatrix} 1 & 0 \\ 0 & q \end{smallmatrix}\right)^{-1}
        R_q^{a_0}L_q^{a_1}\cdots R_q^{a_{2\ell-2}}L_q^{a_{2\ell-1}}
        \left(\begin{smallmatrix} 1 \\ 0 \end{smallmatrix}\right).
    \end{align*}
\end{proof}

\subsection{A second application: counting admissible sequences with 1-norm statistics}

This result, together with Theorem~\ref{thm:nice-formula-for-order-ideals}
implies the following statement.

\begin{THEOREMB}
    \MainTheoremB
\end{THEOREMB}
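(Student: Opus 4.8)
The plan is to deduce the statement directly from Theorem~\ref{thm:nice-formula-for-order-ideals} by transporting its generating-function identity along the order-preserving bijection $\Psi$ of Theorem~\ref{thm:bijection-order-ideals-to-integersequences}. First I would fix the positive rational number $x\in\Q_{>0}$ whose even-length continued fraction expansion is $a$; such an $x$ exists and is unique because $\CFeven\colon\Q_{>0}\to\Acaleven$ is a bijection. Then the fence poset $\Fcal(x)$ and its set of lower order ideals $\Jcal(x)=\Jcal^\bullet(x)\cup\Jcal^\circ(x)$ are defined, and Theorem~\ref{thm:bijection-order-ideals-to-integersequences} provides a bijection $\Psi\colon\Jcal(x)\to\Bcal(a)$ that restricts to bijections $\Jcal^\bullet(x)\to\Bcal^\bullet(a)$ and $\Jcal^\circ(x)\to\Bcal^\circ(a)$, and that satisfies $\size{I}=\Vert\Psi(I)\Vert_1$ for every order ideal $I\in\Jcal(x)$.

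The key step is then a change of summation variable. Since $\Psi$ restricts to a bijection $\Jcal^\bullet(x)\to\Bcal^\bullet(a)$ carrying the exponent $\size{I}$ to $\Vert\Psi(I)\Vert_1$, we obtain
\[
    \sum_{b\in\Bcal^\bullet(a)}q^{\Vert b\Vert_1}
    =\sum_{I\in\Jcal^\bullet(x)}q^{\Vert\Psi(I)\Vert_1}
    =\sum_{I\in\Jcal^\bullet(x)}q^{\size{I}},
\]
and, in the same way, $\sum_{b\in\Bcal^\circ(a)}q^{\Vert b\Vert_1}=\sum_{I\in\Jcal^\circ(x)}q^{\size{I}}$. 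Stacking these two equalities into a column vector shows that the left-hand side of the claimed identity is equal to the left-hand side of the identity in Theorem~\ref{thm:nice-formula-for-order-ideals}, which in turn equals the matrix product appearing on the right-hand side of the claimed identity. This finishes the proof.

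Both ingredients are already available — the statistic-preserving, $\bullet/\circ$-respecting bijection $\Psi$ (Theorem~\ref{thm:bijection-order-ideals-to-integersequences}) and the matrix formula for the order-ideal generating function (Theorem~\ref{thm:nice-formula-for-order-ideals}) — so I do not expect any genuine obstacle here. The only point requiring a little care is bookkeeping: one should check that the even length $2\ell$ of $a$ is exactly what makes the alternating product $R_q^{a_0}L_q^{a_1}\cdots R_q^{a_{2\ell-2}}L_q^{a_{2\ell-1}}$ on the right-hand side legitimate, and that the partition $\Bcal(a)=\Bcal^\bullet(a)\cup\Bcal^\circ(a)$ appearing in the statement is literally the one matched by $\Psi$; both are immediate from the definitions, so the argument is in essence a one-line corollary of the two cited theorems.
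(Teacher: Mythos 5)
Your proposal is correct and is essentially identical to the paper's own proof: the paper also deduces Theorem~\ref{thm:1-norm-statistics} by transporting the identity of Theorem~\ref{thm:nice-formula-for-order-ideals} along the statistic-preserving, $\bullet/\circ$-respecting bijection $\Psi$ of Theorem~\ref{thm:bijection-order-ideals-to-integersequences}. No issues.
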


\begin{proof}
Let $\Psi:\Jcal(\xx)\to\Bcal(a)$ be the bijection from
the set of order ideals of the fence poset $\Fcal(\xx)$
to the set of admissible sequences $\Bcal(a)$
defined in Theorem~\ref{thm:bijection-order-ideals-to-integersequences}.
We have
\begin{align*}
    \left(\begin{array}{r}
          \sum_{b\in \Bcal^\bullet(a)} q^{\Vert b\Vert_1}\\
          \sum_{b\in \Bcal^\circ  (a)} q^{\Vert b\Vert_1}
    \end{array}\right)
    &=
    \left(\begin{array}{r}
          \sum_{I\in \Jcal^\bullet(\xx)} q^{\Vert \Psi(I)\Vert_1}\\
          \sum_{I\in \Jcal^\circ  (\xx)} q^{\Vert \Psi(I)\Vert_1}
    \end{array}\right)\\
    &=
    \left(\begin{array}{r}
          \sum_{I\in \Jcal^\bullet(\xx)} q^{\size{I}}\\
          \sum_{I\in \Jcal^\circ  (\xx)} q^{\size{I}}
    \end{array}\right).
\end{align*}
The conclusion follows from
Theorem~\ref{thm:nice-formula-for-order-ideals}.
\end{proof}

\section{Snake graphs and their perfect matchings}
\label{sec:snake-graphs-matchings}

Snake graphs have been introduced to provide combinatorial formulas
for elements in cluster algebras of surface type
\cite{MR2661414,MR2807089,MR3034481,zbMATH06144657},
see also \cite{zbMATH07181526}.
The formulas are obtained from the set of perfect matchings
of a snake graph. The enumeration of perfect matchings in a graph,
or equivalently, of domino tilings \cite{MR2074946}
is part of a longer history including the study of
alternating-sign matrices and the Aztec diamonds \cite{zbMATH00130434}.

Perfect matchings in certain snake graphs
also give an interpretation of Markoff numbers
\cite{MR3098784} and of the numerator and denominator of rational numbers from
their continued fraction expansion \cite{MR3778183,MR4058266}.
However, these results are limited to rational numbers $>1$
and need different snake graphs for the numerator and denominators.
In what follows, we introduce snake graphs in a way that allows us to associate
them bijectively to every positive rational number $>0$.
The representation of snake graphs by strings was proposed in
\cite{zbMATH07336893} using a ternary alphabet $\{\rightarrow,\leftarrow,\bullet\}$.
Here, we simply use the binary alphabet $\{\0,\1\}$.

A binary word $w\in\{\0,\1\}^*$ describes a path in the North East quadrant
starting from the origin with
letter $\0$ associated with a unitary horizontal step in the East direction
and
letter $\1$ associated with a unitary vertical step in the North direction
as in Figure~\ref{fig:legend01}.
The end point of the path is
$\overrightarrow{w}=(|w|_\0,|w|_\1)$
where $|w|_a$ denotes the number of occurrences of the letter $a$ in the word $w$.
\begin{figure}[h]
\begin{center}
    \begin{tikzpicture}[scale=.5,>=latex]
    \draw[->] (0,0) -- (1,0) node[right] {$\0$};
    \draw[->] (0,0) -- (0,1) node[above] {$\1$};
    \end{tikzpicture}
    \qquad
    \qquad
    \begin{tikzpicture}[scale=.5,>=latex]
        \draw[->] (0,0) -- ++ (1,0) -- ++ (0,1) -- ++ (1,0) -- ++ (1,0) -- ++
        (0,1) -- ++ (1,0) -- ++ (1,0) ;
        \node at (2.5,-1) {$w=\mathtt{0100100}$};
        \node[left] at (0,0) {(0,0)};
        \node[right] at (5,2) {$(5,2)=\overrightarrow{w}$};
    \end{tikzpicture}
\end{center}
    \caption{A path in the plane using vertical (North) and horizontal (East) unitary steps
    is encoded as a word over the alphabet $\{\0,\1\}$.}
    \label{fig:legend01}
\end{figure}

From a word $w\in\{\0,\1\}^*$ describing a path, we define the snake graph $G(w)$ as a thickening of
this path. More formally, the snake graph $G(w)=(V_w,E_w)$ is defined by the
following set of vertices and (undirected) edges:
\begin{align*}
    V_w&=\{\overrightarrow{p}\mid \text{$p$ prefix of $w$}\}
    + \{0, e_1,e_2, e_1+e_2\} \subset\Z^2\\
    E_w&=
    \bigcup_{\text{$p$ prefix of $w$}}
       S_{\overrightarrow{p}}
\end{align*}
where
$
 S_{g}
 =    \{(g,    g+e_1),
        (g,    g+e_2),
        (g+e_1,g+e_1+e_2),
        (g+e_2,g+e_1+e_2)\}
$
is the set of four edges of the unit square at point $g\in\Z^2$, see
Figure~\ref{fig:snake-graph}.
Notice that a word $w\in\{0,1\}^*$ of length $|w|=n$ has $n+1$ prefixes including
itself and the empty word. Therefore, the snake graph $G(w)$ is the
concatenation of $|w|+1$ unit squares.

\begin{figure}[h]
\begin{center}
    \begin{tikzpicture}[scale=.5,>=latex]
        \draw[dotted,very thick] (0,0) -- ++ (1,0) -- ++ (0,1) -- ++ (-1,0) -- ++ (0,-1);
        \node[left]       at (0,0) {$g$};
        \node[above left] at (0,1) {$g+e_2$};
        \node[right]        at (1,0) {$g+e_1$};
        \node[above right] at (1,1) {$g+e_1+e_2$};
        \node[circled node] at (0,0)  {};
        \node[circled node] at (0,1)  {};
        \node[circled node] at (1,0) {};
        \node[circled node] at (1,1) {};
        \draw (-3,-2) node[circled node] {}
            -- ++ (1,0) node[circled node] {}
            -- ++ (0,1) node[circled node] {}
            -- ++ (1,0) node[circled node] {}
            -- ++ (1,0) node[circled node] {}
            -- ++ (0,1) node[circled node] {};
    \end{tikzpicture}
    \qquad
    \qquad
    \begin{tikzpicture}[scale=.5,>=latex]
        \draw[dotted,very thick] (0,0) node[circled node] {}
            -- ++ (1,0) node[circled node] {}
            -- ++ (1,0) node[circled node] {}
            -- ++ (0,1) node[circled node] {}
            -- ++ (1,0) node[circled node] {}
            -- ++ (1,0) node[circled node] {}
            -- ++ (0,1) node[circled node] {}
            -- ++ (1,0) node[circled node] {}
            -- ++ (1,0) node[circled node] {}
            -- ++ (0,1) node[circled node] {}
            -- ++ (-1,0) node[circled node] {}
            -- ++ (-1,0) node[circled node] {}
            -- ++ (-1,0) node[circled node] {}
            -- ++ (0,-1) node[circled node] {}
            -- ++ (-1,0) node[circled node] {}
            -- ++ (-1,0) node[circled node] {}
            -- ++ (0,-1) node[circled node] {}
            -- ++ (-1,0) node[circled node] {}
            -- ++ (0,-1) node[circled node] {};
            \draw[dotted,very thick] (1,0) -- ++ (0,1);
            \draw[dotted,very thick] (2,1) -- ++ (0,1);
            \draw[dotted,very thick] (3,1) -- ++ (0,1);
            \draw[dotted,very thick] (4,2) -- ++ (0,1);
            \draw[dotted,very thick] (5,2) -- ++ (0,1);
            \draw[dotted,very thick] (1,1) -- ++ (1,0);
            \draw[dotted,very thick] (3,2) -- ++ (1,0);
            \node[left] at  (0,0) {(0,0)};
            \node[left] at  (0,1) {(0,1)};
            \node[right] at (6,3) {(6,3)};
            \node[right] at (6,2) {(6,2)};
    \end{tikzpicture}
\end{center}
\caption{Right: the snake graph $G(\mathtt{0100100})$.
         Left: the vertices of the unit square
         at $g=\protect\overrightarrow{p}$
         for the prefix
         $p=\0\1\0\0\1$.
         }
    \label{fig:snake-graph}
\end{figure}

\subsection{From rational numbers to snake graphs}

We denote the non-identity involution on the alphabet $\{\0,\1\}$ as
$\overline{\0}=\1$ and $\overline{\1}=\0$ which extends
to an involution on $\{\0,\1\}^*$. This involution
induces a symmetry on the set of snake graphs.

\begin{lemma}\label{lem:G-wbar-is-symmetric-to-Gw}
    For every $w\in\{\0,\1\}^*$,
    the snake graph $G(\overline{w})$
    is the mirror image of the snake graph $G(w)$
    under the map $\sigma:\R^2\to\R^2:(a,b)\mapsto (b,a)$.
\end{lemma}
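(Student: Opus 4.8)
The plan is to verify directly that applying $\sigma$ to the vertex set and to each edge of $G(w)$ yields exactly the vertex set and edge set of $G(\overline{w})$; since $\sigma$ is the reflection of $\R^2$ across the diagonal line $\{(a,b)\in\R^2 : a=b\}$, this is precisely the asserted ``mirror image'' statement. First I would record the two elementary facts that drive everything. Because barring is applied letter by letter, $p$ is a prefix of $w$ if and only if $\overline{p}$ is a prefix of $\overline{w}$, so $p\mapsto\overline{p}$ is a bijection from the prefixes of $w$ to the prefixes of $\overline{w}$; and because barring exchanges the two letters, $|\overline{p}|_{\0}=|p|_{\1}$ and $|\overline{p}|_{\1}=|p|_{\0}$, hence
\[
    \overrightarrow{\overline{p}}
    = (|p|_{\1},|p|_{\0})
    = \sigma(\overrightarrow{p})
\]
for every prefix $p$ of $w$.

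Next I would check that $\sigma$, being the linear map exchanging $e_1$ and $e_2$, is compatible with the two local building blocks of a snake graph. It fixes the offset square setwise, $\sigma(\{0,e_1,e_2,e_1+e_2\})=\{0,e_2,e_1,e_1+e_2\}=\{0,e_1,e_2,e_1+e_2\}$, and for every $g\in\Z^2$ it carries the four edges of the unit square at $g$ onto the four edges of the unit square at $\sigma(g)$, that is $\sigma(S_g)=S_{\sigma(g)}$ (a bottom horizontal edge becomes a left vertical edge, and so on, so the four edges are merely permuted among themselves).

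Finally I would assemble these pieces. For the vertices, linearity of $\sigma$ together with the offset-square invariance gives
\[
    \sigma(V_w)
    = \{\sigma(\overrightarrow{p}) \mid p \text{ prefix of } w\} + \{0,e_1,e_2,e_1+e_2\},
\]
and substituting $\sigma(\overrightarrow{p})=\overrightarrow{\overline{p}}$ and then re-indexing via the prefix bijection $p\mapsto\overline{p}$ turns the right-hand side into $V_{\overline{w}}$. For the edges the computation is the same:
\[
    \sigma(E_w)
    = \bigcup_{p} \sigma(S_{\overrightarrow{p}})
    = \bigcup_{p} S_{\sigma(\overrightarrow{p})}
    = \bigcup_{p} S_{\overrightarrow{\overline{p}}}
    = E_{\overline{w}},
\]
where $p$ ranges over the prefixes of $w$. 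Hence $\sigma$ maps $G(w)$ onto $G(\overline{w})$, which is the claim. There is no substantial obstacle here; the only points demanding care are tracking the Minkowski-sum notation $\{\cdot\}+\{0,e_1,e_2,e_1+e_2\}$ through the linear map, and confirming that $\sigma$ genuinely permutes the four edges of each unit square (so that $\sigma(S_g)=S_{\sigma(g)}$ holds on the nose rather than producing edges outside the intended square).
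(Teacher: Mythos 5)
Your proof is correct and follows essentially the same route as the paper: both rest on the identity $\sigma(\overrightarrow{p})=\overrightarrow{\overline{p}}$, the bijection between prefixes of $w$ and prefixes of $\overline{w}$, and the invariance of the offset set $\{0,e_1,e_2,e_1+e_2\}$ under $\sigma$. The only difference is that you also verify $\sigma(E_w)=E_{\overline{w}}$ explicitly, whereas the paper checks only the vertex sets and leaves the edges implicit.
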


\begin{proof}
    Let $\sigma:\R^2\to\R^2:(a,b)\mapsto(b,a)$. The vertices
    of the snake graph $\sigma(G(w))$ satisfy
\begin{align*}
    \text{vertices of } \sigma(G(w))
    &= \sigma\left(\{\overrightarrow{p}\mid \text{$p$ prefix of $w$}\}
        + \{0, e_1,e_2, e_1+e_2\}\right)\\
    &= \{\sigma(\overrightarrow{p})\mid \text{$p$ prefix of $w$}\}
        + \{0, e_1,e_2, e_1+e_2\}\\
    &= \{\overrightarrow{p}\mid \text{$p$ prefix of $\overline{w}$}\}
        + \{0, e_1,e_2, e_1+e_2\}\\
    &= \text{vertices of } G(\overline{w}).\qedhere
\end{align*}
\end{proof}

We define a length-preserving map
$\theta:\{\0,\1\}^*\to\{\0,\1\}^*$
recursively by $\theta(\varepsilon)=\varepsilon$ and
\[
    \theta(\alpha w)=
    \begin{cases}
        \overline{\alpha}\theta(w) & \text{ if $|w|$ is even,}\\
        \alpha\theta(w)            & \text{ if $|w|$ is odd,}
    \end{cases}
\]
for every word $w\in\{\0,\1\}^*$ and letter $\alpha\in\{\0,\1\}$.

\begin{lemma}
$\theta:\{\0,\1\}^*\to\{\0,\1\}^*$ is an involution.
\end{lemma}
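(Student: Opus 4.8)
The plan is to prove that $\theta(\theta(w)) = w$ for every $w \in \{\0,\1\}^*$ by induction on the length $|w|$. The first thing I would record is the immediate observation that $\theta$ is length-preserving: this is visible directly from the recursive definition, since $\theta(\alpha w)$ always has the form $\beta\,\theta(w)$ for a single letter $\beta \in \{\0,\1\}$ (namely $\beta = \overline{\alpha}$ or $\beta = \alpha$). This fact is exactly what makes the induction go through, because it guarantees that the parity of the tail is unchanged when $\theta$ is applied.

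The base case $w = \varepsilon$ is immediate from $\theta(\varepsilon) = \varepsilon$. For the inductive step, I would write $w = \alpha u$ with $\alpha \in \{\0,\1\}$ and $u \in \{\0,\1\}^*$, and assume as inductive hypothesis that $\theta(\theta(u)) = u$. I would then split into two cases according to the parity of $|u|$.

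If $|u|$ is even, then $\theta(\alpha u) = \overline{\alpha}\,\theta(u)$ by definition. Since $|\theta(u)| = |u|$ is even, applying $\theta$ a second time uses the same branch of the definition, giving $\theta(\overline{\alpha}\,\theta(u)) = \overline{\overline{\alpha}}\,\theta(\theta(u)) = \alpha u$, where I use $\overline{\overline{\alpha}} = \alpha$ and the inductive hypothesis. If instead $|u|$ is odd, then $\theta(\alpha u) = \alpha\,\theta(u)$, and since $|\theta(u)| = |u|$ is odd, applying $\theta$ again gives $\theta(\alpha\,\theta(u)) = \alpha\,\theta(\theta(u)) = \alpha u$. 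In both cases $\theta(\theta(w)) = w$, which completes the induction and shows that $\theta$ is an involution.

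I do not expect any genuine obstacle here; the argument is routine parity bookkeeping. The only point requiring a moment's care is the use of length-preservation of $\theta$ to ensure that the even/odd case distinction governing the outer application of $\theta$ is the same as the one governing the inner application, so that the two applications of $\theta$ undo each other letter by letter.
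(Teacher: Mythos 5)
Your proof is correct and complete, but it takes a different route from the paper. The paper's proof is a one-line appeal to an explicit, non-recursive description of the map: $\theta$ flips the letters at even distance from the right end of the word, and since flipping is self-inverse, the involution property is immediate. You instead work directly with the recursive definition and run an induction on $|w|$, with the key supporting observation that $\theta$ is length-preserving so that the parity case governing the outer application of $\theta$ matches the one governing the inner application. Your argument is more self-contained and verifies everything explicitly, whereas the paper's version is terser but leaves the reader to check that the stated characterization of $\theta$ actually follows from the recursion (which is itself a small induction). Either way the result stands; your parity bookkeeping is exactly right and there is no gap.
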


\begin{proof}
The map $\theta$ flips the letters at even distance
from the right end of the word.
\end{proof}

\begin{lemma} \label{lem:map-theta-commutes-bar-bar}
    For every $w\in\{\0,\1\}^*$, $\overline{\theta(w)}=\theta(\overline{w})$.
\end{lemma}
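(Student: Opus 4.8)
The plan is to prove the identity by a short induction on the length $|w|$, exploiting the recursive definition of $\theta$ together with the facts that $\overline{\cdot}$ is length-preserving and acts letterwise (so it commutes with taking suffixes and with prepending a letter). The base case $w=\varepsilon$ is immediate: both sides equal $\varepsilon$.

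For the inductive step, write $w=\alpha u$ with $\alpha\in\{\0,\1\}$ and $u\in\{\0,\1\}^*$, and observe that $\overline{w}=\overline{\alpha}\,\overline{u}$ with $|\overline{u}|=|u|$; hence the parity of $|u|$ selects the same branch of the recursion in the computations of both $\theta(w)$ and $\theta(\overline{w})$. When $|u|$ is even we have $\theta(w)=\overline{\alpha}\,\theta(u)$ and $\theta(\overline{w})=\overline{\overline{\alpha}}\,\theta(\overline{u})=\alpha\,\theta(\overline{u})$, so taking $\overline{\cdot}$ of the first expression and using $\overline{\overline{\alpha}}=\alpha$ reduces the claim to $\overline{\theta(u)}=\theta(\overline{u})$, i.e.\ the induction hypothesis. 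When $|u|$ is odd we have $\theta(w)=\alpha\,\theta(u)$ and $\theta(\overline{w})=\overline{\alpha}\,\theta(\overline{u})$, and the same manipulation again closes the case.

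An alternative, more conceptual phrasing uses the description of $\theta$ already noted (it flips exactly those letters at an even distance from the right end of the word): composing $\theta$ with $\overline{\cdot}$ in either order yields the map flipping exactly the letters at an odd distance from the right end, and this description does not depend on the order of composition, whence $\overline{\theta(w)}=\theta(\overline{w})$. There is no genuine obstacle in either approach; the only point requiring a little care is aligning the parity condition between $w$ and $\overline{w}$, which is trivial because $\overline{\cdot}$ preserves the length of every suffix.
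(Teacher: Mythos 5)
Your inductive argument is correct and is essentially the same as the paper's proof: both induct on $|w|$, peel off the first letter, note that complementation preserves length so the same branch of the recursion for $\theta$ applies, and reduce to the induction hypothesis. The alternative "flip letters at even distance from the right end" phrasing is a nice sanity check but not needed.
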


\begin{proof}
    We proceed by induction on the length of $w$.
    The equality holds if $w=\varepsilon$ is the empty word.
    Let $\alpha\in\{\0,\1\}$ and $w\in\{\0,\1\}^*$.
    If $|w|$ is even, we have
    \[
        \overline{\theta(\alpha w)}
        = \overline{\overline{\alpha} \theta(w)}
        = \alpha \overline{\theta(w)}
        = \alpha \theta(\overline{w})
        =\theta(\overline{\alpha w}).
    \]
    If $|w|$ is odd, we have
    \[
        \overline{\theta(\alpha w)}
        = \overline{\alpha \theta(w)}
        = \overline{\alpha} \overline{\theta(w)}
        = \overline{\alpha} \theta(\overline{w})
        =\theta(\overline{\alpha w}).\qedhere
    \]
\end{proof}

    In other words, it follows from
    Lemma~\ref{lem:map-W-commutes-inverse-bar},
    Lemma~\ref{lem:map-theta-commutes-bar-bar} and
    Lemma~\ref{lem:G-wbar-is-symmetric-to-Gw}
    that the diagram shown in Figure~\ref{fig:commutative-Gcal}
    is commutative.
Using the maps $W$ and $\theta$, we associate
a snake graph $\Gcal(\xx)$
to every rational number $\xx\in\Q_{>0}$
as follows:
\begin{equation}
\Gcal = G\circ \theta\circ W.
\end{equation}

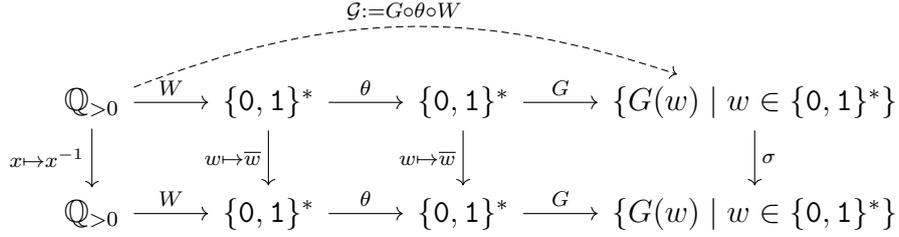
\begin{figure}[h]
\begin{center}
\begin{tikzcd}[ampersand replacement=\&]
    \Q_{>0}
    \arrow{r}{W}
    \arrow[swap]{d}{x\mapsto x^{-1}}
    \arrow[rrr,bend left=20,pos=.5,dashed,"\Gcal:=G\circ\theta\circ W"]
	\& \{\0,\1\}^* \arrow{r}{\theta}
                   \arrow[swap]{d}{w\mapsto \overline{w}}
    \& \{\0,\1\}^* \arrow[swap]{d}{w\mapsto \overline{w}}
                   \arrow{r}{G}
    \& \{G(w)\mid w\in\{\0,\1\}^*\}
                   \arrow{d}{\sigma}
    \\
    \Q_{>0} \arrow{r}{W}
	\& \{\0,\1\}^* \arrow{r}{\theta}
    \& \{\0,\1\}^* \arrow{r}{G}
    \& \{G(w)\mid w\in\{\0,\1\}^*\}
\end{tikzcd}
\end{center}
    \caption{A commutative diagram involving $\Gcal = G\circ \theta\circ W$.}
    \label{fig:commutative-Gcal}
\end{figure}

\begin{lemma}
    The map $\Gcal$ is a bijection from the set $\Q_{>0}$
    of positive rational numbers to the set $\{G(w)\mid w\in\{\0,\1\}^*\}$ of
    snake graphs.
\end{lemma}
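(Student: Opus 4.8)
The plan is to present $\Gcal = G\circ\theta\circ W$ as a composition and to reduce the statement to a single injectivity fact about $G$. By the lemma above, $W\colon\Q_{>0}\to\{\0,\1\}^*$ is a bijection, and $\theta\colon\{\0,\1\}^*\to\{\0,\1\}^*$ is an involution, hence a bijection; so $\theta\circ W\colon\Q_{>0}\to\{\0,\1\}^*$ is a bijection. Since the target set $\{G(w)\mid w\in\{\0,\1\}^*\}$ is by construction the image of $G$, the composite $\Gcal$ is automatically surjective onto it. As precomposing with the bijection $\theta\circ W$ does not change injectivity, $\Gcal$ is injective if and only if $G\colon\{\0,\1\}^*\to\{G(w)\mid w\}$ is injective. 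So it suffices to show that the embedded snake graph $G(w)=(V_w,E_w)$ determines the word $w$.

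The key step is to recover the set $S_w=\{\overrightarrow{p}\mid p\text{ a prefix of }w\}\subset\Z^2$ of lower-left corners of the constituent unit squares directly from $G(w)$. I claim that $g\in S_w$ if and only if the bottom edge $(g,g+e_1)$ and the top edge $(g+e_2,g+e_1+e_2)$ of the unit square at $g$ both lie in $E_w$. The forward direction is immediate from the definition of $E_w$. Conversely, a horizontal unit edge $(g,g+e_1)$ belongs to $E_w$ precisely when $g\in S_w$ or $g-e_2\in S_w$, and $(g+e_2,g+e_1+e_2)\in E_w$ precisely when $g\in S_w$ or $g+e_2\in S_w$. If $g\notin S_w$ we would therefore have both $g-e_2\in S_w$ and $g+e_2\in S_w$; but $S_w$ is the monotone lattice path $\overrightarrow{p_0}=0,\overrightarrow{p_1},\dots,\overrightarrow{p_n}$ (where $p_i$ is the length-$i$ prefix of $w$ and consecutive points differ by $e_1$ or $e_2$), and a monotone path containing two points differing by $2e_2$ must contain the point halfway between them, forcing $g\in S_w$ --- a contradiction. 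Once $S_w$ is known, order its points as $s_0=0,s_1,\dots,s_n$ by the value of the coordinate sum (which equals $i$ at $s_i$, hence is strictly increasing), and put $w_i=\0$ if $s_i-s_{i-1}=e_1$ and $w_i=\1$ otherwise; this returns $w$. Hence $G$ is injective.

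Putting these together, $\Gcal=G\circ\theta\circ W$ is a bijection from $\Q_{>0}$ onto $\{G(w)\mid w\in\{\0,\1\}^*\}$.

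The main obstacle is exactly the injectivity of $G$: one must argue that the tiling of the planar region by its unit squares is intrinsic to the embedded graph, and this is where monotonicity of the underlying lattice path is essential (it is also why the construction must treat snake graphs as embedded, not abstract, graphs). Everything else is formal bookkeeping with compositions of bijections, and it may also be worth recording, via the commutative diagram of Figure~\ref{fig:commutative-Gcal}, that $\Gcal(x^{-1})$ is the image of $\Gcal(x)$ under the reflection $\sigma$, which reflects the symmetry already observed for $W$ and $\theta$.
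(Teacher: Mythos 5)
Your proposal is correct and follows the same route as the paper: the paper's proof of this lemma is the one-line observation that $\Gcal=G\circ\theta\circ W$ is a composition of bijections. The only difference is that the paper simply asserts that $G$ is a bijection onto the set of snake graphs, whereas you supply the missing detail --- recovering the lower-left corners $S_w$ from the edge set via the ``bottom edge and top edge both present'' criterion and the monotonicity of the lattice path --- and this argument is sound and a genuine improvement in completeness over what is written in the paper.
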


\begin{proof}
It follows from the fact that intermediate maps $W$, $\theta$ and $G$ are bijections.
\end{proof}

Commutativity of the diagram shown in Figure~\ref{fig:commutative-Gcal} implies
the following result.

\begin{lemma}\label{lem:mirror-image-snake-graph-inverse-rational-number}
    For every $\xx\in\Q_{>0}$,
    the snake graph $\Gcal(\xx^{-1})$
    is the mirror image of the snake graph $\Gcal(\xx)$
    under the map $\sigma:\R^2\to\R^2:(a,b)\mapsto (b,a)$.
\end{lemma}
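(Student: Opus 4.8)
The plan is to prove the statement by a diagram chase around the square displayed in Figure~\ref{fig:commutative-Gcal}, whose three constituent squares have already been verified. Recall that by definition $\Gcal = G\circ\theta\circ W$, so it suffices to track the effect of $x\mapsto x^{-1}$ through the composition. First I would apply Lemma~\ref{lem:map-W-commutes-inverse-bar} to get $W(x^{-1})=\overline{W(x)}$. Then, applying Lemma~\ref{lem:map-theta-commutes-bar-bar} with the word $w=W(x)$, I would rewrite $\theta(\overline{W(x)})=\overline{\theta(W(x))}$. Finally, applying Lemma~\ref{lem:G-wbar-is-symmetric-to-Gw} with the word $w=\theta(W(x))$, I would conclude that $G(\overline{\theta(W(x))})$ is the image of $G(\theta(W(x)))$ under $\sigma$. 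Composing these three steps gives
\[
\Gcal(x^{-1}) = G(\theta(W(x^{-1}))) = G(\theta(\overline{W(x)})) = G(\overline{\theta(W(x))}) = \sigma\bigl(G(\theta(W(x)))\bigr) = \sigma\bigl(\Gcal(x)\bigr),
\]
which is exactly the assertion that $\Gcal(x^{-1})$ is the mirror image of $\Gcal(x)$ under $\sigma\colon(a,b)\mapsto(b,a)$.

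There is essentially no obstacle here: all of the content has been isolated into the three preceding lemmas (the commutation of $W$ with inversion, of $\theta$ with the letter-swap, and of $G$ with the letter-swap), and what remains is the formal composition above. The only point I would be careful about is checking that the symmetry on the right-hand edge of the diagram is really the single transposition $\sigma$, not $\sigma$ composed with a translation; this is fine because, exactly as in the proof of Lemma~\ref{lem:G-wbar-is-symmetric-to-Gw}, the vertex set of $G(w)$ is the $\sigma$-image of that of $G(\overline{w})$: it is obtained from the $\sigma$-swapped prefix-endpoint set $\{\overrightarrow{p}\}$ Minkowski-summed with the $\sigma$-invariant square $\{0,e_1,e_2,e_1+e_2\}$, and the edge sets transform accordingly. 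Hence the diagram chase is valid verbatim.

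An alternative would be to unwind the definition $\Gcal = G\circ\theta\circ W$ and argue by induction on the length of $W(x)$, directly comparing the path encoded by $\theta(W(x))$ with the one encoded by $\theta(\overline{W(x)})$; but this merely re-derives Lemmas~\ref{lem:map-theta-commutes-bar-bar} and~\ref{lem:G-wbar-is-symmetric-to-Gw} inline, so the factored diagram chase is preferable and is the route I would take.
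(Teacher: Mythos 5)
Your proposal is correct and follows exactly the same route as the paper: chaining Lemma~\ref{lem:map-W-commutes-inverse-bar}, Lemma~\ref{lem:map-theta-commutes-bar-bar}, and Lemma~\ref{lem:G-wbar-is-symmetric-to-Gw} to obtain $\Gcal(\xx^{-1})=G(\overline{\theta\circ W(\xx)})=\sigma(\Gcal(\xx))$. The paper's proof is the same diagram chase, just stated more tersely.
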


\begin{proof}
Lemma~\ref{lem:map-W-commutes-inverse-bar}
and
Lemma~\ref{lem:map-theta-commutes-bar-bar}
imply that
    $\theta\circ W(\xx^{-1})=\overline{\theta\circ W(\xx)}$.
Using Lemma~\ref{lem:G-wbar-is-symmetric-to-Gw}, we have
    \[
        \Gcal(\xx^{-1})
        =G(\theta\circ W(\xx^{-1}))
        =G(\overline{\theta\circ W(\xx)})
        =\sigma(G(\theta\circ W(\xx)))
        =\sigma(\Gcal(\xx)).\qedhere
    \]
\end{proof}

An example illustrating
\begin{align*}
    \Gcal(10/27)&=\Gcal([0;2,1,2,2,1])=G(\theta(\0\0\1\0\0\1\1))=G(\1\0\0\0\1\1\0) \text{ and }\\
    \Gcal(27/10)&=\Gcal([2;1,2,3])    =G(\theta(\1\1\0\1\1\0\0))=G(\0\1\1\1\0\0\1)
\end{align*}
is shown in Figure~\ref{fig:Lee_Schiffler_2017_Figure2}.
Both snake graphs contain 8 cells
which is the sum of the partial quotients
$8=2+1+2+3=0+2+1+2+2+1$.

\begin{figure}[h]
\[
    \def\arraycolsep{10mm}
\begin{array}{cc}
    \includegraphics{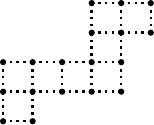} &
    \includegraphics{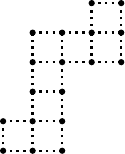}\\
    \Gcal(10/27)
    &
    \Gcal(27/10)\\
\end{array}
\]
    \caption{
    The snake graphs
    $\Gcal(10/27)$
    and $\Gcal(27/10))$.
    }
    \label{fig:Lee_Schiffler_2017_Figure2}
\end{figure}

The map $\Gcal$ maps the Raney tree
of positive rational numbers \cite{MR1453849},
also known as the Calkin-Wilf tree \cite{MR1763062},
to the binary tree of snake graphs where the parent relation
is defined by the suffix relation; see
Figure~\ref{fig:CalkinWilf-tree-of-snakes}.
\begin{figure}
\begin{center}
\includegraphics[width=\linewidth]{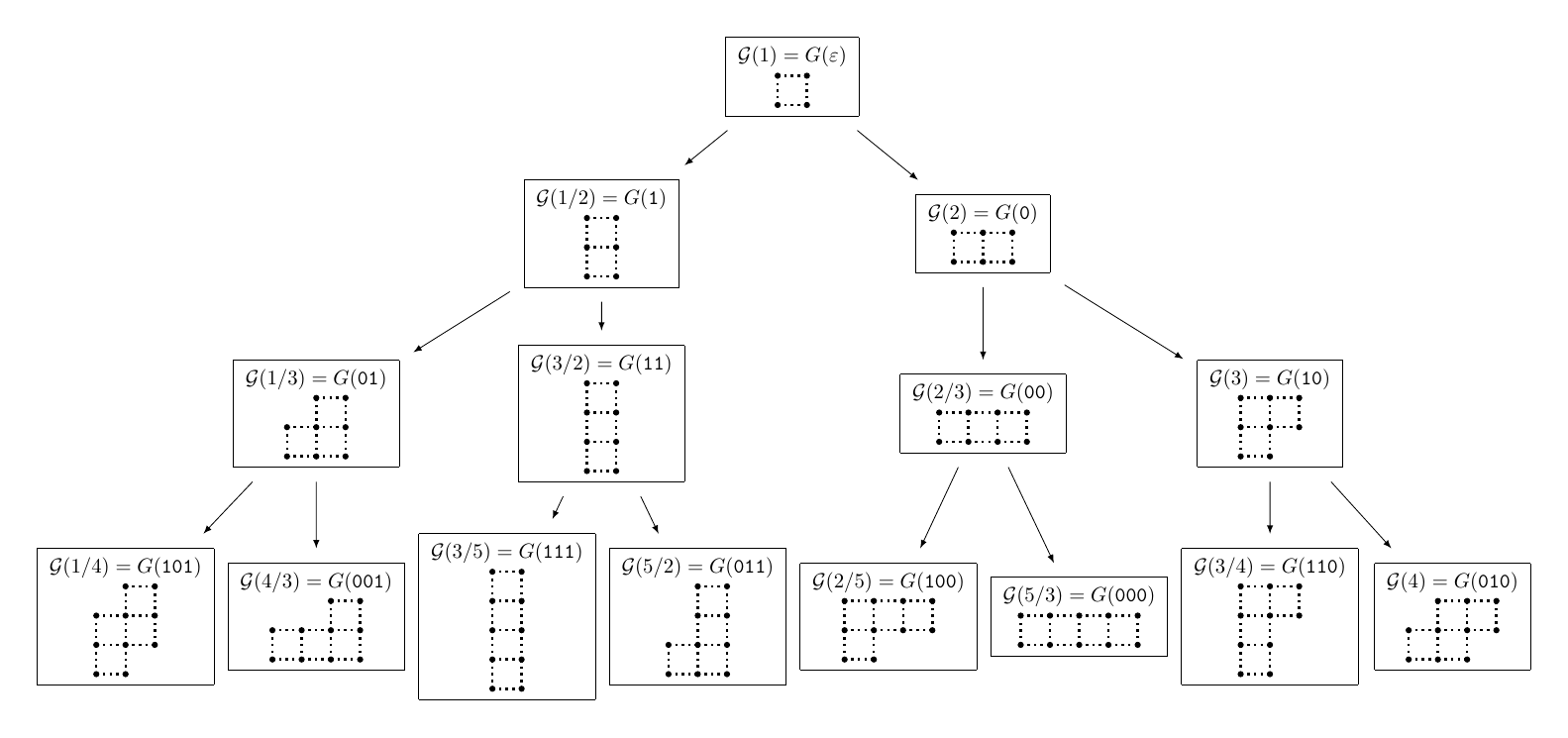}
\end{center}
    \caption{The map $\Gcal$ maps the Calkin-Wilf tree of positive
    rational numbers onto the binary tree of snake graphs defined by the suffix relation.}
    \label{fig:CalkinWilf-tree-of-snakes}
\end{figure}
In general, the snake graph
\[
\Gcal[a_0;\dots,a_{2\ell-1}]
= G(\theta(\1^{a_0}\0^{a_{1}}\cdots \1^{a_{{2\ell}-2}}\0^{a_{2\ell-1}-1}))
\]
is made of $a_0+\dots+a_{2\ell-1}$ unit squares.
By comparison, the snake graphs defined in \cite{MR4016518} contain
one less unit square and are defined only for rational numbers that are
larger than 1.
Figure~\ref{fig:Lee_Schiffler_2017_Figure2}
can be compared with Figure~2 in \cite{MR4016518}
where the snake graph associated with the rational number $27/10$ contains seven
unit squares instead of eight.
See Corollary~\ref{cor:the-usual-combinatorial-interpretation}
and Remark~\ref{rem:relation-to-Thm-3.4-Schiffler}
for the precise relation between these definitions of snake graphs.

\subsection{The dual map defined by Claussen}
In \cite{claussen_expansion_2020} (also implicitly defined in
\cite{zbMATH07181526}), another length-preserving involution
$\eta:\{\0,\1\}^*\to\{\0,\1\}^*$
called \emph{dual map} that resembles $\theta$ was considered
in the context of snake graphs.
It is defined recursively by $\eta(\varepsilon)=\varepsilon$ and
\begin{equation}\label{eq:eta}
    \eta(w\alpha)=
    \begin{cases}
        \eta(w) \overline{\alpha} & \text{ if $|w|$ is even,} \\
        \eta(w) \alpha            & \text{ if $|w|$ is odd,}
    \end{cases}
\end{equation}
for every word $w\in\{\0,\1\}^*$ and letter $\alpha\in\{\0,\1\}$.

It turns out that the map $\theta$ is conjugate to the dual map $\eta$ through the involution
$w \mapsto \widehat{w}$ on $\{\0,\1\}^*$ defined in
Section~\ref{sec:proof-of-Theorems-B-and-D}.
Recall that $\widehat{w} = \overline{w_k} \cdots \overline{w_1}$ if $w = w_1
\cdots w_k$, where $\overline{\0}=\1$ and $\overline{\1}=\0$.

\begin{lemma}\label{lem:the-dual-map-eta}
    For every word $w\in\{\0,\1\}^*$,
    $\widehat{\theta(w)}=\eta(\widehat{w})$.
    In other words, the following diagram is commutative:
\begin{center}
\begin{tikzcd}[ampersand replacement=\&]
	\{\0,\1\}^* \arrow{r}{\theta}
                   \arrow[swap]{d}{w\,\mapsto \widehat{w}}
    \& \{\0,\1\}^* \arrow[swap]{d}{w\,\mapsto \widehat{w}}
    \\
	\{\0,\1\}^* \arrow{r}{\eta}
    \& \{\0,\1\}^*
\end{tikzcd}
\end{center}
\end{lemma}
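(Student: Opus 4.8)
The plan is to prove the identity $\widehat{\theta(w)}=\eta(\widehat{w})$ by induction on the length $|w|$. The base case $w=\varepsilon$ is immediate, since $\widehat{\varepsilon}=\theta(\varepsilon)=\eta(\varepsilon)=\varepsilon$, so both sides are the empty word.

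For the inductive step I would write a nonempty word as $\alpha w$ with $\alpha\in\{\0,\1\}$ and $w\in\{\0,\1\}^*$, and use three elementary facts. First, the involution $\widehat{\cdot}$ turns prepending a letter into appending its complement: from $\widehat{w_1\cdots w_k}=\overline{w_k}\cdots\overline{w_1}$ one reads off $\widehat{\alpha w}=\widehat{w}\,\overline{\alpha}$. Second, $\widehat{w}$ has the same length as $w$, so the parity of $|w|$ that governs the recursion for $\theta(\alpha w)$ agrees with the parity of the prefix $\widehat{w}$ that governs the recursion for $\eta(\widehat{w}\,\overline{\alpha})$. Third, $\overline{\overline{\alpha}}=\alpha$.

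With these facts a short computation completes the step. If $|w|$ is even, then
\[
    \widehat{\theta(\alpha w)}
    = \widehat{\overline{\alpha}\,\theta(w)}
    = \widehat{\theta(w)}\,\alpha
    \qquad\text{and}\qquad
    \eta(\widehat{\alpha w})
    = \eta(\widehat{w}\,\overline{\alpha})
    = \eta(\widehat{w})\,\alpha ;
\]
if $|w|$ is odd, then
\[
    \widehat{\theta(\alpha w)}
    = \widehat{\alpha\,\theta(w)}
    = \widehat{\theta(w)}\,\overline{\alpha}
    \qquad\text{and}\qquad
    \eta(\widehat{\alpha w})
    = \eta(\widehat{w}\,\overline{\alpha})
    = \eta(\widehat{w})\,\overline{\alpha} .
\]
In both cases the inductive hypothesis $\widehat{\theta(w)}=\eta(\widehat{w})$ gives $\widehat{\theta(\alpha w)}=\eta(\widehat{\alpha w})$, which closes the induction.

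The argument is routine, and I do not expect a genuine obstacle; the only point requiring care is the parity bookkeeping, since $\theta$'s recursion strips a letter from the left and branches on $|w|$, whereas $\eta$'s recursion strips a letter from the right and branches on the length of the remaining prefix. Conjugating by the length-preserving involution $\widehat{\cdot}$ --- which simultaneously converts ``prepend $\alpha$'' into ``append $\overline{\alpha}$'' --- is exactly what makes the two recursions match, and this is the heart of the statement.
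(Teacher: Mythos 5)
Your proof is correct and follows essentially the same route as the paper's: an induction on $|w|$ whose step peels off the first letter $\alpha$, uses $\widehat{\alpha w}=\widehat{w}\,\overline{\alpha}$ together with the length-preservation of $\widehat{\cdot}$ to match the two parity-dependent recursions, and splits on the parity of $|w|$. If anything, you are slightly more explicit than the paper, which leaves the induction hypothesis implicit in its chain of equalities.
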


\begin{proof}
    Let $w\in\{\0,\1\}^*$ and $\alpha\in\{\0,\1\}$.
    If $|w|$ is even, then
    \[
      \widehat{\theta(\alpha w})
     =\widehat{\overline{\alpha}\theta(w})
     =\widehat{\theta(w})\alpha
     =\eta(\widehat{w})\alpha
     =\eta(\widehat{w}\overline{\alpha})
     =\eta(\widehat{\alpha w}).
    \]
    If $|w|$ is odd, then
    \[
      \widehat{\theta(\alpha w})
     =\widehat{\alpha\theta(w})
     =\widehat{\theta(w})\overline{\alpha}
     =\eta(\widehat{w})\overline{\alpha}
     =\eta(\widehat{w}\overline{\alpha})
     =\eta(\widehat{\alpha w}).
    \]
\end{proof}

Recall, from Lemma~\ref{lem:map-W-commutes-the-widehat}, that the map $w
\mapsto \widehat{w}$ is conjugate to the involution
$\tau:\Acaleven\to\Acaleven$.
However, in this work, it is preferable to use $\theta$ instead of $\eta$ in
order to
obtain Theorem~\ref{thm:nice-formula-for-qmatchings}
and
have the start of the snake graph correspond to the first partial
quotients of the continued fraction expansion.

\subsection{Perfect matchings}

In graph theory, a \emph{matching} in an undirected graph is a subset of edges
without common vertices and a \emph{perfect matching} is a matching that
matches all vertices of the graph.
We denote by $M(g)$ the set of perfect matchings of a graph $g$.
By abuse of notation, for every $w\in\{\0,\1\}^*$,
we also denote by $M(w)$ the set of perfect matchings of the snake graph $G(w)$.

A snake graph admits perfect matchings.
An example is shown in Figure~\ref{fig:a-perfect-matching}.
\begin{figure}[h]
\begin{center}
    \begin{tikzpicture}[scale=.5,>=latex]
        \draw[dotted] (0,0) -- ++ (1,0) -- ++ (1,0) -- ++ (0,1) -- ++ (1,0) --
        ++ (1,0) -- ++ (0,1) -- ++ (1,0) -- ++ (1,0) -- ++ (0,1) -- ++ (-1,0)
        -- ++ (-1,0) -- ++ (-1,0) -- ++ (0,-1) -- ++ (-1,0) -- ++ (-1,0) -- ++
        (0,-1) -- ++ (-1,0) -- ++ (0,-1);
        \draw[dotted] (1,0) -- ++ (0,1);
        \draw[dotted] (2,1) -- ++ (0,1);
        \draw[dotted] (3,1) -- ++ (0,1);
        \draw[dotted] (4,2) -- ++ (0,1);
        \draw[dotted] (5,2) -- ++ (0,1);
        \draw[dotted] (1,1) -- ++ (1,0);
        \draw[dotted] (3,2) -- ++ (1,0);
        \draw[somematching] (0,0) -- ++ (0,1);
        \draw[somematching] (2,0) -- ++ (0,1);
        \draw[somematching] (1,0) -- ++ (0,1);
        \draw[somematching] (5,2) -- ++ (1,0);
        \draw[somematching] (5,3) -- ++ (1,0);
        \draw[somematching] (3,2) -- ++ (0,1);
        \draw[somematching] (4,2) -- ++ (0,1);
        \draw[somematching] (3,1) -- ++ (1,0);
        \draw[somematching] (1,2) -- ++ (1,0);
        \foreach \p in {(0,0), (1,0), (2,0), (2,1), (3,1), (4,1), (4,2), (5,2),
        (6,2), (6,3), (5,3), (4,3), (3,3), (3,2), (2,2), (1,2), (1,1), (0,1)}
            \node[circled node] at \p {};
        \node[left] at (0,0) {(0,0)};
        \node[right] at (6,3) {(6,3)};
    \end{tikzpicture}
\end{center}
    \caption{A perfect matching of the snake graph $G(w)$ with $w=\mathtt{0100100}$.}
\label{fig:a-perfect-matching}
\end{figure}

For every positive rational number $\xx\in\Q_{>0}$, we define
$\Mcal(\xx)$
to be the set of perfect matchings of the snake graph
$\Gcal(\xx)$.
In other words, $\Mcal:=M\circ G\circ\theta\circ W$;
see Figure~\ref{fig:the-big-picture}.

A combinatorial interpretation of every rational number $\frac{r}{s}>1$ was
proposed by {\c{C}}anak\c{c}i and Schiffler
in terms of the quotient of cardinalities of sets of perfect matchings from
two distinct snake graphs \cite{MR3778183}.
Their result can be stated using $\Gcal$ and $\Mcal$ as follows.

\begin{theorem}[Theorem 3.4, \cite{MR3778183}]\label{thm:schiffler}
Let $[a_0;\dots,a_{n-1}]$ be the continued fraction expansion
of a rational number greater than 1, that is, such that $a_0\geq 1$.
Then,
\[
[a_0;\dots,a_{n-1}]
=
\frac{\#\Mcal [a_0-1,\dots,a_{n-1}]}
     {\#\Mcal [a_1-1,\dots,a_{n-1}]}
\]
and the fraction on the right-hand side is reduced.
\end{theorem}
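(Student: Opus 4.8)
The plan is to deduce Theorem~\ref{thm:schiffler} from the enumerative count $\#\Mcal(\xx)=r+s$, valid whenever $\xx=r/s$ with $\gcd(r,s)=1$ and $r,s\geq1$, together with elementary continued-fraction arithmetic. That count is established in this article: the snake graph $\Gcal(\xx)$ has exactly $r+s$ perfect matchings — equivalently $\#\Mcal^\carreplein(\xx)=r$ and $\#\Mcal^\carrevide(\xx)=s$, which is Theorem~\ref{thm:nice-formula-for-qmatchings} specialized at $q=1$ via \eqref{eq:CF-expansion-LR}. In the statement of Theorem~\ref{thm:schiffler}, $\Mcal[c_0-1,\dots,c_{m-1}]$ is read as $\Mcal$ of the positive rational number whose continued fraction expansion is $[c_0-1;c_1,\dots,c_{m-1}]$. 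I would fix $\xx=[a_0;a_1,\dots,a_{n-1}]$, write $\xx=r/s$ in lowest terms, and record that $r>s\geq1$ since $\xx>1$.

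First I would treat the numerator. Since
\[
    [a_0-1;a_1,\dots,a_{n-1}]=[a_0;a_1,\dots,a_{n-1}]-1=\frac{r-s}{s}
\]
is a positive rational (as $r>s$) already in lowest terms, because $\gcd(r-s,s)=\gcd(r,s)=1$, the count yields $\#\Mcal[a_0-1,\dots,a_{n-1}]=(r-s)+s=r$. Next the denominator: set $y=[a_1;a_2,\dots,a_{n-1}]$, so that $\xx=a_0+1/y$ and hence $y=1/(\xx-a_0)=s/(r-a_0 s)$. Writing $s'=r-a_0 s$, one has $s'\geq1$ (because $\xx>a_0$), $\gcd(s,s')=\gcd(s,r)=1$, and $s'<s$ (because $\xx<a_0+1$), so
\[
    [a_1-1;a_2,\dots,a_{n-1}]=y-1=\frac{s-s'}{s'}
\]
is a positive rational in lowest terms and the count gives $\#\Mcal[a_1-1,\dots,a_{n-1}]=(s-s')+s'=s$. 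Combining the two, the right-hand side of the asserted identity equals $r/s=[a_0;a_1,\dots,a_{n-1}]$, and it is reduced because $\gcd(r,s)=1$.

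The only substantive ingredient is the count $\#\Mcal(\xx)=r+s$; everything else is bookkeeping. The point that will need a little care is that $[a_0-1;a_1,\dots,a_{n-1}]$ and $[a_1-1;a_2,\dots,a_{n-1}]$ must genuinely be admissible inputs to $\Mcal$ in every case, in particular when $a_0-1=0$ or $a_1-1=0$; this is exactly where it matters that, unlike the snake graphs of \cite{MR3778183,MR4016518}, the maps $\Gcal$ and $\Mcal$ introduced here are defined on all of $\Q_{>0}$, so no degenerate expansion ever leaves the domain, and the convention fixing $\CFeven$ in Section~\ref{sec:continued-fraction} absorbs any length-parity mismatch. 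A more self-contained alternative would be a direct bijection at the level of snake graphs — splitting the perfect matchings of $\Gcal[a_0;\dots,a_{n-1}]$ according to their behaviour at the first unit square and matching the two resulting families with the perfect matchings of the two smaller snake graphs — but that route is heavier and is unnecessary given the count already in hand.
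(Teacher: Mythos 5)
Your argument is correct, and it reaches the statement by a genuinely different route from the paper. The paper treats this as a cited result and later \emph{recovers} it as the $q=1$ specialization of Corollary~\ref{cor:the-usual-combinatorial-interpretation}: there the numerator and denominator of $[\xx]_q$ are rewritten, via the matrix identity \eqref{eq:prefix-identity}, as the full rank polynomials of $\Jcal(\xx')$ and $\Jcal(\xx'')$ with $\xx''=[0;a_1-1,a_2,\dots,a_{2\ell-1}]$, and Remark~\ref{rem:relation-to-Thm-3.4-Schiffler} then invokes the mirror-image symmetry of Lemma~\ref{lem:mirror-image-snake-graph-inverse-rational-number} to replace $\xx''$ by $(\xx'')^{-1}=[a_1-1;a_2,\dots,a_{2\ell-1}]$. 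You bypass all of that: since the cardinality $\#\Mcal(r/s)=r+s$ depends only on the \emph{value} of the rational number, the identities $\xx-1=(r-s)/s$ and $[a_1;\dots,a_{n-1}]-1=(s-s')/s'$ plus $\gcd$ bookkeeping finish the job. Your route is shorter and needs neither the matrix identity nor the mirror-image lemma (cardinality is automatically symmetric under $r\leftrightarrow s$, which is precisely what that lemma compensates for); what it gives up is the $q$-refinement, since for $q\neq 1$ the area polynomial is not determined by the value of the rational number alone, so the paper's heavier derivation is what actually yields \eqref{eq:usual-combinatorial-interpretations}.

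One caveat you half-acknowledge but then wave away: when the tail $[a_1;\dots,a_{n-1}]$ equals $1$ (that is, $n=2$ and $a_1=1$, so $\xx$ is an integer written in its even-length expansion), your $y-1=0$, and $[a_1-1]=0$ does leave $\Q_{>0}$, so $\Mcal$ of it is undefined rather than merely ``degenerate but in the domain.'' This case is inherited from the statement itself (the paper's $\xx''=((\xx-\lfloor\xx\rfloor)^{-1}-1)^{-1}$ is likewise undefined for integer $\xx$), so you should either exclude it explicitly or note the convention that the denominator count is $1$ there; as written, the sentence claiming that no expansion ever leaves the domain is not quite accurate.
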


Below, we show that the above theorem can be extended to
consider the case $a_0=0$ while also giving a combinatorial
interpretation of both the numerator and denominator of a rational
number from the same snake graph using a natural dichotomy on its
set of perfect matchings.

\begin{figure}[h]
\begin{center}
    \begin{tikzpicture}[scale=.5,>=latex]
        \draw[dotted] (0,0) -- ++ (1,0) -- ++ (1,0) -- ++ (0,1) -- ++ (1,0) --
        ++ (1,0) -- ++ (0,1) -- ++ (1,0) -- ++ (1,0) -- ++ (0,1) -- ++ (-1,0)
        -- ++ (-1,0) -- ++ (-1,0) -- ++ (0,-1) -- ++ (-1,0) -- ++ (-1,0) -- ++
        (0,-1) -- ++ (-1,0) -- ++ (0,-1);
        \draw[dotted] (1,0) -- ++ (0,1);
        \draw[dotted] (2,1) -- ++ (0,1);
        \draw[dotted] (3,1) -- ++ (0,1);
        \draw[dotted] (4,2) -- ++ (0,1);
        \draw[dotted] (5,2) -- ++ (0,1);
        \draw[dotted] (1,1) -- ++ (1,0);
        \draw[dotted] (3,2) -- ++ (1,0);
        \draw[somematching] (0,0) -- ++ (0,1);
        \draw[somematching] (2,0) -- ++ (0,1);
        \draw[somematching] (1,0) -- ++ (0,1);
        \draw[somematching] (5,2) -- ++ (1,0);
        \draw[somematching] (5,3) -- ++ (1,0);
        \draw[somematching] (3,2) -- ++ (0,1);
        \draw[somematching] (4,2) -- ++ (0,1);
        \draw[somematching] (3,1) -- ++ (1,0);
        \draw[somematching] (1,2) -- ++ (1,0);
        \foreach \p in {(0,0), (1,0), (2,0), (2,1), (3,1), (4,1), (4,2), (5,2),
        (6,2), (6,3), (5,3), (4,3), (3,3), (3,2), (2,2), (1,2), (1,1), (0,1)}
            \node[circled node] at \p {};
        \node[right] (lastedge) at (7,2.5) {last edge};
        \node (A) at (5.2,3.0) {};
        \draw[<-,bend right] (A) to (lastedge.west);
        \node[left] (firstedge) at (-1,.5) {first edge};
        \node (B) at (0,.5) {};
        \draw[->] (firstedge) to (B);
        \node[below,xshift=-3mm] at (0,0) {(0,0)};
        \node[above,xshift=3mm] at (6,3) {(6,3)};
    \end{tikzpicture}
\end{center}
    \caption{A perfect matching of the snake graph $G(w)$ with $w=\mathtt{0100100}$.
             The first and last edge of the matching are indicated.}
\label{fig:a-perfect-matching-first-last-edge}
\end{figure}

\subsection*{The first and last edge of a matching}
A notion which is important is the first and last edge of a perfect matching.
The \emph{first edge} of a perfect matching is the edge
containing the vertex $(0,0)$, that is, the bottom-most left-most vertex of the
snake graph $G(w)$.
The \emph{last edge} of a perfect matching is the edge
containing the vertex $\overrightarrow{w}+(1,1)$, which is the top-most right-most vertex of the snake graph $G(w)$,
see Figure~\ref{fig:a-perfect-matching-first-last-edge}.

\subsection*{A dichotomy on the set of perfect matchings}

Let $w\in\{\0,\1\}^*$.
Let $M(w)$ be the set of perfect matchings of the snake graph $G(w)$.
The dichotomy on the set of perfect matchings is defined as follows.
\begin{definition}\label{def:matching-dichothomoy}
We say that a perfect matching $m\in M(w)$ is a \emph{$\carreplein$-matching} if
\begin{itemize}
\item $|w|$ is even and its first edge is horizontal, or
\item $|w|$ is odd and its first edge is vertical.
\end{itemize}
Otherwise, we say that $m$ is a \emph{$\carrevide$-matching}, that is, if
\begin{itemize}
\item $|w|$ is even and its first edge is vertical, or
\item $|w|$ is odd and its first edge is horizontal.
\end{itemize}
\end{definition}
The terminology $\carrevide$- and $\carreplein$-matching was chosen because
the first edge of a $\carrevide$-matching is parallel to the first edge of the
basic perfect matching and the first edge of a $\carreplein$-matching is orthogonal to the
first edge of the basic perfect matching; see Remark~\ref{rem:parallel-perp-interpretation}.
Let
\begin{align*}
    M^\carreplein(w)&=\{m\in M(w)\mid\text{$m$ is a $\carreplein$-matching}\},\\
    M^\carrevide(w) &=\{m\in M(w)\mid\text{$m$ is a $\carrevide$-matching}\}.
\end{align*}
These two sets form a partition of $M(w)=M^\carrevide(w)\cup M^\carreplein(w)$.

\begin{example}
The snake graph associated with $\xx=\frac{2}{7}$ is
$\Gcal(\frac{2}{7})=G(\0\1\0\0)$
since
\[
\theta\circ W(\textstyle\frac{2}{7})
= \theta(W([0;3,1,1]))
= \theta(\1^0\0^3\1^1\0^{1-1})
= \theta(\0\0\0\1)
= \0\1\0\0.
\]
The perfect matchings
of the snake graph $G(\0\1\0\0)$ whose first edge is horizontal are
\[
M^\carreplein(\0\1\0\0) = \left\{\raisebox{-5mm}{\includegraphics{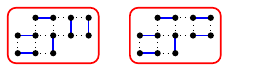}}\right\}.
\]
The perfect matchings of the snake graph $G(\0\1\0\0)$ whose first edge is vertical are
\[
    M^\carrevide(\0\1\0\0) = \left\{\raisebox{-12mm}{\includegraphics{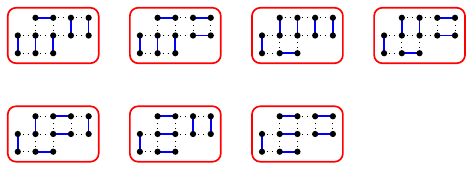}}\right\}.
\]
We observe that the cardinalities
$\# M^\carreplein(\0\1\0\0)$ and $\# M^\carrevide (\0\1\0\0)$
are the numerator and denominator of the rational number $\xx$:
\[
\frac{\# \Mcal^\carreplein(\textstyle\frac{2}{7})}
     {\# \Mcal^\carrevide (\textstyle\frac{2}{7})}
     =
\frac{\# M^\carreplein\circ \Gcal(\textstyle\frac{2}{7})}
     {\# M^\carrevide \circ \Gcal(\textstyle\frac{2}{7})}
     =
\frac{\# M^\carreplein(\0\1\0\0)}
     {\# M^\carrevide (\0\1\0\0)}
    = \frac{2}{7} 
    = x.
\]
\end{example}

We show that this holds in general.
The following result extends Theorem~\ref{thm:schiffler} to every
positive rational number. It gives an interpretation of both the numerator
and denominator of a rational number in terms of the perfect matchings
of the same snake graph.

\begin{theorem}\label{thm:our-combinatorial-interpretation-of-rational}
Let $\xx>0$ be a positive rational number.
Then,
\[
x
=
    \frac{\#\Mcal^\carreplein (\xx)}
     {\#\Mcal^\carrevide  (\xx)}
\]
and the fraction on the right-hand side is reduced.
\end{theorem}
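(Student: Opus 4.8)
The plan is to prove the sharper statement: if $x=\frac{r}{s}$ with $r,s\geq1$ coprime, then $\#\Mcal^\carreplein(x)=r$ and $\#\Mcal^\carrevide(x)=s$; once this is known, the fraction on the right-hand side is automatically reduced. I would argue by strong induction on $r+s$. In the base case $r+s=2$ we have $x=1$, and $\Gcal(1)=G(\varepsilon)$ is a single unit square whose two perfect matchings are the pair of horizontal edges and the pair of vertical edges; since $|\varepsilon|=0$ is even, Definition~\ref{def:matching-dichothomoy} places the first in $\Mcal^\carreplein(1)$ and the second in $\Mcal^\carrevide(1)$, so both cardinalities equal $1=r=s$.

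For the inductive step with $x>1$, write $x=[a_0;a_1,\dots,a_{2\ell-1}]$ with $a_0\geq1$, so that $W(x)=\1\cdot W(x-1)$ with $x-1=[a_0-1;a_1,\dots,a_{2\ell-1}]$. Unwinding the recursive definition of $\theta$ gives $\theta(W(x))=\gamma\cdot\theta(W(x-1))$ for a single letter $\gamma\in\{\0,\1\}$, with $\gamma=\0$ exactly when $|W(x-1)|$ is even; hence $\Gcal(x)=G(\gamma v)$ where $v=\theta(W(x-1))$ and $\Gcal(x-1)=G(v)$. Geometrically, $G(\gamma v)$ is a copy of $G(v)$ with one extra unit square $S_0$ glued along a single edge, and the vertex $(0,0)$ of $S_0$ is a corner of degree $2$; matchings of $G(\gamma v)$ split according to which of the two edges of $S_0$ at $(0,0)$ they use, and this split is exactly the partition $\Mcal(x)=\Mcal^\carreplein(x)\sqcup\Mcal^\carrevide(x)$. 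Because $\gamma$ is pinned down by the parity of $|v|$ there are only two cases, and in each a short argument on how the remaining two vertices of $S_0$ are then forced to be matched shows that restriction to $G(v)$ yields two bijections: the matchings in $\Mcal^\carreplein(x)$ correspond to \emph{all} perfect matchings of $G(v)$, so $\#\Mcal^\carreplein(x)=\#\Mcal(x-1)$; and the matchings in $\Mcal^\carrevide(x)$ correspond to the perfect matchings of $G(v)$ lying in $\Mcal^\carrevide(x-1)$, so $\#\Mcal^\carrevide(x)=\#\Mcal^\carrevide(x-1)$. Applying the induction hypothesis to $x-1=\frac{r-s}{s}$ (coprime, of height $r<r+s$) gives $\#\Mcal^\carreplein(x-1)=r-s$ and $\#\Mcal^\carrevide(x-1)=s$, so $\#\Mcal^\carreplein(x)=\#\Mcal(x-1)=(r-s)+s=r$ and $\#\Mcal^\carrevide(x)=s$. (Alternatively, once the two restriction bijections are established, this case can also be closed by invoking {\c{C}}anak\c{c}i--Schiffler's Theorem~\ref{thm:schiffler}.)

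For the inductive step with $x<1$, I would reduce to the previous case via Lemma~\ref{lem:mirror-image-snake-graph-inverse-rational-number}: the reflection $\sigma\colon(a,b)\mapsto(b,a)$ sends $\Gcal(x)$ to $\Gcal(x^{-1})$, swaps horizontal and vertical edges, and preserves the length of the defining word, so it interchanges $\Mcal^\carreplein$ with $\Mcal^\carrevide$ while leaving $\#\Mcal$ unchanged. Since $x^{-1}>1$, the two recursions above apply to $x^{-1}$, and composing with $\sigma$ gives $\#\Mcal^\carreplein(x)=\#\Mcal^\carreplein\big(\tfrac{x}{1-x}\big)$ and $\#\Mcal^\carrevide(x)=\#\Mcal\big(\tfrac{x}{1-x}\big)$, where $\tfrac{x}{1-x}=(x^{-1}-1)^{-1}=\frac{r}{s-r}$ has height $s<r+s$; the induction hypothesis then yields $\#\Mcal^\carreplein(x)=r$ and $\#\Mcal^\carrevide(x)=s$.

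The step I expect to be the main obstacle is the verification of the two restriction bijections in the $x>1$ case: in each of the two parity cases for $\gamma$ one must check that matching $(0,0)$ to the vertex of $S_0$ outside $G(v)$ leaves a free perfect matching of $G(v)$, whereas the opposite choice forces the first edge of the induced matching of $G(v)$, and one must track how the labels $\carreplein$ and $\carrevide$ swap meaning (``first edge horizontal'' versus ``first edge vertical'') when the defining word gains a letter. The underlying tile-by-tile decomposition of perfect matchings of a snake graph is classical \cite{MR2807089,MR3034481}, so this amounts to careful bookkeeping rather than a genuinely new argument.
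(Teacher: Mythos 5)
Your proposal is correct, and the route is genuinely different from the paper's. The paper obtains this theorem as the $q=1$ specialization of Theorem~\ref{thm:nice-formula-for-qmatchings}, which is itself proved by composing the order-preserving bijection $\Phi:\Mcal(\xx)\to\Jcal(\xx)$ of Theorem~\ref{theo:bij} with the order-ideal count of Proposition~\ref{prop:statistics-over-order-ideals} and the $L_q,R_q$ matrix identities; you instead run a direct induction on $r+s$ along the Stern--Brocot recursion ($\xx\mapsto \xx-1$ for $\xx>1$ via prepending a letter to $W(\xx)$, and $\xx\mapsto \xx^{-1}$ for $\xx<1$ via the mirror symmetry of Lemma~\ref{lem:mirror-image-snake-graph-inverse-rational-number}), never touching fence posets, admissible sequences, or the area statistic. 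The combinatorial engine is nevertheless shared: your ``glue one unit square at the corner and case-split on the edge at $(0,0)$'' step is precisely the paper's $\pop$ map from the proof of Theorem~\ref{theo:bij}, and I checked that your two restriction bijections and the bookkeeping of the $\carreplein/\carrevide$ labels under the parity flip do work out in both cases for $\gamma$ (in each case the corner vertex and its degree-$2$ partner force the local matching, yielding $\#\Mcal^\carreplein(\xx)=\#\Mcal(\xx-1)$ and $\#\Mcal^\carrevide(\xx)=\#\Mcal^\carrevide(\xx-1)$, which at $q=1$ is exactly the action of the matrix $R$). What each approach buys: yours is shorter and self-contained for the $q=1$ statement and makes the Euclidean-algorithm structure of the counts transparent (cf.\ Figure~\ref{fig:prefixes_suffixes_MR4058266_example23}); the paper's heavier route is what produces the $q$-refinement of Theorem~\ref{thm:nice-formula-for-qmatchings} and Corollary~\ref{cor:three-combinatorial-interpretation-of-q-rational}, which your argument does not recover since the area statistic is not tracked through your bijections.
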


Theorem~\ref{thm:our-combinatorial-interpretation-of-rational}
is proved as a consequence of
Theorem~\ref{thm:nice-formula-for-qmatchings}
which is a $q$-analog of it
thanks to the area statistics defined below.
We prove Theorem~\ref{thm:nice-formula-for-qmatchings} in
Section~\ref{sec:proof-of-thm:nice-formula-for-qmatchings}.
An alternative statement deduced from
Theorem~\ref{thm:our-combinatorial-interpretation-of-rational}
is the following.

\begin{corollary}
    Let $r,s\geq1$ be coprime integers.
    The snake graph $\Gcal(\frac{r}{s})$
    has $r$ $\carreplein$-matchings and
    $s$ $\carrevide$-matchings.
    Thus,
    \[
        \frac{r}{s}
        =
        \frac{\# \Mcal^\carreplein(\frac{r}{s})}
             {\# \Mcal^\carrevide(\frac{r}{s})}.
    \]
\end{corollary}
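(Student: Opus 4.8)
The plan is to deduce this directly from Theorem~\ref{thm:our-combinatorial-interpretation-of-rational}. Applied to the positive rational number $\xx = r/s$, that theorem gives
\[
    \frac{r}{s} = \frac{\#\Mcal^\carreplein(\xx)}{\#\Mcal^\carrevide(\xx)},
\]
and asserts that the fraction on the right-hand side is already in lowest terms. Since $r/s>0$, both $\#\Mcal^\carreplein(\xx)$ and $\#\Mcal^\carrevide(\xx)$ are positive integers. By hypothesis $\gcd(r,s)=1$, so the left-hand side is likewise in lowest terms. A positive rational number has a unique expression as a ratio of two positive integers in lowest terms, so comparing the two reduced representations gives $\#\Mcal^\carreplein(\xx)=r$ and $\#\Mcal^\carrevide(\xx)=s$. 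The displayed identity in the statement is then nothing more than a restatement of Theorem~\ref{thm:our-combinatorial-interpretation-of-rational}.

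There is no real obstacle here; the entire content sits in Theorem~\ref{thm:our-combinatorial-interpretation-of-rational} (hence ultimately in Theorem~\ref{thm:nice-formula-for-qmatchings}), and all that remains is the elementary uniqueness of the reduced-fraction representation. The only point worth spelling out is that the denominator $\#\Mcal^\carrevide(\xx)$ is nonzero, which is automatic because the fraction in Theorem~\ref{thm:our-combinatorial-interpretation-of-rational} is well defined.

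Alternatively, one can bypass Theorem~\ref{thm:our-combinatorial-interpretation-of-rational} entirely and argue directly from Theorem~\ref{thm:nice-formula-for-qmatchings} by specializing $q=1$: the diagonal matrix $\left(\begin{smallmatrix} 1 & 0 \\ 0 & q \end{smallmatrix}\right)$ becomes the identity, $L_q$ and $R_q$ become the elementary matrices $L$ and $R$, and the right-hand side of the formula becomes $R^{a_0}L^{a_1}\cdots R^{a_{2\ell-2}}L^{a_{2\ell-1}}\left(\begin{smallmatrix} 1 \\ 0 \end{smallmatrix}\right)$, which equals $\left(\begin{smallmatrix} r \\ s \end{smallmatrix}\right)$ by \eqref{eq:CF-expansion-LR}. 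Evaluating the generating-function vector on the left-hand side at $q=1$ yields $(\#\Mcal^\carreplein(\xx),\,\#\Mcal^\carrevide(\xx))$, so again $\#\Mcal^\carreplein(\xx)=r$ and $\#\Mcal^\carrevide(\xx)=s$, and the quotient identity follows.
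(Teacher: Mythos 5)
Your proposal is correct and matches the paper's (implicit) argument: the paper states this corollary only as ``an alternative statement deduced from Theorem~\ref{thm:our-combinatorial-interpretation-of-rational},'' i.e.\ exactly your main route via the reducedness of the fraction and uniqueness of the reduced representation. Your alternative via specializing $q=1$ in Theorem~\ref{thm:nice-formula-for-qmatchings} and invoking \eqref{eq:CF-expansion-LR} is also how the paper obtains the parent theorem itself, so both of your routes are consistent with the paper.
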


\begin{figure}[h]
    \includegraphics{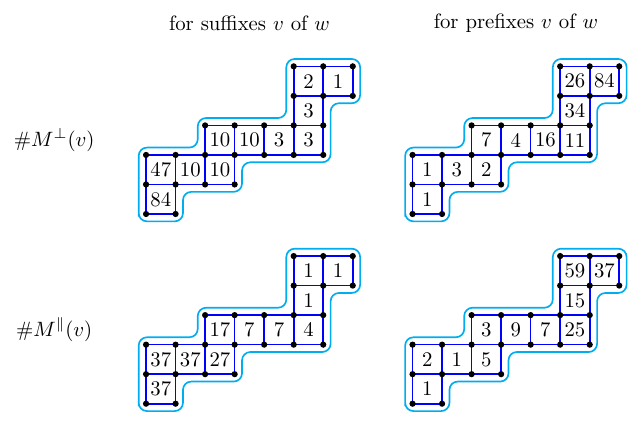}
    \caption{
    The snake graph $\Gcal(\frac{84}{37})=G(w)$
    with $w=\1\0\0\1\0\0\0\1\1\0$.
    The values indicate the number of
    $\carreplein$-matchings $\carrevide$-matchings
    in the snake graph $G(v)$ for every prefix and suffixes of $w$.
    This reproduces Example 2.3 from \cite{MR4058266}
    using a single snake graph for both the numerators and denominators.
    }
    \label{fig:prefixes_suffixes_MR4058266_example23}
\end{figure}

\begin{example}
    This example is a reproduction of Example 2.3 from \cite{MR4058266}
    with our definition of snake graph.
    The continued fraction
    $[2, 3, 1, 2, 3]=\frac{84}{37}$ has convergents
    \[
        [2]=\frac{2}{1},\quad
        [2, 3]=\frac{7}{3},\quad
        [2, 3, 1]=\frac{9}{4},\quad
        [2, 3, 1, 2]=\frac{25}{11},\quad
        [2, 3, 1, 2, 3]=\frac{84}{37}.
    \]
    These rational numbers define a path in the Stern-Brocot tree
    \cite{MR1397498}
    whose intermediate nodes are
    \[
        \frac{1}{1},\quad
        \frac{2}{1},\quad
        \frac{3}{1},\quad
        \frac{5}{2},\quad
        \frac{7}{3},\quad
        \frac{9}{4},\quad
        \frac{16}{7},\quad
        \frac{25}{11},\quad
        \frac{34}{15},\quad
        \frac{59}{26},\quad
        \frac{84}{37}.
    \]
    All these values can be interpreted in the
    snake graph $\Gcal(\frac{84}{37})=G(w)$
    with $w=\1\0\0\1\0\0\0\1\1\0$.
    The numbers $\#M^\carreplein(v)$
    and $\#M^\carrevide(v)$ are numerators and denominators of
    a convergent $\frac{84}{37}$ when $v$ is a prefix of $w$.
    When $v$ is a suffix of $w$,
    the numbers $\#M^\carreplein(v)$
    and $\#M^\carrevide(v)$ are consecutive values in
    the execution of the Euclid algorithm when computing the gcd of 84 and 37;
    see Figure~\ref{fig:prefixes_suffixes_MR4058266_example23}.
\end{example}

\subsection*{Perfect matchings with no interior edges}

An edge $(u,v)$ of a snake graph $G(w)=(V_w,E_w)$ is \emph{interior}
if the graph induced by the subset $V_w\setminus\{u,v\}$ is not connected
\cite{MR3034481}.
Remaining edges are called \emph{boundary} edges \cite{MR2661414,zbMATH06144657}.
Two of the perfect matchings use only boundary edges of a snake graph;
see Figure~\ref{fig:boundary-matchings}.
\begin{figure}[h]
\begin{center}
    \begin{tikzpicture}[scale=.5,>=latex]
        \draw[dotted] (0,0) -- ++ (1,0) -- ++ (1,0) -- ++ (0,1) -- ++ (1,0) --
        ++ (1,0) -- ++ (0,1) -- ++ (1,0) -- ++ (1,0) -- ++ (0,1) -- ++ (-1,0)
        -- ++ (-1,0) -- ++ (-1,0) -- ++ (0,-1) -- ++ (-1,0) -- ++ (-1,0) -- ++
        (0,-1) -- ++ (-1,0) -- ++ (0,-1);
        \draw[dotted] (1,0) -- ++ (0,1);
        \draw[dotted] (2,1) -- ++ (0,1);
        \draw[dotted] (3,1) -- ++ (0,1);
        \draw[dotted] (4,2) -- ++ (0,1);
        \draw[dotted] (5,2) -- ++ (0,1);
        \draw[dotted] (1,1) -- ++ (1,0);
        \draw[dotted] (3,2) -- ++ (1,0);
        \draw[somematching] (0,0)
            to ++ (1,0) ++ (1,0)
            to ++ (0,1) ++ (1,0)
            to ++ (1,0) ++ (0,1)
            to ++ (1,0) ++ (1,0)
            to ++ (0,1) ++ (-1,0)
            to ++ (-1,0) ++ (-1,0)
            to ++ (0,-1) ++ (-1,0)
            to ++ (-1,0) ++ (0,-1)
            to ++ (-1,0) ++ (0,-1) ;
        \foreach \p in {(0,0), (1,0), (2,0), (2,1), (3,1), (4,1), (4,2), (5,2),
        (6,2), (6,3), (5,3), (4,3), (3,3), (3,2), (2,2), (1,2), (1,1), (0,1)}
            \node[circled node] at \p {};
    \end{tikzpicture}
    \qquad
    \qquad
    \begin{tikzpicture}[scale=.5,>=latex]
        \draw[dotted] (0,0) -- ++ (1,0) -- ++ (1,0) -- ++ (0,1) -- ++ (1,0) --
        ++ (1,0) -- ++ (0,1) -- ++ (1,0) -- ++ (1,0) -- ++ (0,1) -- ++ (-1,0)
        -- ++ (-1,0) -- ++ (-1,0) -- ++ (0,-1) -- ++ (-1,0) -- ++ (-1,0) -- ++
        (0,-1) -- ++ (-1,0) -- ++ (0,-1);
        \draw[dotted] (1,0) -- ++ (0,1);
        \draw[dotted] (2,1) -- ++ (0,1);
        \draw[dotted] (3,1) -- ++ (0,1);
        \draw[dotted] (4,2) -- ++ (0,1);
        \draw[dotted] (5,2) -- ++ (0,1);
        \draw[dotted] (1,1) -- ++ (1,0);
        \draw[dotted] (3,2) -- ++ (1,0);
        \draw[somematching] (0,0)
            ++ (1,0)  to ++ (1,0)
            ++ (0,1)  to ++ (1,0)
            ++ (1,0)  to ++ (0,1)
            ++ (1,0)  to ++ (1,0)
            ++ (0,1)  to ++ (-1,0)
            ++ (-1,0) to ++ (-1,0)
            ++ (0,-1) to ++ (-1,0)
            ++ (-1,0) to ++ (0,-1)
            ++ (-1,0) to ++ (0,-1) ;
        \foreach \p in {(0,0), (1,0), (2,0), (2,1), (3,1), (4,1), (4,2), (5,2),
        (6,2), (6,3), (5,3), (4,3), (3,3), (3,2), (2,2), (1,2), (1,1), (0,1)}
            \node[circled node] at \p {};
    \end{tikzpicture}
\end{center}
    \caption{Two perfect matchings of $G(w)$ use only boundary edges.}
    \label{fig:boundary-matchings}
\end{figure}
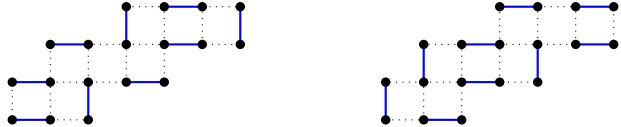

These two perfect matchings using only boundary edges are called \emph{minimal}
and \emph{maximal} perfect matching in
\cite{MR3034481,MR2661414,zbMATH06144657}, see also \cite{zbMATH08015631}.
The standard convention for the minimal perfect matching is that
the first edge is horizontal.
In this article, we need another convention.
The perfect matching with only boundary edges and such that its last edge is
vertical is called the \emph{basic matching} of the snake graph.
Depending on the parity of the length of the snake graph, the basic matching may
correspond to the minimal or to the maximal perfect matching.
We denote the basic matching as $\bfrak_w$ or as $\bfrak$ when the context is clear
and we illustrate it with thick red dashed edges in illustrations;
see Figure~\ref{fig:the-basic-matching}.

\begin{figure}[h]
\begin{center}
    \begin{tikzpicture}[scale=.5,>=latex]
        \draw[dotted] (0,0) -- ++ (1,0) -- ++ (1,0) -- ++ (0,1) -- ++ (1,0) --
        ++ (1,0) -- ++ (0,1) -- ++ (1,0) -- ++ (1,0) -- ++ (0,1) -- ++ (-1,0)
        -- ++ (-1,0) -- ++ (-1,0) -- ++ (0,-1) -- ++ (-1,0) -- ++ (-1,0) -- ++
        (0,-1) -- ++ (-1,0) -- ++ (0,-1);
        \draw[dotted] (1,0) -- ++ (0,1);
        \draw[dotted] (2,1) -- ++ (0,1);
        \draw[dotted] (3,1) -- ++ (0,1);
        \draw[dotted] (4,2) -- ++ (0,1);
        \draw[dotted] (5,2) -- ++ (0,1);
        \draw[dotted] (1,1) -- ++ (1,0);
        \draw[dotted] (3,2) -- ++ (1,0);
        \draw[thebasicmatching] (0,0)
            to ++ (1,0) ++ (1,0)
            to ++ (0,1) ++ (1,0)
            to ++ (1,0) ++ (0,1)
            to ++ (1,0) ++ (1,0)
            to ++ (0,1) ++ (-1,0)
            to ++ (-1,0) ++ (-1,0)
            to ++ (0,-1) ++ (-1,0)
            to ++ (-1,0) ++ (0,-1)
            to ++ (-1,0) ++ (0,-1) ;
        \foreach \p in {(0,0), (1,0), (2,0), (2,1), (3,1), (4,1), (4,2), (5,2),
        (6,2), (6,3), (5,3), (4,3), (3,3), (3,2), (2,2), (1,2), (1,1), (0,1)}
            \node[circled node] at \p {};
            \node at (-2,1.5) {$\bfrak_w=$};
    \end{tikzpicture}
\end{center}
    \caption{The basic matching $\bfrak$ has only boundary edges
    and its last edge is vertical.}
    \label{fig:the-basic-matching}
\end{figure}
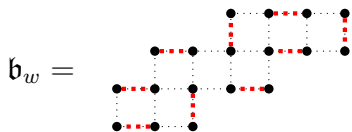

Notice that the first edge of the basic perfect matching is vertical
or horizontal depending on the parity of $|w|$, where $|w|$ denotes
the length of the word $w$; see Figure~\ref{fig:parity-of-w}.
In particular, the basic matching of $G(w)$ always belong to the set $M^\carrevide(w)$.

\begin{figure}[h]
\begin{center}
    \def\endX{6}
    \def\endY{3}
    \def\thesetup{
        \fill[\enclosedAreaColor] (0,0) -- (1,0) -- (\endX,\endY-1) -- (\endX,\endY) -- (\endX-1,\endY) -- (0,1) -- cycle;
        \draw[thebasicmatching] (\endX,\endY) to ++ (0,-1);
        \draw[thick,->,>=latex] (.5,.5) -- node[above] {$w$} (\endX-.5,\endY-.5);
        \foreach \p in {(0,0), (1,0), (0,1), (\endX,\endY-1), (\endX,\endY), (\endX-1,\endY)}
            \node[circled node] at \p {};
    }
    \begin{tikzpicture}[scale=.5,>=latex,auto]
        \begin{scope}[xshift=0cm,yshift=0cm]
            \thesetup
            \draw[thebasicmatching] (0,0) to (1,0);
            \node at (3,-1) {first edge is horizontal when $|w|$ is odd};
            \foreach \p in {(0,0), (1,0)} \node[circled node] at \p {};
        \end{scope}
        \begin{scope}[xshift=17cm,yshift=0cm]
            \thesetup
            \draw[thebasicmatching] (0,0) to (0,1);
            \node at (3,-1) {first edge is vertical when $|w|$ is even};
            \foreach \p in {(0,0), (0,1)} \node[circled node] at \p {};
        \end{scope}
    \end{tikzpicture}
\end{center}
    \caption{The orientation of the first edge of the basic matching of $G(w)$.}
    \label{fig:parity-of-w}
\end{figure}
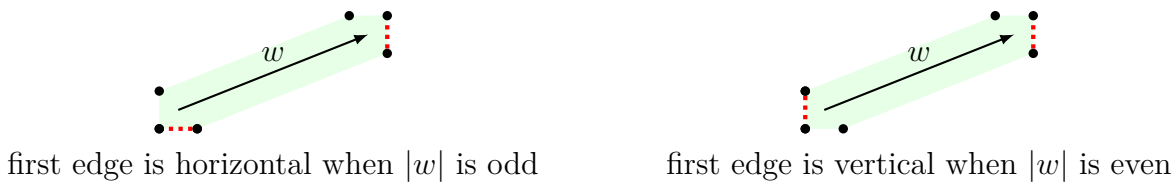

\begin{remark}\label{rem:parallel-perp-interpretation}
A matching $m\in G(w)$ is a \emph{$\carrevide$-matching} if and only if its
    first edge belongs to the basic perfect matching
and a matching is a \emph{$\carreplein$-matching} if and only if its first edge does not
belong to the basic perfect matching.
\end{remark}

\subsection*{The area statistics}
Given $w\in\{\0,\1\}^*$ and two perfect matchings $m,m'\in M(w)$,
let $m\Delta m'=(m\setminus m')\cup(m'\setminus m)$ denote their symmetric difference.
We recall that the symmetric difference of sets $A$ and $B$
is defined as: $A\Delta B=(A\setminus B)\cup(B\setminus A)$.
The symmetric difference $m\Delta m'$ is the union of disjoint cycles
in the snake graph $G(w)$.
Since $G(w)$ is embedded in the plane $\R^2$,
the cycles in the symmetric difference defines a bounded region which
we denote by
$
    \region(m\Delta m')\subset\R^2.
$
A statistics which takes an important role is the
\emph{area} of this region which we denote by
\[
    \area(m\Delta m').
\]
Equivalently, it is equal to the area of the region enclosed
by the union $m\cup m'$.

Note that the region enclosed by $m\Delta m_-$
where $m_-$ is the minimal perfect matching was used
in the description of variables in cluster algebras from surfaces
\cite{MR2661414,MR3034481,zbMATH06144657}.
In this work, for every matching $m\in M(w)$, we rather consider the region
enclosed by $m\Delta\bfrak_w$ where $\bfrak_w$ is the basic perfect matching of
the snake graph $G(w)$; see Figure~\ref{fig:area}.
It is more convenient to use the basic perfect matching $\bfrak_w$
in the symmetric difference in order for the start of the snake graph
to correspond to the initial partial quotients in the continued fraction expansion
while having the first edge of the perfect matching used to obtain a dichotomy
in the set of perfect matchings contributing to the numerator or denominator
of the associated rational number.

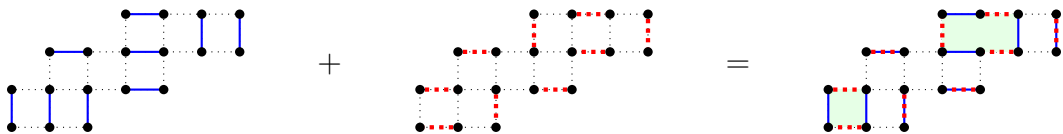
\begin{figure}[h]
\begin{center}
    \def\enclosedarea{
    \fill[\enclosedAreaColor] (0,0) rectangle ++ (1,1);
    \fill[\enclosedAreaColor] (3,2) rectangle ++ (2,1);}
    \def\thevertices{
    \foreach \p in {(0,0), (1,0), (2,0), (2,1), (3,1), (4,1), (4,2), (5,2),
    (6,2), (6,3), (5,3), (4,3), (3,3), (3,2), (2,2), (1,2), (1,1), (0,1)}
        \node[circled node] at \p {};}

    \def\thebasicmatching{
    \draw[thebasicmatching] (0,0) to ++ (1,0) ++ (1,0)
                                  to ++ (0,1) ++ (1,0)
                                  to ++ (1,0) ++ (0,1)
                                  to ++ (1,0) ++ (1,0)
                                  to ++ (0,1) ++ (-1,0)
                                  to ++ (-1,0) ++ (-1,0)
                                  to ++ (0,-1) ++ (-1,0)
                                  to ++ (-1,0) ++ (0,-1)
                                  to ++ (-1,0) ++ (0,-1) ;
                                  }
    \def\somematching{
        \draw[somematching] (0,0) -- ++ (0,1);
        \draw[somematching] (2,0) -- ++ (0,1);
        \draw[somematching] (1,0) -- ++ (0,1);
        \draw[somematching] (5,2) -- ++ (0,1);
        \draw[somematching] (6,2) -- ++ (0,1);
        \draw[somematching] (3,2) -- ++ (1,0);
        \draw[somematching] (3,3) -- ++ (1,0);
        \draw[somematching] (3,1) -- ++ (1,0);
        \draw[somematching] (1,2) -- ++ (1,0);}
    \def\thesnakegraph{
        \draw[dotted] (0,0) -- ++ (1,0) -- ++ (1,0) -- ++ (0,1) -- ++ (1,0) --
        ++ (1,0) -- ++ (0,1) -- ++ (1,0) -- ++ (1,0) -- ++ (0,1) -- ++ (-1,0)
        -- ++ (-1,0) -- ++ (-1,0) -- ++ (0,-1) -- ++ (-1,0) -- ++ (-1,0) -- ++
        (0,-1) -- ++ (-1,0) -- ++ (0,-1);
        \draw[dotted] (1,0) -- ++ (0,1);
        \draw[dotted] (2,1) -- ++ (0,1);
        \draw[dotted] (3,1) -- ++ (0,1);
        \draw[dotted] (4,2) -- ++ (0,1);
        \draw[dotted] (5,2) -- ++ (0,1);
        \draw[dotted] (1,1) -- ++ (1,0);
        \draw[dotted] (3,2) -- ++ (1,0);
    }

    \begin{tikzpicture}[scale=.5,>=latex]
        \thesnakegraph
        \somematching
        \thevertices
    \end{tikzpicture}
    \qquad
    \raisebox{8mm}{+}
    \qquad
    \begin{tikzpicture}[scale=.5,>=latex]
        \thesnakegraph
        \thebasicmatching
        \thevertices
    \end{tikzpicture}
    \qquad
    \raisebox{8mm}{=}
    \qquad
    \begin{tikzpicture}[scale=.5,>=latex]
        \thesnakegraph
        \enclosedarea
        \somematching
        \thebasicmatching
        \thevertices
    \end{tikzpicture}
\end{center}
\caption{A perfect matching on top of the basic matching
encloses a region of area $1+2=3$.}
\label{fig:area}
\end{figure}

The area of the basic perfect matching of a snake graph is zero.
The area of the other perfect matching with no interior edges is maximal,
see Figure~\ref{fig:boundary-matching-area}.
Note that the definition of area corresponds to the degree of the height
monomials of a perfect matching
\cite{MR2661414,MR3034481,zbMATH06144657,zbMATH08015631},
except that the height monomials are defined using the minimal perfect matching
$m_-$ instead of the basic perfect matching $\bfrak_w$.

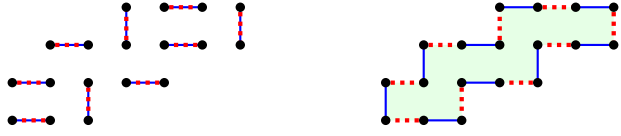
\begin{figure}[h]
\begin{center}
    \begin{tikzpicture}[scale=.5,>=latex]
        \draw[somematching] (0,0)
            -- ++ (1,0)  ++ (1,0)
            -- ++ (0,1)  ++ (1,0)
            -- ++ (1,0)  ++ (0,1)
            -- ++ (1,0)  ++ (1,0)
            -- ++ (0,1)  ++ (-1,0)
            -- ++ (-1,0) ++ (-1,0)
            -- ++ (0,-1) ++ (-1,0)
            -- ++ (-1,0) ++ (0,-1)
            -- ++ (-1,0) ++ (0,-1) ;
        \def\thebasicmatching{
        \draw[thebasicmatching] (0,0) to ++ (1,0) ++ (1,0)
                                    to ++ (0,1) ++ (1,0)
                                    to ++ (1,0) ++ (0,1)
                                    to ++ (1,0) ++ (1,0)
                                    to ++ (0,1) ++ (-1,0)
                                    to ++ (-1,0) ++ (-1,0)
                                    to ++ (0,-1) ++ (-1,0)
                                    to ++ (-1,0) ++ (0,-1)
                                    to ++ (-1,0) ++ (0,-1) ;}
        \thebasicmatching
        \foreach \p in {(0,0), (1,0), (2,0), (2,1), (3,1), (4,1), (4,2), (5,2),
        (6,2), (6,3), (5,3), (4,3), (3,3), (3,2), (2,2), (1,2), (1,1), (0,1)}
            \node[circled node] at \p {};
    \end{tikzpicture}
    \qquad
    \qquad
    \begin{tikzpicture}[scale=.5,>=latex]
        \fill[\enclosedAreaColor] (0,0) -- ++ (2,0)
                        -- ++ (0,1)
                        -- ++ (2,0)
                        -- ++ (0,1)
                        -- ++ (2,0)
                        -- ++ (0,1)
                        -- ++ (-3,0)
                        -- ++ (0,-1)
                        -- ++ (-2,0)
                        -- ++ (0,-1)
                        -- ++ (-1,0)
                        -- ++ (0,-1) -- cycle;
        \draw[somematching] (0,0)
            ++ (1,0)  -- ++ (1,0)
            ++ (0,1)  -- ++ (1,0)
            ++ (1,0)  -- ++ (0,1)
            ++ (1,0)  -- ++ (1,0)
            ++ (0,1)  -- ++ (-1,0)
            ++ (-1,0) -- ++ (-1,0)
            ++ (0,-1) -- ++ (-1,0)
            ++ (-1,0) -- ++ (0,-1)
            ++ (-1,0) -- ++ (0,-1) ;
        \def\thebasicmatching{
        \draw[thebasicmatching] (0,0) to ++ (1,0) ++ (1,0)
                                    to ++ (0,1) ++ (1,0)
                                    to ++ (1,0) ++ (0,1)
                                    to ++ (1,0) ++ (1,0)
                                    to ++ (0,1) ++ (-1,0)
                                    to ++ (-1,0) ++ (-1,0)
                                    to ++ (0,-1) ++ (-1,0)
                                    to ++ (-1,0) ++ (0,-1)
                                    to ++ (-1,0) ++ (0,-1) ;}
        \thebasicmatching
        \foreach \p in {(0,0), (1,0), (2,0), (2,1), (3,1), (4,1), (4,2), (5,2),
        (6,2), (6,3), (5,3), (4,3), (3,3), (3,2), (2,2), (1,2), (1,1), (0,1)}
            \node[circled node] at \p {};
    \end{tikzpicture}
\end{center}
\caption{A perfect matching of minimal area 0 and maximal area.}
\label{fig:boundary-matching-area}
\end{figure}

\begin{remark}\label{rem:carreplein-interpretation}
Let $m\in M(w)$.
Observe that $m\in M^\carreplein(w)$ if and only if
the unit square at the origin is inside the enclosed area.
Equivalently, $m\in M^\carrevide(w)$ if and only if
the unit square at the origin is not in the enclosed area.
\end{remark}
In particular,
$\area(m\Delta\bfrak_w) \geq 1$
for every matching $m\in M^\carreplein(w)$.
Note that this implies that
$q$ divides the polynomial $\sum_{m\in M^\carreplein(w)} q^{\area(m\Delta\bfrak_w)}$.

\subsection*{A graded poset}
For every $w\in\{\0,\1\}^*$,
the set of perfect matchings $M(w)$ is naturally equipped with a partial order
$(M(w), \prec)$ defined as follows:
\[
    m\prec m'
    \quad
    \text{ if and only if }
    \quad
    \region(m \Delta \bfrak_w)
    \subseteq
    \region(m' \Delta \bfrak_w)
\]
for every perfect matchings $m, m'\in M(w)$.
The rank function of the graded poset $(M(w), \prec)$ is the area of the perfect matchings.
The basic perfect matching $\bfrak_w$ is its minimal element.

\begin{example}
For example, let $w=\0\1\0\0$.
The basic perfect matching of $G(w)$ is
\[
\bfrak = \bfrak_w = \raisebox{-5mm}{
            \includegraphics{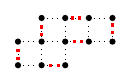}
            }.
\]
The region enclosed by $m\Delta\bfrak$
for every perfect matchings $m\in M^\carreplein(\0\1\0\0)$ is:
\[
\left\{\raisebox{-12mm}{\includegraphics{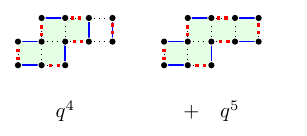}}\right\}
\]
Thus, $\sum_{m\in M^\carreplein(\0\1\0\0)} q^{\area(m\Delta\bfrak)}=q^4+q^5$.
The region enclosed by $m\Delta\bfrak$
for every perfect matchings $m\in M^\carrevide(\0\1\0\0)$ is:
\[
\left\{\raisebox{-20mm}{\includegraphics{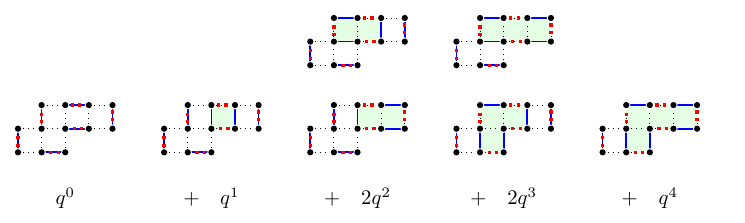}}\right\}
\]
Thus, $\sum_{m\in M^\carrevide(\0\1\0\0)} q^{\area(m\Delta\bfrak)}=1+q+2q^2+2q^3+q^4$.
We observe that
\[
    q^{-1}\frac{\sum_{m\in M^\carreplein(\0\1\0\0)} q^{\area(m\Delta\bfrak)}}
         {\sum_{m\in M^\carrevide (\0\1\0\0)} q^{\area(m\Delta\bfrak)} }
= q^{-1}\frac{q^4+q^5}{1+q+2q^2+2q^3+q^4}
= [0;3,1,1]_q .
\]
\end{example}

\section{A bijection from perfect matchings to fence poset order ideals}
\label{sec:bijection-perfect-matchings-to-order-ideals}

The main result of this section is the existence of a bijection
from the set of perfect matchings of a snake graph
to the set of order ideals of a fence poset
such that the area statistics of a perfect matching
correspond to the size (rank) of the order ideal.

The existence of such a bijection is not new; see for instance
Theorem 5.4 in \cite{zbMATH06144657},
Section 4.1 in \cite{zbMATH08015631},
Section 6 in \cite{claussen_expansion_2020},
Section 4 in \cite{burcroff_higher_2024},
Section 4 in \cite{zbMATH07181526}.
For completeness, but also because our definitions slightly differ from the
literature, we provide a proof of the bijection which works for all snake
graphs and for all fence posets and which involves the area statistics
defined from our specific choice of basic perfect matching.

For every word $w\in\{\0,\1\}^*$, the map is defined as:
\[
\begin{array}{rccl}
    \Phi:&M(w) & \to & J(F(\theta(w)))\\
    &m & \mapsto &
    \left\{|p| \colon w = ps \text{ and }
            \region(S_{\overrightarrow{p}})\subseteq \region(m\Delta\bfrak_w)
        \right\}.
\end{array}
\]
where $\bfrak_w$ is the basic matching of the snake graph $G(w)$,
that is, the matching using only boundary edges and whose last edge is vertical.
The subset inclusion
$\region(S_{\overrightarrow{p}})\subseteq \region(m\Delta\bfrak_w)$
means that the unit square $S_{\overrightarrow{p}}$ at $\overrightarrow{p}$
is inside the region delimited by a cycle of $m\Delta\bfrak_w$.
The map $\Phi$ is illustrated in Figure~\ref{fig:Phi-ex-with-fence-poset}.

\newcommand{\arrowR}{\tikz \draw[->] (-2pt,0) -- (2pt,0);}
\newcommand{\arrowL}{\tikz \draw[<-] (-2pt,0) -- (2pt,0);}
\tikzset{milieu/.style={sloped, pos=0.5, allow upside down}}

\begin{figure}[h]
\begin{center}
    \def\enclosedarea{
    \fill[\enclosedAreaColor]
                    (0,0) -- ++ (1,0)
                    -- ++ (0,2)
                    -- ++ (-1,0)
                    -- ++ (0,-2) -- cycle;
    \fill[\enclosedAreaColor] (2,3) -- ++ (1,0)
                    -- ++ (0,2)
                    -- ++ (2,0)
                    -- ++ (0,1)
                    -- ++ (-3,0)
                    -- ++ (0,-3) -- cycle;
    \fill[\enclosedAreaColor] (7,6) -- ++ (1,0)
                    -- ++ (0,2)
                    -- ++ (-1,0)
                    -- ++ (0,-2) -- cycle;
	}
    \def\thevertices{
    \foreach \p in {
    (0,0), (1,0),
    (0,1), (0,2),
    (1,1), (1,2), (1,3), (1,4),
    (2,1), (2,2), (2,3), (2,4), (2,5), (2,6),
    (3,3), (3,4), (3,5), (3,6),
    (4,5), (4,6),
    (5,5), (5,6), (5,7),
    (6,5), (6,6), (6,7),
    (7,6), (7,7), (7,8),
    (8,6), (8,7), (8,8)
    }
        \node[circled node] at \p {};}

    \def\thebasicmatching{
    \draw[thebasicmatching] (0,0) -- ++ (0,1)
        ++ (0,1)  -- ++ (1,0)
        ++ (0,1)  -- ++ (0,1)
        ++ (1,0)  -- ++ (0,1)
        ++ (0,1)  -- ++ (1,0)
        ++ (1,0)  -- ++ (1,0)
        ++ (0,1)  -- ++ (1,0)
        ++ (1,0)  -- ++ (0,1)
        ++ (1,0)  -- ++ (0,-1)
        ++ (0,-1)  -- ++ (-1,0)
        ++ (-1,0)  -- ++ (0,-1)
        ++ (-1,0)  -- ++ (-1,0)
        ++ (-1,0)  -- ++ (0,-1)
    	++ (0,-1)  -- ++ (-1,0)
	    ++ (0,-1)  -- ++ (0,-1)
	    ++ (-1,0)  -- ++ (0,-1)
       ;}
    \def\somematching{
        \draw[somematching] (0,0) -- (1,0);
        \draw[somematching] (0,1) -- (0,2);
        \draw[somematching] (1,1) -- (1,2);
        \draw[somematching] (2,1) -- (2,2);
        \draw[somematching] (1,3) -- (1,4);
        \draw[somematching] (2,3) -- (2,4);
        \draw[somematching] (3,3) -- (3,4);
        \draw[somematching] (3,5) -- (4,5);
        \draw[somematching] (5,5) -- (5,6);
        \draw[somematching] (2,5) -- (2,6);
        \draw[somematching] (3,6) -- (4,6);
        \draw[somematching] (6,5) -- (6,6);
        \draw[somematching] (5,7) -- (6,7);
        \draw[somematching] (7,6) -- (7,7);
        \draw[somematching] (8,6) -- (8,7);
        \draw[somematching] (7,8) -- (8,8);
      }
    \def\dottedlines{
        \draw[dotted] (0,1) -- + (1,0);
        \draw[dotted] (1,1) -- + (1,0);
        \draw[dotted] (1,2) -- + (1,0);
        \draw[dotted] (1,3) -- + (1,0);
        \draw[dotted] (1,4) -- + (1,0);
        \draw[dotted] (2,4) -- + (1,0);
        \draw[dotted] (2,5) -- + (1,0);
        \draw[dotted] (5,5) -- + (1,0);
        \draw[dotted] (5,6) -- + (1,0);
        \draw[dotted] (6,6) -- + (1,0);
        \draw[dotted] (6,7) -- + (1,0);
        \draw[dotted] (7,7) -- + (1,0);
        \draw[dotted] (1,2) -- + (0,1);
        \draw[dotted] (2,2) -- + (0,1);
        \draw[dotted] (3,5) -- + (0,1);
        \draw[dotted] (4,5) -- + (0,1);
        \draw[dotted] (5,6) -- + (0,1);
        \draw[dotted] (6,6) -- + (0,1);
      }

    \begin{tikzpicture}[scale=.5,>=latex]
        \enclosedarea
        \thevertices
        \thebasicmatching
        \somematching
        \dottedlines

        \begin{scope}[xshift=2cm,yshift=-1cm,
                      every node/.style={scale=0.9}]
            \foreach \x in {2,3,4,10,11,12}  {
                \node[empty circled node] (v\x) at (\x,0) {};
                \node[below=1mm]  at (\x,0) {$\x$};
            }
            \foreach \x in {0,1,5,6,7,8,9,13,14}  {
                \node[plein circled node] (v\x) at (\x,0) {};
                \node[below=1mm]  at (\x,0) {$\x$};
            }
            \draw (v0) -- (v1) node[milieu]{\arrowL};
            \draw (v1) -- (v2) node[milieu]{\arrowL};
            \draw (v2) -- (v3) node[milieu]{\arrowL};
            \draw (v3) -- (v4) node[milieu]{\arrowR};
            \draw (v4) -- (v5) node[milieu]{\arrowR};
            \draw (v5) -- (v6) node[milieu]{\arrowR};
            \draw (v6) -- (v7) node[milieu]{\arrowL};
            \draw (v7) -- (v8) node[milieu]{\arrowL};
            \draw (v8) -- (v9) node[milieu]{\arrowR};
            \draw (v9) -- (v10) node[milieu]{\arrowL};
            \draw (v10) -- (v11) node[milieu]{\arrowL};
            \draw (v11) -- (v12) node[milieu]{\arrowL};
            \draw (v12) -- (v13) node[milieu]{\arrowR};
            \draw (v13) -- (v14) node[milieu]{\arrowR};
        \end{scope}

        \draw[dashed,thick,shorten <= 3mm,shorten >= 2mm] (v0) -- (1,0);
        \draw[dashed,thick,shorten <= 3mm,shorten >= 2mm] (v1) -- (1,1);
        \draw[dashed,thick,shorten <= 3mm,shorten >= 2mm] (v5) -- (3,3);
        \draw[dashed,thick,shorten <= 3mm,shorten >= 2mm] (v6) -- (3,4);
        \draw[dashed,thick,shorten <= 3mm,shorten >= 2mm] (v7) -- (3,5);
        \draw[dashed,thick,shorten <= 3mm,shorten >= 2mm] (v8) -- (4,5);
        \draw[dashed,thick,shorten <= 3mm,shorten >= 2mm] (v9) -- (5,5);
        \draw[dashed,thick,shorten <= 3mm,shorten >= 2mm] (v13) -- (8,6);
        \draw[dashed,thick,shorten <= 3mm,shorten >= 2mm] (v14) -- (8,7);
        \draw[dotted,thick,shorten <= 3mm,shorten >= 2mm] (v2) -- (2,1);
        \draw[dotted,thick,shorten <= 3mm,shorten >= 2mm] (v3) -- (2,2);
        \draw[dotted,thick,shorten <= 3mm,shorten >= 2mm] (v4) -- (2,3);
        \draw[dotted,thick,shorten <= 3mm,shorten >= 2mm] (v10) -- (6,5);
        \draw[dotted,thick,shorten <= 3mm,shorten >= 2mm] (v11) -- (6,6);
        \draw[dotted,thick,shorten <= 3mm,shorten >= 2mm] (v12) -- (7,6);

        \begin{scope}[xshift=2cm,yshift=-8cm,
                      every node/.style={scale=0.9}]
            \foreach \x/\h in {2/2,3/3,4/2,10/2,11/3,12/4}  {
                \node[empty circled node] (f\x) at (\x,\h) {};
            }
            \foreach \x/\h in {0/0,1/1,5/1,6/0,7/1,8/2,9/1,13/3,14/2}  {
                \node[plein circled node] (f\x) at (\x,\h) {};
            }
            \draw (f0)  -- (f1)  ;
            \draw (f1)  -- (f2)  ;
            \draw (f2)  -- (f3)  ;
            \draw (f3)  -- (f4)  ;
            \draw (f4)  -- (f5)  ;
            \draw (f5)  -- (f6)  ;
            \draw (f6)  -- (f7)  ;
            \draw (f7)  -- (f8)  ;
            \draw (f8)  -- (f9)  ;
            \draw (f9)  -- (f10) ;
            \draw (f10) -- (f11) ;
            \draw (f11) -- (f12) ;
            \draw (f12) -- (f13) ;
            \draw (f13) -- (f14) ;
        \end{scope}

        \draw[dashed,thick,shorten <= 8mm,shorten >= 3mm] (v0)  -- (f0) ;
        \draw[dashed,thick,shorten <= 8mm,shorten >= 3mm] (v1)  -- (f1) ;
        \draw[dashed,thick,shorten <= 8mm,shorten >= 3mm] (v5)  -- (f5) ;
        \draw[dashed,thick,shorten <= 8mm,shorten >= 3mm] (v6)  -- (f6) ;
        \draw[dashed,thick,shorten <= 8mm,shorten >= 3mm] (v7)  -- (f7) ;
        \draw[dashed,thick,shorten <= 8mm,shorten >= 3mm] (v8)  -- (f8) ;
        \draw[dashed,thick,shorten <= 8mm,shorten >= 3mm] (v9)  -- (f9) ;
        \draw[dashed,thick,shorten <= 8mm,shorten >= 3mm] (v13) -- (f13);
        \draw[dashed,thick,shorten <= 8mm,shorten >= 3mm] (v14) -- (f14);
        \draw[dotted,thick,shorten <= 8mm,shorten >= 3mm] (v2)  -- (f2) ;
        \draw[dotted,thick,shorten <= 8mm,shorten >= 3mm] (v3)  -- (f3) ;
        \draw[dotted,thick,shorten <= 8mm,shorten >= 3mm] (v4)  -- (f4) ;
        \draw[dotted,thick,shorten <= 8mm,shorten >= 3mm] (v10) -- (f10);
        \draw[dotted,thick,shorten <= 8mm,shorten >= 3mm] (v11) -- (f11);
        \draw[dotted,thick,shorten <= 8mm,shorten >= 3mm] (v12) -- (f12);

    \end{tikzpicture}
\end{center}
\caption{The image of a perfect matching of the snake graph
    $G(\1\0\1\1\0\1\1\0\0\0\1\0\0\1)$
    under the map $\Phi$ is the order ideal
    $\{0,1,5,6,7,8,9,13,14\}$
    of the fence poset
    $F(\theta(\1\0\1\1\0\1\1\0\0\0\1\0\0\1))
    =F(       \1\1\1\0\0\0\1\1\0\1\1\1\0\0)$.}
\label{fig:Phi-ex-with-fence-poset}
\end{figure}
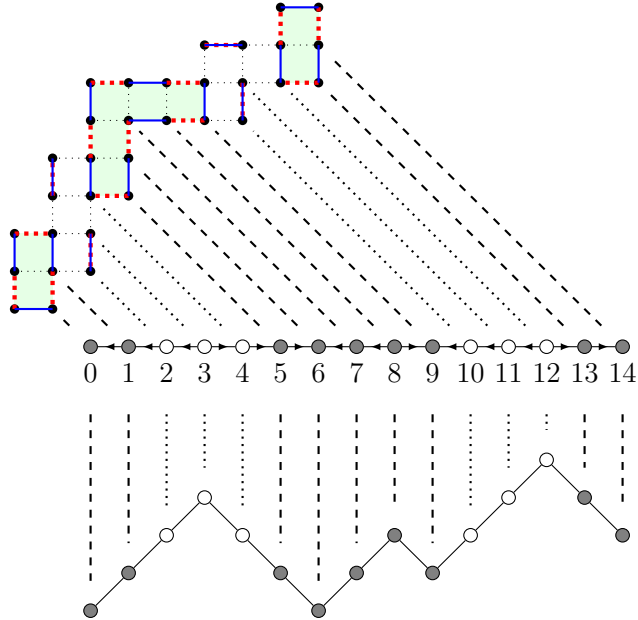

We show that the map $\Phi$ is a well-defined bijection
from the set of perfect matchings $M(w)$ to the set of fence poset order ideals $J(F(\theta(w)))$.

\begin{theorem}\label{theo:bij}
Let $w\in\{\0,\1\}^*$.
The map $\Phi:(M(w),\prec)\to (J(F(\theta(w))), \subseteq)$
is an order-preserving bijection from
the set $M(w)$ of perfect matchings of the snake graph $G(w)$
to
    the set $J(F(\theta(w)))$ of order ideals of the fence poset $F(\theta(w))$
such that:
\begin{itemize}
    \item $m\in M^\carreplein(w)$ if and only if $\Phi(m)\in J^\bullet(F(\theta(w)))$;
    \item $m\in M^\carrevide(w)$ if and only if  $\Phi(m)\in J^\circ(F(\theta(w)))$;
    \item for every $m\in M(w)$, we have $\area(m\Delta\bfrak_w) = \size{\Phi(m)}.$
\end{itemize}
\end{theorem}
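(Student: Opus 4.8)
\emph{Proof strategy.} Two of the three bulleted properties I would dispatch at once, before addressing bijectivity. For any $m\in M(w)$ the region $\region(m\Delta\bfrak_w)$ is a union of closed unit tiles of $G(w)$ with pairwise disjoint interiors, so its area is the number of tiles it contains, which is exactly $\size{\Phi(m)}$; this gives the third bullet. For the first two, Remark~\ref{rem:carreplein-interpretation} says that $m\in M^\carreplein(w)$ precisely when the tile $S_{\overrightarrow{\varepsilon}}$ at the origin is enclosed by $m\Delta\bfrak_w$, i.e. precisely when $0\in\Phi(m)$, i.e. precisely when the first vertex $y_0$ of $F(\theta(w))$ lies in $\Phi(m)$, which is the defining property of $J^\bullet(F(\theta(w)))$; the classes $M^\carrevide(w)$ and $J^\circ(F(\theta(w)))$ are the respective complements (see Definition~\ref{def:matching-dichothomoy}). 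So the real content is that $\Phi$ is a well-defined order-preserving bijection.

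The backbone is a geometric description of $\Phi(m)$. First I would record that $m\Delta\bfrak_w$, being the symmetric difference of two perfect matchings, is a disjoint union of cycles of $G(w)$; since the cycle space of the chain of tiles $G(w)$ is freely generated by the tile boundaries $\partial S_{\overrightarrow{p_0}},\dots,\partial S_{\overrightarrow{p_n}}$ and the only simple cycles it contains are the boundaries of \emph{consecutive} blocks of tiles, the set $\Phi(m)$ of indices of enclosed tiles is a union of integer intervals in $\{0,1,\dots,|w|\}$, and $m\Delta\bfrak_w$ is recovered from $\Phi(m)$ as the boundary $\partial R$ of the polyomino $R=\bigcup_{i\in\Phi(m)}\overline{S_{\overrightarrow{p_i}}}$. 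Conversely, for any union of intervals $I\subseteq\{0,\dots,|w|\}$ one forms $R_I=\bigcup_{i\in I}\overline{S_{\overrightarrow{p_i}}}$ and asks whether $\bfrak_w\Delta\partial R_I$ is again a perfect matching; this happens exactly when $\partial R_I$ is $\bfrak_w$-alternating, meaning that at each vertex of $\partial R_I$ exactly one of its two boundary edges lies in $\bfrak_w$. The crux of the theorem is the following claim: \emph{for a union of intervals $I\subseteq\{0,\dots,|w|\}$, the cycle union $\partial R_I$ is $\bfrak_w$-alternating if and only if $I$ is an order ideal of $F(\theta(w))$.}

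I would prove this claim by a local analysis that reduces the $\bfrak_w$-alternating condition to a condition at each index $i\in\{1,\dots,|w|\}$ relating the memberships $i-1\in I$ and $i\in I$. When $i-1$ and $i$ are both in $I$ or both out of $I$, the internal edge $d_i$ between the tiles lies in the interior of $R_I$ or of its complement, and the alternation condition at its endpoints is consistent for any choice of $I$; in the ``mixed'' situation $d_i\in\partial R_I$ the condition is satisfiable for exactly one of the two mixings, and which one is governed by the local shape of the snake at the tiles $i-1,i$ — that is, by $w_{i-1},w_i,w_{i+1}$ — together with the parity of the position, because $\bfrak_w$ (the boundary matching whose last edge is vertical) alternates as one runs along the zig-zag of the snake. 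Carrying out this bookkeeping shows that at position $i$ the admissible mixing is $(i-1\in I,\ i\notin I)$ iff $(\theta(w))_i=\1$ and is $(i-1\notin I,\ i\in I)$ iff $(\theta(w))_i=\0$, and these are precisely the order-ideal constraints of the fence poset $F(\theta(w))$; the very same analysis checks that an order ideal never produces a degenerate (degree-four) vertex on $\partial R_I$. The two extreme positions $i=1$ and $i=|w|$, corresponding to the two pendant vertices $y_0$ and $y_{|w|}$ of the fence poset, must be handled separately because the neighbouring tile on one side is absent and $\bfrak_w$ is pinned there. I expect isolating exactly why the flip $w\mapsto\theta(w)$, rather than $w$ itself, appears to be the main obstacle; an inductive alternative, peeling off the first letter of $w$ and mimicking the $\shift$ argument used for order ideals in Proposition~\ref{prop:statistics-size-satisfies-XYhypothesis}, is available but trades the clean treatment of the area and dichotomy statistics for bookkeeping of the recursion.

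With the claim in hand the rest is formal. For $m\in M(w)$ the cycle union $m\Delta\bfrak_w$ is tautologically $\bfrak_w$-alternating, so $\Phi(m)$ satisfies the claim and is an order ideal of $F(\theta(w))$; thus $\Phi$ is well defined. The assignment $I\mapsto\bfrak_w\Delta\partial R_I$ from $J(F(\theta(w)))$ to $M(w)$ is well defined by the converse half of the claim, and it is a two-sided inverse of $\Phi$ because $\partial R_{\Phi(m)}=m\Delta\bfrak_w$ and $\Phi(\bfrak_w\Delta\partial R_I)=I$. Finally, since $\region(m\Delta\bfrak_w)$ and $\region(m'\Delta\bfrak_w)$ are both unions of tiles with disjoint interiors, the inclusion $\region(m\Delta\bfrak_w)\subseteq\region(m'\Delta\bfrak_w)$ is equivalent to $\Phi(m)\subseteq\Phi(m')$, so $m\prec m'\iff\Phi(m)\subseteq\Phi(m')$ and $\Phi$ is an isomorphism of posets; combined with the three bulleted identities dispatched at the outset, this completes the proof.
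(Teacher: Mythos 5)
Your argument is correct in outline but takes a genuinely different route from the paper's. The paper proves Theorem~\ref{theo:bij} by induction on $|w|$: it introduces a $\pop$ operation that deletes the first unit square of the snake graph, rewrites $\Phi$ recursively through $\pop$, and then establishes well-definedness, injectivity and surjectivity (with an explicit four-case construction of the preimage of an order ideal) by a case analysis on the first letter of $w$ and the parity of $|w|$. You instead characterize the image of $\Phi$ globally: perfect matchings of $G(w)$ correspond bijectively to $\bfrak_w$-alternating unions of cycles, simple cycles of a snake graph enclose consecutive blocks of tiles, and the whole theorem reduces to your claim that $\partial R_I$ is $\bfrak_w$-alternating exactly when $I$ is an order ideal of $F(\theta(w))$. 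This is a legitimate and arguably more illuminating strategy: it explains where $\theta$ comes from (the boundary matching alternates with period two along the snake starting from the last tile, where the vertical last edge pins it, which is why the flip happens at even distance from the right end of $w$), it makes the inverse map $I\mapsto\bfrak_w\Delta\partial R_I$ explicit, and it dispatches the three bulleted statistics immediately. The paper's induction buys smaller local verifications at the cost of less transparency about the role of $\theta$.

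What you have not done is the step you defer with ``carrying out this bookkeeping shows that\dots'': the identification of the admissible mixed configuration at the internal edge $d_i$ with the letter $(\theta(w))_i$ is the entire technical content of the theorem, and you yourself flag it as the expected obstacle. The claim is true, and so is your subsidiary claim that order ideals never create a degree-four vertex (a gap of length one at tile $i$ forces $y_i$ to be a local extremum of the fence, hence $\theta(w)_i\neq\theta(w)_{i+1}$, hence $w_i=w_{i+1}$, hence no corner contact between tiles $i-1$ and $i+1$); I have checked both against the paper's worked examples. But as written they are assertions, and until the local case analysis (straight run versus turn, interior edges versus the two pendant ends where $\bfrak_w$ is pinned) is actually written out, this is a plan rather than a proof. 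One bookkeeping warning: your stated correspondence (mixing $i-1\in I$, $i\notin I$ allowed iff $\theta(w)_i=\1$) matches the up-step/down-step convention used in the paper's figures and examples, in which $\1$ is an up step; the displayed definition of $F(w)$ in Section~\ref{sec:rational-to-fence-poset} has its two cases swapped relative to that, so state explicitly which convention you are using when you write out the cases.
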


\def\firstVertEdge{u^|}
\def\firstHorizEdge{u^-}

\begin{proof}
    Let $\firstHorizEdge=(0,e_1)$ denote the horizontal bottom edge and
    $\firstVertEdge=(0,e_2)$ denote the vertical left edge of the first unit square
    of the snake graph.
    Let $S$ be the four edges $\{\firstHorizEdge, \firstVertEdge, (e_1,e_1+e_2), (e_2,e_1+e_2)\}$
    of the first unit square at the origin.
    Using $S$, we define a function from $M(\alpha w)$ to $M(w)$
    which erases the information in the first unit square of the snake graph $G(w)$:
    \[
    \begin{array}{rccl}
        \pop:& M(\alpha w)&\to& M(w)\\
        &m&\mapsto &
        \begin{cases}
            m\setminus\{\firstVertEdge\} - e_1
            & \text{ if } \alpha = \0 \text{ and } \firstVertEdge\in m,\\
            (m\Delta S)\setminus\{\firstVertEdge\} - e_1
            & \text{ if } \alpha = \0 \text{ and } \firstHorizEdge\in m,\\
            m\setminus\{\firstHorizEdge\} - e_2
            & \text{ if } \alpha = \1 \text{ and } \firstHorizEdge\in m,\\
            (m\Delta S)\setminus\{\firstHorizEdge\} - e_2
            & \text{ if } \alpha = \1 \text{ and } \firstVertEdge\in m.
        \end{cases}
    \end{array}
    \]
    Using $\pop$, we can equivalently define the map $\Phi$ by induction
    as follows:
    \[
    \begin{array}{rccl}
        \Phi:&M(w) & \to & J(F(\theta(w)))\\
        &m & \mapsto &
        \begin{cases}
            \varnothing
            & \text{ if } w=\varepsilon \text{ and } m \in M^\carrevide(\varepsilon),\\
            \{0\}
            & \text{ if } w=\varepsilon \text{ and } m \in M^\carreplein(\varepsilon),\\
            \Phi(\pop(m))+1
            & \text{ if } w\neq\varepsilon \text{ and } m \in M^\carrevide(w),\\
            \{0\} \cup (\Phi(\pop(m))+1)
            & \text{ if } w\neq\varepsilon \text{ and } m \in M^\carreplein(w).
        \end{cases}
    \end{array}
    \]

    The proof that $\Phi$ is a bijection is done by induction on the length of the word $w$.
    Suppose that $\Phi:M(w)\to J(F(\theta(w)))$
    is a bijection for every word $w\in\{\0,\1\}^*$ of length $n$.
    Let $\alpha\in\{\0,\1\}$ be some letter.
    We want to show that $\Phi:M(\alpha w)\to J(F(\theta(\alpha w)))$
    is also a bijection.

    First we prove that $\Phi$ is \textbf{well-defined}.
    Let $m\in M(\alpha w)$
    and $\beta\in\{\0,\1\}$ be the first letter of
    $\theta(\alpha w)=\beta\theta(w)$.
    From the induction hypothesis,
    we have that $\Phi(\pop(m))$ is an order ideal in $J(F(\theta(w)))$.
    Thus, $\Phi(\pop(m))+1$ is an order ideal in $J(F(\theta(\alpha w)))$.
    It remains to show that $\Phi(m)$ is also an order ideal.
    This depends on $\beta$ and the subset $\{0,1\}\cap\Phi(m)$.
    Suppose that $\beta=\1$.
    We want to show that $1\in\Phi(m)$ implies that $0\in\Phi(m)$.
    Suppose by contradiction that $1\in\Phi(m)$ and $0\notin\Phi(m)$.
    There are two cases to consider:
    \begin{itemize}
        \item Case $|\alpha w|$ is even.
            From the definition of $\theta$, we must have $\alpha=\1$.
            Since $1\in\Phi(m)$ and $0\notin\Phi(m)$, we have the interior edge
            $(e_2,e_1+e_2)\in m$.
            Since $|\alpha w|$ is even, we have $\firstVertEdge\in\bfrak_{\alpha w}$.
            Since $0\notin\Phi(m)$, we have $m\in M^\carrevide(\alpha w)$.
            Thus, $\firstVertEdge\in m$ as well.
            This is a contradiction, because vertex $e_2$ may not be paired
            with two distinct vertices in the perfect matching $m$.
        \item Case $|\alpha w|$ is odd.
            From the definition of $\theta$, we must have $\alpha=\0$.
            Since $1\in\Phi(m)$ and $0\notin\Phi(m)$, we have the interior edge
            $(e_1,e_1+e_2)\in m$.
            Since $|\alpha w|$ is odd, we have $\firstHorizEdge\in\bfrak_{\alpha w}$.
            Since $0\notin\Phi(m)$, we have $m\in M^\carrevide(\alpha w)$.
            Thus, $\firstHorizEdge\in m$ as well.
            This is a contradiction, because vertex $e_1$ may not be paired
            with two distinct vertices in the perfect matching $m$.
    \end{itemize}
    Suppose that $\beta=\0$.
    We need to show that $0\in\Phi(m)$ implies that $1\in\Phi(m)$.
    As above, we may show that assuming $0\in\Phi(m)$ and $1\notin\Phi(m)$
    leads to a contradiction.
    We conclude that the map $\Phi$ is well-defined.

    \textbf{$\Phi$ is surjective.}
    Let $I\in J(F(\theta(\alpha w)))$ be an order ideal.
    We have $(I\setminus\{0\})-1 \in J(F(\theta(w)))$.
    By induction, there exists a perfect matching $m\in M(w)$ such that
    $\Phi(m)=(I\setminus\{0\})-1$.
    Let $m'\in M(\alpha w)$ be a perfect matching of $G(\alpha w)$ defined as follows:
    \[
        m'=
        \begin{cases}
            (m+e_1)\cup \{\firstVertEdge\}
                             & \text{ if } \alpha=\0, 0\in I \text{ and $|w|$ is even}\\
                             & \text{ or } \alpha=\0, 0\notin I \text{ and $|w|$ is odd},\\
            ((m+e_1)\cup \{\firstVertEdge\})\Delta S
                             & \text{ if } \alpha=\0, 0\in I \text{ and $|w|$ is odd}\\
                             & \text{ or } \alpha=\0, 0\notin I \text{ and $|w|$ is even},\\
            ((m+e_2)\cup \{\firstHorizEdge\})\Delta S
                             & \text{ if } \alpha=\1, 0\in I \text{ and $|w|$ is even}\\
                             & \text{ or } \alpha=\1, 0\notin I \text{ and $|w|$ is odd},\\
            (m+e_2)\cup \{\firstHorizEdge\}
                             & \text{ if } \alpha=\1, 0\in I \text{ and $|w|$ is odd}\\
                             & \text{ or } \alpha=\1, 0\notin I \text{ and $|w|$ is even}.
        \end{cases}
    \]
    This matching satisfies that $\pop(m')=m$.
    If $0\in I$, we have that $m'\in M^\carreplein(\alpha w)$ and
    \begin{align*}
        \Phi(m')
        &= \{0\} \cup (\Phi(\pop(m'))+1)
         = \{0\} \cup (\Phi(m)+1)\\
        &= \{0\} \cup ((I\setminus\{0\})-1+1)
         = \{0\} \cup (I\setminus\{0\})
        = I.
    \end{align*}
    If $0\notin I$, we have that $m'\in M^\carrevide(\alpha w)$ and
    \begin{align*}
        \Phi(m')
        &= \Phi(\pop(m'))+1
         = \Phi(m)+1\\
        &= (I\setminus\{0\})-1+1
         = I\setminus\{0\}
         = I.
    \end{align*}
    We conclude that $\Phi$ is surjective.

    \textbf{$\Phi$ is injective.}
    Let $m_1,m_2\in M(\alpha w)$ be two perfect matchings such that $\Phi(m_1)=\Phi(m_2)$.
    We have
    \begin{align*}
        \Phi(\pop(m_1))
        &=\Phi(\pop(m_1))+1-1
        =\Phi(m_1)\setminus\{0\}-1\\
        &=\Phi(m_2)\setminus\{0\}-1
        =\Phi(\pop(m_2))+1-1
        =\Phi(\pop(m_2)).
    \end{align*}
    We may now use the induction hypothesis.
    Since $\Phi:M(w)\to J(F(\theta(w)))$ is injective
    and $\pop(m_1),\pop(m_2)\in M(w)$,
    we must have that $\pop(m_1)=\pop(m_2)$.
    If $0\in \Phi(m_1) =\Phi(m_2)$,
    then $m_1,m_2\in M^\carreplein(w)$.
    If $0\notin \Phi(m_1) =\Phi(m_2)$,
    then $m_1,m_2\in M^\carrevide(w)$.
    In particular, the first edge of $m_1$ is the same as
    the first edge of $m_2$.
    Assume $w$ starts with $\0$.
    If the first edge of $m_1$ and $m_2$ is $\firstVertEdge$, then
    $m_1\setminus\{\firstVertEdge\}-e_1
    =\pop(m_1)
    =\pop(m_2)
    =m_2\setminus\{\firstVertEdge\}-e_1$.
    Thus, $m_1=m_2$.
    If the first edge of $m_1$ and $m_2$ is $\firstHorizEdge$, then
    $(m_1\Delta S)\setminus\{\firstVertEdge\}-e_1
    =\pop(m_1)
    =\pop(m_2)
    =(m_2\Delta S)\setminus\{\firstVertEdge\}-e_1$.
    Thus, $m_1=m_2$.
    We reach the same conclusion if we assume $w$ that starts with $\1$.
    We conclude that $\Phi$ is injective.

    \textbf{$\Phi$ is order-preserving.}
    Let $m, m'\in M(w)$ be two perfect matchings.
    We have
    $m\prec m'$
    if and only if
    $\region(m \Delta \bfrak_w)
    \subseteq
    \region(m' \Delta \bfrak_w)$
    if and only if
    $\Phi(m)\subset\Phi(m')$.

    \textbf{Items 1 and 2.}
    By definition of $\Phi$, we have
    $m\in M^\carreplein(w)$
    if and only if $0\in\Phi(m)$
    if and only if $\Phi(m)\in J^\bullet(F(\theta(w)))$.
    Since $M(w)=M^\carreplein(w)\cup M^\carrevide(w)$ is a disjoint union,
    this also implies that
    $m\in M^\carrevide(w)$
    if and only if $0\notin\Phi(m)$
    if and only if $\Phi(m)\in J^\circ(F(\theta(w)))$.

    \textbf{Item 3.}
    It follows from the definition of $\Phi$
    that $\area(m\Delta\bfrak_w) = \size{\Phi(m)}$
    for every perfect matching $m\in M(w)$.
\end{proof}

Since $\theta$ is an involution on $\{\0,\1\}^*$,
Theorem~\ref{theo:bij}
also means that the map $\Phi$
is a bijection $M(\theta(w))\to J(F(w))$
from
the set of perfect matchings of the snake graph $G(\theta(w))$
to the set of order ideals of the fence poset $F(w)$.

\begin{corollary}\label{cor:poset-isomorphism-matchings-to-order-ideals}
For every $w\in\{\0,\1\}^*$,
the posets $(M(\theta(w)),\prec)$ and $(J(F(w)), \subseteq)$
are isomorphic.
\end{corollary}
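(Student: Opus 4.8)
The plan is to deduce this directly from Theorem~\ref{theo:bij}, using that $\theta$ is an involution on $\{\0,\1\}^*$. First I would apply Theorem~\ref{theo:bij} to the word $\theta(w)$ in place of $w$. This gives an order-preserving bijection
\[
    \Phi:(M(\theta(w)),\prec)\longrightarrow (J(F(\theta(\theta(w)))), \subseteq).
\]
Since $\theta\circ\theta$ is the identity map on $\{\0,\1\}^*$, we have $\theta(\theta(w))=w$, hence $F(\theta(\theta(w)))=F(w)$, so $\Phi$ is an order-preserving bijection from $(M(\theta(w)),\prec)$ onto $(J(F(w)), \subseteq)$.

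The only remaining point is to upgrade ``order-preserving bijection'' to ``poset isomorphism'', that is, to check that the inverse bijection $\Phi^{-1}$ is also order-preserving. For this I would appeal to the slightly more precise statement established inside the proof of Theorem~\ref{theo:bij}, where the order-preservation is proved as an equivalence: for all $m,m'\in M(\theta(w))$ one has $m\prec m'$ if and only if $\Phi(m)\subseteq\Phi(m')$. Reading this biconditional in the reverse direction shows that $\Phi^{-1}$ is order-preserving, so $\Phi$ is an isomorphism of posets.

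I do not expect any genuine obstacle here: the entire content lies in Theorem~\ref{theo:bij}, and the corollary is just a repackaging of it via the involution $\theta$. If one prefers not to rely on the biconditional form, an alternative would be to note that $\Phi$ is a bijection between the underlying sets of two finite graded posets which preserves the rank function (by Item~3 of Theorem~\ref{theo:bij}) and respects covering relations, but invoking the equivalence proved in the theorem is the cleanest route.

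\begin{proof}
    Apply Theorem~\ref{theo:bij} to the word $\theta(w)$ in place of $w$.
    Since $\theta$ is an involution, $\theta(\theta(w))=w$, so $F(\theta(\theta(w)))=F(w)$,
    and we obtain an order-preserving bijection
    $\Phi:(M(\theta(w)),\prec)\to (J(F(w)), \subseteq)$.
    In the proof of Theorem~\ref{theo:bij} the order-preservation is shown as an equivalence,
    namely $m\prec m'$ if and only if $\Phi(m)\subseteq\Phi(m')$ for all $m,m'\in M(\theta(w))$;
    applying this biconditional to $\Phi^{-1}$ shows that $\Phi^{-1}$ is order-preserving as well.
    Hence $\Phi$ is a poset isomorphism, and $(M(\theta(w)),\prec)$ and $(J(F(w)), \subseteq)$ are isomorphic.
\end{proof}
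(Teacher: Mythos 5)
Your proof is correct and matches the paper's argument: the paper likewise deduces the corollary by applying Theorem~\ref{theo:bij} with $\theta(w)$ in place of $w$ and invoking that $\theta$ is an involution. Your extra care in noting that the order-preservation in Theorem~\ref{theo:bij} is actually a biconditional (so that $\Phi^{-1}$ is also order-preserving) is a point the paper leaves implicit, but it is exactly the right justification.
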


\begin{proof}
    It follows from the order-preserving bijection proved in
    Theorem~\ref{theo:bij}.
\end{proof}

\section{Proof of Theorem~\ref{thm:nice-formula-for-qmatchings}
and Corollary~\ref{cor:three-combinatorial-interpretation-of-q-rational}}
\label{sec:proof-of-thm:nice-formula-for-qmatchings}

Similarly to Proposition~\ref{prop:statistics-over-order-ideals},
we have the following result involving the map $\theta$
as a consequence of Theorem~\ref{theo:bij}.

\begin{proposition}\label{prop:statistics-over-snake-graph-Gw}
    Let $w\in\{\0,\1\}^*$.
The area statistics over the set of perfect matchings
of the snake graph $G(w)$ satisfies
\begin{equation*}
\left(\begin{array}{r}
    \sum_{m\in M^\carreplein(w)} q^{\area(m\Delta\bfrak)}\\
        \sum_{m\in M^\carrevide (w)} q^{\area(m\Delta\bfrak)}
\end{array}\right)
    =
    \left(\begin{smallmatrix} 1 & 0 \\ 0 & q \end{smallmatrix}\right)^{-1}
            \nu_q(\theta(w))
        \left(\begin{array}{c} q \\ q \end{array}\right).
\end{equation*}
where $\bfrak$ is the basic perfect matching of the snake graph $G(w)$.
\end{proposition}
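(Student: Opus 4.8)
The plan is to obtain this formula as an immediate consequence of the area-preserving bijection $\Phi$ constructed in Theorem~\ref{theo:bij} together with the fence-poset computation carried out in Proposition~\ref{prop:statistics-over-order-ideals}. The first step is to recall what Theorem~\ref{theo:bij} gives for a fixed $w\in\{\0,\1\}^*$: the map $\Phi\colon M(w)\to J(F(\theta(w)))$ is a bijection under which $m\in M^\carreplein(w)$ if and only if $\Phi(m)\in J^\bullet(F(\theta(w)))$, under which $m\in M^\carrevide(w)$ if and only if $\Phi(m)\in J^\circ(F(\theta(w)))$, and which satisfies $\area(m\Delta\bfrak_w)=\size{\Phi(m)}$ for every $m\in M(w)$. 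Here $\bfrak_w$ is exactly the basic perfect matching denoted $\bfrak$ in the statement, so no reconciliation of conventions is needed.

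Then I would simply push the two generating polynomials through $\Phi$. Writing $q^{\area(m\Delta\bfrak_w)}=q^{\size{\Phi(m)}}$ and summing over $M^\carreplein(w)$ and over $M^\carrevide(w)$, and using that $\Phi$ restricts to bijections $M^\carreplein(w)\to J^\bullet(F(\theta(w)))$ and $M^\carrevide(w)\to J^\circ(F(\theta(w)))$, gives
\[
\sum_{m\in M^\carreplein(w)} q^{\area(m\Delta\bfrak_w)} = \sum_{I\in J^\bullet(F(\theta(w)))} q^{\size{I}} \qquad\text{and}\qquad \sum_{m\in M^\carrevide(w)} q^{\area(m\Delta\bfrak_w)} = \sum_{I\in J^\circ(F(\theta(w)))} q^{\size{I}}.
\]
The last step is to invoke Proposition~\ref{prop:statistics-over-order-ideals} applied to the binary word $\theta(w)$ (the proposition holds for every binary word, so this is legitimate), which rewrites the column vector of the two right-hand sides as $\left(\begin{smallmatrix} 1 & 0 \\ 0 & q \end{smallmatrix}\right)^{-1}\nu_q(\theta(w))\left(\begin{smallmatrix} q \\ q \end{smallmatrix}\right)$. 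Chaining these equalities yields the claimed identity.

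I do not expect any real obstacle: the combinatorial substance has already been discharged, namely the bijection and its compatibility with the $\carreplein/\carrevide$ dichotomy and with the area/size statistics in Theorem~\ref{theo:bij}, and the matrix-product evaluation in Proposition~\ref{prop:equivalent-conditions-Fq} and Proposition~\ref{prop:statistics-over-order-ideals}. The only points to be careful about are bookkeeping ones: that the statistic transported by $\Phi$ is the area taken against $\bfrak_w$ (which matches the $\bfrak$ in the statement), and that one feeds $\theta(w)$ rather than $w$ into Proposition~\ref{prop:statistics-over-order-ideals}.
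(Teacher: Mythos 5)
Your proposal is correct and follows exactly the paper's own argument: transport the two area generating polynomials through the bijection $\Phi$ of Theorem~\ref{theo:bij}, using its compatibility with the $\carreplein/\carrevide$ versus $\bullet/\circ$ dichotomies and the identity $\area(m\Delta\bfrak_w)=\size{\Phi(m)}$, and then apply Proposition~\ref{prop:statistics-over-order-ideals} to the word $\theta(w)$. No gaps; the bookkeeping points you flag (matching $\bfrak$ with $\bfrak_w$, feeding $\theta(w)$ into the order-ideal proposition) are precisely the ones the paper handles.
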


\begin{proof}
    For every $w\in\{\0,\1\}^*$,
    let
    $\Phi:M(w)\to J(F(\theta w))$ be the bijection from
    the set of perfect matchings of the snake graph $G(w)$
    to the set of order ideals of the fence poset $F(\theta w)$
defined in Theorem~\ref{theo:bij}.
Using Proposition~\ref{prop:statistics-over-order-ideals},
we have
\begin{align*}
    \left(\begin{array}{r}
        \sum_{m\in M^\carreplein(w)} q^{\area(m\Delta\bfrak)}\\
        \sum_{m\in M^\carrevide(w)}  q^{\area(m\Delta\bfrak)}
    \end{array}\right)
    &=
    \left(\begin{array}{r}
            \sum_{I\in J^\bullet(F(\theta w))} q^{\area(\Phi^{-1}(I)\Delta\bfrak)}\\
            \sum_{I\in J^\circ  (F(\theta w))} q^{\area(\Phi^{-1}(I)\Delta\bfrak)}
    \end{array}\right)\\
    &=
    \left(\begin{array}{r}
          \sum_{I\in J^\bullet(F(\theta w))} q^{\size{I}}\\
          \sum_{I\in J^\circ  (F(\theta w))} q^{\size{I}}
    \end{array}\right)%
    =
    \left(\begin{smallmatrix} 1 & 0 \\ 0 & q \end{smallmatrix}\right)^{-1}
            \nu_q(\theta w)
        \left(\begin{array}{c} q \\ q \end{array}\right).
\end{align*}
\end{proof}

    The following result says that the area statistics of the perfect
    $\carrevide$-matchings
    and $\carreplein$-matchings
    of a snake graph is related
    to the $q$-analog of a continued fraction expansion of a rational number.

\begin{THEOREMD}
    \MainTheoremD
\end{THEOREMD}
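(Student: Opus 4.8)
The plan is to obtain Theorem~\ref{thm:nice-formula-for-qmatchings} as an essentially immediate consequence of Proposition~\ref{prop:statistics-over-snake-graph-Gw} and Lemma~\ref{lem:sufficient-conditions-qcontinued-fraction}, with the only delicate point being that the two occurrences of the involution $\theta$ cancel. All of the real work will have been done already: the order-preserving bijection $\Phi$ of Theorem~\ref{theo:bij} transports the area statistics on perfect matchings to the cardinality (rank) statistics on order ideals of a fence poset, and Proposition~\ref{prop:equivalent-conditions-Fq} (via Proposition~\ref{prop:statistics-over-order-ideals}) computes those as a product of $L_q$ and $R_q$ matrices.

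Concretely, I would let $a=[a_0;a_1,\dots,a_{2\ell-1}]=\CFeven(\xx)$ and set $w=\theta\circ W(\xx)$, so that, by the definition $\Gcal=G\circ\theta\circ W$, we have $\Gcal(\xx)=G(w)$. Then the basic perfect matching $\bfrak$ of $\Gcal(\xx)$ is $\bfrak_w$, and $\Mcal^\carreplein(\xx)=M^\carreplein(w)$, $\Mcal^\carrevide(\xx)=M^\carrevide(w)$. Applying Proposition~\ref{prop:statistics-over-snake-graph-Gw} to this word $w$ gives
\[
\left(\begin{array}{r}
    \sum_{m\in \Mcal^\carreplein(\xx)} q^{\area(m\Delta\bfrak)}\\
    \sum_{m\in \Mcal^\carrevide (\xx)} q^{\area(m\Delta\bfrak)}
\end{array}\right)
=
\left(\begin{smallmatrix} 1 & 0 \\ 0 & q \end{smallmatrix}\right)^{-1}
\nu_q(\theta(w))
\left(\begin{array}{c} q \\ q \end{array}\right).
\]
Since $\theta$ is an involution, $\theta(w)=\theta(\theta(W(\xx)))=W(\xx)=W(a)$, so Lemma~\ref{lem:sufficient-conditions-qcontinued-fraction} rewrites the right-hand side as
\[
\left(\begin{smallmatrix} 1 & 0 \\ 0 & q \end{smallmatrix}\right)^{-1}
\nu_q(W(a))
\left(\begin{smallmatrix} q \\ q \end{smallmatrix}\right)
=
\left(\begin{smallmatrix} 1 & 0 \\ 0 & q \end{smallmatrix}\right)^{-1}
R_q^{a_0}L_q^{a_1}\cdots R_q^{a_{2\ell-2}}L_q^{a_{2\ell-1}}
\left(\begin{smallmatrix} 1 \\ 0 \end{smallmatrix}\right),
\]
which is exactly the claimed formula.

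The argument is therefore pure bookkeeping, and I do not expect a genuine obstacle; the one place that needs care — and the reason Proposition~\ref{prop:statistics-over-snake-graph-Gw} is deliberately stated with $\nu_q(\theta(w))$ on its right-hand side — is the cancellation $\theta(\theta(W(\xx)))=W(\xx)$, which is what makes the snake graph $\Gcal(\xx)$ produce, under $\Phi$, order ideals of the fence poset $\Fcal(\xx)=F\circ W(\xx)$ rather than of $F(\theta\circ W(\xx))$. Once this identification is in place, chaining Theorem~\ref{theo:bij}, Proposition~\ref{prop:statistics-over-order-ideals} and Lemma~\ref{lem:sufficient-conditions-qcontinued-fraction} closes the proof.
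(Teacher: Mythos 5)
Your proposal is correct and is essentially identical to the paper's own proof: both apply Proposition~\ref{prop:statistics-over-snake-graph-Gw} to the word $\theta(W(a))$ defining the snake graph $\Gcal(\xx)$, use that $\theta$ is an involution to reduce $\nu_q(\theta^2(W(a)))$ to $\nu_q(W(a))$, and conclude with Lemma~\ref{lem:sufficient-conditions-qcontinued-fraction}. The only difference is notational (you name $w=\theta\circ W(\xx)$ while the paper writes $w=W(a)$ and substitutes $\theta(W(a))$ into the proposition), so there is nothing further to add.
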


\begin{proof}
    Let $a=[a_0;a_1,\dots,a_{2\ell-1}]$ be the even-length continued fraction expansion
    of a positive rational number $\xx>0$.
    Let $w=W(a)=\1^{a_0}\0^{a_{1}}\cdots \1^{a_{2\ell-2}}\0^{a_{2\ell-1}-1}$.
    Using Proposition~\ref{prop:statistics-over-snake-graph-Gw}
    and Lemma~\ref{lem:sufficient-conditions-qcontinued-fraction},
    we obtain
    \begin{align*}
    \left(\begin{array}{r}
        \sum_{m\in \Mcal^\carreplein(\xx)} q^{\area(m\Delta\bfrak)}\\
            \sum_{m\in \Mcal^\carrevide (\xx)} q^{\area(m\Delta\bfrak)}
    \end{array}\right)
        &=
    \left(\begin{array}{r}
        \sum_{m\in M^\carreplein(\theta(W(a)))} q^{\area(m\Delta\bfrak)}\\
            \sum_{m\in M^\carrevide (\theta(W(a)))} q^{\area(m\Delta\bfrak)}
    \end{array}\right)\\
        &= \left(\begin{smallmatrix} 1 & 0 \\ 0 & q \end{smallmatrix}\right)^{-1}
                \nu_q(\theta^2(W(a)))
            \left(\begin{smallmatrix} q \\ q \end{smallmatrix}\right)\\
        &= \left(\begin{smallmatrix} 1 & 0 \\ 0 & q \end{smallmatrix}\right)^{-1}
                \nu_q(W(a))
            \left(\begin{smallmatrix} q \\ q \end{smallmatrix}\right)\\
        &=
        \left(\begin{smallmatrix} 1 & 0 \\ 0 & q \end{smallmatrix}\right)^{-1}
        R_q^{a_0}L_q^{a_1}\cdots R_q^{a_{2\ell-2}}L_q^{a_{2\ell-1}}
        \left(\begin{smallmatrix} 1 \\ 0 \end{smallmatrix}\right).
    \end{align*}
\end{proof}

Also, in terms of the Morier-Genoud--Ovsienko $q$-analog of rational numbers,
the following result holds in general.

\begin{COROLLARYE}
    \MainCorollaryE
\end{COROLLARYE}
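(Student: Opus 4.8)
The plan is to obtain Corollary~\ref{cor:three-combinatorial-interpretation-of-q-rational} directly from Theorems~\ref{thm:1-norm-statistics}, \ref{thm:nice-formula-for-order-ideals} and \ref{thm:nice-formula-for-qmatchings} by matching their right-hand sides with the definition \eqref{eq:CF-expansion-LqRq} of the Morier-Genoud--Ovsienko $q$-rational. Writing $M_q=R_q^{a_0}L_q^{a_1}\cdots R_q^{a_{2\ell-2}}L_q^{a_{2\ell-1}}$, equation \eqref{eq:CF-expansion-LqRq} says $\left(\begin{smallmatrix}R(q)\\S(q)\end{smallmatrix}\right)=q^{-1}M_q\left(\begin{smallmatrix}1\\0\end{smallmatrix}\right)$, hence $M_q\left(\begin{smallmatrix}1\\0\end{smallmatrix}\right)=\left(\begin{smallmatrix}qR(q)\\qS(q)\end{smallmatrix}\right)$ and therefore
\[
\left(\begin{smallmatrix}1&0\\0&q\end{smallmatrix}\right)^{-1}M_q\left(\begin{smallmatrix}1\\0\end{smallmatrix}\right)
=\left(\begin{smallmatrix}1&0\\0&q^{-1}\end{smallmatrix}\right)\left(\begin{smallmatrix}qR(q)\\qS(q)\end{smallmatrix}\right)
=\left(\begin{smallmatrix}qR(q)\\S(q)\end{smallmatrix}\right).
\]
The right-hand sides of Theorems~\ref{thm:1-norm-statistics}, \ref{thm:nice-formula-for-order-ideals} and \ref{thm:nice-formula-for-qmatchings} are all exactly this vector, so each of the three numerator polynomials $\sum_{b\in\Bcal^\bullet(a)}q^{\Vert b\Vert_1}$, $\sum_{I\in\Jcal^\bullet(\xx)}q^{|I|}$, $\sum_{m\in\Mcal^\carreplein(\xx)}q^{\area(m\Delta\bfrak)}$ equals $qR(q)$, and each of the three denominator polynomials $\sum_{b\in\Bcal^\circ(a)}q^{\Vert b\Vert_1}$, $\sum_{I\in\Jcal^\circ(\xx)}q^{|I|}$, $\sum_{m\in\Mcal^\carrevide(\xx)}q^{\area(m\Delta\bfrak)}$ equals $S(q)$. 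Dividing, each fraction on the right-hand side of the corollary equals $q^{-1}\cdot qR(q)/S(q)=R(q)/S(q)=[x]_q$, which gives the three displayed identities.

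It remains to check that the fractions are reduced. Note first that $q^{-1}\sum_{b\in\Bcal^\bullet(a)}q^{\Vert b\Vert_1}=R(q)$ is a genuine polynomial (consistent with $\Vert b\Vert_1\ge 1$ for all $b\in\Bcal^\bullet(a)$, and similarly with $\area(m\Delta\bfrak)\ge1$ for $m\in\Mcal^\carreplein(\xx)$ as in Remark~\ref{rem:carreplein-interpretation}), so it suffices to prove $\gcd(R(q),S(q))=1$ in $\Q[q]$. I would use the second form of \eqref{eq:CF-expansion-LqRq}: with $M_q'=R_q^{a_0}L_q^{a_1}\cdots R_q^{a_{2\ell-2}}L_q^{a_{2\ell-1}-1}$ one has $\left(\begin{smallmatrix}R(q)\\S(q)\end{smallmatrix}\right)=M_q'\left(\begin{smallmatrix}1\\1\end{smallmatrix}\right)$, and since $\det L_q=\det R_q=q$ also $\det M_q'=q^{a_0+\dots+a_{2\ell-1}-1}$. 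Writing the two columns of $M_q'$ as $\left(\begin{smallmatrix}A\\C\end{smallmatrix}\right)$ and $\left(\begin{smallmatrix}B\\D\end{smallmatrix}\right)$, so that $R=A+B$, $S=C+D$ and $AD-BC=q^{a_0+\dots+a_{2\ell-1}-1}$, the identity $A\,S-C\,R=AD-BC$ shows that any common divisor of $R$ and $S$ divides a power of $q$; combined with $S(0)=1$ — which is immediate here since the all-zero sequence is the unique admissible sequence with $\Vert b\Vert_1=0$ and it always lies in $\Bcal^\circ(a)$ — this forces $\gcd(R,S)=1$. Alternatively one may simply cite the reducedness of the $q$-rational $R(q)/S(q)$ established in \cite{MR4073883}.

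I do not expect a real obstacle: all the substantive work is in the bijections $\val_a$, $\Psi$, $\Phi$ and in Proposition~\ref{prop:equivalent-conditions-Fq}, which are already in place, so the corollary reduces to the bookkeeping above. The only step that is not a pure substitution is the coprimality of $R(q)$ and $S(q)$, handled by the short determinant computation.
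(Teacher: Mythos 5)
Your proof is correct and follows essentially the same route as the paper's: both reduce the corollary to the identity $\left(\begin{smallmatrix}1&0\\0&q\end{smallmatrix}\right)^{-1}M_q\left(\begin{smallmatrix}1\\0\end{smallmatrix}\right)=\left(\begin{smallmatrix}qR(q)\\S(q)\end{smallmatrix}\right)$ via \eqref{eq:CF-expansion-LqRq} and then invoke Theorems~\ref{thm:1-norm-statistics}, \ref{thm:nice-formula-for-order-ideals} and \ref{thm:nice-formula-for-qmatchings}. The only difference is that you explicitly verify coprimality of $R(q)$ and $S(q)$ with the determinant argument, whereas the paper leaves the reducedness claim implicit (relying on \cite{MR4073883}); your extra check is sound and harmless.
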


\begin{proof}
    Let $a=[a_0;\dots,a_{2\ell-1}]$ be the even-length continued fraction expansion
    of a positive rational number $\frac{r}{s}>0$.
    Let $R(q)$ and $S(q)$ be the polynomials such that
    the Morier-Genoud--Ovsienko $q$-analog of the rational number $\frac{r}{s}$ is
    \[
        \left[\frac{r}{s}\right]_q
        =
        \frac{R(q)}{S(q)}.
    \]
    Using Theorem~\ref{thm:nice-formula-for-qmatchings}, we have
    \begin{align*}
        \left(\begin{array}{r}
            \sum_{m\in \Mcal^\carreplein(\xx)} q^{\area(m\Delta\bfrak)}\\
              \sum_{m\in \Mcal^\carrevide (\xx)} q^{\area(m\Delta\bfrak)}
        \end{array}\right)
            &=
        \left(\begin{smallmatrix} 1 & 0 \\ 0 & q \end{smallmatrix}\right)^{-1}
        R_q^{a_0}L_q^{a_1}\cdots R_q^{a_{2\ell-2}}L_q^{a_{2\ell-1}}
        \left(\begin{smallmatrix} 1 \\ 0 \end{smallmatrix}\right)\\
        &=
        \left(\begin{smallmatrix} 1 & 0 \\ 0 & q \end{smallmatrix}\right)^{-1}
        q\cdot q^{-1} R_q^{a_0}L_q^{a_1}\cdots R_q^{a_{2\ell-2}}L_q^{a_{2\ell-1}}
            \left(\begin{array}{c} 1 \\ 0 \end{array}\right)\\
        &\overset{\eqref{eq:CF-expansion-LqRq}}{=}
        \left(\begin{smallmatrix} q & 0 \\ 0 & 1 \end{smallmatrix}\right)
        \left(\begin{array}{c} R(q) \\ S(q) \end{array}\right)
        =
        \left(\begin{array}{c} q\cdot R(q) \\ S(q) \end{array}\right).
    \end{align*}
    We obtain
    \[
        \left[\frac{r}{s}\right]_q
        = \frac{R(q)}{S(q)}
        = \frac{q^{-1}\sum_{m\in M^\carreplein(\xx)} q^{\area(m\Delta\bfrak)}}
                     {\sum_{m\in M^\carrevide (\xx)} q^{\area(m\Delta\bfrak)}}.
    \]
    The two other equalities follow from
    Theorem~\ref{thm:1-norm-statistics} and
    Theorem~\ref{thm:nice-formula-for-order-ideals}.
\end{proof}

A combinatorial interpretation of rational numbers
\cite[Theorem 3.4]{MR3778183},
stated here as Theorem~\ref{thm:schiffler},
and of their $q$-analogs
\cite[Theorem 4]{MR4073883}
was already provided
in terms of order ideals of fence posets and snake graphs.
All of these formulas are involving different combinatorial
objects for the numerator and denominator
and are restricted to rational numbers larger than one.
We can deduce their results as a consequence of ours.

\begin{corollary}\label{cor:the-usual-combinatorial-interpretation}
    Let $\xx>1$ be a rational number
    whose even-length continued fraction expansion
    is $a=[a_0;a_1,\dots,a_{2\ell-1}]$.
    Then,
    \begin{equation} \label{eq:usual-combinatorial-interpretations}
\left[\xx\right]_q
=
    \frac{\sum_{b\in \Bcal(\xx')} q^{\Vert b\Vert_1}}
     {\sum_{b\in \Bcal(\xx'')  } q^{\Vert b\Vert_1}}
=
    \frac{\sum_{I\in \Jcal(\xx')} q^{\size{I}}}
     {\sum_{I\in \Jcal(\xx'')  } q^{\size{I}}}
=
    \frac{\sum_{m\in \Mcal(\xx')} q^{\area(m\Delta\bfrak')}}
     {\sum_{m\in \Mcal(\xx'')  } q^{\area(m\Delta\bfrak'')}}
\end{equation}
    where
    \begin{align*}
        \xx' &=\xx-1&&=[a_0-1;a_1,a_2,\dots,a_{2\ell-1}],\\
        \xx''&=((\xx-\lfloor \xx\rfloor)^{-1}-1)^{-1}&&=[0;a_1-1,a_2,\dots,a_{2\ell-1}],
    \end{align*}
          and $\bfrak'$ (resp. $\bfrak''$)
          is the basic perfect matching of the snake graph
          $\Gcal(\xx')$ (resp. $\Gcal(\xx'')$).
\end{corollary}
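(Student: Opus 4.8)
The plan is to bootstrap from Corollary~\ref{cor:three-combinatorial-interpretation-of-q-rational}, which already holds for \emph{every} positive rational, using the recursion $[x]_q=q[x-1]_q+1$ together with a short computation that peels letters off the word $W(x)$. Since $x>1$ forces $a_0\geq1$ (the even-length continued fraction of an integer $n\geq2$ is $[n-1;1]$), the number $x'=x-1$ is a positive rational whose even-length continued fraction expansion is $[a_0-1;a_1,\dots,a_{2\ell-1}]$, and $W(x)=\1\,W(x')$. Applying~\eqref{eq:identity-q-rationals} and then Corollary~\ref{cor:three-combinatorial-interpretation-of-q-rational} to $x'$ I would get
\[
    [x]_q=q[x']_q+1
    =q\cdot\frac{q^{-1}\sum_{I\in\Jcal^\bullet(x')}q^{\size{I}}}{\sum_{I\in\Jcal^\circ(x')}q^{\size{I}}}+1
    =\frac{\sum_{I\in\Jcal(x')}q^{\size{I}}}{\sum_{I\in\Jcal^\circ(x')}q^{\size{I}}},
\]
where the last step uses the disjoint decomposition $\Jcal(x')=\Jcal^\bullet(x')\cup\Jcal^\circ(x')$. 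So the numerator already has the desired shape, and it remains to recognize the denominator as $\sum_{I\in\Jcal(x'')}q^{\size{I}}$.

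For that I would use the functions $X,Y$ from Proposition~\ref{prop:statistics-size-satisfies-XYhypothesis}, which give $\sum_{I\in\Jcal^\circ(x')}q^{\size{I}}=Y(W(x'))$. Writing $W(x')=\1^{a_0-1}\,\0\,v'$ with $v'=\0^{a_1-1}\1^{a_2}\0^{a_3}\cdots\0^{a_{2\ell-1}-1}$, one checks that $v'$ is precisely the $W$-image of $[0;a_1-1,a_2,\dots,a_{2\ell-1}]$, that is $v'=W(x'')$ (when $a_1=1$ this uses the harmless reduction $[0;0,a_2,\dots]=[a_2;a_3,\dots]$). Then the relations $Y(\1w)=Y(w)$ and $Y(\0w)=X(w)+Y(w)$ yield
\[
    \sum_{I\in\Jcal^\circ(x')}q^{\size{I}}=Y(W(x'))=Y(\0v')=X(v')+Y(v')
    =\sum_{I\in J(F(v'))}q^{\size{I}}=\sum_{I\in\Jcal(x'')}q^{\size{I}},
\]
which completes the middle equality $[x]_q=\frac{\sum_{I\in\Jcal(x')}q^{\size{I}}}{\sum_{I\in\Jcal(x'')}q^{\size{I}}}$. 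Here $x''$ is well-defined because $x\notin\Z$; this is implicit in the statement, since the formula for $x''$ uses $(x-\lfloor x\rfloor)^{-1}$, and if $x$ were an integer the word $v'$ would be empty.

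Finally, for the outer two equalities I would simply transport these generating functions along the statistic-preserving bijections already established, applied to $y\in\{x',x''\}$: the bijection $\Psi$ of Theorem~\ref{thm:bijection-order-ideals-to-integersequences} has $\size{I}=\Vert\Psi(I)\Vert_1$, so $\sum_{I\in\Jcal(y)}q^{\size{I}}=\sum_{b\in\Bcal(y)}q^{\Vert b\Vert_1}$; and the bijection $\Phi$ of Theorem~\ref{theo:bij}, read with the word $w=\theta(W(y))$ so that $\theta(w)=W(y)$ and $\Phi\colon\Mcal(y)\to\Jcal(y)$, has $\area(m\Delta\bfrak_y)=\size{\Phi(m)}$ for $\bfrak_y$ the basic matching of $\Gcal(y)$, so $\sum_{I\in\Jcal(y)}q^{\size{I}}=\sum_{m\in\Mcal(y)}q^{\area(m\Delta\bfrak_y)}$. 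Substituting $y=x'$ in each numerator and $y=x''$ in each denominator gives~\eqref{eq:usual-combinatorial-interpretations}. The main thing to be careful about is the word bookkeeping identifying $v'$ with $W(x'')$, together with the degenerate cases $a_1=1$ and $x\in\Z$; everything else is a direct assembly of earlier results.
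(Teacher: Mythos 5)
Your proof is correct, but it takes a noticeably different route from the paper's. The paper applies Corollary~\ref{cor:three-combinatorial-interpretation-of-q-rational} to $\xx$ itself and then converts $q^{-1}\sum_{I\in\Jcal^\bullet(\xx)}q^{\size{I}}$ and $\sum_{I\in\Jcal^\circ(\xx)}q^{\size{I}}$ into $\sum_{I\in\Jcal(\xx')}q^{\size{I}}$ and $\sum_{I\in\Jcal(\xx'')}q^{\size{I}}$ by a purely matrix-level identity, namely $q^{-1}\left(\begin{smallmatrix}1&0\end{smallmatrix}\right)\left(\begin{smallmatrix}1&0\\0&q\end{smallmatrix}\right)^{-1}R_q=\left(\begin{smallmatrix}1&1\end{smallmatrix}\right)\left(\begin{smallmatrix}1&0\\0&q\end{smallmatrix}\right)^{-1}=\left(\begin{smallmatrix}0&1\end{smallmatrix}\right)\left(\begin{smallmatrix}1&0\\0&q\end{smallmatrix}\right)^{-1}R_q^{a_0}L_q$, applied to the products from Theorem~\ref{thm:nice-formula-for-order-ideals}. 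You instead apply Corollary~\ref{cor:three-combinatorial-interpretation-of-q-rational} to $\xx'$ and obtain the numerator from the recursion $[\xx]_q=q[\xx-1]_q+1$ (equation~\eqref{eq:identity-q-rationals}, which the paper records but does not use here), and you handle the denominator with the word-level recursions $Y(\1 w)=Y(w)$, $Y(\0 w)=X(w)+Y(w)$ of Proposition~\ref{prop:statistics-size-satisfies-XYhypothesis}. The two denominator computations are really the same algebra in different clothing (absorbing $R_q^{a_0}L_q$ versus peeling $\1^{a_0-1}\0$ off $W(\xx')$, equivalent via Proposition~\ref{prop:equivalent-conditions-Fq}), but your numerator step is genuinely different: it trades the matrix identity for the $q$-deformation recursion, which makes the combinatorial meaning of ``$+1$'' transparent at the cost of importing a cited identity. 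Your word bookkeeping $v'=W(\xx'')$ is right, including the reduction $[0;0,a_2,\dots]=[a_2;a_3,\dots]$ when $a_1=1$, and you correctly flag the only genuine caveat: when $\xx\in\Z$ the word $W(\xx')$ contains no $\0$, the decomposition $W(\xx')=\1^{a_0-1}\0 v'$ fails, and $\xx''$ itself is undefined --- a degeneracy already present in the statement (the paper's own proof shares it silently). One small presentational nit: the intermediate chain $Y(W(\xx'))=Y(\0 v')$ tacitly uses $Y(\1w)=Y(w)$ exactly $a_0-1$ times, which is worth saying explicitly since $a_0-1$ may be $0$.
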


\begin{proof}
    Let $\xx>1$ be a rational number
    whose even-length continued fraction expansion
    is $a=[a_0;a_1,\dots,a_{2\ell-1}]$.
    Since $\xx>1$, we have $a_0>0$.
    Below, we use the following identity
    \begin{equation}\label{eq:prefix-identity}
        q^{-1}
        \left(\begin{smallmatrix} 1 & 0 \end{smallmatrix}\right)
        \left(\begin{smallmatrix} 1 & 0 \\ 0 & q \end{smallmatrix}\right)^{-1}
            R_q
            = \left(\begin{smallmatrix} 1 & 1 \end{smallmatrix}\right)
              \left(\begin{smallmatrix} 1 & 0 \\ 0 & q \end{smallmatrix}\right)^{-1}
            =
        \left(\begin{smallmatrix} 0 & 1 \end{smallmatrix}\right)
        \left(\begin{smallmatrix} 1 & 0 \\ 0 & q \end{smallmatrix}\right)^{-1}
        R_q^{a_0}L_q.
    \end{equation}
    For the numerator of $[\xx]_q$
    expressed in Corollary~\ref{cor:three-combinatorial-interpretation-of-q-rational},
    we can rewrite it as follows
    using Theorem~\ref{thm:nice-formula-for-order-ideals} and \eqref{eq:prefix-identity}:
    \begingroup
    \allowdisplaybreaks
    \begin{align*}
        q^{-1}\sum_{I\in \Jcal^\bullet(\xx)} q^{\size{I}}
        &=
        q^{-1}
        \left(\begin{smallmatrix} 1 & 0 \end{smallmatrix}\right)
        \left(\begin{smallmatrix} 1 & 0 \\ 0 & q \end{smallmatrix}\right)^{-1}
            R_q\cdot
        R_q^{a_0-1}L_q^{a_1}\cdots R_q^{a_{2\ell-2}}L_q^{a_{2\ell-1}}
        \left(\begin{smallmatrix} 1 \\ 0 \end{smallmatrix}\right)\\
        &=
        \left(\begin{smallmatrix} 1 & 1 \end{smallmatrix}\right)
        \left(\begin{smallmatrix} 1 & 0 \\ 0 & q \end{smallmatrix}\right)^{-1}
            \cdot
        R_q^{a_0-1}L_q^{a_1}\cdots R_q^{a_{2\ell-2}}L_q^{a_{2\ell-1}}
        \left(\begin{smallmatrix} 1 \\ 0 \end{smallmatrix}\right)\\
        &=
          \sum_{I\in \Jcal^\bullet[a_0-1;a_1,\dots,a_{2\ell-1}]} q^{\size{I}}
        + \sum_{I\in \Jcal^\circ[a_0-1;a_1,\dots,a_{2\ell-1}]} q^{\size{I}}\\
        &=
        \sum_{I\in \Jcal[a_0-1;a_1,\dots,a_{2\ell-1}]} q^{\size{I}}
        = \sum_{I\in \Jcal(\xx')} q^{\size{I}}.
    \end{align*}
    \endgroup
    For the denominator of $[\xx]_q$
    expressed in Corollary~\ref{cor:three-combinatorial-interpretation-of-q-rational},
    we can rewrite it as follows
    using Theorem~\ref{thm:nice-formula-for-order-ideals} and \eqref{eq:prefix-identity}:
    \begingroup
    \allowdisplaybreaks
    \begin{align*}
        \sum_{I\in \Jcal^\circ  (\xx)} q^{\size{I}}
        &=
        \left(\begin{smallmatrix} 0 & 1 \end{smallmatrix}\right)
        \left(\begin{smallmatrix} 1 & 0 \\ 0 & q \end{smallmatrix}\right)^{-1}
        R_q^{a_0}L_q\cdot
        L_q^{a_1-1}\cdots R_q^{a_{2\ell-2}}L_q^{a_{2\ell-1}}
        \left(\begin{smallmatrix} 1 \\ 0 \end{smallmatrix}\right)\\
        &=
        \left(\begin{smallmatrix} 1 & 1 \end{smallmatrix}\right)
        \left(\begin{smallmatrix} 1 & 0 \\ 0 & q \end{smallmatrix}\right)^{-1}
            \cdot
        L_q^{a_1-1}\cdots R_q^{a_{2\ell-2}}L_q^{a_{2\ell-1}}
        \left(\begin{smallmatrix} 1 \\ 0 \end{smallmatrix}\right)\\
        &=
          \sum_{I\in \Jcal^\bullet[0;a_1-1,\dots,a_{2\ell-1}]} q^{\size{I}}
        + \sum_{I\in \Jcal^\circ  [0;a_1-1,\dots,a_{2\ell-1}]} q^{\size{I}}\\
        &=
        \sum_{I\in \Jcal[0;a_1-1,\dots,a_{2\ell-1}]} q^{\size{I}}
        = \sum_{I\in \Jcal(\xx'')} q^{\size{I}}.
    \end{align*}
    \endgroup
    Notice that if $a_1=1$, then $\xx''=[a_2;a_3,\dots,a_{2\ell-1}]$.
    The other equalities are deduced from the order-preserving bijections
    proved in Theorem~\ref{thm:bijection-order-ideals-to-integersequences}
    and Theorem~\ref{theo:bij}.
\end{proof}

One may observe that the fence posets $\Fcal(\xx')$ and $\Fcal(\xx'')$ are
the two oriented path graphs used to define the numerator and denominator
formulas for $[\xx]_q$ in Theorem~4 of  \cite{MR4073883}.
Here is a remark why \eqref{eq:usual-combinatorial-interpretations}
is a $q$-analog of Theorem 3.4 from \cite{MR3778183}.

\begin{remark}\label{rem:relation-to-Thm-3.4-Schiffler}
    When $q=1$, the perfect matchings part of Equation
    \eqref{eq:usual-combinatorial-interpretations} becomes
    \[
    \xx
    = \frac{\#\Mcal(\xx')}
           {\#\Mcal(\xx'')}
    = \frac{\#\Mcal(\xx')}
           {\#\Mcal((\xx'')^{-1})}
    = \frac{\#\Mcal([a_0-1;a_1,\dots,a_{2\ell-1}])}
           {\#\Mcal([a_1-1,a_2,\dots,a_{2\ell-1}])}
    \]
    where we used the fact that the number of perfect matchings
    of the snake graph $\Gcal(\xx'')$
    is equal to the number of perfect matchings of
    its mirror image snake graph $\Gcal((\xx'')^{-1})$;
    see Lemma~\ref{lem:mirror-image-snake-graph-inverse-rational-number}.
    Thus, we recover Theorem 3.4 from \cite{MR3778183}.
    However, replacing $\xx''$ by its inverse does not work
    in general when $q\neq 1$.
    Thus, \eqref{eq:usual-combinatorial-interpretations}
    is a legitimate $q$-analog of Theorem 3.4 from \cite{MR3778183}.
\end{remark}

It remains open whether Theorem 1.3 from \cite{musiker_higher_2023} can also be
obtained from the statistics of the $m$-dimer cover over a single snake graph.

\section{Markoff numbers and their \texorpdfstring{$q$}{q}-analogs}
\label{sec:Markoff}

A Markoff triple is a positive solution of the Diophantine equation
$x^2+y^2+z^2=3xyz$ \cite{MR1510073,M1879}.
Markoff triples can be defined recursively as follows:
$(1,1,1)$, $(1,2,1)$ and $(1,5,2)$ are Markoff triples and if
$(x,y,z)$ is a Markoff triple with $y\geq x$ and $y\geq z$, then
$(x,3xy-z,y)$ and $(y,3yz-x,z)$ are Markoff triples.
A list of small Markoff numbers (elements of a Markoff triple) is
\[
    1,2,5,13,29,34,89,169,194,233,433,610,985,1325,1597,2897,4181,
    \ldots
\]
referenced as sequence \href{https://oeis.org/A002559}{A002559} in OEIS \cite{OEISA005132}.

Christoffel words are words over the alphabet $\{\0,\1\}$ that can be defined
recursively as follows: $\0$, $\1$ and $\0\1$ are Christoffel words and if
$u,v,uv\in\{\0,\1\}^*$ are Christoffel words then $uuv$ and $uvv$ are Christoffel
words \cite{MR2464862}. The shortest Christoffel words are:
\[
    \0,\1,
    \0\1,
    \0\0\1,\0\1\1,
    \0\0\0\1,\0\0\1\0\1,\0\1\0\1\1,\0\1\1\1,
    \0\0\0\0\1,\0\0\0\1\0\0\1,\0\0\1\0\0\1\0\1,\0\0\1\0\1\0\1,
    \ldots
\]
Note that these are usually named \emph{lower} Christoffel words.
A Christoffel word other than the letters $\0$ or $\1$ is called \emph{proper}.

It is known that each Markoff number can be expressed in terms of a Christoffel
word.
More precisely,
let $\mu$ be the monoid homomorphism $\{\0,\1\}^* \rightarrow \GL_2(\mathbb{Z})$ defined by
\[
    \mu(\0) = \begin{pmatrix} 2 & 1 \\ 1 & 1 \end{pmatrix}
\quad
\text{ and }
\quad
    \mu(\1) = \begin{pmatrix} 5 & 2 \\ 2 & 1 \end{pmatrix}.
\]
Each Markoff number is equal to $\mu(w)_{12}$ for some Christoffel word $w$
\cite{MR2534916},
where
$M_{12}$ denotes
the element above the diagonal in a matrix
$M=\left(\begin{smallmatrix}M_{11}&M_{12}\\M_{21}&M_{22}\end{smallmatrix}\right)\in\GL_2(\mathbb{Z})$.

For example, the Markoff number 194 is associated with the Christoffel word $\0\0\1\0\1$
as it is the entry at position $(1,2)$ in the matrix
\begin{align*}
\mu(\0\0\1\0\1)
&=
\begin{pmatrix} 2 & 1 \\ 1 & 1 \end{pmatrix}
\begin{pmatrix} 2 & 1 \\ 1 & 1 \end{pmatrix}
\begin{pmatrix} 5 & 2 \\ 2 & 1 \end{pmatrix}
\begin{pmatrix} 2 & 1 \\ 1 & 1 \end{pmatrix}
\begin{pmatrix} 5 & 2 \\ 2 & 1 \end{pmatrix}
=
\begin{pmatrix} 463 & 194 \\ 284 & 119 \end{pmatrix}.
\end{align*}
Whether the map $w\mapsto\mu(w)_{12}$ provides a bijection between Christoffel
words and Markoff numbers is a question (stated differently in \cite{F1913})
that has remained open for more than 100 years \cite{MR3098784}.
The conjecture, also known as the \emph{uniqueness conjecture}, can be
expressed in terms of the injectivity of the map $w\mapsto\mu(w)_{12}$
over the set of Christoffel words \cite[\S 3.3]{MR3887697}.

A $q$-analog of the Markoff numbers was proposed in \cite{kogiso_q-deformations_2020}
which can be expressed using the following morphism of monoids
$\mu_q:\{\0,\1\}^*\to\GL_2(\Z[q^{\pm 1}])$ \cite{MR4265544}:
\begin{align*}
    \mu_q(\0)
    &=\nu_q(\1\0)
    =R_qL_q
    =
\begin{pmatrix}
    q + q^{2} & 1 \\
    q & 1
\end{pmatrix},\\
    \mu_q(\1)
    &=\nu_q(\1\1\0\0)
    =R_qR_qL_qL_q
    =
\begin{pmatrix}
    q + 2q^2+q^3+q^4 & 1 + q \\
    q + q^{2} & 1
\end{pmatrix}.
\end{align*}
It was proved that
the map $w\mapsto\mu_q(w)_{12}$ is injective
over the language of a balanced sequence \cite{MR4405998}
and over the set of Christoffel words \cite{MR4686132}.
Note that different $q$-analogs of Markoff numbers were proposed in
\cite{evans_q-deformed_2025} based on other pairs of Cohn matrices.

Consider the following morphism
    \[
    \begin{array}{rccl}
        \gamma:&\{\0,\1\}^*&\to& \{\0,\1\}^*\\
        &\0&\mapsto &\0\0\\
        &\1&\mapsto &\0\1\1\0
    \end{array}.
    \]
The following result provides a
combinatorial interpretation of the $q$-analog of Markoff numbers
based on the map $\mu_q$ in terms of the area statistics of the perfect
matching of a snake graph. Its proof is short and is based on the matrices
$L_q$ and $R_q$.

\begin{theorem}\label{thm:we-recover-the-q-analog-of-Markoff-number}
    The $q$-analog of the Markoff number associated with a proper Christoffel
    word $\0 m\1\in\{\0,\1\}^*$ is
    the area polynomial of the set of perfect matchings
    of the snake graph $G(\0\gamma(m)\0)$.
    More precisely,
    \[
        \mu_q(\0m\1)_{12}
        =
        \sum_{m\in M(\0\gamma(m)\0)} q^{\area(m\Delta\bfrak)}.
    \]
\end{theorem}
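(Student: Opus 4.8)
The plan is to rewrite both sides of the asserted identity as one and the same bilinear expression in the matrix $\nu_q(\kappa(m))$, where $\kappa\colon\{\0,\1\}^*\to\{\0,\1\}^*$ is the morphism of monoids determined by $\kappa(\0)=\1\0$ and $\kappa(\1)=\1\1\0\0$. The first thing to record is that $\mu_q=\nu_q\circ\kappa$: since $\nu_q$ is a morphism with $\nu_q(\0)=L_q$ and $\nu_q(\1)=R_q$, the defining identities $\mu_q(\0)=R_qL_q$ and $\mu_q(\1)=R_qR_qL_qL_q$ say exactly that $\mu_q(\0)=\nu_q(\kappa(\0))$ and $\mu_q(\1)=\nu_q(\kappa(\1))$, and both $\mu_q$ and $\nu_q\circ\kappa$ are morphisms.

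The combinatorial core of the proof, and the step I expect to require the most care, is the word identity $\theta(\0\gamma(m)\0)=\0\,\kappa(m)\,\1$ for every $m\in\{\0,\1\}^*$. To establish it I would argue as follows. The word $\gamma(m)$ is a concatenation of the blocks $\gamma(\0)=\0\0$ and $\gamma(\1)=\0\1\1\0$, both of even length, so $\gamma(m)$---and hence $w:=\0\gamma(m)\0$, of length $N:=|\gamma(m)|+2$---has even length. By definition $\theta$ flips the letters of $w$ at an even distance from the right end, i.e.\ those in positions $i$ with $N-i$ even; as $N$ is even these are exactly the letters in even positions. Thus $\theta$ fixes the leading $\0$ (position $1$), flips the trailing $\0$ (position $N$) to $\1$, and acts on the middle factor $\gamma(m)$ (positions $2,\dots,N-1$) by flipping the letters of odd internal index. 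Since every block of $\gamma(m)$ has even length, each block begins at an odd internal index, so this flipping is performed block by block: $\0\0\mapsto\1\0=\kappa(\0)$ and $\0\1\1\0\mapsto\1\1\0\0=\kappa(\1)$; hence the middle factor becomes $\kappa(m)$ and $\theta(w)=\0\,\kappa(m)\,\1$.

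It then remains to compute. For the right-hand side, I add the two coordinates of the vector equation in Proposition~\ref{prop:statistics-over-snake-graph-Gw} (legitimate because $M(\0\gamma(m)\0)=M^\carreplein(\0\gamma(m)\0)\cup M^\carrevide(\0\gamma(m)\0)$ is a disjoint union), substitute $\theta(\0\gamma(m)\0)=\0\,\kappa(m)\,\1$ and $\nu_q(\0\,\kappa(m)\,\1)=L_q\,\nu_q(\kappa(m))\,R_q$, and simplify the scalar row and column factors $\left(\begin{smallmatrix}1&1\end{smallmatrix}\right)\left(\begin{smallmatrix}1&0\\0&q\end{smallmatrix}\right)^{-1}L_q=\left(\begin{smallmatrix}q+1&q^{-1}\end{smallmatrix}\right)$ and $R_q\left(\begin{smallmatrix}q\\q\end{smallmatrix}\right)=q\left(\begin{smallmatrix}q+1\\1\end{smallmatrix}\right)$, which yields $\sum_{m'\in M(\0\gamma(m)\0)}q^{\area(m'\Delta\bfrak)}=\left(\begin{smallmatrix}q^2+q&1\end{smallmatrix}\right)\nu_q(\kappa(m))\left(\begin{smallmatrix}q+1\\1\end{smallmatrix}\right)$. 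On the other side, using $\mu_q=\nu_q\circ\kappa$ one has $\mu_q(\0m\1)_{12}=\left(\begin{smallmatrix}1&0\end{smallmatrix}\right)R_qL_q\cdot\nu_q(\kappa(m))\cdot R_qR_qL_qL_q\left(\begin{smallmatrix}0\\1\end{smallmatrix}\right)$, and since $\left(\begin{smallmatrix}1&0\end{smallmatrix}\right)R_qL_q=\left(\begin{smallmatrix}q^2+q&1\end{smallmatrix}\right)$ and $R_qR_qL_qL_q\left(\begin{smallmatrix}0\\1\end{smallmatrix}\right)=\left(\begin{smallmatrix}q+1\\1\end{smallmatrix}\right)$, this equals the same bilinear form; comparing the two expressions proves the theorem. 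As a side remark, specializing to $q=1$ recovers in the same stroke the classical identity between Markoff numbers and numbers of perfect matchings of snake graphs, the Christoffel hypothesis playing no role in the argument.
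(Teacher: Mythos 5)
Your proof is correct and follows essentially the same route as the paper's: the morphism you call $\kappa$ is the paper's $\gamma'$, and both arguments rest on $\mu_q=\nu_q\circ\gamma'$, the word identity $\theta(\0\gamma(m)\0)=\0\gamma'(m)\1$, and summing the two components of Proposition~\ref{prop:statistics-over-snake-graph-Gw} before a routine matrix computation. The only difference is cosmetic: you evaluate both sides as the same bilinear form in $\nu_q(\kappa(m))$ rather than absorbing the boundary factors into $\nu_q(\gamma'(\0m\1))$, and you supply a careful positional proof of the word identity that the paper merely asserts.
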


\begin{proof}
    Note that the morphism
    \[
    \begin{array}{rccl}
        \gamma':&\{\0,\1\}^*&\to& \{\0,\1\}^*\\
        &\0&\mapsto &\1\0\\
        &\1&\mapsto &\1\1\0\0
    \end{array}
    \]
    satisfies $\mu_q=\nu_q\circ \gamma'$ and
    \[
        \theta(\0\gamma(w)\0)=\0\gamma'(w)\1
    \]
    for every even-length word $w\in\{\0,\1\}^*$.
    Using Proposition~\ref{prop:statistics-over-snake-graph-Gw},
    we have
    \begingroup
    \allowdisplaybreaks
    \begin{align*}
        \sum_{m\in M(\0\gamma(m)\0)} q^{\area(m\Delta\bfrak)}
        &=\sum_{m\in M^\carreplein(\0\gamma(m)\0)} q^{\area(m\Delta\bfrak)}
         +\sum_{m\in M^\carrevide (\0\gamma(m)\0)} q^{\area(m\Delta\bfrak)}\\
        &=
        \left(\begin{array}{cc} 1 & 1 \end{array}\right)
        \left(\begin{array}{cc}
            \sum_{m\in M^\carreplein(\0\gamma(m)\0)} q^{\area(m\Delta\bfrak)}\\
            \sum_{m\in M^\carrevide (\0\gamma(m)\0)} q^{\area(m\Delta\bfrak)}
        \end{array}\right)
            \\
        &=
        \left(\begin{array}{cc} 1 & 1 \end{array}\right)
        \left(\begin{smallmatrix} 1 & 0 \\ 0 & q \end{smallmatrix}\right)^{-1}
            \nu_q(\theta(\0\gamma(m)\0))
        \left(\begin{smallmatrix} q \\ q \end{smallmatrix}\right)\\
        &=
        \left(\begin{array}{cc} 1 & 1 \end{array}\right)
        \left(\begin{smallmatrix} 1 & 0 \\ 0 & q \end{smallmatrix}\right)^{-1}
            \nu_q(\0\gamma'(m)\1)
        \left(\begin{smallmatrix} q \\ q \end{smallmatrix}\right)\\
        &=
        \left(\begin{array}{cc} q & 1 \end{array}\right)
            \cdot \nu_q(\0\gamma'(m)\1)\cdot
        \left(\begin{smallmatrix} 1 \\ 1 \end{smallmatrix}\right)\\
        &=
        \left(\begin{array}{cc} 1 & 0 \end{array}\right)
            R_q
            \cdot \nu_q(\0\gamma'(m)\1)\cdot
            R_qL_qL_q
        \left(\begin{smallmatrix} 0 \\ 1 \end{smallmatrix}\right)\\
        &=
        \left(\begin{array}{cc} 1 & 0 \end{array}\right)
            \cdot \nu_q(\1\0\gamma'(m)\1\1\0\0)\cdot
        \left(\begin{smallmatrix} 0 \\ 1 \end{smallmatrix}\right)\\
        &=
        \left(\begin{array}{cc} 1 & 0 \end{array}\right)
            \cdot \nu_q(\gamma'(\0m\1))\cdot
        \left(\begin{smallmatrix} 0 \\ 1 \end{smallmatrix}\right)\\
        &=
        \left(\begin{array}{cc} 1 & 0 \end{array}\right)
            \cdot \mu_q(\0m\1)\cdot
        \left(\begin{smallmatrix} 0 \\ 1 \end{smallmatrix}\right)\\
        &= \mu_q(\0m\1)_{12}.
    \end{align*}
    \endgroup
\end{proof}

As a consequence, when evaluating at $q=1$, we recover
Theorem 4 from \cite{zbMATH07181526},
see also Theorem 7.12 from \cite{MR3098784}
and Theorem C from \cite{MR4058266}.

\begin{corollary}\label{cor:we-recover-the-Markoff-number}
    The Markoff number associated with a proper Christoffel
    word $\0 m\1\in\{\0,\1\}^*$ is
    equal to the number of perfect matchings
    of the snake graph $G(\0\gamma(m)\0)$, that is,
    \[
        \mu(\0m\1)_{12}
        =
        \# M(\0\gamma(m)\0).
    \]
\end{corollary}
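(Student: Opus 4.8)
The plan is simply to specialize Theorem~\ref{thm:we-recover-the-q-analog-of-Markoff-number} at $q=1$. That theorem asserts, for a proper Christoffel word $\0m\1$,
\[
    \mu_q(\0m\1)_{12} = \sum_{m\in M(\0\gamma(m)\0)} q^{\area(m\Delta\bfrak)},
\]
so it suffices to check that, after setting $q=1$, the right-hand side becomes $\#M(\0\gamma(m)\0)$ and the left-hand side becomes $\mu(\0m\1)_{12}$. The right-hand side is immediate: evaluating at $q=1$ replaces every monomial $q^{\area(m\Delta\bfrak)}$ by $1$, so the sum collapses to the number of its terms, namely the cardinality $\#M(\0\gamma(m)\0)$.

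For the left-hand side, I would verify that the homomorphism $\mu_q\colon\{\0,\1\}^*\to\GL_2(\Z[q^{\pm1}])$ specializes at $q=1$ to the classical Cohn homomorphism $\mu\colon\{\0,\1\}^*\to\GL_2(\Z)$. Since both are monoid homomorphisms, it is enough to compare them on the two generators. Using $L_q|_{q=1}=L$ and $R_q|_{q=1}=R$, one gets $\mu_q(\0)|_{q=1}=(R_qL_q)|_{q=1}=RL=\left(\begin{smallmatrix}2&1\\1&1\end{smallmatrix}\right)=\mu(\0)$ and $\mu_q(\1)|_{q=1}=(R_q R_q L_q L_q)|_{q=1}=R^2L^2=\left(\begin{smallmatrix}5&2\\2&1\end{smallmatrix}\right)=\mu(\1)$. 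Hence $\mu_q(w)|_{q=1}=\mu(w)$ for every $w\in\{\0,\1\}^*$, and in particular the $(1,2)$-entry of $\mu_q(\0m\1)$ evaluates at $q=1$ to $\mu(\0m\1)_{12}$. Combining the two specializations with the displayed identity yields $\mu(\0m\1)_{12}=\#M(\0\gamma(m)\0)$, which is the claim.

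There is essentially no obstacle here; the corollary is a pure $q\to1$ degeneration of Theorem~\ref{thm:we-recover-the-q-analog-of-Markoff-number}, and the only bookkeeping point is the one-line matrix check that the $q$-deformed Cohn matrices $\mu_q(\0),\mu_q(\1)$ reduce to the classical Cohn matrices $\mu(\0),\mu(\1)$ at $q=1$. I would also remark, as already indicated in the text, that this recovers Theorem~4 of \cite{zbMATH07181526}, and likewise Theorem~7.12 of \cite{MR3098784} and Theorem~C of \cite{MR4058266}.
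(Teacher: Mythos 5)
Your proposal is correct and follows exactly the paper's route: the corollary is obtained by evaluating Theorem~\ref{thm:we-recover-the-q-analog-of-Markoff-number} at $q=1$. The extra check that $\mu_q(\0)|_{q=1}=\mu(\0)$ and $\mu_q(\1)|_{q=1}=\mu(\1)$ is a harmless (and accurate) bookkeeping step that the paper leaves implicit.
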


\begin{proof}
    It follows from Theorem~\ref{thm:we-recover-the-q-analog-of-Markoff-number},
    after evaluating at $q=1$.
\end{proof}

Our proof is different as it is based on
Proposition~\ref{prop:statistics-over-snake-graph-Gw}
which is more general as it works for all snake graphs
using a product over the elementary matrices $L_q$ and $R_q$.
The proof proposed in \cite{MR3098784}
uses a decomposition of certain snake graphs (those obtained from Christoffel words)
into blocks of size 2 or 4
each block being associated with one of the Cohn matrices
$\left(\begin{smallmatrix} 1 & 1 \\ 1 & 2\end{smallmatrix}\right)$
    and
$\left(\begin{smallmatrix} 3 & 2 \\ 4 & 3\end{smallmatrix}\right)$.

The bijections provided in
Theorem~\ref{thm:bijection-order-ideals-to-integersequences} and
Theorem~\ref{theo:bij} mean
that
Theorem~\ref{thm:we-recover-the-q-analog-of-Markoff-number}
and Corollary~\ref{cor:we-recover-the-Markoff-number}
also admit equivalent interpretations
of Markoff numbers and their $q$-analogs
in terms of admissible sequences and order ideals of fence posets.

\begin{example}
    This reproduces Example 7.11 in \cite{MR3098784}.
Consider the Christoffel word $\0\0\1\0\1=\0 m\1$
with $m=\0\1\0$.
The associated Markoff number is
    \[
        \mu(\0m\1)_{12}
        =\mu(\0\0\1\0\1)_{12}
        =433
        = \# M(\0\gamma(m)\0)
        = \# M(\0\0\1\1\0\0\0\0\1\1\0\0)
    \]
    which is equal to the number of perfect matchings
    of the snake graph $G(\0\0\1\1\0\0\0\0\1\1\0\0)=\Gcal(179/254)$
    shown in Figure~\ref{fig:aigner-example711}.
\end{example}

\begin{figure}[h]
\includegraphics{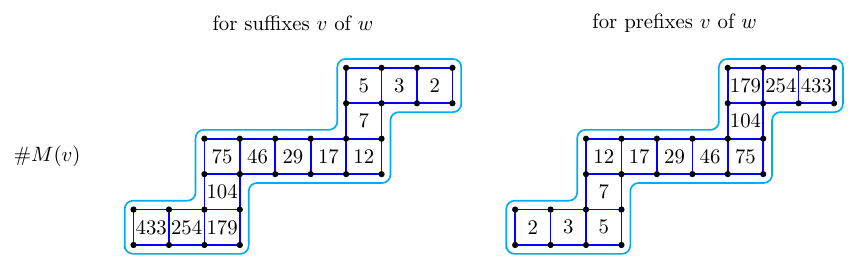}
    \caption{With $a=179/254 = [0; 1, 2, 2, 1, 1, 2, 2, 2]$,
    we have  $W(a)=011001011001$ and
$w=\theta\circ  W(a)=001100001100$.
The number of perfect matchings in the snake graph $G(v)$
    for all prefixes and suffixes of $w$ are indicated in the boxes.
    This reproduces Example 7.11 in Aigner's book \cite{MR3098784}
    for the Markov number 433.
    }
    \label{fig:aigner-example711}
\end{figure}

\section{Convexity of the set of admissible sequences}
\label{sec:convexity}

In this section, we prove additional results about the set
of admissible sequences for the even or odd-length continued
fraction expansion of a positive rational number,
namely it is a convex subset of $\Z^k$
whose convex hull is a polytope \cite{MR1311028}.

\begin{definition}
    Let $k\geq 1$ be an integer.
    For every even or odd-length continued fraction expansion
    $a=[a_0; a_1, \dots, a_{k-1}]$ of a positive rational number,
    we define the polytope
    \[
        \Pcal(a):=\conv(\Bcal(a))\subset\R^{k}
    \]
    where $\conv(X)$ denotes the convex hull of $X$ for every subset
    $X\subset\R^k$.
Also, for every positive rational number $\xx\in\Q_{>0}$,
let
\[
    \Pcaleven(x)=\Pcal(\CFeven(x))
    \qquad
    \text{ and }
    \qquad
    \Pcalodd(x)=\Pcal(\CFodd(x))
\]
be the two polytopes associated with $x\in\Q_{>0}$.
\end{definition}

Notice that the dimension of the polytope $\Pcal(a)$
is much smaller than the dimension ($=a_0+a_1+\dots+a_{k-1}$)
of the order polytope of the fence poset $\Fcal(a)$
\cite{zbMATH03958127}.

\begin{theorem}\label{thm:convex-hull}
    Let $k\geq 1$ be an integer
    and $a=[a_0; a_1, \dots, a_{k-1}]$ be the even or odd-length continued
    fraction expansion of a positive rational number.
    The set of admissible sequences for $a$
    are the integer points of the polytope $\Pcal(a)$, that is,
    \[
        \Pcal(a)\cap\Z^k = \Bcal(a).
    \]
\end{theorem}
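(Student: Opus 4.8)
The inclusion $\Bcal(a)\subseteq\Pcal(a)\cap\Z^k$ is immediate, since admissible sequences are integer vectors lying in their own convex hull. For the reverse inclusion the plan is to sandwich $\Bcal(a)$ between a convex polytope $Q(a)$ and its own convex hull: I would exhibit a polytope $Q(a)\subseteq\R^k$ with $\Bcal(a)\subseteq Q(a)$ and $Q(a)\cap\Z^k=\Bcal(a)$, and then conclude $\Pcal(a)=\conv(\Bcal(a))\subseteq Q(a)$ by convexity, whence $\Pcal(a)\cap\Z^k\subseteq Q(a)\cap\Z^k=\Bcal(a)$.

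First I would define $Q(a)$ as the set of $(b_i)_{0\le i<k}\in\R^k$ satisfying $0\le b_i\le a_i$ for every $i$ with $0\le i<k$ together with, for every $i$ with $0<i<k$, the single linear inequality
\[
    b_{i-1}+a_{i-1}(a_i-b_i)\ \ge\ a_{i-1}\quad\text{if }i\text{ is odd},
    \qquad
    b_{i-1}\ \le\ a_{i-1}\,b_i\quad\text{if }i\text{ is even}.
\]
As an intersection of finitely many half-spaces contained in the box $\prod_i[0,a_i]$, the set $Q(a)$ is a (bounded) convex polytope.

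The key step is the identity $Q(a)\cap\Z^k=\Bcal(a)$. For an integer vector $b$ the box constraints are precisely the first admissibility condition, so it suffices to verify, for each $0<i<k$ and each $b$ lying in the box, that the displayed inequality is equivalent to the corresponding admissibility implication. For odd $i$: if $b_i=a_i$ the inequality reads $b_{i-1}\ge a_{i-1}$, which with $b_{i-1}\le a_{i-1}$ forces $b_{i-1}=a_{i-1}$; conversely, assuming the implication, either $b_i\le a_i-1$, so $a_{i-1}(a_i-b_i)\ge a_{i-1}$ and the inequality holds because $b_{i-1}\ge 0$, or $b_i=a_i$, so $b_{i-1}=a_{i-1}$ and the inequality reads $a_{i-1}\ge a_{i-1}$. (When $i=1$ and $a_0=0$ both sides are vacuous.) For even $i$, where $a_{i-1}\ge1$, one argues symmetrically that $b_{i-1}\le a_{i-1}b_i$ is equivalent for integer box points to "$b_i=0\Rightarrow b_{i-1}=0$". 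This gives $Q(a)\cap\Z^k=\Bcal(a)$, and in particular $\Bcal(a)\subseteq Q(a)$.

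Assembling the pieces: $\Pcal(a)=\conv(\Bcal(a))\subseteq Q(a)$ by convexity of $Q(a)$, hence $\Pcal(a)\cap\Z^k\subseteq Q(a)\cap\Z^k=\Bcal(a)$, and together with the trivial reverse inclusion we obtain $\Pcal(a)\cap\Z^k=\Bcal(a)$. The only real obstacle is guessing the correct polytope $Q(a)$: the point to notice is that each superficially non-convex admissibility implication, once restricted to the integer points of the box $\prod_i[0,a_i]$, is cut out by a \emph{single} linear inequality — geometrically, in each coordinate plane $(b_{i-1},b_i)$ the convex hull of the allowed lattice points is a quadrilateral with exactly one slanted edge. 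Once this is seen, the remaining verifications are routine.
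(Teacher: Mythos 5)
Your proposal is correct, but it takes a genuinely different route from the paper. The paper argues directly on convex combinations: given an integer point $c=\sum_j\gamma_j b^{(j)}$ of $\Pcal(a)$ with all $\gamma_j>0$, it uses the extremality of the box bounds (if $c_i=a_i$ and each $b_i^{(j)}\le a_i$, then every $b_i^{(j)}=a_i$) to propagate the admissibility implications from the summands to $c$ itself; no auxiliary polytope is needed. You instead exhibit an explicit half-space description $Q(a)$ whose integer points in the box are exactly $\Bcal(a)$, and then sandwich $\conv(\Bcal(a))\subseteq Q(a)$. I checked your key equivalences: for odd $i$ the inequality $b_{i-1}+a_{i-1}(a_i-b_i)\ge a_{i-1}$ restricted to integer box points is indeed equivalent to ``$b_i=a_i\Rightarrow b_{i-1}=a_{i-1}$'' (the case $b_i\le a_i-1$ makes the slack term at least $a_{i-1}$, and the degenerate case $i=1$, $a_0=0$ is vacuous on both sides), and for even $i$ the inequality $b_{i-1}\le a_{i-1}b_i$ with $a_{i-1}\ge1$ is equivalent to ``$b_i=0\Rightarrow b_{i-1}=0$''. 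The paper's argument is shorter and requires no guesswork; yours buys something extra, namely an explicit system of linear inequalities cutting out a polytope with the same lattice points as $\Pcal(a)$, which is a concrete step toward the facet description that the paper leaves open at the end of Section~\ref{sec:convexity}. (Note only that you have not shown $Q(a)=\Pcal(a)$ — that would require $Q(a)$ to have integer vertices — but your argument does not need this.)
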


\begin{proof}
    ($\supseteq$)
    Admissible sequences are sequences of nonnegative integers, so we have
    $\Bcal(a)\subset\Z^k$.
    From the definition of convex hull, we have
    $\Bcal(a)\subset\conv(\Bcal(a))=\Pcal(a)$.
    The conclusion follows.

    ($\subseteq$)
    Let $(c_i)_{0\leq i\leq k-1}\in\Pcal(a)\cap\Z^k$.
    There exist $m\geq1$ admissible sequences
    $(b_i^{(1)})_{0\leq i\leq k-1},\dots,
     (b_i^{(m)})_{0\leq i\leq k-1}\in\Bcal(a)$
    such that
    $(c_i)_{0\leq i\leq k-1}$ is in their convex hull.
    In other words,
    there exists $(\gamma_1,\dots,\gamma_m)\in[0,1]^m$,
    with $\gamma_1+\dots+\gamma_m=1$,
    such that
    \[
        c_i= \sum_{j=1}^m \gamma_j b_i^{(j)}
    \]
    for every integer $i$ with $0\leq i\leq k-1$.
    We may assume that the number $m$ of admissible sequences allowing to
    express $(c_i)_i$ is minimal. In other words, we can assume that $\gamma_j\neq
    0$ for every $j$ with $1\leq j\leq m$.
    If there exists $j$ such that $\gamma_j=1$, then
    $(c_i)_{0\leq i\leq k-1} = (b_i^{(j)})_{0\leq i\leq k-1}$ is an admissible
    sequence.
    Suppose that $0<\gamma_j<1$ for every $j$ with $1\leq j\leq m$.
    Since every sequence $(b_i^{(j)})_{0\leq i\leq k-1}$ is admissible for $a$,
    for every integer $i$, we have
    \[
        0\leq \min_{1\leq j\leq m}(b_i^{(j)}) \leq c_i
         \leq \max_{1\leq j\leq m}(b_i^{(j)}) \leq a_i.
    \]
    Assume $c_i=a_i$ for some odd integer $i>0$. Then
    $c_i=b_i^{(j)}=a_i$ for every $j$ with $1\leq j\leq m$.
    Since these $m$ sequences are admissible,
    we deduce that $b_{i-1}^{(j)}=a_{i-1}$ for every $j$ with $1\leq j\leq m$.
    Thus, $c_{i-1}=a_{i-1}$.
    Assume $c_i=0$ for some even integer $i>0$. Then
    $c_i=b_i^{(j)}=0$ for every $j$ with $1\leq j\leq m$.
    Since these $m$ sequences are admissible,
    we deduce that $b_{i-1}^{(j)}=0$ for every $j$ with $1\leq j\leq m$.
    Thus, $c_{i-1}=0$.
    Therefore, the sequence $(c_i)_{0\leq i\leq k-1}$ is admissible for $a$.
\end{proof}

Convexity also holds for the two subsets
forming the partition
$\Bcal(a)= \Bcal^\bullet(a) \cup \Bcal^\circ(a)$.
In fact, the two subsets are in two complementary half-spaces.
For every $x\in\R$, let
$\delta_{x}=0$ if $x=0$, and
$\delta_{x}=1$ if $x\neq 0$.
For every integer $k\geq 2$ and every continued fraction expansion
$a=[a_0;a_1,\dots,a_{k-1}]$ of a positive real number, consider
the open half-space
\[
    H_a^+ = \{(x_0,x_1,\dots,x_{k-1})\in\R^k\mid
                \delta_{a_0}(1-x_0)+(1-\delta_{a_0})(a_1-x_1)>0\}
        \subset\R^k.
\]

\begin{lemma}\label{lem:half-space-splits-the-polytope}
    We have
    \begin{align*}
        \Bcal^\bullet(a)
        &= \Bcal(a)\cap (\R^k\setminus H_a^+),\\
        \Bcal^\circ(a)
        &=\Bcal(a)\cap H_a^+.
    \end{align*}
\end{lemma}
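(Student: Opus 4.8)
The plan is to argue by cases on whether $a_0=0$, the point being that the linear functional defining $H_a^+$ collapses to a single coordinate in each case, and that the two disjuncts appearing in the definitions of $\Bcal^\bullet(a)$ and $\Bcal^\circ(a)$ each simplify because one of them becomes vacuous. Throughout I would use the basic fact that for an admissible sequence $b\in\Bcal(a)$ the entries $b_0$ and $b_1$ are nonnegative integers with $b_0\leq a_0$ and $b_1\leq a_1$, which is what lets me pass between the strict inequality cutting out $H_a^+$ and the equalities/inequalities occurring in the definitions of $\Bcal^\bullet(a)$ and $\Bcal^\circ(a)$.

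\textbf{Case $a_0\neq 0$.} Here $\delta_{a_0}=1$, so $H_a^+=\{(x_0,\dots,x_{k-1})\in\R^k\mid x_0<1\}$ and $\R^k\setminus H_a^+=\{x_0\geq 1\}$. Since $a_0\neq 0$, the clause ``$b_0=a_0=0$'' never occurs, so the definitions reduce to $\Bcal^\bullet(a)=\{b\in\Bcal(a)\mid b_0>0\}$ and $\Bcal^\circ(a)=\{b\in\Bcal(a)\mid b_0=0\}$. As $b_0$ is a nonnegative integer, $b_0>0\iff b_0\geq 1\iff b\notin H_a^+$ and $b_0=0\iff b_0<1\iff b\in H_a^+$; intersecting with $\Bcal(a)$ gives both identities in this case.

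\textbf{Case $a_0=0$.} Here $\delta_{a_0}=0$, so $H_a^+=\{x_1<a_1\}$ and $\R^k\setminus H_a^+=\{x_1\geq a_1\}$; since $k\geq 2$ and $a$ is the continued fraction expansion of a positive number we have $a_1\geq 1$, so these are genuine half-spaces. Every admissible $b$ satisfies $0\leq b_0\leq a_0=0$, hence $b_0=0$, so the second disjunct (``$0<b_0$'', resp. ``$0=b_0<a_0$'') is impossible and the definitions reduce to $\Bcal^\bullet(a)=\{b\in\Bcal(a)\mid b_1=a_1\}$ and $\Bcal^\circ(a)=\{b\in\Bcal(a)\mid 0\leq b_1<a_1\}$. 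Using that $b_1$ is a nonnegative integer with $b_1\leq a_1$, we get $b_1=a_1\iff b_1\geq a_1\iff b\notin H_a^+$ and $0\leq b_1<a_1\iff b_1<a_1\iff b\in H_a^+$, which yields the two identities.

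Combining the two cases completes the proof. I do not expect a real obstacle here: the content is entirely bookkeeping — identifying which disjunct survives in each case and invoking integrality of the entries of admissible sequences — with no geometry beyond recognizing $H_a^+$ as a coordinate half-space. As a byproduct, the same analysis shows that $\Bcal(a)=\Bcal^\bullet(a)\sqcup\Bcal^\circ(a)$ is genuinely a partition and exhibits the bounding hyperplane of $H_a^+$ as the separating hyperplane between $\Bcal^\bullet(a)$ and $\Bcal^\circ(a)$, matching the claim made in the introduction.
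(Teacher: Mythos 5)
Your proof is correct and follows essentially the same route as the paper's: a case split on $a_0=0$ versus $a_0\neq 0$, observing that in each case one disjunct in the definitions of $\Bcal^\bullet(a)$ and $\Bcal^\circ(a)$ is vacuous and that $H_a^+$ reduces to a coordinate half-space, then using integrality of $b_0$ or $b_1$ to convert between strict and non-strict inequalities. The only difference is that you spell out slightly more of the bookkeeping (e.g., why $b_0=0$ forces the second disjunct to vanish when $a_0=0$), which the paper leaves implicit.
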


\begin{proof}
    If $a_0=0$, we have $\delta_{a_0}=0$ and
    \begin{align*}
        \Bcal^\bullet(a)
        &=
        \{(b_i)_{0\leq i\leq k-1} \in\Bcal(a)
            \mid b_1=a_1
        \}
        =\Bcal(a)\cap (\R^k\setminus H_a^+)
        ,\\
        \Bcal^\circ(a)
        &=
        \{(b_i)_{0\leq i\leq k-1} \in\Bcal(a)
            \mid b_1<a_1
        \}
        =\Bcal(a)\cap H_a^+.
    \end{align*}
    If $a_0\neq0$, we have $\delta_{a_0}=1$ and
    \begin{align*}
        \Bcal^\bullet(a)
        &=
        \{(b_i)_{0\leq i\leq k-1}
        \in\Bcal(a)
            \mid
            b_0>0
        \}
        \\
        &=
        \{(b_i)_{0\leq i\leq k-1}
        \in\Bcal(a)
            \mid
            b_0\geq1
        \}
        =\Bcal(a)\cap (\R^k\setminus H_a^+)
        ,\\
        \Bcal^\circ(a)
        &=
        \{(b_i)_{0\leq i\leq k-1}
        \in\Bcal(a)
            \mid
            b_0=0
        \}
        \\
        &=
        \{(b_i)_{0\leq i\leq k-1}
        \in\Bcal(a)
            \mid
            b_0<1
        \}
        =\Bcal(a)\cap H_a^+.
    \end{align*}
\end{proof}

If $\xx\in\Q_{>0}$,
we deduce
from
Corollary~\ref{cor:three-combinatorial-interpretation-of-q-rational} (when $q=1$)
and Lemma~\ref{lem:half-space-splits-the-polytope},
that the number of integer points
in the polytope $\Pcaleven(x)$ that are inside (resp., outside) of the open half-space $H_a^+$
is equal to the denominator (resp., numerator) of the
rational number $\xx$.

Unimodality of the rank polynomials proved in \cite{MR4499341}
means that the sequence of number of integers points of $\Pcal(a)$ on each affine hyperplane
orthogonal to the vector $(1,1,\dots,1)$ is also unimodal.

Given the continued fraction expansion $a=[a_0; a_1, \dots, a_{k-1}]$
of a positive rational number,
the properties of the polytope $\Pcal(a)$ remain to be explored.
A first step would be to
describe its extremal points and facets.

\section*{Acknowledgements}
This work was partly funded from France's Agence Nationale de la Recherche
(ANR) projects IZES (ANR-22-CE40-0011) and COMBINÉ (ANR-19-CE48-0011).
It was also supported by grants from the
\emph{Symbolic Dynamics and Arithmetic Expansions}
(SymDynAr) Project, co-funded by ANR (ANR-23-CE40-0024)
and FWF (\href{https://dx.doi.org/10.55776/I6750}{I 6750}),
the Austrian Science Fund.

The second author acknowledges Université de Bordeaux's program
``\textit{Mobilité internationale des personnels de recherche}''
partially supporting a one-year stay at CRM-CNRS in Montréal (2025-2026).

We are thankful to Alejandro Morales for helpful discussions about fence posets
and their order ideals during the finalization of this article.

\bibliographystyle{alpha-first-name-initials-with-doi} %

\bibliography{biblio}

\newcommand{\etalchar}[1]{$^{#1}$}
\begin{thebibliography}{EJMGO25}

\bibitem[Aig13]{MR3098784}
M. Aigner.
\newblock {\em Markov's theorem and 100 years of the uniqueness conjecture. {A}
  mathematical journey from irrational numbers to perfect matchings}.
\newblock Springer, Cham, 2013.
\newblock \doi{10.1007/978-3-319-00888-2}.

\bibitem[Arn02]{MR1970391}
P. Arnoux.
\newblock Sturmian sequences.
\newblock In {\em Substitutions in dynamics, arithmetics and combinatorics},
  volume 1794 of {\em Lecture Notes in Math.}, pages 143--198. Springer,
  Berlin, 2002.
\newblock \doi{10.1007/3-540-45714-3\_6}.

\bibitem[AS03]{MR1997038}
J.-P. Allouche and J. Shallit.
\newblock {\em Automatic sequences}.
\newblock Cambridge University Press, Cambridge, 2003.
\newblock Theory, applications, generalizations.
\newblock \doi{10.1017/CBO9780511546563}.

\bibitem[BdL97]{MR1453849}
J. Berstel and A. de~Luca.
\newblock Sturmian words, {L}yndon words and trees.
\newblock {\em Theoret. Comput. Sci.}, 178(1-2):171--203, 1997.
\newblock \doi{10.1016/S0304-3975(96)00101-6}.

\bibitem[Ber01]{berthe_autour_2001}
V. Berthé.
\newblock Autour du système de numération d'{Ostrowski}.
\newblock {\em Bulletin of the Belgian Mathematical Society - Simon Stevin},
  8(2):209--239, 2001.

\bibitem[BLRS09]{MR2464862}
J. Berstel, A. Lauve, C. Reutenauer, and F.~V. Saliola.
\newblock {\em Combinatorics on words. {C}hristoffel words and repetitions in
  words}, volume~27 of {\em CRM Monograph Series}.
\newblock American Mathematical Society, Providence, RI, 2009.

\bibitem[BOSZ24]{burcroff_higher_2024}
A. Burcroff, N. Ovenhouse, R. Schiffler, and S.~W. Zhang.
\newblock Higher $q$-{Continued} {Fractions}.
\newblock August 2024.
\newblock \arxiv{2408.06902}.

\bibitem[Bou16]{bourla_ostrowski_2016}
A. Bourla.
\newblock The {Ostrowski} {Expansions} {Revealed}.
\newblock June 2016.
\newblock \arxiv{1605.07992}.

\bibitem[Cla20]{claussen_expansion_2020}
A. Claussen.
\newblock {\em Expansion {Posets} for {Polygon} {Cluster} {Algebras}}.
\newblock PhD thesis, May 2020.
\newblock \arxiv{2005.02083}.

\bibitem[CS13]{MR3034481}
I. Canakci and R. Schiffler.
\newblock Snake graph calculus and cluster algebras from surfaces.
\newblock {\em J. Algebra}, 382:240--281, 2013.
\newblock \doi{10.1016/j.jalgebra.2013.02.018}.

\bibitem[{\c{C}}S18]{MR3778183}
{\.{I}}. {\c{C}}anak\c{c}i and R. Schiffler.
\newblock Cluster algebras and continued fractions.
\newblock {\em Compos. Math.}, 154(3):565--593, 2018.
\newblock \doi{10.1112/S0010437X17007631}.

\bibitem[{\c{C}}S20]{MR4058266}
{\.{I}}. {\c{C}}anak\c{c}i and R. Schiffler.
\newblock Snake graphs and continued fractions.
\newblock {\em European J. Combin.}, 86:103081, 19, 2020.
\newblock \doi{10.1016/j.ejc.2020.103081}.

\bibitem[{\c{C}}S21]{zbMATH07336893}
{\.I}. {\c{C}}anak{\c{c}}{\i} and S. Schroll.
\newblock Lattice bijections for string modules, snake graphs and the weak
  {Bruhat} order.
\newblock {\em Adv. Appl. Math.}, 126:22, 2021.
\newblock \doi{10.1016/j.aam.2020.102094}.
\newblock Id/No 102094.

\bibitem[CW00]{MR1763062}
N. Calkin and H.~S. Wilf.
\newblock Recounting the rationals.
\newblock {\em Amer. Math. Monthly}, 107(4):360--363, 2000.
\newblock \doi{10.2307/2589182}.

\bibitem[DT89]{MR1020484}
J.-M. Dumont and A. Thomas.
\newblock Syst\`emes de num\'eration et fonctions fractales relatifs aux
  substitutions.
\newblock {\em Theoret. Comput. Sci.}, 65(2):153--169, 1989.
\newblock \doi{10.1016/0304-3975(89)90041-8}.

\bibitem[EFG{\etalchar{+}}12]{epifanio_sturmian_2012}
C. Epifanio, C. Frougny, A. Gabriele, F. Mignosi, and J. Shallit.
\newblock Sturmian graphs and integer representations over numeration systems.
\newblock {\em Discrete Applied Mathematics}, 160(4):536--547, March 2012.
\newblock \doi{10.1016/j.dam.2011.10.029}.

\bibitem[EJMGO25]{evans_q-deformed_2025}
S. Evans, P. Jouteur, S. Morier-Genoud, and V. Ovsienko.
\newblock On $q$-deformed {Markov} numbers. {Cohn} matrices and perfect
  matchings with weighted edges.
\newblock July 2025.
\newblock \arxiv{2507.19080}.

\bibitem[EKLP92]{zbMATH00130434}
N. Elkies, G. Kuperberg, M. Larsen, and J. Propp.
\newblock Alternating-sign matrices and domino tilings. {I}.
\newblock {\em J. Algebr. Comb.}, 1(2):111--132, 1992.
\newblock \doi{10.1023/A:1022420103267}.

\bibitem[Fog02]{MR1970385}
N.~P. Fogg.
\newblock {\em Substitutions in Dynamics, Arithmetics and Combinatorics},
  volume 1794 of {\em Lecture Notes in Mathematics}.
\newblock Springer-Verlag, Berlin, 2002.
\newblock Edited by V. Berth\'{e}, S. Ferenczi, C. Mauduit and A. Siegel.
\newblock \doi{10.1007/b13861}.

\bibitem[Fro13]{F1913}
G. Frobenius.
\newblock \"{U}ber die {M}arkoffschen {Z}ahlen.
\newblock {\em Sitzungsberichte der K\"{o}niglich Preussischen Akademie der
  Wissenschaften zu Berlin}, 26:458--487, 1913.

\bibitem[GKP94]{MR1397498}
R.~L. Graham, D.~E. Knuth, and O. Patashnik.
\newblock {\em Concrete mathematics. {A} foundation for computer science}.
\newblock Addison-Wesley Publishing Company, Reading, MA, second edition, 1994.

\bibitem[HW08]{MR2445243}
G.~H. Hardy and E.~M. Wright.
\newblock {\em An introduction to the theory of numbers}.
\newblock Oxford University Press, Oxford, sixth edition, 2008.
\newblock Revised by D. R. Heath-Brown and J. H. Silverman, With a foreword by
  Andrew Wiles.

\bibitem[Knu09]{zbMATH05597000}
D.~E. Knuth.
\newblock {\em The art of computer programming. {Vol}. 4, {Fasc}. 0--4. {Fasc}.
  0: {Introduction} to combinatorial algorithms and {Boolean} functions.
  {Fasc}. 1: {Bitwise} tricks \& techniques, binary decision diagrams. {Fasc}.
  2: {Generating} all tuples and permutations. {Fasc}. 3: {Generating} all
  combinations and partitions. {Fasc}. 4: {Generating} all trees. {History} of
  combinatorial generation.}
\newblock Upper Saddle River, NJ: Addison-Wesley, fascicle 0: 3rd printing
  2009; {Fascicle} 1: 1st printing 2009; {Fascicle} 2: 2nd printing 2009;
  {Fascicle} 3: 2nd printing 2009; {Fascicle} 4: 3rd printing 2009 edition,
  2009.

\bibitem[Kog20]{kogiso_q-deformations_2020}
T. Kogiso.
\newblock $q$-{Deformations} and $t$-deformations of {Markov} triples.
\newblock October 2020.
\newblock \arxiv{2008.12913}.

\bibitem[KOR23]{MR4499341}
E. Kantarc\i~O\u{g}uz and M. Ravichandran.
\newblock Rank polynomials of fence posets are unimodal.
\newblock {\em Discrete Math.}, 346(2):Paper No. 113218, 20, 2023.
\newblock \doi{10.1016/j.disc.2022.113218}.

\bibitem[KOY25]{zbMATH08015631}
E. Kantarc{\i}~O{\u{g}}uz and E. Y{\i}ld{\i}r{\i}m.
\newblock Cluster expansions: $t$-walks, labeled posets and matrix
  calculations.
\newblock {\em J. Algebra}, 669:183--219, 2025.
\newblock \doi{10.1016/j.jalgebra.2025.01.024}.

\bibitem[Kuo04]{MR2074946}
E.~H. Kuo.
\newblock Applications of graphical condensation for enumerating matchings and
  tilings.
\newblock {\em Theoret. Comput. Sci.}, 319(1-3):29--57, 2004.
\newblock \doi{10.1016/j.tcs.2004.02.022}.

\bibitem[LL22]{MR4405998}
S. Labb\'{e} and M. Lapointe.
\newblock The {$q$}-analog of the {M}arkoff injectivity conjecture over the
  language of a balanced sequence.
\newblock {\em Comb. Theory}, 2(1):Paper No. 9, 25, 2022.
\newblock \doi{10.5070/c62156881}.

\bibitem[LL24]{MR4836876}
S. Labb\'e and J. Lep\v{s}ov\'a.
\newblock Dumont-{T}homas complement numeration systems for {$\Bbb Z$}.
\newblock {\em Integers}, 24:Paper No. A112, 27, 2024.
\newblock \doi{10.5281/zenodo.14340125}.

\bibitem[LLS23]{MR4686132}
S. Labb\'e, M. Lapointe, and W. Steiner.
\newblock A {$q$}-analog of the {M}arkoff injectivity conjecture holds.
\newblock {\em Algebr. Comb.}, 6(6):1677--1685, 2023.
\newblock \doi{10.5802/alco.322}.

\bibitem[LMG21]{MR4265544}
L. Leclere and S. Morier-Genoud.
\newblock {$q$}-deformations in the modular group and of the real quadratic
  irrational numbers.
\newblock {\em Adv. in Appl. Math.}, 130:Paper No. 102223, 28, 2021.
\newblock \doi{10.1016/j.aam.2021.102223}.

\bibitem[LS19]{MR4016518}
K. Lee and R. Schiffler.
\newblock Cluster algebras and {J}ones polynomials.
\newblock {\em Selecta Math. (N.S.)}, 25(4):Paper No. 58, 41, 2019.
\newblock \doi{10.1007/s00029-019-0503-x}.

\bibitem[Mar79]{M1879}
A. Markoff.
\newblock Sur les formes quadratiques binaires ind\'{e}finies.
\newblock {\em Math. Ann.}, 15(3):381--496, 1879.

\bibitem[Mar80]{MR1510073}
A. Markoff.
\newblock Sur les formes quadratiques binaires ind\'{e}finies (second
  m\'{e}moire).
\newblock {\em Math. Ann.}, 17(3):379--399, 1880.
\newblock \doi{10.1007/BF01446234}.

\bibitem[MGO19]{MorierGenoud2019}
S. Morier-Genoud and V. Ovsienko.
\newblock On $q$-deformed real numbers.
\newblock {\em Experimental Mathematics}, pages 1--9, October 2019.
\newblock \doi{10.1080/10586458.2019.1671922}.

\bibitem[MGO20]{MR4073883}
S. Morier-Genoud and V. Ovsienko.
\newblock {$q$}-deformed rationals and {$q$}-continued fractions.
\newblock {\em Forum Math. Sigma}, 8:Paper No. e13, 55, 2020.
\newblock \doi{10.1017/fms.2020.9}.

\bibitem[MOSZ23]{musiker_higher_2023}
G. Musiker, N. Ovenhouse, R. Schiffler, and S.~W. Zhang.
\newblock Higher {Dimer} {Covers} on {Snake} {Graphs}.
\newblock June 2023.
\newblock \arxiv{2306.14389}.

\bibitem[MPS25]{mcconville_hyperbinary_2025}
T. McConville, J. Propp, and B.~E. Sagan.
\newblock Hyperbinary partitions and $q$-deformed rationals.
\newblock August 2025.
\newblock \arxiv{2508.20026}.

\bibitem[MS10]{MR2661414}
G. Musiker and R. Schiffler.
\newblock Cluster expansion formulas and perfect matchings.
\newblock {\em J. Algebraic Combin.}, 32(2):187--209, 2010.
\newblock \doi{10.1007/s10801-009-0210-3}.

\bibitem[MSS21]{MR4266256}
T. McConville, B.~E. Sagan, and C. Smyth.
\newblock On a rank-unimodality conjecture of {M}orier-{G}enoud and {O}vsienko.
\newblock {\em Discrete Math.}, 344(8):Paper No. 112483, 13, 2021.
\newblock \doi{10.1016/j.disc.2021.112483}.

\bibitem[MSW11]{MR2807089}
G. Musiker, R. Schiffler, and L. Williams.
\newblock Positivity for cluster algebras from surfaces.
\newblock {\em Adv. Math.}, 227(6):2241--2308, 2011.
\newblock \doi{10.1016/j.aim.2011.04.018}.

\bibitem[MSW13]{zbMATH06144657}
G. Musiker, R. Schiffler, and L. Williams.
\newblock Bases for cluster algebras from surfaces.
\newblock {\em Compos. Math.}, 149(2):217--263, 2013.
\newblock \doi{10.1112/S0010437X12000450}.

\bibitem[Mus25]{musiker_super_2025}
G. Musiker.
\newblock Super {Markov} {Numbers} and {Signed} {Double} {Dimer} {Covers}.
\newblock March 2025.
\newblock \arxiv{2503.21872}.

\bibitem[MZS02]{MR1948779}
E. Munarini and N. Zagaglia~Salvi.
\newblock On the rank polynomial of the lattice of order ideals of fences and
  crowns.
\newblock {\em Discrete Math.}, 259(1-3):163--177, 2002.
\newblock \doi{10.1016/S0012-365X(02)00378-3}.

\bibitem[NS16]{zbMATH06559878}
T. Nakanishi and S. Stella.
\newblock Wonder of sine-{Gordon} $y$-systems.
\newblock {\em Trans. Am. Math. Soc.}, 368(10):6835--6886, 2016.
\newblock \doi{10.1090/tran/6505}.

\bibitem[{OEI}22]{OEISA005132}
{OEIS Foundation Inc.}
\newblock Entry {A}005132 in the on-line encyclopedia of integer sequences,
  2022.
\newblock \url{http://oeis.org/A005132}.

\bibitem[Ost21]{zbMATH02600866}
A. Ostrowski.
\newblock Bemerkungen zur {Theorie} der diophantischen {Approximationen}.
\newblock {\em Abh. Math. Semin. Univ. Hamb.}, 1:77--98, 1921.
\newblock \doi{10.1007/BF02940581}.

\bibitem[Ove23]{MR4588256}
N. Ovenhouse.
\newblock {$q$}-rationals and finite {S}chubert varieties.
\newblock {\em C. R. Math. Acad. Sci. Paris}, 361:807--818, 2023.
\newblock \doi{10.5802/crmath.446}.

\bibitem[Pro20]{zbMATH07181526}
J. Propp.
\newblock The combinatorics of frieze patterns and {Markoff} numbers.
\newblock {\em Integers}, 20:paper a12, 38, 2020.
\newblock \doi{10.5281/zenodo.10717499}.

\bibitem[Reu09]{MR2534916}
C. Reutenauer.
\newblock Christoffel words and {M}arkoff triples.
\newblock {\em Integers}, 9:A26, 327--332, 2009.
\newblock \doi{10.1515/INTEG.2009.027}.

\bibitem[Reu19]{MR3887697}
C. Reutenauer.
\newblock {\em From {C}hristoffel words to {M}arkoff numbers}.
\newblock Oxford University Press, Oxford, 2019.
\newblock \doi{10.1093/oso/9780198827542.001.0001}.

\bibitem[Sta86]{zbMATH03958127}
R.~P. Stanley.
\newblock Two poset polytopes.
\newblock {\em Discrete Comput. Geom.}, 1:9--23, 1986.
\newblock \doi{10.1007/BF02187680}.

\bibitem[Sta12]{MR2868112}
R.~P. Stanley.
\newblock {\em Enumerative combinatorics. {V}olume 1}, volume~49 of {\em
  Cambridge Studies in Advanced Mathematics}.
\newblock Cambridge University Press, Cambridge, second edition, 2012.
\newblock \doi{10.1017/CBO9781139058520}.

\bibitem[TU25]{topkara_equivariant_2025}
M. Topkara and A.~M. Uludag.
\newblock Equivariant {Modular} {Functions} and {Quantizations} of {Continued}
  {Fractions}.
\newblock September 2025.
\newblock \arxiv{2509.06036}.

\bibitem[Zie95]{MR1311028}
G.~M. Ziegler.
\newblock {\em Lectures on polytopes}, volume 152 of {\em Graduate Texts in
  Mathematics}.
\newblock Springer-Verlag, New York, 1995.
\newblock \doi{10.1007/978-1-4613-8431-1}.

\end{thebibliography}

\end{document}